\documentclass[a4paper]{amsart}
\usepackage[ps2pdf]{hyperref}
\usepackage{amsmath,amssymb}
\usepackage{bbold}
\usepackage{psfrag}
\usepackage{graphicx}
\usepackage{subfigure}
\usepackage{leftidx}
\usepackage[all]{xy}
\usepackage[latin1]{inputenc}
\usepackage[dvipsnames]{xcolor}

% THEOREMS -------------------------------------------------------
\newtheorem{thm}{Theorem}[section]
\newtheorem{cor}[thm]{Corollary}
\newtheorem{lem}[thm]{Lemma}

\theoremstyle{definition}

\newtheorem{exa}[thm]{Example}

% MACROS ----------------------------------------------------------------
\newcommand{\letterf}{z}
\newcommand{\ds}{\displaystyle}

\newcommand{\co}{\colon}
\newcommand{\id}{\mathrm{id}}

\newcommand{\opp}{\mathrm{op}}

\newcommand{\cc}{\mathcal{C}}
\newcommand{\bb}{\mathcal{B}}
\newcommand{\dd}{\mathcal{D}}

\newcommand{\zz}{\mathcal{Z}}

\newcommand{\CC}{\mathbb{C}}
             
                     \newcommand{\ZZ}{\mathbb{Z}}
                     \newcommand{\RR}{\mathbb{R}}

\newcommand{\kk}{\Bbbk}
\newcommand{\kt}{$\Bbbk$\nobreakdash-\hspace{0pt}}

\newcommand{\ti}{\mbox{-}\,}

\newcommand{\R}{\mathbb{R}}
\newcommand{\un}{\mathbb{1}}

\newcommand{\Ob}{\mathrm{Ob}}
\newcommand{\Int}{\mathrm{Int}}
\newcommand{\Reg}{\mathrm{Reg}}

\newcommand{\End}{\mathrm{End}}
\newcommand{\Hom}{\mathrm{Hom}}

\newcommand{\tr}{\mathrm{tr}}

\newcommand{\rank}{\mathrm{rank}}

\newcommand{\inv}{\mathbb{F}}

\newcommand{\lev}{\mathrm{ev}}
\newcommand{\Ev}{\mathrm{Ev}}
\newcommand{\rev}{\widetilde{\mathrm{ev}}}
\newcommand{\lcoev}{\mathrm{coev}}
\newcommand{\rcoev}{\widetilde{\mathrm{coev}}}
\newcommand{\ldual}[1]{#1^{*}}

\newcommand{\dual}[1]{{#1}^*}

\newcommand{\scaledraw}[1]{A}
\newcommand{\scaleraisedraw}[2]{A}
\newcommand{\rsdraw}[3]{\raisebox{-#1\height}{\scalebox{#2}{\includegraphics{#3.eps}}}}
\providecommand{\bysame}{\leavevmode\hbox to3em{\hrulefill}\thinspace}

\listfiles

\begin{document}

\title{On two approaches to 3-dimensional TQFTs}
\author[V. Turaev]{Vladimir Turaev}
\address{%
Vladimir Turaev\newline
\indent Department of Mathematics \newline
\indent Indiana University \newline
\indent Bloomington IN47405 \newline
\indent USA \newline
\indent e-mail: vtouraev@indiana.edu}
\author[A. Virelizier]{Alexis Virelizier}
\address{%
Alexis Virelizier\newline
\indent Department of Mathematics \newline
\indent University Lille 1\newline
\indent 59655 Villeneuve d'Ascq \newline
\indent France \newline
\indent e-mail:  alexis.virelizier@math.univ-lille1.fr}
\subjclass[2010]{57R56}
\date{\today}

\begin{abstract}
We prove that $\vert M\vert_{\cc}=\tau_{\zz(\cc)} (M)$ for  any closed
oriented 3-manifold $M$ and for any spherical  fusion category~$\cc$
of non-zero dimension. Here $\vert M\vert_{\cc}$ is the
Turaev-Viro-Barrett-Westbury state sum invariant of $M$ derived from~
$\cc$, $\zz(\cc)$ is the Drinfeld-Joyal-Street center of $\cc$, and $\tau_{\zz(\cc)}
(M)$ is the Reshetikhin-Turaev surgery invariant of $M$ derived from
   $\zz(\cc)$.
\end{abstract}
\maketitle

\setcounter{tocdepth}{1} \tableofcontents

\section*{Introduction}

Two fundamental constructions of 3-dimensional topological quantum
field theories (TQFTs) are due   to Reshetikhin-Turaev \cite{RT} and
  Turaev-Viro \cite{TV}. The RT-construction is widely viewed as a
mathematical realization of Witten's Chern-Simons TQFT, see
\cite{Wi-}. The TV-construction is closely related to the
Ponzano-Regge state-sum model for   3-dimensional quantum gravity,
see \cite{Ca}. The first connections between these two constructions
were established by Walker \cite{Wa} and Turaev \cite{Tu1}. In 1995,
the first named author conjectured a more general connection between
these constructions. This connection may be formulated as the
identity $\vert M\vert_{\cc}=\tau_{\zz(\cc)} (M)$, see below for a
detailed statement. The   aim of our paper is to prove this
conjecture.

  The Turaev-Viro approach   derives TQFTs
from spherical fusion categories.    We define a fusion category  as a monoidal category with compatible left and right dualities such that all objects are
direct sums of   simple objects and the number of isomorphism
classes of simple objects is finite. The condition of sphericity
says that the left and right dimensions of all objects are equal. A
spherical fusion category has a numerical dimension, which we
suppose to be non-zero throughout the introduction.  The original
TV-construction \cite{TV} applies to
 categories of representations of the quantum group
$U_q(sl_2(\CC))$ at roots of unity. This was extended to
  modular categories (i.e., to spherical fusion categories with braiding) in~\cite{Tu1}. At about the same time,
A.\ Ocneanu  observed  that the use of the braiding may be avoided.
This was formalized by Barrett and Westbury \cite{BW} (see also
\cite{GK}) who derived a topological  invariant $\vert M\vert_{\cc}$
of an arbitrary closed oriented 3-manifold~$M$ from a  spherical
fusion category $\cc$.  The definition of~$\vert
M\vert_{\cc}$ goes by considering a certain state sum on a
triangulation of~$M$ and proving that this sum depends only on~$M$
and not on the choice of triangulation. The key algebraic
ingredients of the state sum are the so-called $6j$-symbols
associated with~$\cc$.

The Reshetikhin-Turaev construction of   3-manifold invariants  uses
as the main algebraic ingredient   a modular  category~$\bb$. This
construction associates with every closed oriented 3-manifold~$M$ a
numerical invariant $\tau_\bb(M)$. Its definition is based on
surgery presentations of~$M$ by links in the 3-sphere.

For every monoidal category~$\cc$,  Joyal and Street \cite{JS} and Drinfeld (unpublished, see Majid~\cite{Maj}) defined a braided
monoidal category $\zz(\cc)$ called the center of~$\cc$.
A fundamental theorem of M\"uger \cite{Mu} says that   the center of a   spherical fusion category~$\cc$ over an algebraically
closed field is   modular. Combining   with the results
mentioned above, we observe that such a
  $\cc$ gives rise to two  3-manifold invariants: $\vert M\vert_{\cc}$ and $\tau_{\zz(\cc)}(M)$.  We prove in
  the present
  paper that these   invariants are equal, i.e.,  for all~$M$,
\begin{equation}\label{eq-main}
\vert
M\vert_{\cc}=\tau_{\zz(\cc)} (M).
\end{equation}
This   was previously
known in several special cases: when~$\cc$   is modular \cite{Tu1, Wa}, when~$\cc$ is the category of bimodules associated
with a subfactor \cite{KSW}, and when $\cc$ is the category of representations of a finite group. For modular $\cc$, the category $\zz(\cc)$ is braided equivalent to the Deligne tensor product $\cc\boxtimes \overline{\cc}$, where $\overline{\cc}$ is the mirror of $\cc$, and therefore Formula \eqref{eq-main} can be rewritten as $$\vert
M\vert_{\cc}=\tau_{\cc \boxtimes \overline{\cc}} (M)=\tau_{\cc} (M) \,\tau_{\overline{\cc}} (M)=\tau_{\cc} (M)\,\tau_{\cc} (-M),$$ where $-M$ is $M$ with opposite orientation. If $\cc$ is a unitary modular category, then $\tau_{\cc} (-M)=\overline{\tau_{\cc} (M)}$ and so $\vert M\vert_{\cc}=|\tau_{\cc} (M)|^2$.

Formula~\eqref{eq-main} relates two
 categorical approaches to 3-manifold invariants through the
categorical center. This relationship sheds new light on both
approaches and shows, in particular, that the  RT-construction is
more general than the state sum construction. For further corollaries of Formula~\eqref{eq-main}, see Section~\ref{sec-TQFT+}.

The proof of Formula~\eqref{eq-main} is based on topological quantum
field theory (TQFT). For a modular category~$\bb$, the  invariants
$\tau_\bb (M)$ extend  to  a 3-dimensional TQFT $\tau_\bb$ derived
from $\bb$, see \cite{RT}. The invariant  $\vert M\vert_{\cc}$ also extends to a 3-dimensional state sum  TQFT $\vert .\vert_\cc$ which we define here in terms
of state sums on    skeletons of 3-manifolds.  It is crucial for the proof of Formula~\eqref{eq-main} that  we allow non-generic skeletons, i.e., skeletons with edges incident to $\geq 4$ regions.  In particular,  we
give two different state sums on any triangulation~$t$ of a closed oriented
3-manifold~$M$: the one in \cite{TV, BW} and a new one. In
the former, the labels are attributed to the edges and the Boltzmann
weights are the $6j$-symbols computed in the tetrahedra; in the
latter, the labels are attributed to the faces and the Boltzmann
weights are computed in the vertices. The existence of two different
state sums   is due to the fact that the triangulation~$t$ gives
rise to two different skeletons of~$M$: the 2-skeleton of the
cellular decomposition of~$M$ dual to~$t$ and the 2-skeleton of~$t$
itself. (It is non-obvious but true that these two
  state sums are equal.)

  %%The use of non-generic
%%skeletons of 3-manifolds leads to considerable technical
%%simplifications in the proof of Formula~\eqref{eq-main}.

 Our main theorem claims that for any spherical fusion
 category~$\cc$  over an algebraically
closed field, the   TQFTs $\vert
 .\vert_\cc$ and $\tau_{\zz(\cc)}$
are isomorphic: $$ \vert \Sigma
 \vert_\cc \simeq \tau_{\zz(\cc)}(\Sigma) \quad \text{and} \quad \vert M
 \vert_\cc \simeq \tau_{\zz(\cc)}(M)$$ for any oriented closed surface $\Sigma$ and any oriented 3-cobordism $M$. The proof involves a detailed
study of  transformations of  skeletons of 3-manifolds and the computation of the coend of  $\zz(\cc)$ provided by the theory of Hopf monads in
monoidal categories due to  Brugui{\`e}res and Virelizier
\cite{BV3}. Another important ingredient of the proof is an extension of  $ \vert M
 \vert_\cc $  to a state sum invariant $ \vert M, L
 \vert_\cc $ of a $\zz(\cc)$-colored framed link $L$ in a 3-manifold $M$.

After the appearance of the present paper on arXiv, Balsam and Kirillov \cite{Ba1, Ba2, KB} found a different proof of \eqref{eq-main} based on the concept of an extended TQFT.
%Note also   our own work on the extension of \eqref{eq-main} to the more general framework of Homotopy Quantum Field Theories and graded categories, see \cite{TVi}.

The paper is organized as follows. Sections 1--4 deal with
preliminaries on monoidal and fusion categories and the associated
invariants of colored graphs. In Sections 5--9 we construct the TQFT
$\vert
 .\vert_\cc$. In Sections 10 and 11 we recall the necessary definitions from the theory of modular categories and state our main theorems.  A proof of these theorems is given in Sections 12--15. Though we do not specifically use  $6j$-symbols, we include for completeness   an appendix summarizing their   properties.

The authors are indebted to N.~Ivanov and S.~Matveev for   helpful
discussions. The work of V.~Turaev on this paper was partially
supported by the NSF grant  DMS-0904262. A.~Virelizier was partially supported by the ANR grant GESAQ.

Throughout the paper, the symbol $\kk$ denotes a commutative ring.

\section{Pivotal and spherical categories} \label{sect-basic-defi}

We review several classes of monoidal categories needed in the
sequel.

\subsection{Pivotal   categories (\cite{Malt})}
By a \emph{pivotal} (or \emph{sovereign}) category, we mean a strict
monoidal category $ \cc $ with unit object~$\un$ such that to each
object $X\in \Ob(\cc)$ there are associated a \emph{dual
object}~$X^*\in \Ob(\cc)$ and four morphisms
\begin{align*}
& \lev_X \co X^*\otimes X \to\un,  \qquad \lcoev_X\co \un  \to X \otimes X^*,\\
&   \rev_X \co X\otimes X^* \to\un, \qquad   \rcoev_X\co \un  \to X^* \otimes X,
\end{align*}
satisfying the following   conditions:
\begin{enumerate}
  \renewcommand{\labelenumi}{{\rm (\alph{enumi})}}
\item For every $X\in
\Ob(\cc)$, the pair $( \lev_X,\lcoev_X)$ is a \emph{left
duality} for $X$, i.e.,
$$(\id_X \otimes \lev_X)(\lcoev_X \otimes \id_X)=\id_X  \quad  {\rm {and}} \quad (\lev_X \otimes \id_{X^*})(\id_{X^*} \otimes \lcoev_X)=\id_{X^*};$$
\item For every $X\in
\Ob(\cc)$, the pair $( \rev_X,\rcoev_X)$ is a \emph{right
duality} for~$X$, i.e.,
$$  (\rev_X \otimes \id_X)(\id_X \otimes \rcoev_X)=\id_X \quad  {\rm {and}} \quad  (\id_{X^*} \otimes \rev_X)(\rcoev_X \otimes \id_{X^*})=\id_{X^*};
$$
\item For every morphism $f\co X \to Y$ in $\cc$, the  left  dual
$$
f^*= (\lev_Y \otimes  \id_{X^*})(\id_{Y^*}  \otimes f \otimes \id_{X^*})(\id_{Y^*}\otimes \lcoev_X) \colon Y^*\to X^*$$ is equal to the  right  dual
$$
f^*= (\id_{X^*} \otimes \rev_Y)(\id_{X^*} \otimes f \otimes \id_{Y^*})(\rcoev_X \otimes \id_{Y^*}) \colon Y^*\to X^*;$$
%\item For all   $X,Y\in
%\Ob(\cc)$, the  left  monoidal constraint
%$$(\lev_X  \otimes \id_{(Y \otimes X)^*})(\id_{X^*}  \otimes \lev_Y \otimes  \id_{(Y \otimes X)^*})(\id_{X^* \otimes Y^*}\otimes \lcoev_{Y \otimes
%X})\colon \ldual{X} \otimes \ldual{Y} \to (Y \otimes X)^*$$ is
%equal to the  right  monoidal constraint $$(\id_{(Y \otimes
%X)^*} \otimes \rev_Y)(\id_{(Y \otimes X)^*} \otimes \rev_X
%\otimes \id_{X^*})(\rcoev_{Y \otimes X}\otimes \id_{X^* \otimes
%Y^*}) \colon \ldual{X} \otimes \ldual{Y} \to (Y \otimes X)^*;$$
\item For all   $X,Y\in
\Ob(\cc)$, the  morphisms $\ldual{X} \otimes \ldual{Y} \to (Y \otimes X)^* $
defined as
$$(\lev_X  \otimes \id_{(Y \otimes X)^*})(\id_{X^*}  \otimes \lev_Y \otimes
\id_{X\otimes (Y \otimes X)^*})(\id_{X^* \otimes Y^*}\otimes \lcoev_{Y \otimes
X})$$   and as $$(\id_{(Y \otimes
X)^*} \otimes \rev_Y)(\id_{(Y \otimes X)^* \otimes Y} \otimes \rev_X
\otimes \id_{Y^*})(\rcoev_{Y \otimes X}\otimes \id_{X^* \otimes
Y^*})  $$ are equal (these morphisms are called   monoidal constraints);
\item  $\lev_\un=\rev_\un \colon \un^* \to \un$ (or, equivalently, $\lcoev_\un=\rcoev_\un\colon \un  \to \un^*$).
\end{enumerate}

Note that the morphisms $\lev_\un$ and  $\lcoev_\un$ (respectively,
$\rev_\un$ and  $\rcoev_\un$) are mutually inverse isomorphisms. The
monoidal constraints   in (d) also are isomorphisms. By abuse of
notation, we will suppress these isomorphisms in the formulas.
 For example, we will write $(f \otimes g)^*=g^* \otimes f^*$ for   morphisms $f,g$ in~$\cc$.

 \subsection{Traces and dimensions}\label{sec-traces} For an endomorphism $f $ of an object $X$ of   a pivotal category $\cc$, one defines the
\emph{left} and \emph{right traces} $\tr_l(f), \tr_r(f) \in
\End_\cc(\un)$ by
$$\tr_l(f)=\lev_X(\id_{\ldual{X}} \otimes f) \rcoev_X  \quad {\text
{and}}\quad
 \tr_r(f)=\rev_X( f \otimes \id_{\ldual{X}}) \lcoev_X  .
$$
Both traces are symmetric: $\tr_l(gh)=\tr_l(hg)$ and $
\tr_r(gh)=\tr_r(hg)$ for any morphisms $g\co X \to Y$ and   $h\co Y
\to X$  in $\cc$. Also $\tr_l(f)=\tr_r(\ldual{f})=\tr_l(f^{**})$ for
any endomorphism $f $ of an object (and similarly  with $l,r$
exchanged). If
\begin{equation}\label{special}
\alpha\otimes \id_X= \id_X\otimes \alpha \quad \text{for all $\alpha\in
\End_{\cc}(\un)$ and $X\in \Ob(\cc)$,}
\end{equation}
then $\tr_l,\tr_r$
are $\otimes$-multiplicative: $ \tr_l(f\otimes g)=\tr_l(f) \,
\tr_l(g)$ and $\tr_r(f\otimes g)=\tr_r(f) \, \tr_r(g) $ for all
endomorphisms $f,g$ of objects~of~$\cc$.

The \emph{left} and \emph{right dimensions} of   $X\in \Ob (\cc)$
are defined by $ \dim_l(X)=\tr_l(\id_X) $ and $
\dim_r(X)=\tr_r(\id_X) $. Clearly,
$\dim_l(X)=\dim_r(\ldual{X})=\dim_l(X^{**})$ (and similarly  with
$l,r$ exchanged). Note that isomorphic objects have the same
dimensions and $\dim_l(\un)=\dim_r(\un)=\id_{\un}$. If~$\cc$ satisfies \eqref{special}, then  left and right dimensions are $\otimes$-multiplicative: $\dim_l (X\otimes Y)= \dim_l (X)
\dim_l (Y)$ and $\dim_r (X\otimes Y)= \dim_r (X) \dim_r (Y)$ for any
$X,Y\in \Ob(\cc)$.

\subsection{Penrose graphical calculus} We represent morphisms in a category $\cc$ by plane   diagrams to be read from the bottom to the top.
The  diagrams are made of   oriented arcs colored by objects of
$\cc$  and of boxes colored by morphisms of~$\cc$.  The arcs connect
the boxes and   have no mutual intersections or self-intersections.
The identity $\id_X$ of $X\in \Ob(\cc)$, a morphism $f\co X \to Y$,
and the composition of two morphisms $f\co X \to Y$ and $g\co Y \to
Z$ are represented as follows:
\begin{center}
\psfrag{X}[Bc][Bc]{\scalebox{.7}{$X$}} \psfrag{Y}[Bc][Bc]{\scalebox{.7}{$Y$}} \psfrag{h}[Bc][Bc]{\scalebox{.8}{$f$}} \psfrag{g}[Bc][Bc]{\scalebox{.8}{$g$}}
\psfrag{Z}[Bc][Bc]{\scalebox{.7}{$Z$}} $\id_X=$ \rsdraw{.45}{.9}{identitymorph}\,,\quad $f=$ \rsdraw{.45}{.9}{morphism} ,\quad \text{and} \quad $gf=$ \rsdraw{.45}{.9}{morphismcompo}\,.
\end{center}
  If $\cc$ is
monoidal, then the monoidal product of two morphisms $f\co X \to Y$
and $g \co U \to V$ is represented by juxtaposition:
\begin{center}
\psfrag{X}[Bc][Bc]{\scalebox{.7}{$X$}} \psfrag{h}[Bc][Bc]{\scalebox{.8}{$f$}}
\psfrag{Y}[Bc][Bc]{\scalebox{.7}{$Y$}}  $f\otimes g=$ \rsdraw{.45}{.9}{morphism} \psfrag{X}[Bc][Bc]{\scalebox{.8}{$U$}} \psfrag{g}[Bc][Bc]{\scalebox{.8}{$g$}}
\psfrag{Y}[Bc][Bc]{\scalebox{.7}{$V$}} \rsdraw{.45}{.9}{morphism3}\,.
\end{center}
In a pivotal category, if an arc colored by $X$ is oriented upwards,
then the corresponding object   in the source/target of  morphisms
is $X^*$. For example, $\id_{X^*}$  and a morphism $f\co X^* \otimes
Y \to U \otimes V^* \otimes W$  may be depicted as:
\begin{center}
 $\id_{X^*}=$ \, \psfrag{X}[Bl][Bl]{\scalebox{.7}{$X$}}
\rsdraw{.45}{.9}{identitymorphdual} $=$  \,
\psfrag{X}[Bl][Bl]{\scalebox{.7}{$\ldual{X}$}}
\rsdraw{.45}{.9}{identitymorph2}  \quad and \quad
\psfrag{X}[Bc][Bc]{\scalebox{.7}{$X$}}
\psfrag{h}[Bc][Bc]{\scalebox{.8}{$f$}}
\psfrag{Y}[Bc][Bc]{\scalebox{.7}{$Y$}}
\psfrag{U}[Bc][Bc]{\scalebox{.7}{$U$}}
\psfrag{V}[Bc][Bc]{\scalebox{.7}{$V$}}
\psfrag{W}[Bc][Bc]{\scalebox{.7}{$W$}} $f=$
\rsdraw{.45}{.9}{morphism2} \,.
\end{center}
The duality morphisms   are depicted as follows:
\begin{center}
\psfrag{X}[Bc][Bc]{\scalebox{.7}{$X$}} $\lev_X=$ \rsdraw{.45}{.9}{leval}\,,\quad
 $\lcoev_X=$ \rsdraw{.45}{.9}{lcoeval}\,,\quad
$\rev_X=$ \rsdraw{.45}{.9}{reval}\,,\quad
\psfrag{C}[Bc][Bc]{\scalebox{.7}{$X$}} $\rcoev_X=$
\rsdraw{.45}{.9}{rcoeval}\,.
\end{center}
The dual of a morphism $f\co X \to Y$ and the
  traces of a morphism $g\co X \to X$ can be depicted as
follows:
\begin{center}
\psfrag{X}[Bc][Bc]{\scalebox{.7}{$X$}} \psfrag{h}[Bc][Bc]{\scalebox{.8}{$f$}}
\psfrag{Y}[Bc][Bc]{\scalebox{.7}{$Y$}} \psfrag{g}[Bc][Bc]{\scalebox{.8}{$g$}}
$f^*=$ \rsdraw{.45}{.9}{dualmorphism2}$=$ \rsdraw{.45}{.9}{dualmorphism}\quad \text{and} \quad
$\tr_l(g)=$ \rsdraw{.45}{.9}{ltrace}\,,\quad  $\tr_r(g)=$ \rsdraw{.45}{.9}{rtrace}\,.
\end{center}
  If $\cc$ is pivotal, then    the morphisms represented by the diagrams
are invariant under isotopies of the diagrams in the plane keeping
fixed the bottom and   top endpoints.

\subsection{Linear  and spherical categories}\label{sphesphe}
A \emph{monoidal \kt category} is a monoidal category $\cc$ such that its
 hom-sets are (left) \kt modules, the composition and monoidal product of morphisms are \kt bilinear,
  and  $\End_\cc(\un)$ is a free \kt module of rank one.
Then the map $\kk \to \End_\cc(\un), k \mapsto k \, \id_\un$  is a
\kt algebra isomorphism. It is used to identify $\End_\cc(\un)=\kk$.

A pivotal \kt category satisfies \eqref{special}. Therefore the traces $\tr_l,\tr_r$ and the
dimensions $\dim_l, \dim_r$ in such a category are
$\otimes$-multiplicative. Clearly,   $\tr_l,\tr_r$ are  \kt linear.

  A \emph{spherical category} is a pivotal category whose left and
right traces are equal, i.e.,  $\tr_l(g)=\tr_r(g)$ for every
endomorphism $g$ of an object. Then $\tr_l(g)$ and $ \tr_r(g)$ are
denoted $\tr(g)$ and called the \emph{trace of $g$}. Similarly, the
left and right dimensions of an object~$X$ are denoted   $\dim(X)$
and called the \emph{dimension of $X$}.

For spherical categories, the corresponding Penrose graphical
calculus has the following property:   the morphisms represented by
 diagrams are invariant under isotopies of   diagrams in the 2-sphere $S^2=\RR^2\cup
\{\infty\}$, i.e., are preserved under isotopies pushing   arcs of
the diagrams across~$\infty$.  For example, the diagrams above
representing $\tr_l(g)$ and $\tr_r(g)$ are related by such an
isotopy. The condition $\tr_l(g)=\tr_r(g)$ for all $g$ is therefore
necessary (and in fact sufficient) to ensure this property.

\subsection{Remark}\label{rem-pivotal} Our definition of a pivotal category is equivalent to the  standard
definition given in terms of   pivotal structures, see \cite{Malt}.
A \emph{pivotal structure} on a monoidal category $\cc$ with left
duality $\{(X^*,\lev, \lcoev)\}_{X\in \Ob (\cc)}$ is a monoidal
natural isomorphism $\psi=\{\psi_X\co X \to X^{**}\}_{X \in
\Ob(\cc)}$, i.e.,  $\psi_{X \otimes Y}=\psi_X \otimes \psi_Y$ and
$\psi_\un=\id_\un$  (up to the monoidal constraints). A pivotal
category $\cc$ obtains a pivotal structure   by $
 \psi_X=(\rev_X \otimes \id_{X^{**}})(\id_X \otimes \lcoev_{X^*})$
for   $X \in \Ob(\cc)$. Conversely,   a pivotal structure makes
$\cc$   pivotal   by
 $
 \rev_X=\lev_{\ldual{X}}(\psi_X \otimes \id_{\ldual{X}})$ and $
 \rcoev_X=(\id_{\ldual{X}} \otimes \psi^{-1}_X)\lcoev_{\ldual{X}}$.

\section{Multiplicities in pivotal categories}\label{sect-sym-modules}

For $n\geq 1$ objects $X_1, \ldots, X_n$ of a monoidal category
$\cc$, one can consider the \lq\lq set of multiplicities" $
\Hom_\cc(\un,X_1 \otimes \cdots \otimes X_n)$. We show that if $\cc$
is pivotal, then this set essentially depends only on the cyclic
order of the objects $X_1, \ldots, X_n$.

\subsection{Permutation maps}\label{sect-permut} For   objects $X$, $Y$ of  a pivotal
category $\cc$, let $$\sigma_{X,Y}\co \Hom_\cc(\un,X\otimes Y) \to
\Hom_\cc(\un,Y \otimes X)$$  be the map defined as follows: for any
$\alpha \in \Hom_\cc(\un,X\otimes Y)$,
\begin{equation}\label{fla-}
\sigma_{X,Y}(\alpha)=(\lev_X \otimes \id_{Y\otimes X})(\id_{\dual{X}} \otimes \alpha \otimes \id_X)\rcoev_X=
\psfrag{X}[Bc][Bc]{\scalebox{.7}{$X$}} \psfrag{a}[Bc][Bc]{\scalebox{1}{$\alpha$}}
\psfrag{Y}[Bc][Bc]{\scalebox{.7}{$Y$}}
\rsdraw{.45}{.9}{defsigma}\,.
\end{equation}
It is easy to check that the maps $ \{\sigma_{X,Y}\}_{X,Y\in
\Ob(\cc)}$ are natural  in the sense that
$$(g\otimes f) \sigma_{X,Y}(\alpha)= \sigma_{X',Y'}((f\otimes g)
\alpha)$$ for any morphisms $f\co X\to X'$,  $g\co Y\to Y'$, and $\alpha \co
\un\to X\otimes Y $ in $\cc$. The following lemma shows that the
maps $\{\sigma_{X,Y}\}_{X,Y}$ behave  as permutations.

\begin{lem}\label{lem-Gamma}
For all   $X,Y,Z \in \Ob(\cc)$,
\begin{enumerate}
  \renewcommand{\labelenumi}{{\rm (\alph{enumi})}}
   \item $\sigma_{X,Y}$ is an isomorphism and
  $\sigma_{X,Y}^{-1}=\sigma_{Y,X}$;
  \item $\sigma_{X,\un}=\sigma_{\un,X}=\id_{\Hom_\cc(\un,X)}$;
  \item $\sigma_{X\otimes Y,Z}=\sigma_{Y,Z \otimes X} \,\sigma_{X,Y \otimes Z}$ and $\sigma_{X,Y\otimes Z}=\sigma_{Z \otimes X,Y}\,\sigma_{X\otimes Y,Z}$.
\end{enumerate}
\end{lem}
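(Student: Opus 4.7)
My plan is to work within the Penrose graphical calculus for $\cc$, establishing (b) and the first identity of (c) directly, and then deducing (a) and the second identity of (c) as formal consequences. Since diagrams in a pivotal category are invariant under planar isotopy keeping the endpoints fixed, each verification reduces to a combination of topological moves and the zigzag (duality) identities.

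For (b), the key input is axiom (c) of pivotality: for $\alpha\co\un\to X$, the left and right duals of $\alpha$ coincide, i.e., $\lev_X(\id_{\ldual{X}}\otimes\alpha)=\rev_X(\alpha\otimes\id_{\ldual{X}})$ as morphisms $\ldual{X}\to\un$. Substituting this into the definition of $\sigma_{X,\un}(\alpha)$ and rearranging via the interchange law gives $\sigma_{X,\un}(\alpha)=(\rev_X\otimes\id_X)(\id_X\otimes\rcoev_X)\alpha=\alpha$, where the last step is the right-duality zigzag (axiom (b)). The computation for $\sigma_{\un,X}$ is analogous, using axiom (c) and the left-duality zigzag.

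For the first identity of (c), I would expand the duality morphisms of $X\otimes Y$ in terms of those of $X$ and $Y$. Under the identification $(X\otimes Y)^*=\ldual{Y}\otimes\ldual{X}$ furnished by axiom (d) of pivotality, one has
\[\rcoev_{X\otimes Y}=(\id_{\ldual{Y}}\otimes\rcoev_X\otimes\id_Y)\,\rcoev_Y,\qquad \lev_{X\otimes Y}=\lev_Y\,(\id_{\ldual{Y}}\otimes\lev_X\otimes\id_Y).\]
Plugging these into the definition of $\sigma_{X\otimes Y,Z}(\alpha)$ for $\alpha\in\Hom_\cc(\un,X\otimes Y\otimes Z)$ and regrouping the factors yields precisely the diagram for $\sigma_{Y,Z\otimes X}(\sigma_{X,Y\otimes Z}(\alpha))$: the pair $(\rcoev_X,\lev_X)$ produces the inner $X$-loop around $\alpha$, absorbing $\alpha$'s $X$-output and replacing it on the right, while the pair $(\rcoev_Y,\lev_Y)$ produces the outer $Y$-loop enclosing everything and doing the same for $\alpha$'s $Y$-output.

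With (b) and the first identity of (c) established, claim (a) is immediate: setting $Z=\un$ in the first identity of (c) and invoking (b) applied to the object $X\otimes Y$ gives $\id=\sigma_{X\otimes Y,\un}=\sigma_{Y,X}\sigma_{X,Y}$, and the symmetric argument shows $\sigma_{X,Y}\sigma_{Y,X}=\id$. The second identity of (c) then follows by formal inversion: the first identity of (c) applied to the triple $(Y,Z,X)$ reads $\sigma_{Y\otimes Z,X}=\sigma_{Z,X\otimes Y}\sigma_{Y,Z\otimes X}$, and inverting both sides via (a) yields $\sigma_{X,Y\otimes Z}=\sigma_{Z\otimes X,Y}\sigma_{X\otimes Y,Z}$. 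The main technical obstacle is the careful planar bookkeeping required for the first identity of (c), namely matching the nested-loop structure of $\rcoev_{X\otimes Y}$ and $\lev_{X\otimes Y}$ to the correct order of composition of the two elementary loops; once the picture is drawn correctly, the identity is transparent from planar isotopy.
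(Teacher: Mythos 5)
Your proof is correct, but it takes a genuinely different and more elementary route than the paper. The paper disposes of (b) and both halves of (c) in a single stroke by appealing to the monoidality of the contravariant duality functor $(\cc^\opp,\otimes^\opp)\to(\cc,\otimes)$, $X\mapsto X^*$, $f\mapsto f^*$: the monoidal structure data $X^*\otimes Y^*\cong(Y\otimes X)^*$ and $\un^*\cong\un$ together with their coherence encode precisely these identities, so no diagram chase is needed. You instead verify (b) from axiom (c) of pivotality together with the right-duality zigzag, and the first identity of (c) from the explicit nested cup/cap expansion of $\lev_{X\otimes Y}$ and $\rcoev_{X\otimes Y}$; you then recover the second identity of (c) by inverting the first (applied to $(Y,Z,X)$) using (a). That last step is a small detour the paper avoids, since both halves of (c) fall out of monoidality simultaneously, but the ordering (b) $\Rightarrow$ first-of-(c) $\Rightarrow$ (a) $\Rightarrow$ second-of-(c) is free of circularity and perfectly valid. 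The deduction of (a) is the same in both treatments: $\sigma_{Y,X}\,\sigma_{X,Y}=\sigma_{X\otimes Y,\un}=\id$. In short, your version makes the graphical content explicit at the cost of length, while the paper's is slicker but delegates the unwinding of the duality functor's monoidality to the reader.
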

\begin{proof}
Claims (b) and (c) are   direct consequences of the monoidality of
the duality functor $ (\cc^\opp,\otimes^\opp ) \to (\cc,\otimes )$
defined by $X \mapsto X^*$ and $f \mapsto f^*$. From (b) and~(c), we
obtain  $\sigma_{Y,X} \,\sigma_{X,Y}=\sigma_{X \otimes Y,\un}=
\id_{\Hom_\cc(\un,X \otimes Y)}$, hence (a).
\end{proof}

It is easy to deduce from Claim (a) of this lemma   that
\begin{equation}\label{fla} \sigma_{X,Y}(\alpha)=(\id_{Y\otimes X} \otimes \rev_Y)(\id_Y
\otimes \alpha \otimes
\id_{\ldual{Y}})\lcoev_Y=\psfrag{X}[Bc][Bc]{\scalebox{.7}{$X$}}
\psfrag{a}[Br][Br]{\scalebox{1}{$\alpha$}}
\psfrag{Y}[Br][Br]{\scalebox{.7}{$Y$}} \rsdraw{.45}{.9}{defsigma2}\,
.\end{equation}

\subsection{Symmetrized sets of multiplicities}\label{sect-signedcyclicset}
A \emph{signed object} of a  pivotal category~$\cc$  is a pair
$(X,\varepsilon)$ where $X\in\Ob(\cc)$ and $\varepsilon\in\{+,-\}$.
Given a signed object $(X,\varepsilon)$,    set $X^\varepsilon=X$ if
$\varepsilon=+$ and $X^\varepsilon=X^*$ if $\varepsilon=-$.

A \emph{cyclic $\cc$\ti set} is a triple $(E,c\co E \to \Ob(\cc)
,\varepsilon\co E \to \{+, - \})$, where $E$ is a totally cyclically
ordered finite set. In other words,  a cyclic $\cc$\ti set is a
totally cyclically ordered finite set whose elements are labeled
  by signed objects of $\cc$.  For shortness, we will
 sometimes write $E$ for   $(E,c
,\varepsilon )$.

A cyclic $\cc$\ti set $E=(E,c,\varepsilon)$ determines a set $H(E)$
as follows. For   $e\in E$, set
$$Z_e=c(e_1)^{\varepsilon(e_1)} \otimes \cdots \otimes
c(e_n)^{\varepsilon(e_n)} \in \Ob (\cc)\quad {\text {and}}\quad
H_e=\Hom_\cc (\un,Z_e  ),$$ where $e =e_1<e_2< \cdots <e_n$ are the
elements of $E$ ordered starting from $e$ via the
 given cyclic order   (here $n=\# E$ is the number of elements of~$E$). For $e,f\in E$, we
define a   map $p_{e,f}\co H_e\to H_f$ as follows.   We have $f=e_k$
for   $k\in\{1,\dots,n\}$. Set
$$X= c(e_1)^{\varepsilon(e_1)} \otimes \cdots \otimes
c(e_{k-1})^{\varepsilon(e_{k-1})} \quad {\text {and}} \quad
Y=c(e_k)^{\varepsilon(e_k)} \otimes \cdots \otimes
c(e_n)^{\varepsilon(e_n)}.$$ Clearly, $Z_e=X\otimes Y$ and
$Z_f=Y\otimes X$. The map  $p_{e,f} $ carries a  morphism $\alpha\co
\un \to Z_e   $ to   $\sigma_{X,Y}(\alpha) \co \un \to Z_f$.
Lemma~\ref{lem-Gamma} implies that $(H_e,p_{e,f})_{e,f \in E}$ is a
projective system of sets and bijections. The   projective limit $
H(E )=\underleftarrow{\lim} \, H_e $  depends only on $E$. It is
equipped with a system of  bijections $\tau=\{\tau_e\co H(E ) \to
H_e\}_{e\in E}$, called the \emph{universal cone}. The bijections $
\tau_e\co H(E ) \to H_e
 $   are called   \emph{cone
bijections}.

An isomorphism   of cyclic $\cc$\ti
  sets $\phi\co E \to  E' $
 is  a bijection  preserving the cyclic order and commuting with the maps to
 $\Ob(\cc)$ and $ \{+, - \}$. Such  a  $\phi$
induces    a   bijection
 $H(\phi) \co H(E ) \to H(E' )$ in the obvious way.

%\section{Spherical categories and duality for symmetrized modules} \label{sect-spher-sym}
%
%Until the end of Section~\ref{sect-spher-sym}, the symbol $\cc$ will denote a \kt linear spherical category. %Note that $\zz(\cc)$ is then a braided \kt linear spherical category (see Section~\ref{sect-center}).

\subsection{Symmetrized multiplicity modules}\label{sect-dual-signed-set} Let $\cc$ be a pivotal \kt category.
Clearly,  for any cyclic $\cc$\ti set $E$, the set  $H(E )$ is a \kt
module (called the \emph{symmetrized multiplicity module}), and the
cone bijections and the maps $H(\phi)$ as above are \kt
isomorphisms. We now study duality for these
  modules.

  For a   tuple $S=((X_1,\varepsilon_1),\ldots,
(X_n,\varepsilon_n))$ of signed objects of $\cc$ with $n\geq 1$, set
\begin{equation}\label{Evev-}
X_S=X_1^{\varepsilon_1} \otimes \cdots  \otimes X_n^{\varepsilon_n}  .
\end{equation} and
$ S^*=(X_n,-\varepsilon_n),\ldots, (X_1,-\varepsilon_1)$. The tuple
$S$ determines a cyclic $\cc$\ti set $E_S=\{1,2,  \ldots, n\}$,
where $1< 2< \cdots <n<1$ and the label of each $e\in E_S$  is
$(X_e, \varepsilon_e)$.   By definition, the  \kt  module
$H(S)=H(E_S)$ is preserved under cyclic permutations of $S$ and is
isomorphic to $\Hom_\cc(\un,X_S)$ through the   cone isomorphism
$\tau_1$.
%$X_S=X_1^{\varepsilon_1} \otimes \cdots  \otimes X_n^{\varepsilon_n}$ and $S^*=(X_n,-\varepsilon_n,\cdots, X_1,-\varepsilon_1)$.

 If $S$ and $T$ are two tuples of signed objects of $\cc$,
then  $ST$ is  the tuple obtained  by their concatenation. Clearly,
$X_{ST}=X_S \otimes X_T$,  $(ST)^*=T^* S^*$, and $S^{**}=S$. The
following recursive formulas define   a morphism $ \Ev_S\co X_{S^*}
\otimes X_S \to \un $:
$$\Ev_{(X,+)}=\lev_X, \quad \Ev_{(X,-)}=\rev_X, \quad {\text {and}}\quad \Ev_{ST}=
\Ev_T(\id_{X_{T^*}} \otimes \Ev_S \otimes \id_{X_T})$$ for any
  $S$, $T$. The morphism $ \Ev_S$ induces a \kt bilinear \emph{evaluation form}
\begin{equation}\label{Evev}
\omega_S\co
  \Hom_\cc(\un,X_{S^*}) \otimes \Hom_\cc(\un,X_S) \to  \End_\cc(\un)=\kk
\end{equation}
  by $\omega_S(\alpha \otimes \beta)=\Ev_S(\alpha
\otimes \beta)$ for all $\alpha \in \Hom_\cc(\un,X_{S^*})$ and
$\beta \in \Hom_\cc(\un,X_S)$.   For  example:
\begin{equation*}
\omega_{(X,-),(Y,+)}(\alpha \otimes \beta)=\, \psfrag{X}[Bc][Bc]{\scalebox{.7}{$X$}}
\psfrag{Y}[Bc][Bc]{\scalebox{.7}{$Y$}} \psfrag{a}[Bc][Bc]{\scalebox{.8}{$\alpha$}}
\psfrag{b}[Bc][Bc]{\scalebox{.8}{$\beta$}} \rsdraw{.45}{.9}{omegaS}.%\,=\,
\end{equation*}
As an exercise, the reader may verify that $\omega_S(\alpha  \otimes \beta)= \omega_{S^*}(\beta \otimes \alpha)$ for all $\alpha$ and $\beta$.

\begin{lem}\label{lem-omegasigma} If $\cc$ is spherical, then
$\omega_{TS} \bigl(\sigma_{X_{T^*},X_{S^*}} \otimes
\sigma_{X_S,X_T}\bigr)= \omega_{ST}$ for all tuples $S$, $T$ of
signed objects of $\cc$.
\end{lem}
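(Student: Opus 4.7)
My plan is to prove the equality by graphical calculus. Both sides of the claimed identity yield scalars in $\End_\cc(\un)=\kk$, each represented by a closed Penrose diagram built from the morphisms $\alpha,\beta$ and the duality structure of $\cc$. The goal is to show that these two closed diagrams represent the same morphism in $\cc$.

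I would proceed by induction on $|S|+|T|$. Using the recursive formula
\[
\Ev_{ST}=\Ev_T\bigl(\id_{X_{T^*}}\otimes \Ev_S\otimes \id_{X_T}\bigr)
\]
from the setup of~\eqref{Evev} together with the composition law $\sigma_{X\otimes Y,Z}=\sigma_{Y,Z\otimes X}\,\sigma_{X,Y\otimes Z}$ of Lemma~\ref{lem-Gamma}(c), the inductive step reduces the general statement to the base case in which $S=(X,\varepsilon)$ and $T=(Y,\eta)$ are single signed objects. The naturality of $\sigma_{-,-}$ (Section~\ref{sect-permut}) ensures that these reductions are compatible with tensoring $\alpha,\beta$ against the identity morphisms of the remaining factors.

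In each of the four sign combinations $(\varepsilon,\eta)\in\{+,-\}^2$, one expands $\sigma(\alpha)$ and $\sigma(\beta)$ via~\eqref{fla-} (or~\eqref{fla}) and substitutes into $\Ev_{TS}(\sigma(\alpha)\otimes\sigma(\beta))$. After straightening the zigzags introduced by the pivotal structure, the resulting closed diagram differs from that of $\Ev_{ST}(\alpha\otimes\beta)$ only by a strand that encircles the rest of the picture. Collapsing this strand amounts to pushing an arc of the diagram across the point at infinity of $S^2=\RR^2\cup\{\infty\}$; by the sphericality property discussed in Section~\ref{sphesphe}, such a move preserves the scalar represented by the diagram, whence the identity $\omega_{TS}\bigl(\sigma_{X_{T^*},X_{S^*}}\otimes \sigma_{X_S,X_T}\bigr)=\omega_{ST}$.

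The principal obstacle I anticipate is the bookkeeping across the four sign cases: different choices of $\varepsilon,\eta$ invoke different combinations of left and right (co)evaluations, producing intermediate diagrams with subtly different shapes. In every case, however, the final step reduces to the equality $\tr_l=\tr_r$ of an appropriate endomorphism of an object of $\cc$, so the sphericality hypothesis resolves all four cases uniformly. Failure of sphericality would leave a genuine discrepancy (a ratio between $\tr_l$ and $\tr_r$ of a certain scalar endomorphism), which is precisely why the hypothesis appears in the statement.
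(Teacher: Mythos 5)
Your base-case picture is correct: the two closed diagrams differ by a strand that sweeps around the remainder of the picture, and sliding that strand across $\infty\in S^2$ is exactly the spherical move, so sphericity closes the gap. This is the same key step as in the paper. The preparatory pivotality step you call ``straightening the zigzags'' corresponds to the paper's identity relating $\Ev_{T^*}$ to $\Ev_T$ by two different bending isotopies. So the computational core of your argument matches the paper's.

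Where your proposal goes astray is the inductive reduction. The composition law of Lemma~\ref{lem-Gamma}(c) does let you peel off one entry of $T$ at a time: writing $T=T_1T_2$ and using $\sigma_{X_S,X_{T_1}\otimes X_{T_2}}=\sigma_{X_{T_2}\otimes X_S,X_{T_1}}\,\sigma_{X_S\otimes X_{T_1},X_{T_2}}$ (and the corresponding decomposition of $\sigma_{X_{T^*},X_{S^*}}$), two applications of the inductive hypothesis reduce general $T$ to a single signed object. But the same device does \emph{not} shrink $S$. The recursive formula $\Ev_{ST}=\Ev_T(\id\otimes\Ev_S\otimes\id)$ does not interact with $\alpha$ and $\beta$, which are undecomposed morphisms $\un\to X_{(ST)^*}$ and $\un\to X_{ST}$; nothing in the hypotheses allows you to split $\alpha$ or $\beta$ along a factorisation of $S$. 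So the claimed base case ``$S$ and $T$ both single signed objects'' is not reached. You could instead take the base case to be ``$T=(Y,\eta)$ a single signed object, $S$ arbitrary,'' but then $X_S$ and $\Ev_S$ are still general and you have not simplified the diagram at all. This is the real point the paper exploits: Formulas~\eqref{fla-} and~\eqref{fla} apply to the maps $\sigma_{X_{T^*},X_{S^*}}$ and $\sigma_{X_S,X_T}$ with $X_S$, $X_T$, $X_{S^*}$, $X_{T^*}$ treated as opaque objects, and the one graphical computation handles arbitrary $S$ and $T$ in a single pass. The induction therefore buys nothing. The four-way sign analysis is likewise superfluous: the signs are absorbed into the definition of $\Ev_S$, $\Ev_T$, and the diagrammatic argument is insensitive to them.
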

\begin{proof}
Let $\alpha \in \Hom_\cc(\un,X_{(ST)^*})=\Hom_\cc(\un,X_{T^*}\otimes
X_{S^*})$ and $\beta \in \Hom_\cc(\un,X_{ST})=\Hom_\cc(\un,X_S
\otimes X_T)$.  By the pivotality of $\cc$,
\begin{equation*}
 \psfrag{E}[Bc][Bc]{\scalebox{.8}{$\Ev_T$}}
\rsdraw{.45}{.9}{pr1b}\,=\Ev_{T^*}=\,\psfrag{X}[Br][Br]{\scalebox{.7}{$X_T$}}
\psfrag{Y}[Bc][Bc]{\scalebox{.7}{$X_{T^*}$}}\rsdraw{.45}{.9}{pr2b}\,.
\end{equation*}
Using these equalities,  Formulas~\eqref{fla-}, \eqref{fla},  and
the sphericity of $\cc$, we obtain
\begin{equation*}
\psfrag{E}[Bc][Bc]{\scalebox{.8}{$\Ev_T$}} \psfrag{R}[Bc][Bc]{\scalebox{.8}{$\Ev_{T^*}$}} \psfrag{F}[Bc][Bc]{\scalebox{.8}{$\Ev_S$}}
\psfrag{a}[Bc][Bc]{\scalebox{.8}{$\alpha$}} \psfrag{u}[Bc][Bc]{\scalebox{.8}{$\beta$}}
\rsdraw{.45}{.9}{pr3b} \, = \, \rsdraw{.45}{.9}{pr4b} \, = \, \rsdraw{.45}{.9}{pr5b} \, = \, \rsdraw{.45}{.9}{pr6b} \,.
\end{equation*}
Here the left-hand side represents  $\omega_{TS}
\bigl(\sigma_{X_{T^*},X_{S^*}}(\alpha) \otimes \sigma_{X_S,X_T}
(\beta)\bigr)$ and the right-hand side represents
$\omega_{ST}(\alpha \otimes \beta)$. Hence these two  expressions
are equal.
\end{proof}

By Lemma~\ref{lem-omegasigma},   for spherical $\cc$,  the
form~\eqref{Evev} is compatible with cyclic permutations of $S$ and
induces a well defined pairing $H(S^*) \otimes H(S)\to \kk$ also
denoted~$\omega_S$.

The \emph{dual}    of  a cyclic $\cc$\ti set $ (E,c,\varepsilon)$ is
the cyclic $\cc$\ti set $ (E^\opp,c,-\varepsilon)$, where~$E^\opp$
is~$E$ with opposite cyclic order. Enumerating the elements of $E$
in their cyclic order, we can identify $E$ with $E_S$ for a
tuple~$S$ of signed objects of $\cc$. Then $E^\opp= E_{S^*}$. If
$\cc$ is spherical, then the pairing $\omega_S\co H(S^*) \otimes
H(S)\to \kk$ induces a   pairing $\omega_{E}\co H(E^\opp ) \otimes
H(E )  \to  \kk  $. More generally, a \emph{duality} between cyclic
$\cc$\ti sets~$ E $ and~$ E' $ is an isomorphism  of cyclic $\cc$\ti
sets $\phi\co  E^\opp \to
 E' $. Such $\phi$ induces a \kt isomorphism $H(\phi) \co H(E^\opp ) \to H(E'
 )$ and a  pairing
\begin{equation}\label{bilipai}
 \omega_{E} \circ (H(\phi)^{-1} \otimes \id)\co H(E' ) \otimes H(E ) \to \kk.
\end{equation}

\subsection{Non-degeneracy}\label{sect-non-degen-contraction}
Given two \kt modules $M$ and $N$, one calls a \kt bilinear  form
$\omega \co M \otimes N \to \kk$   \emph{non-degenerate} if there is
a \kt linear map $\Omega \co \kk \to N \otimes M$ such that $(\id_N
\otimes \omega)(\Omega\otimes \id_N)=\id_N$ and $(\omega \otimes
\id_M)(\id_M \otimes \Omega)=\id_M$.  If such~$\Omega$  exists, then
it is unique, and   $\Omega(1)\in N \otimes M$ is   the
\emph{inverse of $\omega$}.
%%%If $\omega$ is non-degenerate, then $M$
%%%and $N$ are finitely-generated projective \kt modules of the same
%%%dimension equal to $\omega \Omega^\opp (1) \in \kk$.
The dual \kt homomorphism $
 \Omega^\star    \co M^\star \otimes N^\star   \to \kk$ is
 called the
\emph{contraction}  induced by $\omega$. Here $M^\star=\Hom_{\kk}
(M, \kk)$.

  A pivotal \kt category $\cc$ is
\emph{non-degenerate} if  for any~$X \in \Ob (\cc)$, the  form
$\omega_X=\omega_{(X,+)} \co \Hom_\cc(\un,X^*) \otimes
\Hom_\cc(\un,X) \to \kk$ (carrying $\alpha\otimes \beta$ to $\lev_X
  (\alpha\otimes \beta)$) is non-degenerate.

\begin{lem}\label{lem-pairing-non-degen}
If $\cc$ is non-degenerate, then   the form~\eqref{Evev} is
non-degenerate for all~$S$ and the form~\eqref{bilipai}  is
non-degenerate for all $E, E', \phi$.
\end{lem}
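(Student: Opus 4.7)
The plan is to reduce both claims to the non-degeneracy of $\omega_S$, and then to prove the latter by exhibiting $(X_{S^*}, \Ev_S)$ as a left duality of $X_S$ in $\cc$, so that $\omega_S$ becomes a transport of the form $\omega_{X_S}$ provided by the hypothesis. For the reduction, given a cyclic $\cc$-set $E$ with a chosen starting element, I identify $E = E_S$ for a suitable tuple $S$ of signed objects and correspondingly $E^\opp = E_{S^*}$; the cone bijections then identify $\omega_E$ with $\omega_S$, and the form~\eqref{bilipai} is obtained from $\omega_E$ by further composition with the $\kk$-linear isomorphism $H(\phi)^{-1} \otimes \id$. Since composing a pairing with $\kk$-linear isomorphisms on either factor preserves non-degeneracy (one transports the inverse $\Omega$ by the inverses of these isomorphisms), both parts of the lemma follow once non-degeneracy of $\omega_S$ is established.

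The core step is to construct, by induction on the length $n$ of $S$, a canonical isomorphism $\phi_S \co X_{S^*} \to (X_S)^*$ satisfying $\Ev_S = \lev_{X_S} \circ (\phi_S \otimes \id_{X_S})$. For $n=1$: if $S=(X,+)$ take $\phi_S = \id_{X^*}$; if $S=(X,-)$ take $\phi_S = \psi_X \co X \to X^{**}$, for which the required identity is exactly the formula $\rev_X = \lev_{X^*}(\psi_X \otimes \id_{X^*})$ from Remark~\ref{rem-pivotal} applied to $\Ev_S = \rev_X$. For the inductive step I write $S = TU$, combine $\phi_T$ and $\phi_U$ through the left monoidal constraint $(X_T \otimes X_U)^* \to X_U^* \otimes X_T^*$ of axiom~(d), and check that the recursive identity $\Ev_{TU} = \Ev_U(\id_{X_{U^*}} \otimes \Ev_T \otimes \id_{X_U})$ matches $\lev_{X_T \otimes X_U} = \lev_{X_U}(\id_{X_U^*} \otimes \lev_{X_T} \otimes \id_{X_U})$ after applying this constraint, so that one may take $\phi_{TU} = \phi_U \otimes \phi_T$ composed with the constraint.

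With $\phi_S$ in hand, the identity $\omega_S(\alpha \otimes \beta) = \lev_{X_S}(\phi_S \alpha \otimes \beta) = \omega_{X_S}((\phi_S)_*(\alpha), \beta)$ reads $\omega_S = \omega_{X_S} \circ ((\phi_S)_* \otimes \id)$, where $(\phi_S)_* \co \Hom_\cc(\un, X_{S^*}) \to \Hom_\cc(\un, (X_S)^*)$ is post-composition with $\phi_S$ and is a $\kk$-linear isomorphism. Since $\cc$ is non-degenerate, $\omega_{X_S}$ admits an inverse $\Omega_{X_S}$, and then $\Omega_S = (\id \otimes (\phi_S^{-1})_*) \Omega_{X_S}$ inverts $\omega_S$; the two snake identities for $\Omega_S$ follow formally from those for $\Omega_{X_S}$. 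I expect the main obstacle to be the bookkeeping in the inductive construction of $\phi_S$: both the monoidal constraint on the dual from axiom~(d) and the pivotal isomorphisms $\psi_{X_i}$ appearing at components with $\varepsilon_i = -$ are silently suppressed in the paper's notation, so care is needed to align tensor factors and to match the recursive formulas at each step. Once $(X_{S^*}, \Ev_S)$ has been recognized as a left dual of $X_S$, both non-degeneracy claims become formal consequences of the hypothesis on $\cc$.
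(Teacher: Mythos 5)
Your proposal is correct and follows essentially the same route as the paper: the paper also reduces $\omega_S$ to $\omega_{X_S}$ via an isomorphism $X_{S^*}\to (X_S)^*$ built from pivotal structure, defining it directly as the tensor product $\psi_{S^*}=\psi_{(X_n,-\varepsilon_n)}\otimes\cdots\otimes\psi_{(X_1,-\varepsilon_1)}$ (with $\psi_{(X,+)}=\psi_X$ and $\psi_{(X,-)}=\id_{X^*}$) rather than by induction on the length of $S$. Your inductive construction of $\phi_S$ produces the same morphism, and the remainder of the argument (transport of the inverse pairing, reduction of~\eqref{bilipai} to $\omega_S$ via cone bijections) matches the paper's.
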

\begin{proof}
 Let $
 \{\psi_X\co X \to X^{**}\}_{X\in \Ob(\cc)} $ be the  isomorphisms  defined in Remark~\ref{rem-pivotal}.
 For
$S=((X_1,\varepsilon_1),\ldots, (X_n,\varepsilon_n))$, set
$$\psi_S=\psi_{(X_1,\varepsilon_1)} \otimes \cdots \otimes
\psi_{(X_n,\varepsilon_n)}\co X_S \to (X_{S^*})^*,$$ where
$\psi_{(X,+)}=\psi_X$ and $\psi_{(X,-)}=\id_{X^*}$ for any $X\in
\Ob(\cc)$. One verifies that $\Ev_S=\lev_{X_S} (\psi_{S^*} \otimes
\id_{X_S})$ and therefore $\omega_S(\alpha \otimes
\beta)=\omega_{X_S}(\psi_{S^*}(\alpha) \otimes \beta)$.
 Since the form~$\omega_{X_S}$ is non-degenerate
and $\psi_{S^*}$ is an iso\-morphism,  $\omega_S$ is non-degenerate.
\end{proof}

\section{Invariants of  colored graphs}\label{sect-inv-graphs}

In this section, $\cc$ is a pivotal \kt category.

\subsection{Colored graphs in surfaces}\label{sect-graph}  By a \emph{graph}, we mean
a finite graph  without isolated vertices.   Every edge of a graph
connects two (possibly coinciding) vertices called the endpoints of
the edge. We allow multiple edges with the same endpoints.
  A graph~$G$ is
\emph{$\cc$-colored}, if   each edge of~$G$ is oriented and
  endowed with an object  of $\cc$  called the
\emph{color} of the edge.

Let $\Sigma$ be an oriented surface. By a \emph{graph} in~$\Sigma$,
we mean a graph embedded in~$\Sigma$.   A vertex $v$ of a
$\cc$-colored graph~$G$ in~$\Sigma$ determines a cyclic $\cc$\ti set
$ (E_v, c_v, \varepsilon_v)$ as follows:   $E_v$ is the set of
half-edges of~$G$ incident to~$v$ with cyclic order induced by the
opposite orientation of~$\Sigma$; the maps $c_v\co E_v \to \Ob(\cc)$
and $\varepsilon_v \co E_v \to \{+,-\}$ assign to each half-edge
$e\in E_v$  its color and sign (the sign is $+$ if $e$ is oriented
towards $v$ and $-$ otherwise). Set $H_v(G)=H(E_v )$ and
$$H(G)=\otimes_v \, H_v(G),$$ where $v$ runs over all vertices of~$G$
and $\otimes=\otimes_{\kk}$ is the tensor product  over $\kk$. To
stress the role of~$\Sigma$, we shall sometimes write $H_v(G;\Sigma)$ for~$H_v(G)$ and
$H(G;\Sigma)$ for~$H(G)$.

For an $n$-valent vertex $v$ of~$G$, the \kt  module $H_v(G)$ can be
described   as follows. Let $e_1 < e_2 < \cdots < e_n < e_1$ be the
half-edges of~$G$ incident to~$v$ with cyclic order induced by the
opposite orientation of~$\Sigma$.  Then $H_v(G) =
H((X_1,\varepsilon_1), \ldots , (X_n,\varepsilon_n))$, where
$X_r=c_v(e_r)$ and $\varepsilon_r=\varepsilon_v(e_r)$ are the color
and the sign of~$e_r$ for $r=1,\ldots , n$. To simplify notation, we
write $H(X_1\,\varepsilon_1, \ldots , X_n\,\varepsilon_n)$ for this
module. For example, $H(i+, j-,k+)$, $H(j-,k+, i+)$, and
$H(k+,i+,j-)$ all stand for the module associated to the trivalent
vertex
\begin{equation*}
\psfrag{i}[Bc][Bc]{\scalebox{.7}{$i$}} \psfrag{j}[Bc][Bc]{\scalebox{.7}{$j$}} \psfrag{k}[Bc][Bc]{\scalebox{.7}{$k$}}
\psfrag{E}[Bc][Bc]{\scalebox{.7}{$\Sigma$}}  \rsdraw{.45}{.9}{ijk}\,.
\end{equation*}

Given two $\cc$-colored graphs $G$ and $G'$ in $\Sigma$, an
\emph{isotopy} of~$G$ into~$G'$  is an isotopy of~$G$ into~$G'$ in
the class of $\cc$-colored graphs in
 $\Sigma$ preserving the   vertices, the edges, and   the orientation and the color
 of the
 edges.
An isotopy $\iota$ of~$G$ into~$G'$ induces an isomorphism of cyclic
$\cc$\ti sets $ E_v \to E_{v'} $, where $v$ is any vertex of~$G$ and
$v'=\iota(v)$. This   induces \kt isomorphisms $H_v(\iota)\co H_v(G)
\to H_{v'}(G')$ and $H(\iota)=\otimes_v \, H_v(\iota)\co H(G) \to
H(G')$.

A \emph{duality}   between two vertices $u$, $v$ of a  $\cc$-colored
graph  $G$  in~$\Sigma$ is a duality  between the cyclic $\cc$\ti
sets $ E_u $ and $ E_{v} $, see Section~\ref{sect-dual-signed-set}.
Such a duality
 induces a pairing $ \omega_{u,v}\co  H_{u}(G) \otimes H_v(G)
\to \kk $ and, when $\cc$ is non-degenerate, a contraction
homomorphism $ \ast_{u,v}\co H_{u}(G)^\star \otimes H_v(G)^\star
\to \kk$.

A $\cc$-colored graph $G \subset \Sigma$
determines  a $\cc$-colored graph   $G^\opp$  in~$-\Sigma$ obtained by
reversing orientation in all edges of~$G$ and in $\Sigma$ while keeping the colors of the edges. The
  cyclic $\cc$-sets determined by a  vertex $v$ of $G$ and $G^\opp$
  are dual. If $\cc$ is non-degenerate, then we can conclude that $$H_v(G^\opp;-\Sigma)= H_v(G;\Sigma)^\star \quad {\text {and}} \quad H (G^\opp;-\Sigma)= H (G;\Sigma)^\star.$$

\subsection{Invariants of   graphs  in  $\R^2$}\label{sect-inv-R2} We   always orient the plane $\R^2$
counterclockwise. Let  $G$ be a $\cc$-colored graph in   $\R^2$. For
each vertex $v$   of~$G$, we choose a half-edge $e_v\in E_v$ and
isotope  $G$ near~$v$ so that   the half-edges incident to $v$   lie
above $v$ with respect to the second coordinate on $\R^2$ and $e_v$
is the leftmost of them. Pick any $\alpha_v \in H_v(G)$ and replace
$v$ by a box colored with $\tau^v_{e_v}(\alpha_v)$ as depicted in
Figure~\ref{fig-alg-inv1}, where $\tau^v$ is the universal cone of
$H_v(G)$.
\begin{figure}[t]
\begin{center}
\psfrag{X}[Bc][Bc]{\scalebox{.7}{$X$}} \psfrag{Y}[Bc][Bc]{\scalebox{.7}{$Y$}} \psfrag{Z}[Bc][Bc]{\scalebox{.7}{$Z$}} \psfrag{T}[Bc][Bc]{\scalebox{.7}{$T$}} \psfrag{h}[Bc][Bc]{\scalebox{.8}{$\tau^v_{e_v}(\alpha_v)$}}
\psfrag{e}[Bc][Bc]{\scalebox{.8}{$e_v$}} \psfrag{v}[Bc][Bc]{\scalebox{.8}{$v$}}\rsdraw{.45}{.9}{transformR2} \quad \rsdraw{.45}{.6}{fleche} \quad \rsdraw{.45}{.9}{transformR2b} \quad \rsdraw{.45}{.6}{fleche} \quad \rsdraw{.45}{.9}{transformR2bis} \,.
\end{center}
\caption{}
\label{fig-alg-inv1}
\end{figure}
This   transforms $G$ into   a planar diagram which determines, by
the
  Penrose calculus,  an element of $ \End_\cc(\un)=\kk$ denoted
$\inv_\cc(G)(\otimes_v\alpha_v) $. By linear extension, this
procedure defines a vector $\inv_\cc(G)\in
H(G)^\star=\Hom_\kk(H(G),\kk)$.

\begin{lem}\label{thm-R2}
  The vector  $\inv_\cc (G) \in
H(G)^\star$ is a well-defined isotopy invariant of a $\cc$-colored
graph $G$ in $\R^2$.
 More precisely, for any isotopy $\iota$ between two $\cc$-colored graphs $G$, $G'$ in $\R^2$,
 we have
 $\inv_\cc(G')\, H(\iota)=\inv_\cc(G)$, where $H(\iota)\co H(G) \to H(G')$ is the isomorphism induced by $\iota$.
\end{lem}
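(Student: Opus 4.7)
There are three independent checks folded into the statement: independence of $\inv_\cc(G)$ from (a)~the local planar isotopy used near each vertex to arrange its incident half-edges above it, (b)~the choice at each vertex $v$ of the distinguished half-edge $e_v\in E_v$, and (c)~the global isotopy class of $G$. My plan is to handle (a) and (c) via the planar isotopy invariance of the Penrose graphical calculus in a pivotal category, and to spend the real effort on (b), which unpacks the construction of the cone bijections $\tau^v_e$ and of the permutation maps $\sigma_{X,Y}$ of Section~\ref{sect-sym-modules}. Step (a) is immediate: any two local arrangements near $v$ which pick the same $e_v$ differ by a planar isotopy of diagrams fixing bottom and top endpoints, hence produce the same morphism in $\End_\cc(\un)=\kk$.

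The crux is (b). Fix a vertex $v$ with incident half-edges $e_1<e_2<\cdots<e_n$ ordered cyclically by the opposite orientation of~$\R^2$. By iteration it suffices to compare the two choices $e_v=e_1$ and $e'_v=e_2$. Set $X=c(e_1)^{\varepsilon(e_1)}$ and $Y=c(e_2)^{\varepsilon(e_2)}\otimes\cdots\otimes c(e_n)^{\varepsilon(e_n)}$, so $Z_{e_1}=X\otimes Y$ and $Z_{e_2}=Y\otimes X$. By the defining property of the universal cone, $\tau^v_{e_2}(\alpha_v)=\sigma_{X,Y}\bigl(\tau^v_{e_1}(\alpha_v)\bigr)$. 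Substituting formula~\eqref{fla-} for $\sigma_{X,Y}$ into the local picture at $v$ shows that the planar diagram produced from the choice $e_v=e_2$ is obtained from the one produced by $e_v=e_1$ by pulling the $e_1$-strand off the top-left of the box, routing it down via $\rcoev_X$, under the box, and back up via $\lev_X$ on the top-right to rejoin its original global endpoint elsewhere in the graph. This routing is a planar isotopy of the entire Penrose diagram, so the associated scalar is unchanged.

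Part (c) is then routine. For an isotopy $\iota\co G\to G'$, choose distinguished half-edges $e_v$ at each vertex of $G$ and set $e_{\iota(v)}=\iota(e_v)$ at each vertex of $G'$. Since $\iota$ restricts to an isomorphism of cyclic $\cc$-sets $E_v\to E_{\iota(v)}$ preserving colors and signs, we have $Z_{e_v}=Z_{\iota(e_v)}$ and $\tau^{\iota(v)}_{\iota(e_v)}\,H_v(\iota)=\tau^v_{e_v}$; hence the colored box used at $\iota(v)$ in the computation of $\inv_\cc(G')(H(\iota)(\otimes_v\alpha_v))$ is identical to the one used at $v$ in the computation of $\inv_\cc(G)(\otimes_v\alpha_v)$. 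The isotopy $\iota$ identifies the normalized local pictures, so the two full Penrose diagrams are ambient planar isotopic and produce the same scalar. The main difficulty throughout is really step (b): the cyclic combinatorics of $E_v$ must be faithfully absorbed into the label of the rigid top-to-bottom box, and this is precisely what the formalism of symmetrized multiplicity modules is engineered to achieve.
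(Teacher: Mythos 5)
Your proof is correct and amounts to a careful unpacking of the paper's one-line argument: independence of the choice of $e_v$ reduces to the identity $\tau^v_{e_2}=\sigma_{X,Y}\,\tau^v_{e_1}$ and to the observation that the formula~\eqref{fla-} (or equivalently~\eqref{fla}) for $\sigma_{X,Y}$ describes a planar rerouting of the $e_1$-strand, while isotopy invariance in general follows from the pivotal Penrose calculus. The route you describe for the rerouted strand is cosmetically imprecise (the $\lev_X$/$\rcoev_X$ arcs sit respectively above and below the box, not as written), but this does not affect the argument since any such rerouting confined to a disk around $v$ is a planar isotopy of the full diagram.
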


\begin{proof} Independence of $\inv_\cc (G)$ of the choice of the half-edges $e_v$ follows from the definition of $H_v(G)$.
Invariance under isotopies follows from the pivotality of $\cc$.
\end{proof}

For example, if $G=S^1\subset \R^2$ is the unit circle  with  one
vertex $v=(1,0)$ and one edge oriented clockwise and colored with
$X\in \Ob (\cc)$, then $E_v$ consists of two elements labeled by
$(X,+)$,
  $(X, -)$ and  $H(G)=H_v(G)\cong \Hom_{\cc} (\un, X^* \otimes X)$. Here
 $\inv_\cc(G)(\alpha)= \lev_X   \alpha \in \End_\cc(\un)=\kk$ for all $\alpha\in   H(G) $.

\subsection{Invariants of graphs  in $S^2$}\label{sect-graph-S2} If $\cc$ is spherical, then the invariant $\inv_\cc$ generalizes to graphs
in the 2-sphere
 $S^2=\R^2 \cup \{\infty\}$  (the orientation of~$S^2$ extends the
counterclockwise orientation in $\R^2$). Let $G$ be a $\cc$-colored
graph in   $S^2$. Pushing, if necessary, $G$ away from $
 \infty $, we obtain   a $\cc$-colored   graph $G_0$ in $
\R^2$. By Section~\ref{sphesphe},  $\inv_\cc(G) =\inv_\cc(G_0)\in
H(G_0)^\star=H(G)^\star$ does not depend on the way we push $G$
in~$\R^2$ and is an isotopy invariant of $G$.

We state  a few
properties of $\inv_\cc$.
\begin{enumerate}
\renewcommand{\labelenumi}{{\rm (\roman{enumi})}}
\item If a $\cc$-colored graph $G' \subset S^2$ is obtained from a
$\cc$-colored graph $G \subset S^2$ through the replacement of the
color $X $ of an edge by an isomorphic object $X'$, then any
isomorphism $ X\to X'$ induces an isomorphism $\phi \co H(G)\to H(G')$
in the obvious way and $\phi^\star (\inv_\cc(G'))=\inv_\cc(G)$. We
call this property of $\inv_\cc$ \emph{naturality}.

\item If   an edge~$e$ of a $\cc$-colored graph $G \subset S^2$ is
colored by $\un$ and   the endpoints of~$e$ are also  endpoints of
 other edges of~$G$, then
  $\inv_\cc(G)=\inv_\cc(G\setminus
 \Int(e))$. Indeed, in the  Penrose calculus, $e$
   can  be deleted without changing the
associated morphism.

\item If a $\cc$-colored graph $G' \subset S^2$ is obtained from a
$\cc$-colored graph $G \subset S^2$ through the replacement of the
color $X $ of an edge $e$  by $X^*$ and the reversion of the
orientation of $e$, then the isomorphism $\psi_X\co X\to X^{**}$ of
Remark~\ref{rem-pivotal} induces an isomorphism $\widehat{\psi} \co
H(G)\to H(G')$   and $(\widehat{\psi})^\star
(\inv_\cc(G'))=\inv_\cc(G)$.

\item We have $H(G\amalg G')=H(G) \otimes H (G')$ and
$\inv_\cc(G\amalg G')=\inv_\cc(G) \otimes \inv_\cc(G')$ for   any
disjoint $\cc$-colored graphs $G, G'\subset S^2$.
\end{enumerate}

\begin{exa}\label{exa-simplegraph}
For an $n$-tuple $S=((X_1,\varepsilon_1),\ldots,
(X_n,\varepsilon_n))$ of signed objects of $\cc$, consider  the
following $\cc$\ti colored graph $\gamma=\gamma_S$ in $S^2$:
\begin{equation*}
\psfrag{v}[Bc][Bc]{\scalebox{.7}{}} \psfrag{u}[Bc][Bc]{\scalebox{.7}{}}
\psfrag{X}[Bl][Bl]{\scalebox{.7}{$X_1$}}  \psfrag{Y}[Bl][Bl]{\scalebox{.7}{$X_2$}}
\psfrag{u}[Bc][Bc]{\scalebox{.8}{$u$}}  \psfrag{v}[Bc][Bc]{\scalebox{.8}{$v$}} \psfrag{Z}[Bl][Bl]{\scalebox{.7}
{$X_{n-1}$}} \psfrag{T}[Bl][Bl]{\scalebox{.7}{$X_n$}} \gamma=\,\rsdraw{.45}{.9}{J-example}\,.
\end{equation*}
The graph $\gamma$ consists of $n$ edges connecting the   vertices $u$
and $v$, the $r$-th edge being colored with $X_r$ and oriented
towards $v$ if $\varepsilon_r=+$ and towards $u$ otherwise (the
picture corresponds to $\varepsilon_1=-$, $\varepsilon_2=+$,
  $\varepsilon_n=-$). Then
 $H(\gamma)=H({S^*}) \otimes H(S)$
  and $\inv_\cc(\gamma) =\omega_S\co H({S^*}) \otimes H(S) \to \kk$.
 The vertices $u$,  $v$ of $\gamma$ are in
duality  induced by the symmetry with respect to the vertical line
and $\omega_{u,v}=\omega_S=\inv_\cc(\gamma )$.
\end{exa}

\begin{exa}\label{exa-6j} For   any
  $i,j,k,l,m,n \in \Ob(\cc)$, consider the following
$\cc$-colored graph   in~$S^2$:
\begin{equation*}
\Gamma= \psfrag{i}[Bc][Bc]{\scalebox{.7}{$i$}} \psfrag{j}[Bc][Bc]{\scalebox{.7}{$j$}} \psfrag{k}[Bc][Bc]{\scalebox{.7}{$k$}}
\psfrag{l}[Bc][Bc]{\scalebox{.7}{$l$}} \psfrag{m}[Bc][Bc]{\scalebox{.7}{$m$}} \psfrag{n}[Bc][Bc]{\scalebox{.7}{$n$}}
\psfrag{e}[Bc][Bc]{\scalebox{.8}{$e_v$}} \psfrag{v}[Bc][Bc]{\scalebox{.8}{$v$}}
%\rsdraw{.45}{.9}{6jsymbol4}\,.
\rsdraw{.45}{.9}{6jsymbol5}\,.
% \rsdraw{.45}{.9}{6jsymbol6}\,
\end{equation*}
Here $H (\Gamma )$ is   the   tensor product of  the modules $H(m+,
i-,n-)$, $H(j+, i+,k-)$, $H(n+, j-,l-)$, and $H(l+, k+,m-)$. The
vector $ \inv_\cc(\Gamma )\in  H (\Gamma )^\star $ and similar
vectors   associated with other orientations of the edges
of~$\Gamma$   form a family of $2^6=64$ tensors  called
\emph{$6j$-symbols} associated with $i,j,k,l,m,n$. For more on this,
see Appendix.
\end{exa}

\section{Pre-fusion and  fusion categories}

\subsection{Pre-fusion   categories}\label{sect-fusion}
An object $X$ of a monoidal \kt category $\cc$ is
\emph{simple} if $\End_\cc(X)$ is a free \kt module of rank 1.
Equivalently, $X$ is simple if the \kt homomorphism $\kk \to
\End_\cc(X),\, k   \mapsto  k\, \id_X$  is an isomorphism. By the
definition of a monoidal \kt category, the unit object $\un$ is
simple. In the literature,  simple objects in our sense are often called absolutely simple objects or scalar objects.

A \emph{pre-fusion category} (over $\kk$) is a pivotal
\kt category $\cc$ such that
\begin{enumerate}
  \renewcommand{\labelenumi}{{\rm (\alph{enumi})}}
  \item Any finite family of objects of $\cc$ has a   direct sum in~$\cc$;
\item Each object of $\cc$ is a finite direct sum of simple objects;
\item  For any non-isomorphic simple objects $i,j  $ of $\cc$, we have $\Hom_\cc(i,j)=0$.
\end{enumerate}
Conditions (b) and (c) imply that all the $\Hom$ spaces in $\cc$ are
free $\kk$-modules of finite rank. The \emph{multiplicity} of a
simple object $i$ in any  $X\in \Ob (\cc)$ is the integer
\begin{equation*}
N^i_X=\rank_\kk \,\Hom_\cc(X,i)=\rank_\kk \,\Hom_\cc(i,X) \geq 0.
\end{equation*}
This integer depends only on the isomorphism classes of $i$ and $X$.

A  set $I$ of   simple objects of a pre-fusion category $\cc$ is
\emph{representative} if $\un \in I$ and every simple object of
$\cc$ is isomorphic to a unique element of~$I$.     Condition~(b)
above implies that for such~$I$ and any $X\in \Ob (\cc)$, there is a
finite family of   morphisms $(p_\alpha \co X \to i_\alpha, q_\alpha
\co i_\alpha \to X)_{\alpha \in \Lambda }$ in~$\cc$    such that
\begin{equation*}
\id_X=\sum_{\alpha \in \Lambda} q_\alpha p_\alpha, \quad i_\alpha\in I, \quad
\text{and} \quad p_\alpha q_\beta=\delta_{\alpha,\beta} \, \id_{i_\alpha} \quad
\text{for all} \quad \alpha,\beta\in \Lambda,
\end{equation*}
where $\delta_{\alpha,\beta}$ is the Kronecker symbol. Such a family
$( p_\alpha  , q_\alpha )_{\alpha \in \Lambda }$  is called  an
\emph{$I$-partition} of $ X$.   For $i \in I$, set
$\Lambda^i=\Lambda^i_X=\{ \alpha \in \Lambda \, | \, i_\alpha=i\}$.
Then $(p_\alpha \co X \to i)_{\alpha\in \Lambda^i}$ is a basis of
$\Hom_\cc(X,i)$ and $(q_\alpha \co i \to X)_{\alpha\in \Lambda^i}$
is a basis of $\Hom_\cc(i,X)$. Therefore $\# \Lambda^i=N_X^i$, $\#
\Lambda=\sum_{i \in I} N_X^i$, and
 $\dim(X)=\sum_{i\in I} \dim(i) N_X^i$.

\begin{lem}\label{lem-prefusion} Let $\cc$ be a pre-fusion category. Then:
\begin{enumerate}
  \renewcommand{\labelenumi}{{\rm (\alph{enumi})}}
   \item $\cc$ is non-degenerate;
  \item The left and right dimensions of any simple object of $\cc$ are invertible in~$\kk$;
  \item  $\cc$ is spherical if and only if
$\dim_l(i)=\dim_r(i)$ for any simple object $i$ of~$\cc$.
\end{enumerate}
\end{lem}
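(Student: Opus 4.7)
The plan is to prove (b) first (since it is the technical heart), then derive (a) by reducing non-degeneracy of $\omega_X$ to the case of simple objects, and finally prove (c) by expanding endomorphisms via an $I$-partition.

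\textbf{Part (b).} The key observation is that since $i$ is simple and $(-)^*$ is a duality equivalence, $i^*$ is also simple, so
\[
N^{\un}_{i^* \otimes i} = \rank_\kk \Hom_\cc(i^* \otimes i, \un) = \rank_\kk \Hom_\cc(i, i) = 1.
\]
Thus an $I$-partition of $i^* \otimes i$ contains a unique $\un$-summand, given by morphisms $p\co i^* \otimes i \to \un$ and $q\co \un \to i^* \otimes i$ with $pq = \id_\un$. Write $\rcoev_i = \lambda q$ and $\lev_i = \mu p$ for unique $\lambda, \mu \in \kk$. Symmetrically, the unique $\un$-summand of $i \otimes i^*$ yields $p',q'$ with $\lcoev_i = \lambda' q'$ and $\rev_i = \mu' p'$. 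Substituting into the left-duality triangle $(\id_i \otimes \lev_i)(\lcoev_i \otimes \id_i) = \id_i$ gives $\mu\lambda' \cdot (\id_i \otimes p)(q' \otimes \id_i) = \id_i$; since the bracketed morphism lies in $\End_\cc(i) = \kk\cdot\id_i$, comparing with $\id_i$ forces $\mu\lambda'$ to be a unit in~$\kk$, and hence so are $\mu$ and $\lambda'$. The right-duality triangle $(\rev_i \otimes \id_i)(\id_i \otimes \rcoev_i) = \id_i$ analogously forces $\mu'\lambda$ to be a unit, whence $\mu'$ and $\lambda$ are units. Therefore $\dim_l(i) = \lev_i \rcoev_i = \mu\lambda$ and $\dim_r(i) = \rev_i \lcoev_i = \mu'\lambda'$ are invertible.

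\textbf{Part (a).} I verify that $\omega_X$ is non-degenerate for every $X \in \Ob(\cc)$. Fix an $I$-partition $(p_\nu, q_\nu)_{\nu \in \Lambda}$ of $X$. Dualizing yields a decomposition of $X^*$ into the simple summands $i_\nu^*$ via $(q_\nu^*, p_\nu^*)_{\nu \in \Lambda}$, and every $\alpha \in \Hom_\cc(\un, X^*)$, $\beta \in \Hom_\cc(\un, X)$ admit unique expansions $\alpha = \sum_\nu p_\nu^* a_\nu$, $\beta = \sum_\nu q_\nu b_\nu$ with $a_\nu \in \Hom_\cc(\un, i_\nu^*)$ and $b_\nu \in \Hom_\cc(\un, i_\nu)$. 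Using the naturality identity $\lev_X(p_\mu^* \otimes \id_X) = \lev_{i_\mu}(\id_{i_\mu^*} \otimes p_\mu)$ together with $p_\mu q_\nu = \delta_{\mu\nu}\id_{i_\nu}$, a direct computation gives
\[
\omega_X(\alpha \otimes \beta) = \sum_\nu \omega_{i_\nu}(a_\nu \otimes b_\nu),
\]
so $\omega_X$ is an orthogonal direct sum of the forms $\omega_{i_\nu}$. It therefore suffices to check non-degeneracy of $\omega_i$ for simple $i$. If $i \not\cong \un$, both sides vanish and the claim is trivial; if $i \cong \un$, then via the canonical isomorphism $\lev_\un\co \un^* \to \un$ (axiom~(e)), the form $\omega_\un$ is the multiplication $\kk \otimes \kk \to \kk$, which is evidently non-degenerate.

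\textbf{Part (c).} The forward direction is immediate from the definition of sphericity. For the converse, assume $\dim_l(i) = \dim_r(i)$ for every simple~$i$, and let $g \in \End_\cc(X)$. Pick an $I$-partition $(p_\nu, q_\nu)_{\nu \in \Lambda}$ of $X$ and expand $g = \sum_{\mu,\nu} q_\mu (p_\mu g q_\nu) p_\nu$. The cyclicity of $\tr_l$ together with $p_\mu q_\nu = \delta_{\mu\nu}\id_{i_\nu}$ make the cross terms vanish, giving $\tr_l(g) = \sum_\nu \tr_l(p_\nu g q_\nu)$, and analogously for $\tr_r$. Since each $p_\nu g q_\nu \in \End_\cc(i_\nu) = \kk\cdot\id_{i_\nu}$ equals $c_\nu\id_{i_\nu}$ for some $c_\nu \in \kk$, we obtain $\tr_l(g) = \sum_\nu c_\nu\dim_l(i_\nu)$ and $\tr_r(g) = \sum_\nu c_\nu\dim_r(i_\nu)$, which coincide by hypothesis. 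The main subtlety I anticipate lies in part~(b): the crucial step is to upgrade the scalars $\lambda, \mu, \lambda', \mu'$ from merely nonzero to units in~$\kk$, and this relies essentially on the triangle identities together with the multiplicity computation $N^{\un}_{i^*\otimes i} = 1$ coming from simplicity of~$i$.
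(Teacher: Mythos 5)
Your proof is correct. Parts (a) and (c) take essentially the same route as the paper's proof (both rest on $I$-partitions; your version of (a) decomposes $\omega_X$ into an orthogonal direct sum of $\omega_{i_\nu}$'s where the paper more directly exhibits the dual bases $(p_\alpha^*)_{\alpha\in\Lambda^\un}$ and $(q_\alpha)_{\alpha\in\Lambda^\un}$; your version of (c) uses the double-sum expansion $g=\sum_{\mu,\nu}q_\mu(p_\mu g q_\nu)p_\nu$ with cyclicity killing the cross terms, where the paper uses the single-sum $f=\sum_\alpha f q_\alpha p_\alpha$ and one application of cyclicity). These are minor bookkeeping differences.

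Part (b) is where you genuinely diverge, and it is worth noting. The paper's argument is shorter but leans on an unproved intermediate assertion: ``since $i$ is simple, $\lev_i$ is a basis of $\Hom_\cc(i^*\otimes i,\un)$ and $\rcoev_i$ is a basis of $\Hom_\cc(\un,i^*\otimes i)$,'' from which the relating scalars $\lambda,\mu$ are automatically units. Establishing that $\lev_i$ is a \emph{basis} (and not merely an element) requires identifying it with $\id_i$ under the duality adjunction, which itself relies on the triangle identities. Your proof makes the reliance on the triangle identities explicit: you write all four duality maps $\lev_i,\rcoev_i,\lcoev_i,\rev_i$ as a priori arbitrary scalar multiples of the canonical $p,q,p',q'$, and then use the two snake equations to conclude that the products $\mu\lambda'$ and $\mu'\lambda$ are units, hence each factor is. You also handle $\dim_l$ and $\dim_r$ symmetrically via the $\un$-summands of $i^*\otimes i$ and $i\otimes i^*$, rather than invoking $\dim_r(i)=\dim_l(i^*)$ as the paper does. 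The trade-off: the paper's argument is slicker once the basis fact is granted; yours is slightly longer but self-contained and does not require the reader to accept an unjustified intermediate claim.
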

\begin{proof}
Let   $I$ be a representative set of   simple objects of $\cc$.
  Pick $X \in \Ob (\cc)$ and an  $I$-partition
$(p_\alpha \co X \to i_\alpha, q_\alpha \co i_\alpha \to X)_{\alpha
\in \Lambda }$   of $ X$. For  any endomorphism $f$ of   $X $ and
$\alpha\in \Lambda$, we have $p_\alpha f q_\alpha=\lambda_\alpha
\id_{i_\alpha}$ for some
 $\lambda_\alpha\in \kk$. Then
$$\tr_{l }(f)=\tr_{l } (f\sum_{\alpha \in \Lambda} q_\alpha p_\alpha)=\tr_{l } (\sum_{\alpha \in \Lambda} p_\alpha f q_\alpha )=
 \sum_{\alpha \in \Lambda} \lambda_\alpha
\dim_{l }(i_\alpha)$$ and similarly $\tr_r(f)=\sum_{\alpha \in \Lambda}
\lambda_\alpha \dim_r(i_\alpha)$. This implies Claim (c).

The families $(p^*_\alpha \co \un^*= \un \to X^*)_{\alpha\in
\Lambda^\un}$ and $(q_\alpha \co \un \to X)_{\alpha\in \Lambda^\un}$
are bases of the modules $\Hom_\cc(\un,X^*)$ and $\Hom_\cc(\un,X)$,
respectively, where $\Lambda^\un=\{ \alpha \in \Lambda \, | \,
i_\alpha=\un\}$.
 The pairing $\omega_X$ between these modules defined in
Section~\ref{sect-non-degen-contraction} is non-degenerate because
$\omega_X(p^*_\alpha \otimes q_\beta)=\lev_X(p^*_\alpha \otimes
q_\beta)= p_\alpha q_\beta=\delta_{\alpha,\beta}$ for all
$\alpha,\beta\in \Lambda^\un$. This implies~(a).

Let $i$ be a simple object of   $\cc$. Then $N_{i^* \otimes
i}^\un=1$ and so  there are morphisms $p \co i^* \otimes i \to \un$
and $q \co \un\to i^* \otimes i$ such that  $\Hom_\cc(i^* \otimes
i,\un)=\kk \cdot p$,   $\Hom_\cc(\un, i^* \otimes i)=\kk \cdot q$,
and $pq=\id_{\un}=1\in \kk$. Since $i$ is a simple object, $\lev_i$
is a basis of $\Hom_\cc(i^* \otimes i,\un)$ and $\rcoev_i$ is a
basis of $\Hom_\cc(\un, i^* \otimes i)$. Therefore $\lev_i=\lambda\,
p$ and $\rcoev_i=\mu q$ for some invertible   $\lambda,\mu \in \kk$.
Hence $\dim_l(i)=\lev_i\rcoev_i=\lambda\mu pq=\lambda\mu$ is
invertible. Since $i^*$ is simple, $\dim_r(i)=\dim_l(i^*)$ is
invertible. This proves Claim (b).
\end{proof}

For spherical $\cc$,   we can consider
 the invariant $\inv_\cc$ of  $\cc$-colored
graphs in   $S^2$, see Section~\ref{sect-graph-S2}. The following
lemma provides useful local relations for $\inv_\cc$.

\begin{lem}\label{lem-calc-diag} Let   $\cc$  be a spherical pre-fusion category   and let $I$ be a
representative set   of simple objects of $\cc$.
\begin{enumerate}
  \renewcommand{\labelenumi}{{\rm (\alph{enumi})}}
    \item For any  $i,j \in I$,
  \begin{equation*}
   \psfrag{i}[Bl][Bl]{\scalebox{.7}{$i$}}\psfrag{j}[Bl][Bl]{\scalebox{.7}{$j$}}
\psfrag{u}[Bl][Bl]{\scalebox{.8}{$v$}} \psfrag{v}[Bl][Bl]{\scalebox{.8}{$v'$}}
\inv_\cc \!\begin{pmatrix}
\;\rsdraw{.45}{.9}{fig-ppte0a}
\end{pmatrix} = \delta_{i,j} \; \inv_\cc \!\begin{pmatrix}
\;\rsdraw{.45}{.9}{fig-ppte0b}
\end{pmatrix} \, .
\end{equation*}
  \item For any $i,j \in I$,
  \begin{equation*}
   \psfrag{i}[Bl][Bl]{\scalebox{.7}{$i$}}\psfrag{j}[Bl][Bl]{\scalebox{.7}{$j$}}
\psfrag{u}[Bl][Bl]{\scalebox{.8}{$v$}} \psfrag{v}[Bl][Bl]{\scalebox{.8}{$v'$}}
\inv_\cc \!\begin{pmatrix}
\rsdraw{.45}{.9}{fig-ppte2a}
\end{pmatrix} = \delta_{i,j} (\dim(i))^{-1} \; \inv_\cc \!\begin{pmatrix}
\rsdraw{.45}{.9}{fig-ppte2b}
\end{pmatrix} \otimes \inv_\cc \!\begin{pmatrix}
\;\rsdraw{.45}{.9}{fig-ppte2c}
\end{pmatrix}.
\end{equation*}
  \item \hfill $\displaystyle  \psfrag{i}[Bc][Bc]{\scalebox{.7}{$i$}}
\psfrag{u}[Bl][Bl]{\scalebox{.8}{$u$}}
\psfrag{v}[Bl][Bl]{\scalebox{.8}{$v$}} \inv_\cc
\!\begin{pmatrix} \;\rsdraw{.45}{.9}{fig-ppte1a}\;
\end{pmatrix} = \sum_{i \in I} \dim(i) \ast_{u,v} \inv_\cc \! \begin{pmatrix}
\;\rsdraw{.45}{.9}{fig-ppte1b}\;
\end{pmatrix}.
$ \hfill \phantom{X} \\[1em]
  \item \hfill $\displaystyle
\psfrag{u}[Bl][Bl]{\scalebox{.8}{$u$}} \psfrag{v}[Bl][Bl]{\scalebox{.8}{$v$}}
\inv_\cc \!\begin{pmatrix}
\;\rsdraw{.45}{.9}{fig-ppte3a}\;
\end{pmatrix} = \ast_{u,v} \, \inv_\cc \!\begin{pmatrix}
\;\rsdraw{.45}{.9}{fig-ppte3b}\;
\end{pmatrix}.
$ \hfill \phantom{X}
\end{enumerate}
\end{lem}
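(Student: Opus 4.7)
The plan is to reduce all four identities to two structural facts recorded earlier: the Schur-type vanishing $\Hom_\cc(i,j)=0$ for non-isomorphic simples $i,j\in I$, built into the definition of a pre-fusion category, and the $I$-partition resolution $\id_X=\sum_{i\in I}\sum_{\alpha\in\Lambda^i} q_\alpha p_\alpha$ of the identity of any object $X$. These will be combined with the non-degeneracy of the evaluation forms $\omega_X$ (Lemma~\ref{lem-pairing-non-degen}), the invertibility of $\dim(i)$ for simple $i$ (Lemma~\ref{lem-prefusion}(b)), the sphericity-enhanced isotopy invariance of $\inv_\cc$ on $S^2$, and the basic properties (i)--(iv) of $\inv_\cc$ from Section~\ref{sect-graph-S2}.

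For (a), property (i) of $\inv_\cc$ together with the Penrose reading locates inside the two vertices $v,v'$ a morphism factoring through $\Hom_\cc(j,i)$, which vanishes unless $i=j$; in the case $i=j$ the two sides coincide tautologically, producing the Kronecker delta. Part (b) refines this: after reducing to $i=j$ via (a), the left-hand diagram contains a small closed $i$-loop whose Penrose evaluation is $\tr(\id_i)=\dim(i)$. Factoring out this scalar splits the diagram into two disjoint pieces by property (iv), and multiplication by the inverse $\dim(i)^{-1}$, well-defined by Lemma~\ref{lem-prefusion}(b), produces the right-hand side.

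The heart of the lemma is (c). On the internal edge of color $X$ in the left-hand graph one inserts the $I$-partition decomposition of $\id_X$. Each summand cuts the edge and introduces two new vertices $u,v$ joined by an $i$-colored strand, carrying the labels $p_\alpha,q_\alpha$. The pair $(u,v)$ is in duality (the cyclic reversal at the cut), and summing over $\alpha$ amounts to summing over a pair of dual bases of $\Hom_\cc(X,i)$ and $\Hom_\cc(i,X)$. By the description of $\ast_{u,v}$ in Section~\ref{sect-non-degen-contraction} and of $\omega_i$ in the proof of Lemma~\ref{lem-prefusion}(a), this sum reproduces $\ast_{u,v}\inv_\cc(\cdot)$ up to the discrepancy between the partition normalization $\omega_i(p_\alpha^*\otimes q_\beta)=\delta_{\alpha,\beta}$ and the Penrose evaluation $\dim(i)$ of a closed $i$-loop; this discrepancy is exactly the factor $\dim(i)$. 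Finally, (d) is essentially tautological: it asserts that the contraction $\ast_{u,v}$ associated with a pair of dually paired vertices computes the corresponding Penrose closure of those vertices, exactly as in Example~\ref{exa-simplegraph}.

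The main obstacle is the bookkeeping in (c): one must check that the $\dim(i)$ prefactor is independent of the chosen $I$-partition and of the basis used to compute the contraction $\ast_{u,v}$, and that the universal cones attached to $H_u$ and $H_v$ match correctly under the duality between these vertices. Once this is verified, Schur's lemma, the $I$-partition, non-degeneracy, and sphericity combine in a routine way to yield (a)--(d).
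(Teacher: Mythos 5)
Your treatment of parts (a), (c), and (d) follows the paper's own line. For (a) you invoke the Schur vanishing $\Hom_\cc(i,j)=0$ for non-isomorphic simple objects, exactly as the paper does. For (c) your plan (insert the $I$-partition of $\id_{X_S}$ along the cut, recognize the families $p_\alpha$ and $q_\alpha$ as dual bases up to a factor $\dim(i)$, and match that factor against the explicit $\dim(i)$ in the statement) is essentially the paper's argument: the paper builds the explicit bases, computes $\omega_{S_i}(e'_\alpha\otimes f_\beta)=\delta_{\alpha,\beta}\dim(i)$, and reads off the inverse of the pairing as $\dim(i)^{-1}\sum_\alpha f_\alpha\otimes e'_\alpha$, so that the prefactor $\dim(i)$ in (c) cancels the $\dim(i)^{-1}$ carried by the contraction $\ast_{u,v}$. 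The paper deduces (d) from (c) via $\Hom_\cc(\un,i)=0$ for $i\neq\un$, whereas you describe (d) as essentially tautological; the two viewpoints are compatible, since only the $i=\un$ term of the sum in (c) survives and $\dim(\un)=1$ reduces it to the bare contraction.

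Your argument for (b), however, has a genuine gap. You assert that the left-hand diagram contains a small closed $i$-loop of Penrose value $\dim(i)$, and that factoring this out splits the diagram in two. There are two problems with this. First, extracting a closed $i$-loop contributes a multiplicative $\dim(i)$, which would place a factor $\dim(i)$, not $\dim(i)^{-1}$, on the right-hand side, so your mechanism produces the wrong power. Second, a closed loop is already a disjoint piece and cannot "split" a connected diagram. The correct ingredient, which the paper cites alongside the Schur vanishing, is the averaging identity $f=\dim(i)^{-1}\,\tr(f)\,\id_i$ valid for any endomorphism $f$ of a simple object $i$ with invertible dimension. In the picture, the unspecified piece of the graph lying between $v$ and $v'$ determines, via the Penrose reading, an endomorphism $f\in\End_\cc(i)$; replacing $f$ by $\dim(i)^{-1}\tr(f)\id_i$ is what disconnects the diagram and yields the two tensor factors on the right-hand side: one is the closure of $f$ into a loop (computing $\tr(f)$), the other is the remainder once $f$ has been replaced by $\id_i$. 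This is the origin of the factor $\dim(i)^{-1}$; no closed $i$-loop appears in the left-hand picture.
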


In (b) and (d) the empty rectangles stand  for   pieces of
$\cc$-colored graphs   sitting inside the rectangles. The same
$\cc$-colored graphs appear  on both sides of the equalities. The
  relations  (a) and (b) can be applied only when there are other vertices (not shown in
the picture) on the single strand on the right-hand side.  For the
definition of~$\ast_{u,v}$  in
  (c), (d), see  Section~\ref{sect-graph}; here the
duality between the vertices $u$ and $v$ is induced by the symmetry
with respect to a horizontal line. Note that for any choice of
colors of the  strands on the left-hand side of~(c), the sum on the
right-hand side has only a finite number of non-zero terms.

\begin{proof}
Claims (a) and (b) follow  from the equalities $\Hom_\cc(i,j)=0$ for
$i \neq j$ and $f=(\dim(i))^{-1}\, \tr(f) \, \id_i$ for any $f\in
\End_\cc(i)$. Claim (d) follows from (c) since $\Hom_\cc(\un,i)=0$
for $i \neq \un$. Let us prove (c). Let
$S=((X_1,\varepsilon_1),\ldots, (X_n,\varepsilon_n))$ be the  tuple
of signed objects of~$\cc$ determined by the strands on the
left-hand side of~(c). Let $X=X_S\in \Ob (\cc)$ be defined by
Formula~\eqref{Evev-}. Let $( p_\alpha \co X \to i_\alpha, q_\alpha
\co i_\alpha \to X )_{\alpha \in \Lambda }$ be an $I$-partition of~$
X$.    For   $i\in I$, set $\Lambda^i =\{\alpha \in \Lambda \,
\vert\, i_\alpha=i\}$ and denote by $S_i$ the $(n+1)$-tuple of
signed objects $ (i,-)S$. The
 families  $$(e_{\alpha}=(\id_{X^*} \otimes p_{ \alpha})\,
\rcoev_{X})_{\alpha\in \Lambda^i} \quad {\text {and}}\quad
(f_{\alpha}=(\id_{i^*} \otimes q_{ \alpha})\, \rcoev_{i}
)_{\alpha\in \Lambda^i}$$ are bases of the modules $\Hom_\cc(\un,
X_{}^* \otimes i)$ and $\Hom_\cc(\un, i^* \otimes X)=\Hom_\cc(\un,
X_{S_i})$, respectively. Recall the isomorphism $\psi_{S^*}\co
X_{S^*} \to X_S^*=X^*$ defined in the proof of
Lemma~\ref{lem-pairing-non-degen}. Then $(e'_{\alpha} =
(\psi_{S^*}^{-1} \otimes \id_{i })(e_\alpha))_{\alpha\in \Lambda^i}$
is a basis of $\Hom_\cc(\un, X_{S^*} \otimes i)=\Hom_\cc(\un,
X_{S_i^*}  )$.
 Recall from Section~\ref{sect-dual-signed-set} the evaluation form $$\omega=\omega_{S_i}\co \Hom_\cc(\un, X_{S_i^*}  ) \otimes
\Hom_\cc(\un, X_{S_i}  ) \to \kk. $$   A direct
 computation gives
$\omega (e'_{ \alpha} \otimes f_{ \beta})=\delta_{\alpha,\beta}
\dim(i)$ for all $\alpha, \beta\in \Lambda^i$. The  inverse of~$
\omega  $ is the tensor $ (\dim(i))^{-1} \sum_{\alpha \in \Lambda^i}
f_{\alpha}\otimes e'_{ \alpha}$  written shortly as $ \Omega_{i}
\otimes \Omega'_{i}$. We have
\begin{equation*}
\psfrag{i}[Bc][Bc]{\scalebox{.7}{$i$}} \psfrag{X}[Bc][Bc]{\scalebox{.7}{$X_S$}} \psfrag{Y}[Bl][Bl]{\scalebox{.7}{$X_{S^*}$}}
\psfrag{R}[Bc][Bc]{\scalebox{.8}{$\Ev_{S^*}$}} \psfrag{F}[Bc][Bc]{\scalebox{.8}{$\psi^{-1}_{S^*}$}} \psfrag{E}[Bc][Bc]{\scalebox{.8}{$\psi_{S}$}}
\psfrag{a}[Bc][Bc]{\scalebox{.8}{$\Omega'_{i}$}} \psfrag{u}[Bc][Bc]{\scalebox{.8}{$\Omega_{i}$}}
\psfrag{x}[Bc][Bc]{\scalebox{.8}{$p_{ \alpha}$}} \psfrag{o}[Bc][Bc]{\scalebox{.8}{$q_{ \alpha}$}}
\sum_{i \in I} \dim(i)\,\rsdraw{.45}{.9}{decompi1} \, = \psfrag{i}[Bc][Bc]{\scalebox{.7}{$i_\alpha$}} \sum_{ \alpha\in \Lambda}\, \rsdraw{.45}{.9}{decompi2} \, = \sum_{ \alpha\in \Lambda}  q_{ \alpha} p_{ \alpha} =\id_{X_S}.
\end{equation*}
This formula and the definition of the contraction maps imply (c).
\end{proof}

\subsection{Fusion   categories}\label{sect-fusion+} By a \emph{fusion
category}, we mean a pre-fusion category~$\cc$ such that   the set
of isomorphism classes of simple objects of~$\cc$   is finite. A
standard example of a fusion category is the   category of finite
rank representations (over~$\kk$) of a finite group  whose order is
relatively prime to  the characteristic of~$\kk$. The category of
representations of an involutory finite dimensional Hopf algebra
over a field of characteristic zero is a fusion category. For more
examples, see~\cite{ENO}.

A representative set $I$ of   simple objects of a fusion category
$\cc$ is finite. The following sum does not depend on the choice of
$I$ and is called the \emph{dimension} of~$\cc$:
\begin{equation*}
\dim (\cc)=\sum_{i \in I} \dim_l( i) \dim_r( i) \in \kk.
\end{equation*}
   By \cite{ENO},
  if   $\kk $ is an algebraically closed field of characteristic zero, then $\dim (\cc)\neq 0$. For spherical~$\cc$, we
have $\dim (\cc)=\sum_{i \in I} (\dim( i))^2$.

\begin{lem}\label{bubbleidentity} Let  $I$ be  a  representative set of   simple objects of a spherical fusion category~$\cc$.
Then for    all $X,Y \in \Ob(\cc)$,
\begin{equation*}%\label{eq-bubble}
\sum_{k,l\in I} \dim(k)\dim(l) N_{X
\otimes k \otimes Y \otimes l}^\un  =   \dim (X) \dim(Y) \dim(\cc)
\end{equation*}
and
\begin{equation*}%\label{eq-bubble+}
\sum_{j,k,l\in I} \dim(k)\dim(l) N_{j \otimes k\otimes j^* \otimes l}^\un  =   (\dim(\cc))^{2}.
\end{equation*}
\end{lem}
\begin{proof}
We have
\begin{gather*}
\sum_{k,l\in I} \dim(k)\dim(l) N_{X \otimes k\otimes Y \otimes  l}^\un
=\sum_{k\in I} \dim(k) \sum_{l\in I}\dim(l^*) N_{X \otimes
k\otimes Y}^{l^*}\\
=\sum_{k\in I} \dim(k) \sum_{m\in I}\dim(m) N_{X
\otimes k\otimes Y}^{m}   =\sum_{k\in I} \dim(k) \dim(X \otimes k\otimes Y)\\
 =
 \dim(X)  \dim(Y) \sum_{k\in I} (\dim(k))^2=  \dim (X) \dim(Y)  \dim(\cc).
 \end{gather*}
The second formula is a direct consequence of the first one.
\end{proof}

\subsection{The opposite  category}\label{sect-oppos} Each monoidal category~$\cc$ determines the
\emph{opposite} monoidal category $\cc^\opp$   by reversing all the
arrows (and keeping the tensor product and the unit object).
If~$\cc$ is pivotal with the evaluation/coevaluation morphisms
$\{\lev_X, \lcoev_X,  \rev_X   , \rcoev_X\}_{X\in \Ob(\cc)}$,
then~$\cc^\opp$ is pivotal with the evaluation/coevaluation
morphisms $$ \lev_X^\opp=\rcoev_X, \;\;  \lcoev_X^\opp= \rev_X, \;\;
\rev_X^\opp=\lcoev_X, \;\; \rcoev_X^\opp=\lev_X ,$$ where $X\in
\Ob(\cc^\opp)=\Ob(\cc)$. If~$\cc$ is a monoidal
\kt category (respectively, a spherical \kt category, a pre-fusion category, a fusion category), then so
is~$\cc^\opp$.

\section{State sums on triangulated 3-manifolds}\label{sec-ssot3m}

In this section, we derive  a topological invariant  of closed
oriented 3-manifolds from a spherical fusion category.

\subsection{Preliminaries on triangulated   3-manifolds}\label{sect-skeletons-} Let $M$ be a     3-manifold (without boundary). Let~$t$ be a
   triangulation of~$M$.
 The  simplices of~$t$ of dimension 0, 1, 2, 3 are called respectively vertices, edges,   faces, and tetrahedra.
 For a vertex~$x$ of~$t$, the
 union of all simplices of~$t$ incident to~$x$ (i.e., containing~$x$) is a closed 3-ball $B\subset M$.
 The simplices of~$t$ lying
 in~$\partial B $ form a triangulation  of~$\partial B$.
 Let
  $\Gamma  \subset \partial B$ be the 1-skeleton of this triangulation, i.e., the union of    vertices and edges of~$t$ contained in~$\partial B$.
   The pair $(\partial B, \Gamma)$ is called the \emph{link} of~$x$ in~$(M,t)$.

 The link of~$x$   can be visualized  in a small neighborhood of~$x$.
Pick a
 (small) closed 3-ball   $B_x \subset M$  centered at~$x$   such that $B_x$ meets every simplex of~$t$
incident to~$x$ along a    concentric subsimplex with vertex~$x$.
The link of~$x$ can be identified with  the pair $(\partial B_x,
\Gamma_x) $, where~$\Gamma_x $ is the intersection of~$\partial B_x$
with the 2-skeleton~$t^{(2)}$  of~$t$ (formed by the simplices
of~$t$ of dimension $\leq 2$). The vertices (resp.\ edges)
of~$\Gamma_x $ are the intersections of~$\partial B_x$ with the
edges (resp.\ faces) of~$t$ incident to~$x$.

In the sequel, the set of all faces of~$t$ is denoted by $\Reg(t)$.
By an orientation of the 2-skeleton~$t^{(2)}$ of $t$, we mean a
choice of orientation in all faces of~$t$ (we impose no
compatibility conditions on these  orientations of faces).

\subsection{Invariants of  3-manifolds}\label{sec-Io3m}  Let~$\cc$ be  a   spherical fusion
category
over~$\kk$ whose dimension is invertible in~$\kk$. Fix a
 (finite)
representative set $I$ of simple objects of $\cc$. For each closed
oriented   3-manifold $M$, we   define a topological invariant
$|M|_\cc \in \kk$.

Pick a triangulation $t$ of $M$ with oriented 2-skeleton and a map
$c \co \Reg(t) \to I$. For each oriented edge $e$ of $t $,
  we define a \kt module $H_c(e)$
as follows. The orientations of~$e$ and~$M$ determine a positive
direction on a small loop in~$M$ encircling~$e$; this direction
determines a cyclic order on the set $t_e$ of all faces of~$t$
containing~$e$. To each   face   $r\in t_e$ we assign the object
$c(r) \in I$ and a sign equal to $ +$ if the orientations of $r$
and~$e$ are compatible   and   to $ -$ otherwise. (The orientations
of $r$ and~$e$ are compatible  if each pair (a tangent vector
directed outward $r$ at a point of~$e$, a positive tangent vector
of~$e$) is positively oriented in~$r$.)
 In this way,~$t_e$ becomes a cyclic $\cc$\ti set. Set
 $H_c(e)=H (t_e)$. If $e^\opp$ is the same edge  with opposite orientation,
 then   $t_{e^\opp}=(t_e)^\opp$. This induces a   duality between
 the modules
 $H_c(e)$, $H_c(e^\opp)$ and a
 contraction  $\ast_e \co H_c(e^\opp)^\star \otimes H_c(e)^\star \to\kk $, see Section~\ref{sect-non-degen-contraction}.
 Note that the contractions $\ast_e$ and $\ast_{e^\opp}$ are
 equal up to permutation of the tensor factors.

Consider the link $(\partial B_x, \Gamma_x)   $ of a vertex $x $
of~$t$. Every edge $\alpha$ of $\Gamma_x $ lies   in a
face~$r_\alpha$ of $t$ incident to~$x$. We color $\alpha$ with
$c(r_\alpha)\in I$ and endow $\alpha$ with the orientation induced
by that of $r_\alpha \setminus \Int (  B_x)$. In this way, $\Gamma_x
$ becomes a $\cc$-colored graph  in~$\partial B_x$. We identify
$\partial B_x$ with the standard 2-sphere $S^2$ via an
 orientation preserving homeomorphism, where  the orientation of $\partial B_x$ is induced
by that of~$M$ restricted to $M\setminus \Int(B_x)$.  Then
 $H (\Gamma_x)=\otimes_{e\in t_x} H_c(e)$, where
 $t_x$ is the set of all edges of~$t$ incident to $x$ and oriented
away from
  $x$. Section~\ref{sect-graph-S2}
gives    $\inv_\cc (\Gamma_x) \in H (\Gamma_x)^\star$. The tensor
product $\otimes_x \,\inv_\cc (\Gamma_x)$ over all vertices $x$ of
$t$ is a vector in
 $\otimes_e \, H_c(e )^\star$, where~$e$
 runs over all oriented edges of $t$. The contractions $\ast_e$
 apply to different tensor factors, and their tensor product
$\ast_t=\otimes_e \ast_e$   is a
 map  $\otimes_e \, H_c(e )^\star \to \kk$.
Set \begin{equation}\label{eq-simplstatesum}|M|_\cc=(\dim
(\cc))^{-\vert t\vert} \, \sum_{c} \, \left (\prod_{r \in \Reg(t)}
\dim c(r)\right )   \, {\ast}_t ( \otimes_x \,\inv_\cc (\Gamma_x))
\in \kk,\end{equation} where $\vert t\vert$ is the number of
tetrahedra of~$t$ and $c$ runs over all maps $ \Reg(t) \to I$. The
following theorem will be proved in
Section~\ref{sec-skeletonsstatesums+}.

\begin{thm}\label{thm-state-3man-smipl}
$|M|_\cc$ is a topological invariant of $M$ (independent of the
choice   of~$t$  and of the  orientation of~$t^{(2)}$). This
invariant does not depend on the choice of~$I$.
\end{thm}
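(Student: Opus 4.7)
The plan is to establish three invariance statements: (a) invariance under the choice of orientation of $t^{(2)}$; (b) invariance under the choice of representative set $I$; and (c) topological invariance, that is, independence of the triangulation $t$. Claim (b) follows from the naturality property~(i) of $\inv_\cc$ recalled in Section~\ref{sect-graph-S2}: given another representative set $I'$, any choice of isomorphisms $i\iso i'$ for $i\in I$ and the corresponding $i'\in I'$ induces a bijection between $I$-colorings and $I'$-colorings of $\Reg(t)$ under which the dimension factors (which depend only on isomorphism classes) and the link contributions $\inv_\cc(\Gamma_x)$ transform compatibly. For claim (a), it suffices to reverse the orientation of a single face $r$; by property~(iii) of Section~\ref{sect-graph-S2} this operation, combined with the substitution $c(r)\mapsto c(r)^*$ and the isomorphism $\psi_{c(r)}\colon c(r)\iso c(r)^{**}$, leaves both affected link contributions $\inv_\cc(\Gamma_x)$ unchanged. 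One then reindexes the sum using the fact that $\{i^*\colon i\in I\}$ is again a representative set and invokes~(b).

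For claim (c), my plan is to invoke Pachner's theorem: any two triangulations of a closed PL 3-manifold are connected by a finite sequence of bistellar moves of type 2-3 and 1-4 and their inverses. It then suffices to check invariance of the state sum under each of these moves. In both cases the local contribution reduces to an identity between invariants of $\cc$-colored graphs on~$S^2$ built from the affected vertex links, and the analysis of these identities is where Lemmas~\ref{lem-calc-diag} and~\ref{bubbleidentity} will enter in an essential way.

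Invariance under a 2-3 move, which increases $|t|$ by $1$ and replaces one internal face by three internal faces together with one new internal edge, I would derive by applying Lemma~\ref{lem-calc-diag}(c) to the new internal edge, thereby resolving its color as a sum over $i\in I$ weighted by $\dim(i)$ and a contraction. The resulting identity between the two vertex-link graph invariants matches the change in the normalization factor $(\dim(\cc))^{-1}$; the underlying algebraic content is a form of the Biedenharn--Elliott (pentagon) relation for the $6j$-symbols of Example~\ref{exa-6j}.

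The main obstacle will be invariance under a 1-4 move. Here a new interior vertex $x_0$ is introduced, along with $4$ new edges, $6$ new faces, and $3$ additional tetrahedra; one must sum over the new face colorings, incorporate new dimension factors, include the new link term $\inv_\cc(\Gamma_{x_0})$ for the resulting tetrahedral graph in~$S^2$, and account for the modifications of the four original vertex links. My strategy is to perform the new coloring sums in stages: at each of the four original vertices in turn, apply Lemma~\ref{lem-calc-diag}(c)--(d) to contract the pairs of parallel strands entering from the new faces, and then evaluate the resulting closed loops using the bubble identity of Lemma~\ref{bubbleidentity}, which in a spherical fusion category produces exactly the factor $\dim(\cc)$. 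Three such applications should yield the factor $(\dim(\cc))^3$ needed to cancel the change $(\dim(\cc))^{-|t|}\mapsto (\dim(\cc))^{-|t|-3}$ in the normalization. The most delicate step will be matching the cyclic orders induced on the link spheres by the ambient orientation of~$M$; here the sphericity of~$\cc$ is essential, as the identity $\tr_l=\tr_r$ is precisely what guarantees that all the planar evaluations involved are well-defined and invariant under isotopies across $\infty$ on~$S^2$.
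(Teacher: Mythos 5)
Your proposal takes a genuinely different route from the paper's. You would check invariance directly under the $2$--$3$ and $1$--$4$ Pachner moves on the triangulation~$t$, whereas the paper first generalizes the state sum to arbitrary skeletons of~$M$ (Theorem~\ref{thm-state-3man}) and then shows (Lemma~\ref{lem-momoves}) that any two skeletons are related by four elementary local moves $T_1,\dots,T_4$, each with a short invariance check via Lemmas~\ref{lem-calc-diag} and~\ref{bubbleidentity}; the present theorem is then just the special case $P=t^{(2)}$. The paper's detour through skeletons pays for itself twice over: it delivers Theorem~\ref{thm-state-3man}, which is indispensable for the TQFT construction in later sections, and the moves $T_1$--$T_4$ have much simpler local pictures than the $1$--$4$ Pachner move. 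Your treatment of independence of~$I$ and of the orientation of~$t^{(2)}$ via properties~(i) and~(iii) of Section~\ref{sect-graph-S2} is correct.

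There is a concern about the Pachner-move analysis. The state sum~\eqref{eq-simplstatesum} assigns colors to the \emph{faces} of~$t$ and weights to the \emph{vertices} of~$t$; this is the picture dual to the Turaev--Viro--Barrett--Westbury state sum, in which colors sit on edges, $6j$-symbols sit on tetrahedra, and the $2$--$3$ move literally is the Biedenharn--Elliott identity. As the introduction stresses, the agreement of the two state sums is ``non-obvious but true,'' so your reduction of the $2$--$3$ computation to Biedenharn--Elliott is not automatic: in the face-colored picture the new edge carries a contraction rather than a color (so the phrase ``resolving its color'' does not quite parse), and the link graphs $\Gamma_x$ change at all five vertices $a,b,c,d,e$ of the bipyramid, not at one. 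For the $1$--$4$ move one must track the subdivided triangles in the link graphs at $a,b,c,d$ together with the new tetrahedral link at~$e$, and the order in which Lemma~\ref{lem-calc-diag}(c)--(d) and Lemma~\ref{bubbleidentity} are applied is delicate, since each factor $\dim(\cc)$ requires a genuinely closed loop with a free color summation. These gaps are plausibly fillable, but that combinatorics is exactly the work the paper's skeleton machinery is built to bypass: in the skeleton language the $1$--$4$ move factors through $T^{(m)}$-type compositions of $T_2$ and $T_3$ (see Section~\ref{proof-lem--}), each handled by a one-line appeal to Lemma~\ref{lem-calc-diag}.
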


It is clear from the definitions that $|M \amalg N|_\cc= |M|_\cc \,
|N|_\cc$ for any oriented closed 3-manifolds $M, N$. One can show
that $|-M|_{\cc}= |M|_{\cc^{\opp}}$, where $-M$ is $M$ with opposite
orientation. We prove below that $|S^3|_\cc= (\dim (\cc))^{-1}$ and
$|S^1\times S^2|_\cc=1$.

The  invariant $|M|_\cc$ can be viewed as a state sum (or a
partition function)   on~$t$ as follows. Provide all symmetrized
multiplicity modules in~$\cc$   with distinguished  bases.
 By  states on~$t$, we mean pairs
consisting of a map $c \co \Reg(t) \to I$ and a choice of a basis
vector   $b_c(e)\in H_c(e)$ for every oriented edge $e$ of $t$. Let
$b^\star_c(e)$ be the corresponding vector in the basis of
$H_c(e)^\star$ dual to the distinguished basis of~$H_c(e)$. The
Boltzmann weight associated with such a state is the product of the
factors $\inv_\cc (\Gamma_x) (\otimes_{e\in t_x} b_c(e))$,
$\ast_e(b^\star_c(e^{\opp}), b^\star_c(e))=\ast_{e^{\opp}} (
b^\star_c(e), b^\star_c(e^{\opp}))$, $\dim c(r)$, and $(\dim
(\cc))^{-1}$ contributed respectively by the vertices, edges, faces,
and tetrahedra of~$t$. The invariant $|M|_\cc$ is the sum of these
Boltzmann weights over all states on~$t$. This state sum differs
from the one in \cite{TV, BW}, where the states are labelings
of the edges of~$t$ with elements of~$I$ and the key factor in the
Boltzmann weight is a $6j$-symbol contributed by every tetrahedron.
It is non-obvious but true that these two state sums are equal, see
Section~\ref{sec-skeletonsstatesums}. In particular, our invariant
$|M|_\cc$ is equal to the invariant of~$M$ defined by Barrett and
Westbury \cite{BW}.

\subsection{Remark} We say that a triangulated
 surface~$\Sigma$ is \emph{$I$-colored} if every edge of~$\Sigma$ is oriented and endowed with an element of~$I$.
 In other words,~$\Sigma$ is  $I$-colored  if its
  1-skeleton
$\Sigma^{(1)}$   is a $\cc$-colored graph in~$\Sigma$ with colors of
all edges being in~$I$. For such~$\Sigma$,  we can consider the \kt
module $H_\cc (\Sigma)=H(\Sigma^{(1)}) =\otimes_x \,
H_x(\Sigma^{(1)})$, where~$x$ runs over all vertices of~$\Sigma$.
The state sum~\eqref{eq-simplstatesum}    extends to compact
oriented 3-manifolds~$M$ with $I$-colored triangulated boundary and
gives a vector $|M |_\cc \in H_\cc (\partial M)^\star$. These
vectors can be used to define a 3-dimensional TQFT. We will  discuss
a more general construction in Section~\ref{sec-TQFT}.

\section{State sums on skeletons of 3-manifolds}\label{sec-skeletonsstatesums}

We compute $|M |_\cc$ as  a state sum on any skeleton  of~$M$.  This
generalizes  the state sums of Section~\ref{sec-ssot3m} and of
\cite{TV, BW}.   We begin with topological preliminaries.

\subsection{Stratified 2-polyhedra} By a  \emph{2-polyhedron}, we mean a compact topological space that can be triangulated
using only simplices of dimension $\leq 2$.
%%The number of simplices in any triangulation of a polyhedron is
 %% finite.
  %% We will focus on   2-polyhedra which admit triangulations such that every vertex is incident to an
   %%edge and every edge is incident to a face.
For a 2-polyhedron $P$, denote by $\Int(P)$ the subspace of~$P$
consisting of  all points having a neighborhood homeomorphic to
$\RR^2$. Clearly, $\Int(P)$ is an (open) 2-manifold without
boundary. By an \emph{arc} in~$P$, we mean the image of a path
$\alpha\co [0,1]\to P$ which is an embedding except that
  possibly $\alpha(0)=\alpha(1)$. (Thus, arcs may be loops.) The
  points $\alpha(0)$, $\alpha(1)$ are the \emph{endpoints}   and the set $\alpha((0,1)) $ is the \emph{interior} of the
  arc.

   To work with polyhedra, we will use the language of     stratifications  as follows. Consider a   2-polyhedron~$P$  endowed with a
    finite     set of arcs~$E $    such that
\begin{enumerate}
  \renewcommand{\labelenumi}{{\rm (\alph{enumi})}}
    \item different arcs in~$E$ may meet only at their endpoints;
    \item   $ P \setminus   \cup_{e\in E} \, e \subset \Int (P)$ and $ P \setminus   \cup_{e\in E} \, e$ is
dense in~$P$.
\end{enumerate}
The arcs of~$E$ are called  \emph{edges} of~$P$ and their endpoints
    are called \emph{vertices} of~$P$.   The vertices and
edges  of $P$ form a
    graph $P^{(1)}= \cup_{e\in E} \, e  $. Since all vertices of~$P$
    are  endpoints of  the edges,~$P^{(1)}$ has no isolated vertices.
Cutting $P$ along  $P^{(1)}$, we obtain a compact surface
${\widetilde P}$ with interior $P \setminus P^{(1)}$. The
polyhedron~$P$ can be recovered by gluing~${\widetilde P}$
to~$P^{(1)}$ along a   map $ p\co
\partial {\widetilde P} \to P^{(1)} $. Condition (b) ensures the surjectivity of~$p$. We call the pair $(P, E)$  (or, shorter,~$P$)   a \emph{stratified 2-polyhedron}
if
 the set $p^{-1}({\text {the set of   vertices of}} \, P)$ is finite
and each component of the complement of this set in $\partial
{\widetilde P} $ is mapped homeomorphically onto the interior of an
edge of~$P$.

A 2-polyhedron~$P$ can be stratified if and only if $\Int(P)$ is
dense in~$P$. For such a~$P$, the edges of any triangulation form a
stratification. Another example: a closed surface with an empty set
of edges is a stratified 2-polyhedron.

For a   stratified 2-polyhedron~$P$, the connected components
of~${\widetilde P}$ are called \emph{regions} of~$P$. Clearly, the
set $\Reg(P) $ of the regions of~$P$ is finite.  For a vertex   $x$
of~$P$,  a \emph{branch}   of~$P$ at~$x$ is
 a germ  at~$x$ of a region of~$P$ adjacent to~$x$. (Formally, the branches of $P$ at~$x$ can be defined
 as
 paths
 $\gamma\co [0,1]\to P$ such that $\gamma(0)=x$ and $\gamma((0,1])\subset P \setminus
 P^{(1)}$, considered up to
  homotopy in the class of such paths.)  The set of branches of~$P$ at~$x$ is finite and
 non-empty. The branches
of $P$ at $x$ bijectively correspond to  the elements of the set~$
p^{-1} (x)$, where $ p\co
\partial {\widetilde P} \to P^{(1)} $ is the map above.
 Similarly, for an edge   $e$ of~$P$,  a \emph{branch}   of~$P$ at~$e$ is
 a germ  at~$e$ of a region of~$P$ adjacent to~$e$. (A   formal
 definition  proceeds in terms of paths  $\gamma\co [0,1]\to P$ such that $\gamma(0) $ lies in the interior of~$e$
 and $\gamma((0,1])\subset P \setminus P^{(1)}$.)
  The set of
branches of~$P$ at~$e$ is  denoted~$P_e$.   This set   is finite and
non-empty. There is a natural bijection between~$P_e$ and  the set
of connected components of $ p^{-1} ({\text {interior of}}\, e) $.
The number of elements of $P_e$ is the \emph{valence} of~$e$. The
edges of~$P$ of valence 1 and their vertices form a graph called the
\emph{boundary} of~$P$ and denoted~$\partial P$. We say that $P$ is
\emph{orientable} (resp.\@ \emph{oriented}) if all regions of~$ {
P}$ are orientable (resp.\@ oriented).

\subsection{Skeletons of closed  3-manifolds}\label{sect-skeletons}  A \emph{skeleton} of a  closed   3-manifold $M$ is
an oriented    stratified 2-polyhedron $P\subset M$ such that
$\partial P=\emptyset$ and  $M\setminus P$ is a disjoint union of
open 3-balls. An example of a skeleton of~$M$ is provided by the
(oriented) 2-skeleton~$t^{(2)}$ of a  triangulation~$t$ of~$M$,
where the   edges of~$t^{(2)}$ are the   edges of~$t$.

Any  vertex   $x$ of a  skeleton $P \subset M$ has a closed ball
neighborhood $B_x \subset M$ such that $\Gamma_x=P\cap \partial B_x$
is a finite non-empty graph and
  $ P\cap B_x $ is the cone over~$  \Gamma_x $.  The vertices of $\Gamma_x$ are the intersections of $\partial B_x$ with the
half-edges of $P$ incident to~$x$; the edges of $\Gamma_x$ are the
intersections of $\partial B_x$ with the branches of $P$ at $x$. The
condition $\partial P=\emptyset$ implies that every vertex of
$\Gamma_x$ is incident to at least two half-edges of $\Gamma_x$.
The pair
 $(\partial B_x, \Gamma_x)$  is determined by the triple $(M,P,x)$   up to   homeomorphism.
 This pair is   the \emph{link} of~$x$ in~$(M,P)$. If $M$ is
 oriented, then we
   endow~$\partial B_x$ with orientation induced
by that of~$M$ restricted to $M\setminus \Int(B_x)$. Then the link
 $(\partial B_x, \Gamma_x)$ of~$x$  is determined by   $(M,P,x)$   up to orientation preserving  homeomorphism.

\subsection{Computation of~$|M|_\cc$}\label{sec-computat}  Let $\cc$ and  $I$  be
as in Section~\ref{sec-Io3m} and let~$M$ be a closed oriented
3-manifold. We compute $|M|_\cc \in \kk$ as a state sum on any
skeleton~$P$ of~$M$. For a map $c \co \Reg(P) \to I$ and an oriented
edge $e$ of $P $, we define  a \kt module $H_c(e)=H(P_e)$,
where~$P_e$ is the set of branches of~$P$ at~$e$ turned into a
cyclic $\cc$\ti set as~$t_e$ in Section~\ref{sec-Io3m}. Here by the
$c$-color of a branch $b\in P_e$,
 we mean the $c$-color of the region of~$P$ containing  $b$.  If $e^\opp$ is the same edge  with opposite orientation,
 then   $P_{e^\opp}=(P_e)^\opp$. This induces a   duality between
 the modules
 $H_c(e)$, $H_c(e^\opp)$ and a
 contraction  $ \ast_e \co H_c(e)^\star \otimes H_c(e^\opp)^\star \to\kk$.

 \begin{figure}[t]
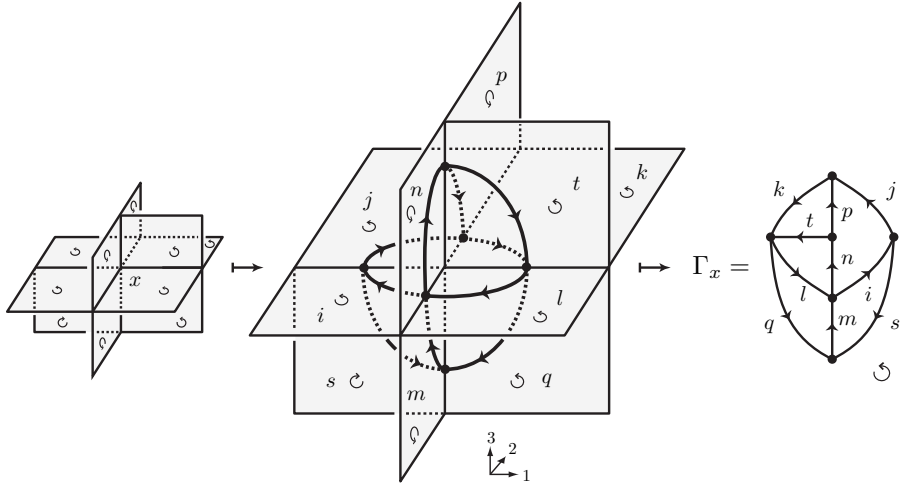

\begin{center}
\psfrag{i}[Bc][Bc]{\scalebox{.9}{$i$}}
\psfrag{j}[Bc][Bc]{\scalebox{.9}{$j$}}
\psfrag{k}[Bc][Bc]{\scalebox{.9}{$k$}}
\psfrag{l}[Bc][Bc]{\scalebox{.9}{$l$}}
\psfrag{m}[Bc][Bc]{\scalebox{.9}{$m$}}
\psfrag{n}[Bc][Bc]{\scalebox{.9}{$n$}}
\psfrag{q}[Bc][Bc]{\scalebox{.9}{$q$}}
\psfrag{p}[Bc][Bc]{\scalebox{.9}{$p$}}
\psfrag{s}[Bc][Bc]{\scalebox{.9}{$s$}}
\psfrag{t}[Bc][Bc]{\scalebox{.9}{$t$}}
\psfrag{x}[Bc][Bc]{\scalebox{.9}{$x$}}
\psfrag{1}[Bc][Bc]{\scalebox{.7}{$1$}}
\psfrag{2}[Bc][Bc]{\scalebox{.7}{$2$}}
\psfrag{3}[Bc][Bc]{\scalebox{.7}{$3$}}
\psfrag{G}[Br][Br]{\scalebox{1.1}{$\Gamma_x=$}}
\rsdraw{.45}{.9}{state-sum-algoder2}
\end{center}
\caption{The graph $\Gamma_x \subset S^2$ associated with a vertex $x$}
\label{fig-state-sum-algo}
\end{figure}

As in Section~\ref{sec-Io3m},  the link of a   vertex $x \in P$
determines a $\cc$-colored  graph $\Gamma_x$ in $\partial
B_x\simeq S^2$.  An example of $\Gamma_x$ is given in Figure 2; note that
since the orientation of $\partial B_x$ is induced by that of~$M$ restricted to $M\setminus \Int(B_x)$, the identification of $\partial B_x$
with the standard 2-sphere $S^2=\R^2\cup\{\infty\}$ (oriented counterclockwise) requires in this example a mirror reflection.

Section~\ref{sect-graph-S2} yields a tensor
$\inv_\cc (\Gamma_x) \in H_c(\Gamma_x)^*$.  By definition,
 $H_c(\Gamma_x)= \otimes_e\, H_c(e)$, where
 $e$ runs over all edges of $P$ incident to $x$ and oriented away from $x$ (an edge   with both endpoints in $x$ appears in this tensor product twice
 with opposite orientations). The tensor product $\otimes_x \,\inv_\cc (\Gamma_x)$ over all
vertices~$x$ of~$P$ is a vector in
 $\otimes_e \, H_c(e )^\star$, where   $e$
 runs over all oriented edges of $P$. Set
$\ast_P=\otimes_e \, \ast_e\co \otimes_e \, H_c(e )^\star \to \kk$.
The following theorem will be proved in
Section~\ref{sec-skeletonsstatesums+}.

\begin{thm}\label{thm-state-3man}
For any skeleton~$P$ of~$M$,
\begin{equation}\label{eq-simplstatesum+}|M|_\cc=(\dim (\cc))^{-\vert P\vert} \sum_{c} \,\,  \left ( \prod_{r \in \Reg(P)} (\dim c(r))^{\chi(r)} \right ) \,
  {\ast}_P ( \otimes_x \,\inv_\cc (\Gamma_x)) \in \kk,\end{equation}
where   $\vert P\vert$ is the number of components of $M\setminus
P$,  $c$ runs over all maps $ \Reg(P) \to I$, and $\chi(r)$ is the
Euler characteristic of $r$.
\end{thm}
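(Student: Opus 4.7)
The natural approach is to verify that the right-hand side of the formula is unchanged under a complete set of local moves on skeletons of $M$, then to check that for the 2-skeleton $t^{(2)}$ of a triangulation $t$ the formula reduces to the state sum of Section~\ref{sec-Io3m} (thereby settling Theorem~\ref{thm-state-3man-smipl} simultaneously). A Matveev--Piergallini style calculus adapted to oriented stratified 2-polyhedra should provide the required moves; candidates include (a) a \emph{bubble move} adding a small 2-sphere attached to $P$ along an arc, (b) a \emph{region subdivision} cutting a region by a new arc, or dually removing a bivalent vertex and fusing two edges, and (c) local moves interchanging combinatorial types of vertices.

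Invariance under each move reduces to a local identity for $\inv_\cc$. The bubble move increases $|P|$ by $1$ (contributing a factor $(\dim \cc)^{-1}$) and adds a 2-sphere region $r$ with $\chi(r)=2$; Lemma~\ref{lem-calc-diag}(c) inserts a sum over $i\in I$ with weight $\dim(i)$, and when carried around the bubble gives $(\dim \cc)^{-1}\sum_{i\in I}(\dim i)^2 = 1$. Subdividing a region along an arc decreases $\chi$ of that region by one, exactly matching the factor $(\dim i)^{-1}$ produced by Lemma~\ref{lem-calc-diag}(b) together with the constraint from Lemma~\ref{lem-calc-diag}(a) that the two sides of the cut carry isomorphic simple objects. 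Bivalent vertex elimination is governed by Lemma~\ref{lem-calc-diag}(d). Orientation independence of the faces of $t^{(2)}$ and independence of the choice of representative set $I$ follow from sphericity of $\cc$ and naturality of $\inv_\cc$ under isomorphism of colors.

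The main obstacle is the first ingredient: assembling a correct set of moves on oriented stratified 2-polyhedra allowing edges of arbitrary valence, and proving that any two skeletons of a fixed closed oriented 3-manifold $M$ are related by a finite sequence of such moves. The classical Matveev--Piergallini calculus is tailored to special spines, whereas here one must accommodate nongeneric skeletons, in particular those arising from triangulations (whose edges typically have high valence). A secondary difficulty is the bookkeeping: each move must be checked against the entire formula, tracking simultaneously the change in $|P|$, in the product $\prod_{r} (\dim c(r))^{\chi(r)}$, and in the pattern of contractions $\ast_e$.

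Once both ingredients are in place, matching the formula for $P = t^{(2)}$ against the triangulation state sum of Section~\ref{sec-Io3m} is straightforward: every face of $t$ is a disk with $\chi = 1$, so the Euler-characteristic exponents trivialize, and every tetrahedron of $t$ accounts for exactly one component of $M\setminus t^{(2)}$. The agreement of the two state sums then yields both Theorem~\ref{thm-state-3man-smipl} and the present theorem, with topological invariance of $|M|_\cc$ as a bonus.
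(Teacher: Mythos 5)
Your strategy is the same as the paper's---prove invariance of the state sum under a complete set of local moves on skeletons, then observe that the triangulation formula is the special case $P = t^{(2)}$---so there is no alternative route to compare; the problem is that the proposal does not carry this plan out, and where it touches specifics, the specifics are wrong. The central ingredient you flag as ``the main obstacle,'' namely that any two skeletons of $M$ are related by a finite sequence of moves from a fixed list, is exactly the content of Lemma~\ref{lem-momoves}, and it is where most of the work lives. The paper resolves it in two stages: Lemma~\ref{lem-primmoves} reduces an arbitrary skeleton to a special (trivalent, disk-regioned, standard-vertexed) skeleton using the primary moves $T_1$--$T_4$ via a delicate five-step procedure (making all regions disks, eliminating $2$-valent edges, reducing high-valence edges, connecting link graphs, blowing up non-standard vertices), and only then invokes the Matveev--Piergallini calculus for special skeletons, with an additional argument to handle the indeterminacy in region orientations. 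Noting that Matveev--Piergallini is ``tailored to special spines'' is not a proof; the reduction from generic skeletons to special ones must be supplied, and you do not supply it.

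The Euler-characteristic bookkeeping in your local invariance checks is also incorrect. In the bubble move $T_4$, no region with $\chi=2$ is created: the move adds two disk regions $D_\pm$ (each of Euler characteristic $1$), removes an open disk from the ambient region (dropping its Euler characteristic by $1$), and increases $\vert P\vert$ by $1$; the compensating identity is the two-variable sum $\sum_{k,l\in I}\dim(k)\dim(l)\,N^\un_{i\otimes k\otimes l}=\dim(i)\dim(\cc)$ of Lemma~\ref{bubbleidentity}, not the one-variable identity $\sum_i(\dim i)^2=\dim\cc$ you invoke. Similarly, cutting a region along a properly embedded arc \emph{increases} the total Euler characteristic of the regions by one; the resulting gain of $\dim(i)$ is what cancels the factor $(\dim i)^{-1}$ produced by the tensor contraction in the $T_1$-invariance check (cf.\ the two sides of Formula~\eqref{eq-computation}). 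Your claim that a \emph{decrease} of $\chi$ ``exactly matches'' $(\dim i)^{-1}$ gets both the sign of the $\chi$-change and the direction of the cancellation wrong. As written, the proposal verifies the formula under no concrete move and leaves the generation lemma unaddressed.
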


  Formula~\eqref{eq-simplstatesum} is a special case
of~\eqref{eq-simplstatesum+} for   the oriented
2-skeleton~$P=t^{(2)}$ of a triangulation~$t$ of~$M$. When~$P$ is
the oriented 2-skeleton of the cellular subdivision of~$M$ dual
to~$t$ and the orientation of~$P$ is induced by that of~$M$ and a
total order on the set of vertices of~$t$,
Formula~\eqref{eq-simplstatesum+} is equivalent to the
Turaev-Viro-Barrett-Westbury state sum on~$t$.
Theorem~\ref{thm-state-3man} implies   that our definition
of~$|M|_\cc$ is equivalent to the one in   \cite{BW}.

We illustrate Theorem~\ref{thm-state-3man} with two examples. An
oriented 2-sphere embedded in~$S^3$ is a skeleton of~$S^3$
  with void set of edges and vertices and   one region.
Formula~\eqref{eq-simplstatesum+} gives $|S^3|_\cc=(\dim (\cc))^{-2}
\sum_{i\in I}(\dim (i))^2= (\dim (\cc))^{-1}$.    Pick a point $x\in
S^1$ and a circle $\ell\subset S^2$. The set $P=(\{x\} \times S^2 )
\cup (S^1\times \ell)$ is a skeleton of $S^1\times S^2$ with one
edge $\{x\}\times \ell$ viewed as a loop; the orientation  of the
three regions of~$P$ is arbitrary. Formula~\eqref{eq-simplstatesum+} and the second formula of Lemma~\ref{bubbleidentity}  give $|S^1\times S^2|_\cc=1$.

\section{Moves on skeletons and proof of   Theorems~\ref*{thm-state-3man-smipl}
and~\ref*{thm-state-3man}}\label{sec-skeletonsstatesums+}

The     proof of Theorems~\ref{thm-state-3man-smipl}
and~\ref{thm-state-3man} given at the end of the section is based on
a study of transformations of skeletons of a closed 3-manifold~$M$.

\subsection{Moves on skeletons}   The symbols $\# v$, $\# e$, $\# r$ will
denote  the number of vertices,   edges, and    regions
 of a skeleton of~$M$, respectively. We define four moves   $T_1-T_4$ on a
 skeleton $P$   of~$M$ transforming~$P$ into   a new skeleton of~$M$, see Figure~\ref{fig-moves}.
  The \lq\lq phantom edge  move"~$T_1$ keeps~$P$ as a polyhedron and  adds one new edge connecting   distinct vertices of
  $P$ (this edge is an arc in $P$ meeting
    $P^{(1)}$ solely at its endpoints and has the valence~2).
  This
  move  preserves $\# v$, increases~$\# e$ by 1, and either preserves $\# r$ or increases $\# r$ by~1.
The \lq\lq contraction move" $T_2$ collapses into a point an
 edge~$e$ of~$P$ with distinct endpoints.
 This
  move is allowed only when at least one endpoint of~$e$ is the endpoint of some other edge. The move~$T_2$  decreases both   $\# v$  and $\# e$  by 1 and  preserves $\# r$.
The \lq\lq percolation  move" $T_3$  pushes a branch~$b$ of~$P$
through a vertex~$x$ of~$P$. The branch~$b$ is pushed across a small
disk~$D$ lying in
   another branch of~$P$ at~$x$  so that $D\cap P^{(1)}= \partial D\cap
   P^{(1)}=\{x\}$ and both these branches are adjacent to the same component of $M\setminus P$.
   This move   preserves $\# v$ and  increases both $\# e$ and~$\# r$ by~1.
The \lq\lq bubble  move" $T_4$  adds to~$P$ an embedded disk
$D_+\subset M$ such that   $D_+\cap P=\partial D_+ \subset
P\setminus P^{(1)}$,  the circle $\partial D_+$ bounds a disk~$D_-$
in $ P\setminus P^{(1)}$, and the 2-sphere $D_+\cup D_-$ bounds a
ball in $M$ meeting~$P$ precisely
   along~$D_-$.
   A  point of the circle $\partial D_+ $ is chosen as a vertex and the circle itself is viewed
   as an edge of the resulting skeleton.
  This
   move      increases~$\# v$   and~$\# e$ by~1 and~$\# r$ by~2.

   \begin{figure}[h, t]
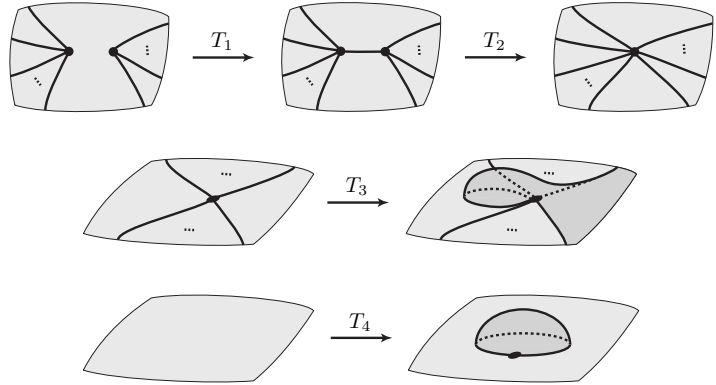

\begin{center}
\psfrag{T}[Bc][Bc]{\scalebox{.9}{$T_1$}} \psfrag{L}[Bc][Bc]{\scalebox{.9}{$T_2$}}  \rsdraw{.45}{.9}{skel-move2} \\
[1.6em]  \psfrag{T}[Bc][Bc]{\scalebox{.9}{$T_3$}}  \rsdraw{.45}{.9}{skel-move3} \\
[1.6em]  \psfrag{T}[Bc][Bc]{\scalebox{.9}{$T_4$}}  \rsdraw{.45}{.9}{skel-move4}
\end{center}
\caption{Local moves on skeletons}
\label{fig-moves}
\end{figure}

   The orientation of   the skeletons produced by the moves $T_1-T_4$ on~$P$  is induced by the
  orientation of~$P$ except for the  small disk regions created by
  $T_3$,
  $T_4$ whose
  orientation
    is chosen   arbitrarily.

  The moves $T_1-T_4$ have  obvious inverses. The move   $T_1^{-1}$
 deletes a 2-valent edge~$e$ with distinct endpoints; this move is
   allowed only when both endpoints of~$e$ are endpoints of some other edges and the orientations on both sides of~$e$ are compatible.
We call the moves $T_1-T_4$ and their inverses \emph{primary moves}.
In  the sequel, we tacitly assume the right to use ambient
isotopies of   skeletons in~$M$. In other
 words, ambient isotopies are   treated as primary moves.

   \begin{lem}\label{lem-momoves}
Any two skeletons  of   $M$ can be related by  primary moves.
\end{lem}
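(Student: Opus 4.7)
The strategy is to deduce the lemma from the classical theorem of Casler--Matveev--Piergallini on spines of PL $3$-manifolds: any two special spines of a closed PL $3$-manifold (with equal number of complementary balls) are related by a finite sequence of local moves, namely the lune move and the $2\leftrightarrow 3$ Matveev--Piergallini move. My plan is to first transform an arbitrary skeleton into a special one by primary moves, then simulate the MP-type moves by primary moves, and finally adjust the number of complementary $3$-balls by bubble moves.

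First I would show that every skeleton $P$ of $M$ can be transformed, by a finite sequence of primary moves, into a \emph{special} skeleton: one whose underlying stratified polyhedron is a Matveev special polyhedron, meaning each edge has valence exactly $3$ and the link of each vertex in $M$ is the $1$-skeleton of a tetrahedron (so $\Gamma_x$ is the tetrahedral graph on $S^2$). The phantom edge move $T_1$ cuts regions along newly inserted edges and thus lowers edge valences; the percolation move $T_3$ pushes branches across vertices, so a vertex whose link is more complicated than the tetrahedral graph can be replaced by several vertices with simpler links; and $T_2$ eliminates the bivalent auxiliary vertices that are created along the way. A standard combinatorial induction on the complexity (sum of edge valences plus vertex link complexities) shows that the procedure terminates at a special skeleton.

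Second, given two special skeletons $P_1, P_2$ of $M$, I would first use iterated bubble moves $T_4$ to equalize the number of complementary $3$-balls: each application of $T_4$ increases $|P|$ by $1$, so applying it an appropriate number of times to whichever of $P_1, P_2$ has fewer balls reduces to the case $|P_1|=|P_2|$. In this case the Matveev--Piergallini theorem produces a sequence of lune and $2\leftrightarrow 3$ moves relating $P_1$ to $P_2$. Each of these can be written explicitly as a composition of moves $T_1^{\pm1}, T_2^{\pm1}, T_3^{\pm1}$, for instance by inserting phantom edges along the relevant arcs, percolating branches through the vertices involved, and then collapsing the resulting bivalent auxiliary structure via $T_2$ and $T_1^{-1}$.

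The main obstacle will be the explicit realization of the lune and MP-moves as compositions of primary moves; the required local pictures are somewhat intricate and must be drawn carefully. A secondary technical point is the book-keeping of orientations of the $2$-strata, because the move $T_1^{-1}$ requires that the two branches meeting along the deleted edge be coherently oriented; however, since the skeleton orientations on the regions created by $T_3$ and $T_4$ can be chosen freely, one can always arrange orientations on intermediate skeletons so that every primary move in the sequence is admissible.
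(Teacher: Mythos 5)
Your overall architecture matches the paper's: reduce to special skeletons (the paper's Lemma~\ref{lem-primmoves}), equalize the number of complementary balls, quote the Matveev--Piergallini theorem, and realize the MP-type moves as compositions of primary moves. But there are two genuine problems.

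The first is small but real: after you have converted $P_1$, $P_2$ to special skeletons, you propose to equalize $|P_1|$ and $|P_2|$ by iterating the bubble move $T_4$. But $T_4$ does not preserve the class of special skeletons---it produces a $2$-valent vertex on the newly attached circle, so the output is no longer a special polyhedron and Matveev's theorem cannot be applied to it. The move you actually need is the variant $T''_4$ introduced in Section~\ref{proof-lem--}, which inserts the disk across an existing edge and creates two honest trivalent vertices, staying within the class of s-skeletons.

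The second gap is the more serious one, and your final paragraph misidentifies it. You frame the orientation issue as making sure every intermediate primary move is admissible (e.g.\ that $T_1^{-1}$ has coherently oriented branches), and you note that freedom in orienting the small regions created by $T_3$, $T_4$ suffices for that. Fine---but that is not where the difficulty lies. The Matveev--Piergallini theorem is a statement about \emph{unoriented} special polyhedra. After you execute the corresponding sequence of primary moves starting from the oriented skeleton $P_1$, you arrive at an oriented skeleton whose underlying polyhedron coincides with that of $P_2$, but whose regions may be oriented differently from the given orientations on $P_2$. Nothing in your argument bridges that last step. The paper closes this gap with a separate construction: for any region $r$ of an s-skeleton $P$, one pushes $r$ off in a normal direction to produce a parallel copy $r'$ with the opposite orientation, obtains the s-skeleton $P\cup r'$, and then eliminates either $r$ or $r'$ by MP-moves, showing that $P$ and the skeleton $P_-$ obtained by reversing the orientation of $r$ are MP-equivalent. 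Without this (or an equivalent) argument, your proof shows only that the underlying unoriented polyhedra are related by primary moves, which is strictly weaker than the lemma as stated.
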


We will prove this lemma in Section~\ref{proof-lem} using the
definitions
 and results of the next two subsections.

\subsection{Further moves}\label{proof-lem--}  We    introduce several additional moves on   skeletons of~$M$.
We begin with two   versions
  $T'_4$ and $T''_4$ of the bubble move, see Figure~\ref{fig-movesTT}.
   The move~$T'_4$  adds to a skeleton~$P$ an embedded   disk $D_+ $ as in   $T_4$
    with the only difference that the circle
   $\partial D_+ $ meets $P^{(1)} $ in a
  single point $x$   and bounds a disk in   a region~$r$ of~$P$.
  The move  $T'_4$  can be expanded as a product of primary
   moves as follows. First, if~$x$ is not a vertex of~$P$, then    turn~$x$ into a vertex by~$T_2^{-1}$. Then
        glue  a disk to~$r$ by~$T_4$. After that,~$T_1$ adds an edge in~$r$   connecting~$x$ to the
   vertex on this circle, and finally~$T_2$ collapses this edge to a
   point. This gives~$T'_4(P)$.

The move~$T''_4$   transforms~$P$ into~$P\cup D_+$,
   where~$D_+$ is a   disk in a small neighborhood  of
a point of an edge~$e$ of~$P$ such that   $D_+\cap P=\partial D_+$
and the set $
\partial D_+ \cap e$ consists of 2 points. Under this move, $e$ splits
  into 3 subedges $e_1, e_2, e_3$ numerated so that
  $\partial e_2=
  D_+ \cap e$.    We  construct a sequence of primary moves turning $P\cup
D_+$ into~$P$. First,~$T_2$ collapses $e_2$ to a point, then
   $T_3^{-1}$   pushes~$\partial D_+$ to one region. The resulting  skeleton  is
 $T'_4(P)$; by the above it can be transformed into $P$ by primary
   moves. The inverse sequence of   moves
transforms~$P$ into~$P\cup D_+$. Thus, $T''_4$    is a composition
of   primary moves.

\begin{figure}[h, t]
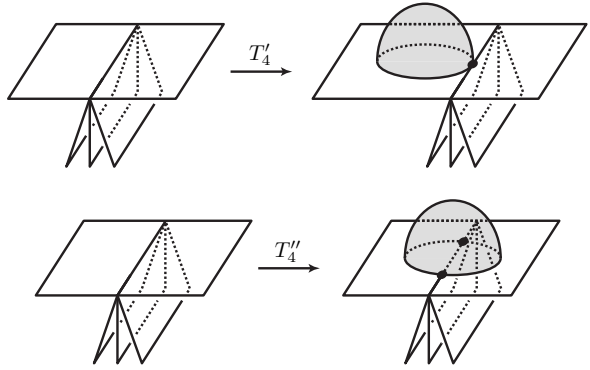

\begin{center}
\psfrag{T}[Bc][Bc]{\scalebox{.9}{$T_4'$}}  \rsdraw{.45}{.9}{moveT4p} \\[1em]
\psfrag{T}[Bc][Bc]{\scalebox{.9}{$T_4''$}}  \rsdraw{.45}{.9}{moveT4pp}
\end{center}
\caption{The moves $T_4'$ and $T_4^{''}$}
\label{fig-movesTT}
\end{figure}

%%%It is understood that~$R$ is \lq\lq thin" in the sense that for any
%%%edge $e$ of $r$, the intersection of $R$ with the branch $r'$  of
%%%$P$ adjacent to $r$ at $e$ is  $e\times [0,1]$, if $R$ and $r'$ lie
%%%on the same side of $r$, and is $e$ otherwise.

For any non-negative integers $m, n$ with $m+n\geq 1$, we define a
move   on   skeletons $T^{m,n}$, see Figure~\ref{fig-Tmn-move}.
 This move is sometimes
called a $m+1\to n+1$ move for the numbers of vertices in the
picture before and after the move. The move $T^{m,n}$ preserves
orientation in all \lq\lq big" regions; the orientation in the small
triangular regions created or destroyed by $T^{m,n}$  may be
arbitrary. In the case $n=0$, this move is allowed only  when the
orientations of the top and bottom regions on the left are
compatible. The move inverse to $T^{m,n}$ is $T^{n,m}$. Note  that
$T^{m,n}$ is a composition of primary moves. To see this,
  split $T^{m,n}$ as a product of a move~$T^{(m)}$ and the inverse of $  T^{(n)} $ as shown in
Figure~\ref{fig-Tmn-move}. The move $T^{(m)}$  is obtained by
applying~$m$ times the move~$T_2$ to collapse  the edges in the
$m-1$ small triangles on the left to the vertex~$x$   and then
applying $T_3^{-1}$ to the vertical branch $m-1$ times to remove the
loops at~$x$ resulting from the third edges of these triangles.

\begin{figure}[h,t]
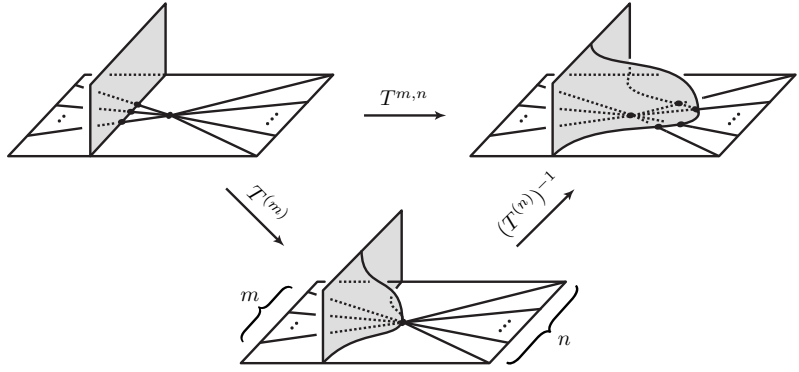

\begin{center}
\psfrag{T}[Bc][Bc]{\scalebox{.9}{$T^{m,n}$}}
\psfrag{E}[cr][cr]{\scalebox{1.5}[3.1]{$\{$}}
\psfrag{L}[cl][cl]{\scalebox{1.5}[4.4]{$\}$}}
\psfrag{A}[cc][cc]{\scalebox{.9}{$T^{(m)}$}}
\psfrag{B}[cc][cc]{\scalebox{.9}{$(T^{(n)})^{-1}$}}
\psfrag{u}[cc][cc]{\scalebox{.9}{$m$}}
\psfrag{v}[cc][cc]{\scalebox{.9}{$n$}}
\rsdraw{.45}{.9}{movemn2}
\end{center}
\caption{The move $T^{m,n}$}
\label{fig-Tmn-move}
\end{figure}

 \subsection{Special skeletons}\label{proof-lem-}  We briefly recall the theory of
special skeletons due to Casler, Matveev, and Piergallini  (see, for
example, Chapter~1 of~\cite{Mat1}).    A \emph{special 2-polyhedron}
$Q$ is a compact 2-polyhedron  such that: all
   components of $\Int(Q)$ are  open  2-disks,   $Q \setminus \Int(Q)$ has no circle
   components, and  every point of $Q$ has a neighborhood
homeomorphic
  to an open subset of
   the set \begin{equation}\label{eq-papillon} \{(x_1, x_2, x_3) \in \RR^3 \, \vert\,  x_3 = 0, {\text {or}}\,  x_1 = 0 \,{\text {and}}\,
   x_3 > 0, \,{\text {or}}\, x_2 =
0 \, {\text {and}}\, x_3 < 0\}.\end{equation}   Then $Q \setminus
\Int(Q)$ is a   graph with only 4-valent vertices. The edges of this
graph yield a canonical stratification of~$Q  $; this  turns $Q$
into a stratified 2-polyhedron. All
   edges of $Q$   have   valence~3 and all vertices of~$Q$ are adjacent to 6 branches. Since all regions of~$Q$ are disks, $Q$
    is orientable.

  A \emph{special skeleton} or shorter an \emph{s-skeleton} of a closed 3-manifold~$M$ is an oriented special   2-polyhedron $Q\subset M$
  such that
   $M\setminus Q$ is a disjoint union of
open 3-balls. Any s-skeleton of~$M$ is a
  skeleton of~$M$ in the sense of Section~\ref{sect-skeletons}. For example, the  2-skeleton of the cellular
subdivision of~$M$ dual to a triangulation
  is a s-skeleton of~$M$ (the regions  are provided  with arbitrary orientation).

  \begin{lem}\label{lem-primmoves}
Any skeleton $P$ of $M$ can be transformed by the
   primary
   moves   into a special skeleton.
\end{lem}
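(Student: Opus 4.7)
The plan is to proceed in three stages, transforming an arbitrary skeleton $P$ of $M$ into a special one by progressively improving the local structure of $P$: first the regions, then the edge valences, then the vertex types. Throughout, I will freely use the derived moves $T_4'$, $T_4''$ and $T^{m,n}$ established in Section~\ref{proof-lem--}, each of which was shown to be a composition of primary moves.

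In the first stage I would cut every non-disk region of $P$ into 2-disks by iterated applications of the phantom edge move $T_1$. A region $r$ is an orientable open surface whose frontier in $P$ is a subgraph of $P^{(1)}$; a finite collection of properly embedded arcs with endpoints on this frontier suffices to cut $r$ into open disks. If the set of vertices on the frontier of $r$ is too small to accommodate such arcs as $T_1$-edges, I would first create additional vertices by applying $T_2^{-1}$ to some incident edges of $P$. After this stage every region of the new skeleton is a 2-disk.

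In the second stage I would normalize the valences of edges to be exactly~$3$. Since $\partial P = \emptyset$ there are no valence-$1$ edges. Each valence-$2$ edge (including the phantom edges created in the first stage) can typically be removed by $T_1^{-1}$; when the required orientation compatibility fails, I would first apply $T_4'$ or $T_4''$ to a nearby branch to restore it. For an edge of valence $k \geq 4$ I would apply $T^{m,n}$ with parameters chosen to split the edge into one edge of valence~$3$ and one of valence $k-1$, and then iterate; since $T^{m,n}$ only introduces small triangular regions, the disk-ness of the other regions is preserved. In the third stage I would likewise normalize each vertex to the standard butterfly model~(\ref{eq-papillon}), by applying percolation moves $T_3$ to simplify the link $(\partial B_x,\Gamma_x)$ until it becomes the $1$-skeleton of a tetrahedron, and using $T_2^{-1}$ to split any remaining non-standard vertex into standard ones.

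The main obstacle is the interaction between the stages: eliminating a valence-$2$ edge in stage two may merge two previously distinct disk regions and destroy the property achieved in stage one, while rearranging a vertex star in stage three can alter the valences of incident edges. The resolution is to work locally at each defect of~$P$ rather than globally: in a small ball neighborhood of an offending edge or vertex, run all three reductions together, interposing a bubble move $T_4$ to restore a disk region whenever an inverse move threatens to merge two regions. Since $P$ has only finitely many regions, edges and vertices and each local reduction strictly decreases some combinatorial complexity (for example, $\sum_r (1-\chi(r))_+ + \sum_e |{\rm valence}(e)-3|$), the procedure terminates at a special skeleton of~$M$.
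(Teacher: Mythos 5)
Your three-stage outline (disk regions, then trivalent edges, then standard vertices) matches the strategy of the paper's proof, but several of the steps as you describe them do not actually work, and the hardest part of the argument is missing.

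First, the use of $T^{m,n}$ to reduce edge valence is a misreading of that move. Inspecting Figure~\ref{fig-Tmn-move} (and its factorization through $T^{(m)}$ and $T^{(n)}$) shows that $T^{m,n}$ applies to a configuration of trivalent edges and $4$-branch vertices -- it rearranges small triangular regions around a chain of such vertices -- and it has no meaningful application to a single edge of valence $k\geq 4$. The paper instead reduces valence by a specific sequence: split the edge with $T_2^{-1}$, percolate a branch across a small disk at the new vertex with $T_3$ (this is the step that produces a trivalent edge, namely $\partial D$), split again with $T_2^{-1}$, and contract twice with $T_2$. Your proposal has no substitute for this.

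Second, the vertex-normalization stage is where the real work lies, and it is essentially absent from your argument. Before you can standardize a vertex $x$, you must make the link graph $\Gamma_x\subset\partial B_x$ \emph{connected}; this is the paper's Step~4 (using $T_4'$ and $T_3$ to merge components of $\Gamma_x$) and you do not mention it. Then the paper does not ``simplify $\Gamma_x$ by $T_3$ until it is a tetrahedron'': percolation moves alone cannot reduce an arbitrary connected trivalent link graph to the tetrahedral graph. Instead the paper \emph{blows up}: it replaces $P$ near $x$ by $P'=(P\setminus\Int B)\cup\partial B$, whose new vertices are automatically standard because $\Gamma_x$ is trivalent and connected, and then shows that $P'$ and $P$ are related by primary moves by collapsing a maximal tree in $\Gamma_x$ via $T_2$-moves, iterating $T_3^{-1}$ to delete the loops one at a time, and finishing with $(T_4')^{-1}$. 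None of this is suggested by your sketch.

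Finally, the complexity-measure termination argument does not close the loop. You propose to repair any region-merging caused by $T_1^{-1}$ by inserting bubble moves, but a bubble move $T_4$ increases the number of regions and edges and hence the quantity $\sum_r(1-\chi(r))_+ + \sum_e|{\rm val}(e)-3|$, so your measure need not decrease under your local reduction. Without an actual decreasing invariant, the claimed termination is unsupported. The paper sidesteps this entirely by carrying out the stages in an order in which later stages do not undo earlier ones (it never deletes valence-$2$ edges by bare $T_1^{-1}$; Step~2 converts them via $T_4''$, $T_1^{-1}$, and two $T_2$'s into two trivalent edges, strictly decreasing the count of non-trivalent edges while preserving disk regions).
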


\begin{proof}   The
   transformation proceeds in five steps.

   Step 1.    Adding if necessary
    new
  edges  by $T_1$,
   we can ensure that all regions of~$P$ are disks. This condition
   will be preserved through the rest of the construction.

 Step 2. Let $e$ be an edge of $P$  of valence $2$.   We apply $T''_4$ at a point of $e$.
  Let $e_1, e_2, e_3$ be the subedges of $e$ as in the definition of~$T''_4$.  Delete $e_2$ via $T_1^{-1}$
   and contract both~$e_1$ and~$e_3$ via~$T_2$  (the orientation  of
   the small disks created by $T''_4$ should be chosen   so that~$T_1^{-1}$ can be applied).  The resulting skeleton has one 2-valent edge
  less  and two new trivalent edges. Continuing by induction, we   obtain a
   skeleton, still denoted~$P$, whose all edges have valence $\geq 3$.

   Step 3. If  $P$ has an edge $e$ of valence $n\geq 4$, then
   apply   $T_2^{-1}$ to add  a new vertex~$x$ inside~$e$.
   This   splits $e$ into two  subedges $e_1$ and~$e_2$.
Next   apply~$T_3$
   pushing one of the branches of~$P$ at~$x$ across a small disk $D$
   lying in an adjacent branch and touching  $e$ at $x$. This move creates a trivalent edge
   $ \partial D$   and keeps all the  other edges. Then   apply   $T_2^{-1}$ to insert a new edge~$e_+$ between~$e_1$ and~$e_2$.
   The valence of~$e_+$ is~$n-1$. Finally,  apply~$T_2$ twice to contract~$e_1$ and~$e_2$.
   The resulting skeleton differs from the original one in that the
  edge~$e$ is replaced with a trivalent edge and an
   $(n-1)$-valent edge. Continuing by induction, we   obtain a
   skeleton, still denoted~$P$, whose all edges are trivalent.

   Step 4.  Let   $(\partial B_x,\Gamma_x)$ be the link of a vertex   $x\in P$.  If the graph~$\Gamma_x$ is disconnected, then we modify~$P$
near~$x$ as follows. Pick two edges $\alpha_1, \alpha_2$
of~$\Gamma_x$ lying on
     different components of $\Gamma_x$ and adjacent to the same
    component of $\partial B_x \setminus \Gamma_x$. Let~$b_i$ be the branch  of~$P$ at~$x$
   containing $\alpha_i$ and let $u_i, v_i$ be the endpoints of~$\alpha_i$   for
   $i=1,2$ (possibly, $u_i=v_i$).    The  branches $b_1$, $b_2$ are
    adjacent  to the same
    component~$X$ of~$M \setminus P$.
Apply~$T'_4$ adding to~$P$ a disk~$D_+ $
    so that   $x\in \partial D_+ \subset b_1$ and $\Int (D_+)\subset X$. Then   apply $T_3$ to push~$D_+$ through~$x$ into
    $b_2$. In the new position, the circle $\partial D_+$ is formed by a loop in~$b_1$ and a loop in~$b_2$, both based at~$x$.
   The   graph~$\Gamma_x$ is modified under this transformation as
   follows: one deletes the edges $\alpha_1, \alpha_2$   and
   adds   a quadrilateral (formed by 4   vertices and 4 edges) and
   also~4  additional edges connecting the  vertices of the quadrilateral to   $u_1, v_1, u_2,
   v_2$, respectively. The resulting   graph   has one
   component less than~$\Gamma_x$. Continuing by induction, we   obtain a
   skeleton, still denoted~$P$, such that the link graphs of all vertices of~$P$ are connected.

Step 5.   At the previous steps we transformed the original skeleton
into a skeleton~$P$ such that all regions are disks, all edges are
trivalent,  and  the link graphs of all vertices are   connected.
Since all edges of $P$ are
   trivalent, $P^{(1)}\cap \Int (P)=\emptyset$. By the definition of
   a stratified 2-polyhedron, $P\setminus P^{(1)} \subset \Int(P)$.
   Thus, $\Int(P)=P\setminus P^{(1)}$ is a disjoint union of open disks.

   We call a vertex~$x$  of~$P$ \emph{standard} if there is a homeomorphism of a
neighborhood of~$x$ in~$P$ onto
   the set~\eqref{eq-papillon} carrying~$x$ into the point  $(0,0,0)$. If all vertices of $P$ are standard,
   then all points of~$P$ have neighborhoods
homeomorphic
  to an open subset of~\eqref{eq-papillon}, and
   the graph $P^{(1)}=P\setminus \Int (P)$ has no circle components
   (every component of $P^{(1)}$ contains a vertex of $P$ and all vertices are incident to 4 half-edges).
In this case, $P$ is a s-skeleton.

 If a vertex $x\in P$ is non-standard, then
we \lq\lq blow  up"~$P$ in a neighborhood of~$x$ as follows. Let
$B=B_x\subset M$ be a small (closed) ball neighborhood of~$x$ such
that
  $ P\cap B  $ is the cone over~$  \Gamma_x=P\cap \partial
B  $.     The      connected trivalent  graph $\Gamma_x $ splits the
2-sphere $\partial B $ into 2-disks. Set
$$P'=(P \setminus   \Int (B)) \cup \partial B   . $$ The 2-polyhedron~$P'$
 has the same  edges and vertices as $P$ except
that all edges incident to $x$ are cut near~$x$,
  the vertex~$x$ is deleted, and the
edges and vertices of~$\Gamma_x$ are added. This turns~$P'$ into a
  stratified 2-polyhedron. The set $M\setminus P'$ is a
disjoint union of $M\setminus P$ and an open 3-ball. We provide the
regions of~$P'$ lying in~$P$ with the induced orientation and orient
the regions of~$P'$ lying in~$\partial B $ in an arbitrary way. This
turn~$P'$ into a skeleton of~$M$. Since  all edges of~$P$ are
trivalent, the   graph~$\Gamma_x$
    is trivalent, i.e., every vertex of~$\Gamma_x$ is incident to   three half-edges
of~$\Gamma_x$. Therefore  all new vertices of~$P'$ are standard.
Below we construct a sequence of primary moves~$P'\to P$. The
inverse sequence transforms~$P$ in~$P'$. Blowing up~$P$ at all
non-standard vertices, we transform~$P$ into a s-skeleton.

To construct  a sequence of primary moves   $P'\to P$, pick a
maximal tree in $\Gamma_x \subset (P')^{(1)}$ and collapse it to a
point by several $T_2$-moves. This   gives a new skeleton~$P''$
of~$M$. The  edges of~$P''$ lying in $\partial B$ are trivalent
loops forming a wedge of  $n\geq 1$  circles based at a point $y\in
\partial B$. At least one of these loops, $S$, bounds a
disk $D\subset
\partial B$ in the complement of the other loops. Let $b \subset
\partial B \setminus \Int (D)$ be the region of $P''$ adjacent to
$S$ from the exterior, and let $b'\subset  (M-B)\cup S $ be the
third region of~$P''$ adjacent to~$S$. If $n\geq 2$, then
applying~$T_3^{-1}$ to~$b$ at the vertex~$y$  of~$P''$, we   change
the way in which~$b$ is glued to the rest of~$P''$ so that~$\partial
b$ does not pass along~$S$ anymore. Under this move, the region~$D$
of~$P''$ unites with~$b'$. After the move, the regions of $P''\cap
\partial B$ distinct from $D$
  form a 2-sphere meeting the rest of the skeleton along a wedge
of $n-1$ loops. Continuing by induction we reduce ourselves to the
case $n=1$. In this case,   apply $(T_4')^{-1}$ once. This gives a
skeleton   isotopic to~$P$. \end{proof}

\subsection{Proof of Lemma~\ref{lem-momoves}}\label{proof-lem}
 In view of Lemma~\ref{lem-primmoves}, we need only to prove  that any
  two s-skeletons   of~$M$ can be related by primary
  moves. By \emph{MP-moves}   on s-skeletons
(for Matveev and Piergallini), we mean the moves $T^{2,1}$,
$T^{1,2}=(T^{2,1})^{-1}$, and $ ( T''_4)^{\pm 1} $,   see
Figure~\ref{fig-32moves} for~$T^{2,1}$. All MP-moves transform
s-skeletons into s-skeletons and  are compositions of primary moves.
Therefore it is enough to show that any two s-skeletons of~$M$ are
related by MP-moves.
 Applying if necessary   $T''_4$, we can ensure that    given   s-skeletons have $\geq 2$ vertices, and that their complements in~$M$ consist
   of the same number of open 3-balls. By Theorem~1.2.5
  of~\cite{Mat1}, the  special   2-polyhedra underlying these s-skeletons     can be related   by a
finite sequence of     moves $T^{1,2}$, $T^{2,1}$.    This implies
our claim   up to the choice of orientation of the regions. The
latter indeterminacy can be eliminated because for any region~$r$ of
a s-skeleton~$P$, there is a sequence of
  MP-moves  transforming~$P$ into~$P_-$, where~$P_-$ is~$P$ with
opposite orientation in~$r$  and the same orientation in all other
regions. Indeed,  the branches of $P$ adjacent to the sides of~$r$
form a cylinder neighborhood of~$\partial r$ in $P- \Int (r)$.
Pushing~$r$ in a normal direction so that~$\partial r$ sweeps  half
of this cylinder, we obtain    a solid cylinder $
 r\times [0,1]\subset M$ meeting~$P$ at $r\cup (\partial r\times [0,1])$, where $r= r\times \{0\} $. Set $r'= r\times \{1\} $. Then
 $P'=P\cup r'$ is a s-skeleton of~$M$, where the regions of~$P'$ contained in~$P$ receive the induced orientation and
 the orientation of the region  $ r'$ is opposite to that of~$r$. We can transform~$P'$ into~$P$ by
 MP-moves  pushing $ \partial r'$ inside $r$ and eventually
 eliminating~$r'$. This transformation involves one move
$T^{1,2}$, several moves  $T^{2,1}$, and one move~$(T''_4)^{-1}$ at
the end.
 A similar elimination of~$ r$ from~$P'$ gives~$P_-$. Hence~$P$ and~$P_-$ are related by
MP-moves.

\begin{figure}[h,t]
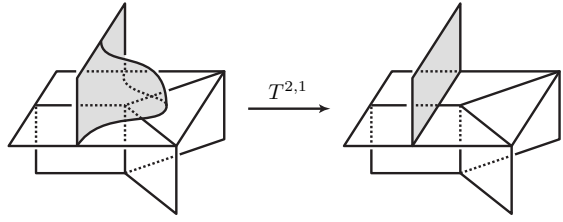

\begin{center}
\psfrag{T}[Bc][Bc]{\scalebox{.9}{$T^{2,1}$}}  \rsdraw{.45}{.9}{move23}
\end{center}
\caption{The move $T^{2,1}$}
\label{fig-32moves}
\end{figure}

\subsection{Proof of Theorems~\ref{thm-state-3man-smipl} and~\ref{thm-state-3man}} The state sum $|M|_\cc$ does not depend
on the choice of the representative set~$I$   by the naturality of
$\inv_{\cc}$ and of the contraction maps.  By
Lemma~\ref{lem-momoves}, to prove the rest of the theorems, we need
only to prove the invariance of the right-hand side of
Formula~\eqref{eq-simplstatesum+} under the
 moves $T_1-T_4$ on $P$.

 The move $P \stackrel {T_1}{\to} P'$ involves two distinct vertices  of~$P$ and modifies their link graphs by adding a new
 vertex $u$ (respectively, $v$)
  inside an edge.   The colorings of~$P'$ assigning different colors to the
  regions of~$P'$ lying on different sides of the new edge contribute zero
  to the state sum by Lemma~\ref{lem-calc-diag}(a) (there are no such colorings if these regions coincide).
  The colorings of~$P'$ assigning the same color $i\in I$ to
  these
  regions contribute the same as the colorings of~$P$ assigning~$i$ to the region of~$P$  containing the new
 edge:
 \begin{equation}\label{eq-computation}
\psfrag{i}[Bc][Bc]{\scalebox{.9}{$i$}}\psfrag{u}[Bc][Bc]{\scalebox{.9}{$u$}}\psfrag{v}[Bc][Bc]{\scalebox{.9}{$v$}} *_{u,v}\;
\inv_\cc \!\begin{pmatrix}
\;  \rsdraw{.45}{.9}{iidual3}\,
\;\;\end{pmatrix} = \;\;\psfrag{i}[Bc][Bc]{\scalebox{.9}{$i$}}  \inv_\cc \!\begin{pmatrix}
\; \rsdraw{.45}{.9} {iidual2b} \; \; \end{pmatrix} = \;(\dim(i))^{-1}\;  \inv_\cc \!\begin{pmatrix}
\; \rsdraw{.45}{.9}{iidualb}\; \end{pmatrix} \;.
\end{equation}
Here the first equality follows from Lemma~\ref{lem-calc-diag}(d)
and the second equality   can be deduced from this lemma or
proved directly using   that $\Hom_{\cc} (i\otimes i^*,\un)=\kk\,
\lev_i$ and $\Hom_{\cc} (\un, i\otimes i^* )=\kk \,\lcoev_i$.
 The factor $\dim (i)$ on the right-hand side of~\eqref{eq-computation} is compensated by the
change in the Euler
 characteristics of the regions.

 The invariance under $T_2$ follows from    Lemma~\ref{lem-calc-diag}(d). The invariance under $T_3$ follows from
  Lemma~\ref{lem-calc-diag}(c) with two vertical strands on the left-hand side. Here $i\in I$ is the color of the disk
  region created by the move.  The invariance under~$T_4$ follows from     Lemma~\ref{bubbleidentity}. Here $i\in I$ is the color of the big region
  where the move proceeds  and $k,l\in I$ are the colors of the small disks created by
 the move. The   factor  $\dim (i) \dim (\cc) $ is compensated by the change in
the number of components of $M\setminus P$  and in the Euler
 characteristic. We  use the equality $\ast_e
 (\inv_{\cc} (\Gamma_v)) = N^{\un}_{i\otimes k\otimes l}$,  where
 $e$ and $v$ are respectively the   edge and the vertex forming the boundary of the small disks.

\subsection{Remarks}\label{rem-onemoreinv} 1. The invariant $ \vert
 M \vert_\cc$ of a closed connected oriented 3-manifold~$M$ can be generalized to an arbitrary spherical fusion category~$\cc$ (without the assumption that $\dim(\cc)$ is invertible in~$\kk$).  Indeed, the right-hand side of
Formula~\eqref{eq-simplstatesum+}  is the product of  $(\dim
(\cc))^{-\vert P\vert}$ and a certain sum which we denote
$\Sigma_\cc(P)$. The definition of $\Sigma_\cc(P)\in \kk$
applies to an arbitrary spherical fusion category~$\cc$.   Let us call a special skeleton
 $P\subset M$ an \emph{s-spine} if $M\setminus P$ is an open ball and $P$ has at least two vertices.   By
 \cite{Mat1}, $M$  has a s-spine~$P$ and any two s-spines of~$M$ can be related by  the   moves $T^{1,2}$, $T^{2,1}$ in the class of s-spines. The arguments above imply that   $\Sigma_\cc(P)$ is preserved under these moves.
  Therefore   $\vert \vert
 M\vert\vert_\cc= \Sigma_\cc(P)$   is a
 topological invariant of~$M$. If $\dim(\cc)$ is invertible, then $   \vert \vert
 M\vert\vert_\cc= \dim (\cc)\,  \vert
 M \vert_\cc$.

 2. A stratified 2-polyhedron $P'$ is a \emph{subdivision} of a
 stratified 2-polyhedron~$P$ if they have the same underlying 2-polyhedron, all vertices of~$P$ are among the
 vertices of~$P'$, and all edges of~$P$ are unions of edges
 of~$P'$. Then $P'\setminus (P')^{(1)}\subset P\setminus
 P^{(1)}$ and therefore an orientation of~$P$ induces an orientation
 of~$P'$. If $P$ is a skeleton of a closed 3-manifold $M$, then
 any subdivision~$P'$  of $P$ is also a skeleton of $M$. As an exercise,
 the reader may relate $P$ and $P'$ by the moves $T_1^{\pm 1}$,
 $T_2^{\pm 1}$.

 \section{Skeletons in the relative case}\label{sec-scerel}

We study   skeletons of   3-manifolds with boundary and their
transformations.

\subsection{Skeletons of pairs}\label{sec-skes} Let $M$ be a compact 3-manifold (with boundary).    Let~$G$ be an
oriented graph in~$\partial M$ such that all vertices of~$G$ have
valence $\geq 2$. (A graph   is \emph{oriented}, if all its edges
are oriented.) A \emph{skeleton} of the pair $(M,G)$ is an oriented
  stratified 2-polyhedron $P\subset M$ such that
\begin{enumerate}
  \renewcommand{\labelenumi}{{\rm (\roman{enumi})}}
    \item  $P\cap
\partial M= \partial P= G$;

\item every vertex $v$ of $G$  is an endpoint of a unique edge
$d_v$ of $P$ not contained in~$\partial M$; moreover,
$d_v\cap\partial M=\{v\}$  and  $d_v$ is not a loop;

\item every edge $a$ of $G$  is an edge of~$P$  of valence 1; the
only region~$D_a$ of~$P$ adjacent to~$a$ is   a closed 2-disk,
$D_a\cap \partial M= a $,  and the orientation of~$D_a$ is
compatible with that of~$a$ (see Section~\ref{sec-Io3m} for
compatibility of orientations);

\item $M\setminus P$ is a disjoint union of a finite collection of
open 3-balls and a 3-manifold homeomorphic to   $(\partial M
\setminus G) \times [0,1)$ through a homeomorphism extending the
identity map
\begin{equation}\label{ided}\partial (M\setminus P)=\partial M \setminus G=(\partial M
\setminus G) \times \{0\}.\end{equation}
\end{enumerate}

Conditions (i)--(iii) imply that in a neighborhood of~$\partial M$,
  a skeleton of $(M,G)$ is a copy of
  $G\times [0,1]$.
The primary moves
 $T_1^{\pm 1} -  T_4^{\pm 1}$ on   skeletons of closed 3-manifolds extend to skeletons~$P$ of   $(M,G)$ in the obvious way.
 These moves  (and all other moves on skeletons considered below)     keep    $\partial P=G $ and    preserve the skeletons in a
neighborhood of their boundary~$G$. In particular, the move $T_1$
 adds
 an
 edge with  both endpoints in $\Int(M)$, the move~$T_2$ collapses an
 edge contained in $\Int(M)$, etc.   Ambient isotopies of   skeletons in~$M$ keeping the boundary pointwise are  also viewed as primary moves.

\begin{lem}\label{lem-skeletons}
Every pair (a compact  orientable 3-manifold~$M$, an oriented
graph~$G$ in~$\partial M$ such that all vertices of~$G$ have valence
$\geq 2$) has a skeleton. Any two skeletons of $(M,G)$ can be
related by primary moves in~$M$.
\end{lem}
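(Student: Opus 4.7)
\emph{Existence.} I would build $P$ from a triangulation of $M$ adapted to $G$. First I would pick a PL triangulation $t$ of $M$ in which $G$ appears as a subcomplex of $t|_{\partial M}$; such a $t$ exists by standard PL techniques (first triangulate $G$, extend to a triangulation of $\partial M$, then extend over $M$). Using the dual cellular decomposition $t^*$ of $t$, with due attention to the boundary, I would form the interior dual 2-skeleton $(t^*)^{(2)} \cap \Int(M)$ as the ``bulk'' of $P$; as in the closed case this partitions $\Int(M)$ into open 3-balls dual to the vertices of $t$. To match the boundary data, I would then include, for each edge $a$ of $G$, a boundary-adapted dual 2-cell $D_a$ (a small disk in $M$ meeting $\partial M$ only along $a$) and, for each vertex $v$ of $G$, a dual 1-cell $d_v$ (a short arc from $v$ into $\Int(M)$). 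After adjusting the truncations so that $P = P_{\mathrm{bulk}} \cup \bigcup_a D_a \cup \bigcup_v d_v$ is a stratified 2-polyhedron with $\partial P = G$, and after orienting the regions arbitrarily, a direct local inspection will verify conditions (i)--(iv); the collar component of $M \setminus P$ arises as the region dual to the vertices of $t$ lying on $\partial M \setminus G$.

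\emph{Uniqueness.} My strategy is to adapt the proof of Lemma~\ref{lem-momoves} to the relative setting by capping off. For two skeletons $P, P'$ of $(M, G)$, axioms (i)--(iii) will force each to contain a canonical boundary-collar piece $N = G \cup \bigcup_a D_a \cup \bigcup_v d_v$ in a neighborhood of $\partial M$. I would first show that such an $N \subset M$ is determined up to ambient isotopy fixing $\partial M$ by the data $(M, G)$ alone, so that after an isotopy I may assume $P$ and $P'$ coincide on an open collar $U$ of $\partial M$. Next I would cap off each boundary component $\Sigma$ of $M$ by gluing a handlebody $H_\Sigma$ with $\partial H_\Sigma = \Sigma$, equipped with a fixed skeleton $Q_\Sigma$ extending the structure of $N$ across $H_\Sigma$. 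This produces a closed 3-manifold $\hat M$ and closed-manifold skeletons $\hat P = P \cup \bigcup_\Sigma Q_\Sigma$ and $\hat P' = P' \cup \bigcup_\Sigma Q_\Sigma$ that agree inside every cap. Lemma~\ref{lem-momoves} will then yield a sequence of primary moves from $\hat P$ to $\hat P'$ in $\hat M$.

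\emph{Main obstacle.} The delicate point will be to realize this sequence by primary moves inside $M$ without disturbing the skeletons inside the caps. Since each primary move $T_1^{\pm 1}$--$T_4^{\pm 1}$ is local, supported in an arbitrarily small ball, I expect a general-position argument to permit an ambient isotopy of each intermediate skeleton so that every move occurs in $\Int(M)$ while the boundary-collar piece $N$ is preserved throughout. If this direct approach runs into trouble at vertices where primary moves meet the caps, the robust alternative is to mirror the strategy of Lemma~\ref{lem-primmoves}: reduce any skeleton of $(M, G)$ by primary moves in $M$ to a ``relative special skeleton'' (special in Matveev's sense outside a fixed collar of $\partial M$), and then relate any two such relative special skeletons via relative Matveev--Piergallini moves, using Matveev's theorem applied to the 2-polyhedron obtained by collapsing the fixed collar piece. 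In either case, restricting the final sequence to $M$ will give the desired primary moves on $(M, G)$ connecting $P$ to $P'$.
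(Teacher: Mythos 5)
Your existence argument (triangulate so that $G$ is a subcomplex of $t|_{\partial M}$, take the interior dual 2-skeleton, and attach dual cells $D_a$, $d_v$ along $G$) is plausible and in the same spirit as the paper's construction, though the paper instead organizes this through the notion of a \emph{frame}: it builds $P(Q,i)=Q\cup i(G\times[0,1])$ from a frame $Q$ and a $Q$-collar $i$, which has the added advantage that the same construction feeds directly into the uniqueness proof. Your version would still need the care taken in the paper to make $P$ a stratified 2-polyhedron (genericity of $G$ against the dual 1-skeleton in $\partial M$) and to orient the regions so that axioms (ii)--(iv) hold, but there is no fundamental obstruction here.

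The uniqueness argument is where the gap is. Capping off $\partial M$ by handlebodies and invoking Lemma~\ref{lem-momoves} gives you a sequence of primary moves in $\hat M$ from $\hat P$ to $\hat P'$, but those moves carry the \emph{intermediate} skeletons wherever they please: nothing in Lemma~\ref{lem-momoves} keeps the caps rigid, and after even one move near $H_\Sigma$ the skeleton need not contain $Q_\Sigma$ or restrict to a skeleton of $(M,G)$. You correctly identify this as the main obstacle, but a ``general position'' argument cannot close it: you are not free to push a move elsewhere, since a move is applied at a specific configuration of the current skeleton, and the moves that enter the caps may be essential to Matveev's sequence. What you actually need is a \emph{relative} version of Lemma~\ref{lem-momoves} --- a statement that two skeletons of $\hat M$ agreeing on a fixed subpolyhedron are related by moves fixing that subpolyhedron --- and this is a genuinely new theorem, not a formal consequence of the absolute one. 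Your ``robust alternative'' (reduce to relative special skeletons, then apply a relative Matveev--Piergallini theorem) is indeed the correct route and is essentially what the paper does: it first proves everything for frames (Lemma~\ref{lem-skeletons1}), appealing to \cite{TV}, Corollary 6.4.C for the relative MP-calculus, and then reduces an arbitrary skeleton of $(M,G)$ to one coming from a frame by an explicit three-step blow-up procedure. Two further points you gloss over: (a) the independence of $P(Q,i)$ from the choice of collar $i$ requires an isotopy/transversality argument producing $T^{m,n}$ and lune moves, and (b) skeletons are \emph{oriented} 2-polyhedra, and the MP-calculus in \cite{TV} does not track orientations, so one must separately show (as the paper does at the end of Section~\ref{proof-lem}) that the orientation of any region can be flipped by moves. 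As written, your uniqueness proof does not go through.
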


We   prove this lemma in Section~\ref{sec-skes++} using the
 results of Section~\ref{sec-skes+} and  the theory
 of special skeletons which we briefly outline.  Suppose that the graph $G\subset \partial M$ is trivalent, i.e., all its vertices have valency $3$
  (possibly, $G=\emptyset$).
  A skeleton $P$ of   $(M,
G)$ is   \emph{special} or an {\emph {s-skeleton}} if all regions of
$P$ are 2-disks, all edges of~$P$ are trivalent, all vertices of~$P$
are incident to 4 half-edges, and every point of~$P\setminus G$ has
a neighborhood in~$P$ homeomorphic to an open subset of the
set~\eqref{eq-papillon}.   As in the proof of
Lemma~\ref{lem-primmoves}, we can transform  any skeleton~$P$ of
$(M, G)$
  into a s-skeleton  by primary moves applied away
from~$\partial P=G$. By \cite{TV}, Corollary 6.4.C,    any two
s-skeletons of $(M, G)$ can be related by    MP-moves and   lune
moves applied away from $G$. The MP-moves on s-skeletons of $(M, G)$
are defined as in Section~\ref{proof-lem} and the \emph{lune moves}
$\mathcal L^{\pm 1}$ are shown in Figure~\ref{fig-movesee}. These
moves on s-skeletons are allowed here only when they produce
s-skeletons (this is always the case for the MP-moves and ${\mathcal
L}$)
 and preserve the orientation of the regions. In particular, the move~${\mathcal L}^{-1}$ is allowed only
when the orientations of two regions united by this move are
compatible and the union of these regions is a 2-disk. The
orientation of the small disk regions destroyed or created by the
MP-moves  and   ${\mathcal L}^{\pm 1}$
   may be arbitrary. Note   that Corollary 6.4.C of \cite{TV}
   does not handle orientations of the skeletons but remains true in the oriented
   setting:
   the
   indeterminacy in the choice of the orientation of the regions can be resolved   as in
   Section~\ref{proof-lem}.

\begin{figure}[h,t]
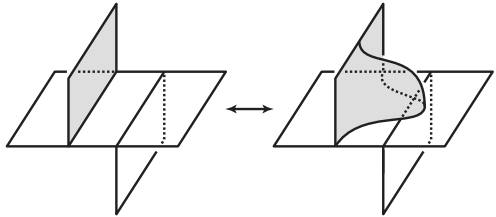

\begin{center}
 \rsdraw{.45}{.9}{move2}
\end{center}
\caption{The lune moves ${\mathcal L}^{\pm 1}$}
\label{fig-movesee}
\end{figure}

\subsection{Skeletons and frames}\label{sec-skes+} Fix a compact
3-manifold~$M$. We assume   that~$M$ is oriented and  endow
$\partial M$ with the induced orientation.

A \emph{skeleton} of~$M$ is a skeleton of the pair $(M, \emptyset)$
in the sense of Section~\ref{sec-skes}. (For
  closed~$M$,
this notion is the same as in Section~\ref{sect-skeletons}.) A
skeleton $Q $ of~$M$ is a \emph{frame}
  if there is an
embedding $i\co  \partial M\times [0,1]\to M$ extending the
identification  $\partial M \times \{0\}=\partial M$ such that
  $$Q\cap i( \partial M\times [0,1])=i (  \partial M\times
  \{1\}),$$ and the orientation of  the regions of~$Q$ contained in $i (  \partial M\times
  \{1\})$    is induced by that of~$\partial M$ via $i$.
Such an embedding~$i$ is called a \emph{$Q$-collar}. By the
definition of a skeleton, all components of $M\setminus (Q\cup i(
\partial M\times [0,1]))$ are open
  3-balls.

 Not all skeletons of~$M$ are frames. We illustrate this claim with an example.
  Consider  an unknotted torus $T=S^1\times S^1$
  in a closed 3-ball~$B$  and add
to~$T$ two  disks in~$B$ lying on different sides of~$T$ and bounded by the loops $S^1\times \{s\}$ and $\{s\} \times S^1$, where $s\in S^1$.
   For any orientation  of the regions, the   resulting oriented 2-polyhedron is a   skeleton   of~$B$ but not a
   frame.

  For a skeleton $Q\subset \Int(M)$ of~$M$, denote
  by~$\widehat Q$ the union   of~$Q$ with all open ball components of $M\setminus
  Q$. The skeleton~$Q$ is a frame if and only if  $\widehat Q$
   is
   a  3-manifold    with boundary and the orientation of all regions of~$Q$ contained
   in $\partial \widehat Q$ is induced by the orientation of~$M$
   restricted to~$\widehat Q$.
 The   surface
     $Q^+=\partial \widehat Q \subset Q$ is homeomorphic to~$\partial M$  and equal to $i (
\partial M\times
  \{1\})$ for any $Q$-collar~$i$.
A local analysis shows that any edge  of~$Q$ meeting~$Q^+$ either
  lies in~$Q^+$ or meets~$Q^+$ in one or two endpoints.   Every
  vertex of $Q$ lying in $Q^+$ is incident to at least one edge of~$Q$ lying in $Q^+$. Therefore the set  $Q^+\cap Q^{(1)}$
  is a graph whose edges and vertices  are the  edges and vertices  of~$Q$ lying in   $Q^+$. (Recall that $Q^{(1)}$ is the
union of edges of~$Q$.)

 By \emph{primary moves} on a frame~$Q$ of~$M$, we   mean the primary moves on skeletons
 $T_1^{\pm 1} -  T_4^{\pm 1}$    applied inside~$M$ at  vertices of~$Q$ not lying in~$Q^+$ or at  edges of~$Q$ disjoint from
 $Q^+$. Such vertices and edges are surrounded by the ball components of $M\setminus Q$. Therefore these
 moves
 do not modify   $\widehat Q$ and   produce frames of~$M$.

 By \emph{generalized MP-moves} on   frames, briefly \emph{GMP-moves},  we mean the moves
$\{T^{m,n}\}_{m,n}$,  $ (T''_4 )^{\pm 1} $  introduced in
Section~\ref{proof-lem--} and the   \emph{lune  moves} $\mathcal
L^{\pm 1}$ shown in Figure~\ref{fig-movesee}. These moves are
allowed in this context only when they produce frames
 and preserve the orientation of the regions. In particular, the move~${\mathcal L}^{-1}$ is allowed only
when the orientations of two regions united by this move are
compatible. The orientation of the small disk regions destroyed or
created by the moves
   may be arbitrary.

As we know, all GMP-moves on frames except possibly the lune moves
expand as compositions of primary moves on skeletons though the
intermediate skeletons may not be frames. The
 lune moves  also expand  as   compositions of primary
moves on skeletons; we leave it to  the reader as an exercise.

\begin{lem}\label{lem-skeletons1}
 The manifold $M$ has a frame. Any two frames of~$M$
can be related by a finite sequence of primary moves and GMP-moves
in the class of frames.
\end{lem}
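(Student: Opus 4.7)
The argument has two parts: producing a frame, and relating any two frames.

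\textbf{Existence.} Fix a collar embedding $i\co \partial M\times[0,1]\hookrightarrow M$ and set $M'=\overline{M\setminus i(\partial M\times[0,1))}$, a compact 3-manifold diffeomorphic to $M$ with $\partial M'=i(\partial M\times\{1\})$. I choose a triangulation of $M'$ having $\partial M'$ as a subcomplex and let $Q$ be its 2-skeleton, oriented on regions contained in $\partial M'$ via $i$ so as to match the induced orientation of $\partial M$, and arbitrarily on the remaining regions. Then $M\setminus Q$ is the disjoint union of the open 3-balls of $M'\setminus Q$ (one per tetrahedron) and the open collar $i(\partial M\times[0,1))$, and by construction $Q\cap i(\partial M\times[0,1])=i(\partial M\times\{1\})$. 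Hence $Q$ is a frame of $M$ with $Q$-collar $i$.

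\textbf{Uniqueness, step 1.} Let $Q_0,Q_1$ be frames with $Q$-collars $i_0,i_1$. The submanifolds $\widehat{Q}_j\subset M$ have boundaries $Q_j^+=i_j(\partial M\times\{1\})$, each an embedded parallel copy of $\partial M$ inside $M$. Since these two copies are ambient isotopic in $M$ (slide one along the collar of $\partial M$ until it coincides with the other), and ambient isotopies are included among primary moves, after such an isotopy applied to $Q_0$ we may assume $\widehat{Q}_0=\widehat{Q}_1=:M_0$, so that $Q_0^+=Q_1^+=\partial M_0$ coincide as oriented surfaces.

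\textbf{Uniqueness, step 2.} The two frames now induce possibly different stratifications $G_0,G_1$ of the common surface $\partial M_0$, determined by the edges and vertices of $Q_0,Q_1$ lying on $\partial M_0$. I bring $G_0$ and $G_1$ into coincidence by GMP-moves whose effect on $\partial M_0$ realizes a sequence of standard surface-stratification moves: a lune move $\mathcal{L}^{\pm1}$ performed at a bigon lying in $\partial M_0$ adds or removes a pair of boundary edges; a move $T^{m,n}$ performed at a vertex of $Q_j^+\cap Q_j^{(1)}$ changes its valence pattern; and $(T''_4)^{\pm1}$ can create or destroy small disk regions. Invoking the classical fact that any two stratifications of a closed oriented surface are related by a finite sequence of such elementary modifications (together with ambient isotopy), I reduce to the case $G_0=G_1=:G$. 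Throughout, these GMP-moves preserve the frame property because the newly created or destroyed cells sit on $\partial M_0$ with the prescribed orientation, and the ball decomposition of $M\setminus Q_j$ is unaffected.

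\textbf{Uniqueness, step 3.} With $\widehat{Q}_0=\widehat{Q}_1=M_0$ and $Q_0\cap\partial M_0=Q_1\cap\partial M_0=G$, the complements $P_j=\overline{Q_j\setminus\partial M_0}$ ($j=0,1$) are skeletons of the pair $(M_0,G)$ in the sense of Section~\ref{sec-skes}: the 2-disk regions of $Q_j$ adjacent to edges of $G$ play the role of the disks $D_a$ in condition (iii), and the ball components of $M_0\setminus P_j$ are those of $M\setminus Q_j$. Applying Lemma~\ref{lem-skeletons} to $(M_0,G)$ yields a finite sequence of primary moves in $M_0$ (fixing $G$ pointwise) relating $P_0$ to $P_1$. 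These moves act in $\widehat{Q}\setminus Q^+$ and therefore qualify as primary moves on the frames, producing the desired chain $Q_0\rightsquigarrow Q_1$. The main obstacle is step 2: one must verify that the GMP-moves listed in Section~\ref{sec-skes+} really do restrict to a generating set for the equivalence relation on stratifications of $\partial M_0$, and that each such move can be carried out on $Q_j$ without destroying the frame condition (orientation compatibility and the ball-plus-collar decomposition of $M\setminus Q_j$).
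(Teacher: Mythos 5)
Your existence construction and uniqueness step 1 are sound (the two collar complements are ambient isotopic rel $\partial M$, and ambient isotopies are allowed as primary moves), but the remainder of the uniqueness argument has gaps that I do not see how to repair within your outline.

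Step 3 is circular and, independently, false as stated. It is circular because the proof of Lemma~\ref{lem-skeletons} given in Section~\ref{sec-skes++} \emph{uses} Lemma~\ref{lem-skeletons1}; the paper's own proof of Lemma~\ref{lem-skeletons1} is careful to use only the relation between skeletons of a pair with a \emph{trivalent} boundary graph, which is established independently in Section~\ref{sec-skes} from the Matveev--Piergallini moves and \cite{TV}, whereas your $G=Q^+\cap Q^{(1)}$ is not trivalent in general. Moreover the assertion that $P_j=\overline{Q_j\setminus\partial M_0}$ is a skeleton of $(M_0,G)$ already fails in the simplest example: for $M=D^3$ and the frame $Q=Q^+$ a concentric $2$-sphere with no edges or vertices, one has $G=\emptyset$, $P_j=\emptyset$, and $M_0$ a closed $3$-ball, which is not a disjoint union of open $3$-balls and a product collar, so condition (iv) of Section~\ref{sec-skes} fails. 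More generally, an edge of $Q_j$ lying in $Q_j^+$ may have several inward branches, the regions of $P_j$ adjacent to $G$ need not be closed $2$-disks meeting $\partial M_0$ only in a single edge (condition (iii)), and a vertex of $G$ need not have a unique outgoing edge (condition (ii)). The definition of a skeleton of a pair requires an explicit collar between the polyhedron and $\partial M_0$, which your $P_j$ does not have.

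Step 2 is also incomplete, as you yourself flag. The GMP-moves of Section~\ref{sec-skes+} are $3$-dimensional modifications of $Q$; to realize a prescribed change of the $2$-dimensional stratification of $Q^+$ one must exhibit a GMP-move applied at $Q^+$ with exactly that surface-level effect, and verify that it produces a frame with compatible orientations and unchanged $\widehat{Q}$. This is nontrivial and not addressed. The paper's route avoids both steps 2 and 3 at once: it inserts a two-layer polyhedron $R=Q\cup i(F\times[\tfrac12,1])\cup i(\partial M\times\{\tfrac12\})$, where $F$ is the fixed trivalent graph formed by the vertices and edges of $(\partial t)^*$, verifies directly that $R$ is a $t$-frame, and collapses $R$ back to $Q$ by GMP-moves. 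In this way one never has to compare or normalize the stratifications of the surfaces $Q_j^+$, and only the (already available) trivalent case of relating skeletons of a pair is used.
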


\begin{proof}   Fix a  triangulation $t$ of
$M$ and denote~$\partial t$ the induced triangulation of~$\partial
M$. The triangulation~$t$ gives rise to a dual
  cellular decomposition~$t^*$ of~$M$. It  is formed by the cells of the cellular decomposition~$(\partial t)^*$ of $\partial M$ dual to $\partial t$
  and  the
  cells   dual to the  simplices of~$t$ in~$M$. The 2-skeleton $ Q_t=(t^*)^{(2)} \subset M$ of $t^*$ is
a 2-polyhedron
  containing $\partial M$, and $M\setminus Q_t$ is the disjoint union of
  the open 3-cells of~$t^*$. The edges of $t^*$ form a stratification of $Q_t$.   We endow   all   regions of~$Q_t$ lying in $\partial M$
with the orientation induced by that of~$\partial M$. All other
regions of $Q_t$ are oriented in an arbitrary way. Pushing $Q_t$
inside $M$ we obtain a frame of $M$.

The construction of $ Q_t$ can be generalized as follows. Let~$F{}$
be the (trivalent)   graph in~$\partial M$ formed by the vertices
and edges of~$(\partial t)^*$. Any skeleton $P$ of the pair $(M,
F{})$ determines a 2-polyhedron $Q=Q(P)=P\cup
\partial M$. The edges of~$P\subset Q$ form a stratification of $Q$. We endow all  regions of~$Q$ lying in
$\partial M$ with the orientation induced by that of~$\partial M$.
The other regions of $Q$ inherit their orientation from $P$. Pushing
$Q$ inside $M$ we obtain a frame of $M$. The frames of~$M$  obtained
in this way from skeletons   of   $(M, F{})$ are called
\emph{$t$-frames}.

The discussion at the end of Section~\ref{sec-skes} shows that any
two skeletons of   $(M, F{})$ can be related by primary moves in the
class of skeletons of $(M, F{})$.
   Extending these moves to the associated $t$-frames in the
obvious way, we obtain that any two $t$-frames of~$M$ can be related
by primary moves   in the class of $t$-frames.

We show  now that any frame~$Q$ of~$M$ can be transformed into a
$t$-frame by     GMP-moves.    Pick a $Q$-collar $i\co
\partial M\times [0,1]\hookrightarrow M$. Let $V$ and $E$ be  the sets of
vertices and     edges of the graph $F{}$ respectively.
Deforming~$i$, we can ensure that the graph $F_i{}=i(F{}\times
\{1\})\subset Q^+$ is generic in the sense that its vertices
$\{i(v\times \{1\})\}_{v\in V}$ lie in $Q^+\setminus Q^{(1)}$ and
its edges $\{i(a\times \{1\})\}_{a\in E}$ are transversal to the
edges of  the graph $   Q^+\cap Q^{(1)}$.  Set
$$R=Q\cup i(F{}\times  [\frac{1}{2}, 1]) \cup i(\partial M\times \{\frac{1}{2}\}) .$$
We stratify the 2-polyhedron $R\subset \Int(M)$ as follows. The
points of $F_i{}\cap Q^{(1)}$ split the edges of the graphs $F_i{}$
and  $Q^+\cap Q^{(1)}$ into smaller subedges. For  edges of~$R$ we
take
 all these subedges   together with the edges of~$Q$ not lying
 in~$Q^+$
 and the arcs $\{i(v\times [\frac{1}{2}, 1])\}_{v\in V}$  and  $\{i(a\times \{\frac{1}{2}\})\}_{a\in E}$. Then the  vertices  of~$R$ are the vertices
of~$Q$ and  the   points of the sets $F_i{}\cap Q^{(1)}$  and
$\{i(v\times \{\frac{1}{2}, 1\}) \}_{v\in V}$.
 Clearly, $$R^{(1)}= Q^{(1)} \cup i(V\times [\frac{1}{2},1]) \cup i(F{}\times \{\frac{1}{2},1\})
 .$$
  We orient $R$ as follows:
the regions contained in~$Q$ inherit their orientation from~$Q$, the
orientation of the regions   contained in $ i(\partial M\times
\{\frac{1}{2}\})$ is induced by that of~$\partial M$, the regions
$\{i(a\times [\frac{1}{2},1])\}_{a\in E}  $  are oriented in an arbitrary
way.  It is clear that~$R$ is a $t$-frame of~$M$. Each region~$r$
of~$R$ contained in    $R^+=i(\partial M\times \{\frac{1}{2}\})$  is a
2-disk whose boundary can be pushed towards $Q^+$ and further
contracted into a small circle  by    GMP-moves in the class of
frames. At the end, this \lq\lq moving" disk can be eliminated via
the inverse bubble move. Note that the orientation of the regions
of~$R$ yield no obstructions to these moves because all regions
of~$R$ contained in~$Q^+$ are oriented coherently.     In this way,
eliminating consecutively all   regions of~$R$ contained in~$R^+$,
we can transform $R$ into $Q$ by   GMP-moves. The inverse sequence
of moves transforms~$Q$  into the $t$-frame~$R$. Now, the properties
of $t$-frames established above imply the claim of the lemma.
\end{proof}

\subsection{Proof of Lemma~\ref{lem-skeletons}}\label{sec-skes++}  It is enough to consider  the case where~$M$ is connected. The case $\partial M=\emptyset$
having been treated above, we    assume that $\partial M\neq
\emptyset$.
 Fix an orientation of~$ M$.
Denote by $V$ and $E$   the sets of vertices and edges of $G $
respectively.

In analogy with the construction of the polyhedron~$R$ at the end of
the proof of Lemma~\ref{lem-skeletons1}, we construct a skeleton of
$(M,G)$ from any frame $Q\subset M$.  Pick a $Q$-collar $i\co
\partial M\times [0,1]\hookrightarrow M$ such that the graph $G_i =i(G\times
\{1\})\subset Q^+$ is generic, i.e.,  its vertices $\{i(v\times
\{1\})\}_{v\in V}$ lie in $Q^+\setminus Q^{(1)}$ and its edges
$\{i(a\times \{1\})\}_{a\in E}$ are transversal to the edges of the
graph $Q^+\cap Q^{(1)}$. Set
 $$P=P(Q,i)=Q\cup i(G \times  [0, 1])   .$$ We  stratify the 2-polyhedron~$P$ as follows. The   points of $G_i\cap
Q^{(1)}$ split the edges of $G_i$ and the edges of $Q^+\cap Q^{(1)}$
into smaller subedges. In the role of  edges of~$P$ we take
 all these subedges   together with the edges of~$Q$ not lying
 in~$Q^+$
 and the arcs $\{i(v\times [0, 1])\}_{v\in V}$  and  $\{i(a\times \{0\})\}_{a\in E}$. The   vertices  of~$P$ are the vertices
of~$Q$    and  the   points of the sets $G_i\cap Q^{(1)}$ and
$\{i(v\times \{0, 1\}) \}_{v\in V}$.
 Clearly, $$P^{(1)}= Q^{(1)} \cup i(V\times [0,1]) \cup G \cup G_i
 .$$
  We orient $P$ as follows:
the regions contained in~$Q$ inherit their orientation from~$Q$; the
orientation of the regions  $\{i(a\times [0,1])\}_{a\in E}  $ is
induced by that of~$G$. Since all vertices of~$G$ have valency $\geq
2$, we have $\partial P=G$.  Clearly,~$P$ is a skeleton of~$(M,G)$.

A different choice of the $Q$-collar $i$ may lead to a different
skeleton. However, any two $Q$-collars  are isotopic in the class of
$Q$-collars. Making an isotopy $\{i_s\}_{s\in [0,1]}$ between them
transversal to $Q^+\cap Q^{(1)}$, we can ensure that for all but a
finite set of values of~$s$, the graph $G_{i_s}= i_s(G\times
\{1\})\subset Q^+$ is generic, and when~$s$ increases through each
of the exceptional values,  the   skeleton $P(Q,i_s)$  is modified
via $T^{m,n}$ or ${\mathcal L}^{\pm 1}$. The orientation of the
regions do not create obstructions to these moves because all
regions of $P(Q,i_s)$ contained in~$Q^+$ are oriented coherently. We
conclude that up to GMP-moves, the skeleton~$ P(Q,i) $ does not
depend on the choice of~$i$.

 Consider two frames $Q$ and $Q'$ of~$M$.  If~$  Q'$ is obtained
from~$Q$   by a  primary move, then $\widehat {Q'}=\widehat Q$ and
any $Q$-collar $i$ is also a $Q'$-collar. Then the skeleton
$P(Q',i)$ is obtained from $P(Q,i)$ by the same primary move.
Suppose that~$ Q'$ is obtained from~$Q$ by a GMP-move~$T$. This move
proceeds inside a small neighborhood $U\subset Q$ of an embedded arc
in~$Q$. Deforming a $Q$-collar~$i$ in the class of $Q$-collars, we
can ensure that $G_i   \cap U =\emptyset$. Then an appropriate
deformation of $i$ yields a $Q'$-collar~$i'$ such that $G_i
 =G_{i'}$. Clearly,  the skeleton $P(Q',i')$ is obtained from
$P(Q,i)$ by the same move~$T$. Since~$T$ expands  as a product  of
primary moves, the skeletons $P(Q,i)$ and $P(Q',i')$ are related by
primary moves. By Lemma~\ref{lem-skeletons1}, the skeletons of
$(M,G)$ associated with any two frames of~$M$ are related by primary
moves.

To  complete the proof, it is enough to show that any skeleton~$P$
of $(M,G)$ can be transformed by   primary moves in a skeleton of
$(M,G)$ associated with a frame. Recall the edges $\{d_v\}_{v\in V}$
and the regions $\{D_a\}_{a\in E}$ of~$P$ from the definition of a
skeleton.  The transformation of~$P$ proceeds in three steps. At
Step~1 we blow up all vertices of~$P$ lying in $\Int (M)$ (including
the standard vertices)  as  at Step~5 of the proof of
Lemma~\ref{lem-primmoves}. It is explained there that this blowing
up can be achieved by primary moves. We need to tune up a little
this procedure: the regions of the
  resulting skeleton,~$P_1$, lying on the small 2-spheres created by blowing
   up are endowed with orientation induced by that of~$M$ restricted to the small
  3-balls bounded by these spheres (the orientation of all other regions of~$P_1$ is induced by that of~$P$).
The skeleton~$P_1$ has edges of two types: the
  \lq\lq short" edges lying on the small spheres (in the
proof of Lemma~\ref{lem-primmoves}, these are the edges
of~$\Gamma_x$ lying in  $\partial B_x$) and the \lq\lq long" edges
obtained by shortening the original edges of~$P$ at their endpoints.
In particular, the edges $\{d_v\}_{v\in V}$ of~$P$ give rise    to
long edges $\{d'_v\}_{v\in V}$ of~$P_1$. The long edges of $P_1$ are
pairwise disjoint. A  long edge~$e$ distinct from $\{d'_v\}_{v\in
V}$ has an open ball neighborhood $U_e$ in~$M$
  that can be identified with~$\RR^3$ so that
$$P_1\cap U_e= (\RR^2 \times \{0, 1 \}) \cup (Y_n\times [0,1]),$$
where $n\geq 2$ is the valence of $e$,
 $Y_n\subset \RR^2$ is a union of~$n $ rays   with
common origin~$O$, and $e=O\times [0,1]$. Let $D\subset \RR^2$ be a
2-disk centered at~$O$ and meeting $Y_n$ along~$n$ radii.
  We modify~$P_1$ by adding the cylinder $\partial D\times [0,1]\subset U_e$ surrounding~$e$.
   The orientation of the   regions lying on this cylinder is induced by that of~$M$ restricted to $  D\times [0,1]\subset U_e$
    while the orientation of all other regions is induced from that of~$P_1$.
  This
 modification can be   achieved by MP-moves and hence by
 primary  moves. We apply these modifications in disjoint
 neighborhoods of all long
edges of~$P_1$ distinct from $\{d'_v\}_{v\in V}$. The
  resulting skeleton,~$P_2$, has regions of two types: the
  \lq\lq small" or \lq\lq narrow" regions created by the previous  transformations at the vertices and   edges   and the
  \lq\lq wide" regions   obtained by  cutting
the original regions of~$P_2$ near their boundary. In particular,
the regions $\{D_a\}_{a\in E}$ of $P$ give rise   to slightly
smaller  regions $\{D'_a\}_{a\in E}$ of~$P_2$.   The wide regions
of~$P_2$ (as well as their closures) are pairwise disjoint. All
regions of~$P_2$ except $\{D'_a\}_{a\in E}$ lie in $\Int (M)$. For a
wide region~$r$ of~$P_2$ lying in $\Int (M)$, we can use MP-moves to
add to~$P_2$ a new region~$r'$ parallel to~$r$, cf.\ the end of the
proof of Lemma~\ref{proof-lem}. This can be done so that the
orientations of~$r$ and~$ r'$ are induced by that of~$M$ restricted
to the solid cylinder $r\times [0,1]$   between~$r$ and~$r'$. These
modifications are applied to all wide regions of~$P_2$ except
$\{D'_a\}_{a\in E}$ inside their   disjoint neighborhoods in~$M$.
This gives a skeleton,~$P_3$, of $(M,G)$. We claim that~$P_3$ is
associated with a frame of~$M$. To see this, consider the
2-polyhedron $Q\subset P_3\cap \Int(M)$ obtained from~$ P_3$ by
removing  the graph $G=\partial P=\partial P_3$, the interiors of
the edges $\{d'_v\}_{v\in V}$, and the interiors of the regions
$\{D'_a\}_{a\in E}$.  The 2-polyhedron~$Q$ can be stratified so that
the graphs~$Q^{(1)} $ and $Q \cap \cup_a \overline {D'_a}$ have only
double transversal crossings and the union of these graphs is equal
to  $Q \cap P_3^{(1)}$. All regions of~$Q$ are regions (or unions of
regions) of~$P_3$ and inherit orientation from~$P_3$. This turns $Q$
into an oriented stratified 2-polyhedron. Since~$P_3$ is a skeleton
of $(M,G)$, the polyhedron~$Q$ is a skeleton of~$M$. We claim
that~$Q$ is a frame. Indeed, the set $\widehat Q\subset M$ is the
union of a regular neighborhood of~$P$ in~$M$ with all 3-ball
components of $M\setminus P$. This union  is a 3-manifold with
boundary, and the orientation of all regions of~$Q$ contained
   in $\partial \widehat Q$ is induced by the orientation of~$M$
   restricted to~$\widehat Q$.  Now, it
is easy to see from the definitions that there is a $Q$-collar $i\co
\partial M\times [0,1]\hookrightarrow M$ such that
$i(\{v\}\times [0,1])=d'_v$ for all $v\in V$ and  $i(a\times
[0,1])=D'_a$ for all $a\in E$. Then $P=P(Q,i)$ is the skeleton
associated with~$Q$.

\section{The state-sum TQFT}\label{sec-TQFT}

We construct in this section the state sum TQFT associated with any spherical fusion category with invertible dimension.

\subsection{Preliminaries on  TQFTs}\label{Preliminaries on  TQFTs} For convenience of the reader, we outline   a definition of a 3-dimensional  Topological Quantum
Field Theory  (TQFT)  referring for details to  \cite{Tu1}. We first define a  category of 3-dimensional cobordisms $\mathrm{Cob}_3$.
%%%By a surface in this context, we mean  a closed oriented surface (possibly, empty) endowed %%%with a total order on the set of connected components.
   Objects of $\mathrm{Cob}_3$ are closed oriented surfaces. A morphism   $\Sigma_0 \to \Sigma_1$ in $\mathrm{Cob}_3$ is represented by  a pair $(M,{h})$,  where $M$ is
 a compact oriented 3-manifold and ${h}$ is      an orientation-preserving homeomorphism $  (-\Sigma_0) \sqcup\Sigma_1 \simeq  \partial M$.
Two such pairs $(M, {h}\co  (-\Sigma_0) \sqcup\Sigma_1 \to \partial M )$ and
$(M', {h}' \co  (-\Sigma_0) \sqcup\Sigma_1 \to \partial M')$ represent the same morphism $\Sigma_0\to \Sigma_1$ if there is an orientation-preserving homeomorphism  $F\co  M \to M'$ such that ${h}'=F{h}$.  The identity morphism of a  surface $\Sigma$ is represented by the cylinder $ \Sigma \times [0,1]$ with the product orientation and the tautological identification of the boundary with $(-\Sigma) \sqcup\Sigma$. Composition of morphisms in $\mathrm{Cob}_3$ is defined through gluing of cobordisms:    the composition of morphisms $(M_0, {h}_0) \co \Sigma_0 \to \Sigma_1$ and $(M_1, {h}_1) \co \Sigma_1 \to \Sigma_2$  is    represented by the pair $(M,{h})$, where $M$ is the result of gluing  $M_0$ to $M_1$ along ${h}_1 {h}_0^{-1} \co  {h}_0(\Sigma_1)   \to {h}_1({\Sigma_1})$ and ${h}={h}_0 \vert_{\Sigma_0} \sqcup {h}_1 \vert_{\Sigma_2} \co (-\Sigma_0) \sqcup\Sigma_2 \simeq  \partial M$. The category $\mathrm{Cob}_3$ is   a symmetric monoidal category with tensor    product   given by   disjoint union.
%%%(The order on the sets of components of   surfaces is instrumental in the definition of the  %%%associativity and   commutativity constraints in $\mathrm{Cob}_3$.)
The unit object of $\mathrm{Cob}_3$ is the empty surface $\emptyset$ (which by convention has a unique orientation).

 Denote $\mathrm{vect}_\kk$  the category  whose objects are finitely generated projective $\kk$-modules and whose morphisms are $\kk$-homomorphisms of modules.  We view $\mathrm{vect}_\kk$ as a symmetric monoidal category with standard tensor    product and unit object $\kk$. A {\it 3-dimensional TQFT} is a symmetric monoidal functor $Z\co \mathrm{Cob}_3 \to \mathrm{vect}_\kk$. In particular, $Z(\emptyset)=\kk$,  $Z( \Sigma  \sqcup \Sigma') = Z(
\Sigma ) \otimes Z( \Sigma') $ for any  closed oriented
surfaces $\Sigma, \Sigma'$, and similarly    for  morphisms.

%%%For any
  %%%morphisms $(M_0, {h}_0) $, $(M_1, {h}_1) $, and $(M,{h})= ( M_1,{h}_1) \circ   ( M_0, {h}_0 )$ %%%as above,
 %%%$Z(M, {h} )=Z( M_1,{h}_1) \circ  Z( M_0, {h}_0 )$.

Each compact oriented 3-manifold $M$ determines two  morphisms $\emptyset \to \partial M$ and   $  -\partial M \to  \emptyset$ in $\mathrm{Cob}_3$. The associated   homomorphisms
$Z(\emptyset)=\kk \to Z
(\partial M)$ and $ Z
(-\partial M)\to  Z(\emptyset)=\kk$ are denoted $Z(M, \emptyset,  \partial M)$ and $Z(M, -\partial M, \emptyset)$, respectively. If $\partial M=\emptyset$, then
 $Z(M, \emptyset,  \partial M)=Z(M, -\partial M, \emptyset)\co  \kk \to \kk$ is multiplication by an element of $\kk$ denoted $Z(M)$.

The category     $\mathrm{Cob}_3$ includes as a subcategory  the category of closed oriented surfaces and (isotopy classes of) orientation-preserving homeomorphisms of surfaces.
%%(The homeomorphisms are  not required to preserve the order on the set of components.)
 Indeed,
 any such  homeomorphism  $f \co \Sigma \to \Sigma'$
determines  a morphism  $  \Sigma \to \Sigma' $ in $\mathrm{Cob}_3$ represented by the pair $(C =\Sigma' \times [0,1], h\co    (-\Sigma) \sqcup\Sigma'  \simeq\partial C)$, where
  $h  (x ) =(f(x), 0) $  for $x\in \Sigma$ and    $h(x') =(x', 1)$ for $x'\in \Sigma'$.    Restricting a   TQFT $Z\co   \mathrm{Cob}_3\to \mathrm{vect}_\kk$ to this  subcategory, we obtain the action of   homeomorphisms  induced  by $Z$.

An {\it isomorphism} of  3-dimensional TQFTs  $Z_1\to Z_2$  is a  natural monoidal isomorphism of   functors.
 Such an isomorphism is
  a system
of  $\kk$-isomorphisms $Z_1( \Sigma)\simeq  Z_2( \Sigma)$,  where $\Sigma$ runs over all closed oriented surfaces.
These  $\kk$-isomorphisms should be multiplicative with respect to
disjoint unions of surfaces  and commute   with the action of
  cobordisms (and in particular, of homeomorphisms).    For $\Sigma=\emptyset$, the isomorphism
$Z_1( \Sigma)\simeq  Z_2( \Sigma)$ should be  the
identity map $\kk\to \kk$. This implies that if two TQFTs $Z_1$, $Z_2$ are isomorphic, then $Z_1(M)= Z_2(M)$ for any closed oriented 3-manifold $M$.

\subsection{Invariants of  $I$-colored graphs}\label{sec-Io3m+}
  Fix up to the end of Section~\ref{sec-TQFT}  a   spherical fusion
category~$\cc$ over~$\kk$ such that $\dim(\cc)$  is invertible
in~$\kk$. Fix a
representative set $I$ of simple objects of $\cc$. We shall derive from  $ \cc$ and $ I$ a 3-dimensional TQFT.

By  an  \emph{$I$-colored graph}  in a
surface,  we mean a  $\cc$-colored graph such that the colors of all
edges belong to~$I$ and all vertices   have valence $\geq 2$. For
any compact  oriented 3-manifold~$M$ and any $I$-colored graph~$G$
in~$\partial M$, we   define   a topological invariant $|M, G| \in
\kk$   as follows. Pick a skeleton~$P\subset M$ of the pair $(M,G)$.
Pick  a map $c \co \Reg(P) \to I$ extending the coloring of $G$ in
the sense   that for every edge~$a$ of~$G$, the value of $c$ on the
region of~$P$ adjacent to~$a$ is the
 $\cc$-color of~$a$.
For every oriented edge $e$ of $P $,  consider the  \kt module
$H_c(e)=H(P_e)$, where~$P_e$ is the set of branches of~$P$ at~$e$
turned into a cyclic $\cc$\ti set as  in Section~\ref{sec-Io3m}. Let
$E_0$ be the set of oriented   edges of~$P$ with both endpoints in
$\Int(M)$, and let~$E_\partial$ be the set of edges of~$P$ with
exactly one endpoint  in $\partial M$ oriented towards this
endpoint. Note that every vertex~$v$ of~$G$ is incident
to  a unique edge $e_v$ belonging to $ E_\partial$ and $ H_c(e_v) = H_v(G^\opp;-\partial M)$, where   the
orientation of~$\partial M$ is induced by that
 of~$M$. Therefore
 $$
 \otimes_{e\in E_\partial}\, H_c(e)^\star=\otimes_v \, H_v(G^\opp;-\partial M)^\star=H(G^\opp;-\partial M)^\star.
 $$
%  By the last remark of Section~\ref{sect-graph},
%$$
%  H(G)=\otimes_v \, H_v(G;\partial M )\simeq\otimes_v \, H_v(G^\opp;-\partial M)^\star=H(G^\opp;-\partial M)^\star=\otimes_{e\in E_\partial}\, H_c(e)^\star,
%  $$
%where   the
%orientation of~$\partial M$ is induced by that
% of~$M$.
For  $e\in E_0$,  the equality $P_{e^\opp}=(P_e)^\opp$  induces a   duality between
 the modules
 $H_c(e)$, $H_c(e^\opp)$ and a
 contraction  $   H_c(e)^\star \otimes H_c(e^\opp)^\star \to\kk$.  This contraction does not depend on the orientation of~$e$
  up to permutation of the factors. Applying these contractions, we obtain a homomorphism
$$\ast_P \co \otimes_{e\in E_0 \cup E_\partial}\, H_c(e )^\star \longrightarrow  \otimes_{e\in E_\partial}\, H_c(e)^\star=H(G^\opp;-\partial M)^\star .$$
 As in Section~\ref{sect-skeletons},
any  vertex   $x$ of~$P $ lying in $\Int(M)$ determines an oriented
graph~$\Gamma_x$ on $S^2$, and the mapping $c$ turns $\Gamma_x$ into
an $I$-colored graph.    Section~\ref{sect-graph-S2} yields a tensor
$\inv_\cc (\Gamma_x) \in H_c(\Gamma_x)^*$. Here
 $H_c(\Gamma_x)= \otimes_e\, H_c(e)$, where~$e$ runs over all edges of~$P$ incident to~$x$ and oriented away from~$x$. The tensor product $\otimes_x \,\inv_\cc (\Gamma_x)$ over all
vertices~$x$ of~$P$  lying in $\Int(M)$  is a vector in
 $\otimes_{e\in E_0 \cup E_\partial}\, H_c(e )^\star $.

\begin{thm}\label{thm-state-3man+r}
For a  skeleton~$P$ of $(M,G)$, set
\begin{equation*} |M,G|=(\dim (\cc))^{-\vert P\vert} \sum_{c} \,\,  \left ( \prod_{r \in \Reg(P)} (\dim c(r))^{\chi(r)} \right ) \,
  {\ast}_P ( \otimes_x \,\inv_\cc (\Gamma_x)) , \end{equation*}
where   $\vert P\vert$ is the number of components of $M\setminus
P$,  $c$ runs over all maps $ \Reg(P) \to I$ extending the coloring
of $G$, and $\chi$ is the Euler characteristic. Then $|M,G|\in H(G^\opp;-\partial M)^\star$ does
not depend on the choice of~$P$.
\end{thm}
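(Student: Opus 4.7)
The plan is to adapt the proof of Theorems~\ref{thm-state-3man-smipl} and~\ref{thm-state-3man} given in Section~\ref{sec-skeletonsstatesums+} to the relative setting. By Lemma~\ref{lem-skeletons}, any two skeletons of $(M,G)$ can be related by the primary moves $T_1^{\pm 1}$--$T_4^{\pm 1}$ applied inside~$M$ while keeping $\partial P = G$ fixed pointwise. It will therefore suffice to check that the expression defining $|M,G|$ is invariant under each primary move, and that it is independent of the choice of the representative set~$I$. The latter follows, as in the closed case, from the naturality of $\inv_\cc$ under replacement of colors by isomorphic simple objects and from the observation that the contractions $\ast_e$ for $e\in E_0\cup E_\partial$ are natural in the same sense.

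The key observation for the invariance under $T_1,\ldots,T_4$ is that each move is local and takes place entirely in $\Int(M)$: the affected vertices, edges, and regions of~$P$ all lie in $\Int(M)$, so the boundary graph~$G$, the edges in~$E_\partial$, the regions whose closures meet $\partial M$, and the tensor factors contributing to $H(G^\opp;-\partial M)^\star$ are untouched. Consequently the local computations from Section~\ref{sec-skeletonsstatesums+} apply essentially verbatim. Specifically, invariance under~$T_1$ reduces to Lemma~\ref{lem-calc-diag}(a) combined with identity~\eqref{eq-computation}; invariance under~$T_2$ follows from Lemma~\ref{lem-calc-diag}(d); invariance under~$T_3$ from Lemma~\ref{lem-calc-diag}(c) with the appropriate number of vertical strands; and invariance under~$T_4$ from Lemma~\ref{bubbleidentity}. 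In each case the change in the Euler characteristics of the affected regions and in $|P|$ will precisely absorb the factors $\dim(i)$ and $\dim(\cc)$ produced by these lemmas, exactly as in the closed case.

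A point that requires a small additional check is that the sum defining $|M,G|$ runs over colorings $c\co \Reg(P) \to I$ that \emph{extend} the prescribed coloring of~$G$. Under any primary move, the modified regions all lie in $\Int(M)$ and none of them is adjacent to an edge of~$G$, so the boundary constraint on~$c$ is preserved and one obtains a natural bijection between admissible colorings before and after the move. Moreover, the regions $\{D_a\}_{a\in E}$ of $P$ adjacent to $\partial M$ and the edges $\{d_v\}_{v\in V}$ of $P$ meeting $\partial M$ remain rigidly fixed throughout.

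The main obstacle I anticipate is not conceptual but rather a bookkeeping issue: since $|M,G|$ now takes values in the multifactor module $H(G^\opp;-\partial M)^\star$ and the map $\ast_P$ contracts only along edges in~$E_0$ while leaving the factors indexed by $E_\partial$ free, I must verify that each local move modifies only tensor factors coming from interior edges and vertices. Once this is checked, the scalar identities of Section~\ref{sec-skeletonsstatesums+} upgrade to identities tensored with the identity on $H(G^\opp;-\partial M)^\star$, and independence from~$P$ follows.
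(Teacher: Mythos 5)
Your proof follows exactly the same strategy as the paper: invoke Lemma~\ref{lem-skeletons} to reduce to invariance under the primary moves, and observe that these are local modifications away from $\partial M$, so the scalar identities from the proof of Theorem~\ref{thm-state-3man} carry over tensored with the identity on the boundary factors. The paper states this in a single sentence; you have filled in the bookkeeping details correctly, including the point that the colorings extending $G$ are in bijection before and after each move.
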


\begin{proof} Since any two skeletons  of $(M,G)$ are related by primary moves, we need only to verify the invariance of
$|M,G|$ under these moves. This invariance is a   local property
verified exactly as in the proof of Theorem~\ref{thm-state-3man}.
\end{proof}

Though there is a canonical isomorphism $H(G^\opp;-\partial M)^\star\simeq H(G;\partial M)$ (see the last remark of Section~\ref{sect-graph}), we view  $|M,G|$ as an element of $ H(G^\opp;-\partial M)^\star$.

Taking  $G=\emptyset$, we obtain a scalar topological invariant
 $|M|_\cc= |M,\emptyset|\in H(\emptyset)^\star=\kk $ of $M$. This generalizes the invariant defined above  for closed $M$.

\subsection{Invariants of  3-cobordisms}\label{sec-Io3m++}
A {\it 3-cobordism} is  a triple $(M, \Sigma_0, \Sigma_1)$, where $M$ is a compact oriented 3-manifold and  $\Sigma_0, \Sigma_1$ are disjoint  closed oriented surfaces contained in  $\partial M$ such that
$\partial M = (-\Sigma_0) \sqcup\Sigma_1$ in the category of oriented manifolds. We call $\Sigma_0$ and $\Sigma_1$ the bottom base and the top base of $M$, respectively.

Consider  a 3-cobordism  $(M, \Sigma_0 ,  \Sigma_1)$ and  an $I$-colored graph $G_i\subset \Sigma_i$ for
$i=0,1 $.
 Theorem~\ref{thm-state-3man+r} yields
  a vector
  $$|M,G_0^\opp \cup G_1 | \in H(G_0 \cup  G_1^\opp, -\partial M)^\star= H(G_0, \Sigma_0)^\star \otimes H( G_1^\opp, -\Sigma_1)^\star.$$
The isomorphism $H( G_1^\opp, -\Sigma_1)^\star\simeq H( G_1, \Sigma_1)$ given by the last remark of Section~\ref{sect-graph} induces an isomorphism
$$
\Upsilon\co H(G_0, \Sigma_0)^\star \otimes H( G_1^\opp, -\Sigma_1)^\star \to \Hom_\kk\bigl (H(G_0, \Sigma_0),H( G_1, \Sigma_1)  \bigr ).
$$
Set $$|M, \Sigma_0, G_0,  \Sigma_1, G_1|= \frac{(\dim (\cc))^{\vert G_1\vert}}{\dim (G_1)} \,  \Upsilon \bigl(|M,G_0^\opp \cup G_1|\bigr)\co H(G_0;\Sigma_0) \to   H(G_1;\Sigma_1)
 ,$$
 where for an $I$-colored graph $G$ in a surface $
 \Sigma$, the symbol  $\vert G \vert$ denotes the number of components of $\Sigma \setminus G$ and $\dim (G)$
 denotes the product of the dimensions of the objects of~$\cc$ associated with the edges
 of~$G$. To compute the homomorphism $|M, \Sigma_0, G_0,  \Sigma_1, G_1|$, let $\Omega\in H( G_1, \Sigma_1) \otimes H( G_1^\opp, -\Sigma_1)$ be the inverse of the canonical pairing $ H( G_1^\opp, -\Sigma_1) \otimes H( G_1,\Sigma_1) \to \kk$. Pick any expansion $\Omega=\sum_\alpha a_\alpha \otimes b_\alpha$, where $a_\alpha \in H( G_1, \Sigma_1)$ and $ b_\alpha \in H( G_1^\opp, -\Sigma_1)$. Then for any $h \in H(G_0,\Sigma_0)$,
 $$
 |M, \Sigma_0, G_0,  \Sigma_1, G_1|(h)= \frac{(\dim (\cc))^{\vert G_1\vert}}{\dim (G_1)} \,\sum_{\alpha} |M,G_0^\opp \cup G_1 |(h \otimes b_\alpha)\,a_\alpha.
 $$

By \emph{skeleton} of
a closed surface~$\Sigma$ we mean an oriented graph $G\subset \Sigma$
such that all vertices of~$G$ have valence $\geq 2$ and all
components of $\Sigma\setminus G$ are open disks.  For example, the
vertices and the edges   of a triangulation~$t$  of~$\Sigma$ (with
an arbitrary orientation of the edges) form a skeleton of~$\Sigma$.
For a graph~$G$,   denote by $\mathrm{col}   (G)$ the set of all
maps from the set of edges of~$G$ to~$I$.

 \begin{lem}\label{lem-skeletons81}
Let $(M_0, \Sigma_0,   \Sigma_1 )$,  $(M_1, \Sigma_1,   \Sigma_2 )$
be two 3-cobordisms and   $ (M,   \Sigma_0,   \Sigma_2 )$  be the
3-cobordism   obtained by gluing $M_0$ and $M_1$ along $\Sigma_1$.
For any $I$-colored graphs  $G_0\subset \Sigma_0$, $G_2\subset
\Sigma_2$ and any skeleton~$G$ of $\Sigma_1$,
$$|M, \Sigma_0, G_0,  \Sigma_2, G_2|=\sum_{c\in \mathrm{col}   (G)}
 \, |M_1, \Sigma_1, (G,c),  \Sigma_2, G_2| \circ |M_0, \Sigma_0, G_0,  \Sigma_1, (G,c) |.$$
\end{lem}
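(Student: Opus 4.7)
The plan is to compute both sides by choosing a skeleton of $M$ that is compatible with the gluing decomposition, and to show that the resulting state sum factors as an appropriate sum indexed by colorings of $G$. Pick skeletons $P_0$ of $(M_0, G_0 \cup G)$ and $P_1$ of $(M_1, G \cup G_2)$, which exist by Lemma~\ref{lem-skeletons}. Since both meet $\Sigma_1$ exactly in $G$ and are collared there by condition (iv) of Section~\ref{sec-skes}, the union $P := P_0 \cup \Sigma_1 \cup P_1 \subset M$ is a $2$-polyhedron naturally stratified so that the disks of $\Sigma_1 \setminus G$ become new regions, each edge $a$ of $G$ becomes an internal $4$-valent edge whose four adjacent branches are the regions $D_a^0 \subset P_0$ and $D_a^1 \subset P_1$ abutting $a$ together with the two disks of $\Sigma_1 \setminus G$ flanking $a$, each vertex of $G$ becomes a vertex of $P$, and the remaining stratification is inherited from $P_0, P_1$. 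A direct check shows that $P$ is a skeleton of $(M, G_0 \cup G_2)$ and that $|P| = |P_0| + |P_1| + |G|$, where $|G|$ denotes the number of components of $\Sigma_1 \setminus G$ (each former collar component of $\Sigma_1$ in $M_i \setminus P_i$ is cut off by a $\Sigma_1$-disk into a new ball component of $M \setminus P$).

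Applying Theorem~\ref{thm-state-3man+r} with this $P$, a coloring $\widetilde c$ of $\Reg(P)$ extending $G_0 \cup G_2$ decomposes uniquely as a triple $(c_0, c_1, c_\Sigma)$ with $c_i \in \mathrm{col}(\Reg(P_i))$ and $c_\Sigma$ an arbitrary coloring of the $|G|$ disks of $\Sigma_1 \setminus G$. The Euler-characteristic product factors across these three groups of regions (each $\Sigma_1$-disk $D$ contributing $\dim c_\Sigma(D)$ since $\chi(D) = 1$), while the tensor $\otimes_x \inv_\cc(\Gamma_x)$ and the contraction $\ast_P$ factor correspondingly into contributions from $P_0$, from $P_1$, and from the vertices and edges of $G$ (now internal in $P$). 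Fix $c_0, c_1$ and sum over $c_\Sigma$: at each edge $a$ of $G$, the two flanking $\Sigma_1$-disks together with their dim-factors act as a ``bubble'' surrounding $a$. Lemma~\ref{lem-calc-diag}(a), applied at each such edge, forces $c_0(D_a^0) = c_1(D_a^1)$, thereby defining a common coloring $c_G \in \mathrm{col}(G)$; parts (b) and (c) of the same lemma, applied at the vertices of $G$, collect the remaining $\Sigma_1$-disk dimensions and vertex contributions into a scalar equal to $\dim(G, c_G)^{-1}$ per $c_G$. Together with $(\dim \cc)^{-|P|} = (\dim \cc)^{-|P_0| - |P_1| - |G|}$, this reduces the LHS state sum to
\begin{equation*}
|M, G_0^\opp \cup G_2| = \sum_{c_G \in \mathrm{col}(G)} \frac{(\dim \cc)^{|G|}}{\dim(G, c_G)} \, \bigl\langle |M_0, G_0^\opp \cup (G, c_G)|,\, |M_1, (G, c_G)^\opp \cup G_2| \bigr\rangle,
\end{equation*}
where $\langle \cdot, \cdot \rangle$ denotes the canonical contraction along the dual $H((G, c_G)^\opp; -\Sigma_1)^\star \otimes H((G, c_G); \Sigma_1)^\star$-factors.

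Finally, multiplying both sides by $(\dim \cc)^{|G_2|}/\dim(G_2)$ and applying $\Upsilon$, the LHS becomes $|M, \Sigma_0, G_0, \Sigma_2, G_2|$ by definition; on the RHS the contraction $\langle \cdot, \cdot \rangle$ under $\Upsilon$ translates into composition of homomorphisms via the canonical isomorphism $H(\cdot; \Sigma_1)^\star \cong H(\cdot^\opp; -\Sigma_1)$ from the last remark of Section~\ref{sect-graph}, while the prefactor $(\dim \cc)^{|G|}/\dim(G, c_G)$ is precisely absorbed into the normalization defining $|M_0, \Sigma_0, G_0, \Sigma_1, (G, c_G)|$. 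This produces $|M_1, \Sigma_1, (G, c_G), \Sigma_2, G_2| \circ |M_0, \Sigma_0, G_0, \Sigma_1, (G, c_G)|$, as required. The main obstacle will be the local ``bubble'' computation at each edge of $G$: verifying that the $\Sigma_1$-disks together with the vertex contributions at the vertices of $G$ genuinely reduce to the matching constraint $c_0(D_a^0) = c_1(D_a^1)$ with the precise normalization factor $\dim(G, c_G)^{-1}$. This is essentially a two-dimensional state-sum TQFT computation on $\Sigma_1$ with skeleton $G$, requiring careful bookkeeping at the vertices of $G$, where the link graph $\Gamma_v$ is a join of the contributions from $P_0$, $P_1$, and $\Sigma_1$; once this local reduction is established, the topological check of Step~1 and the normalization matching of the final step are routine.
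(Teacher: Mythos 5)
Your approach differs from the paper's at the crucial step of choosing the skeleton. You glue in $\Sigma_1$ itself, taking $P = P_0 \cup \Sigma_1 \cup P_1$, whereas the paper uses the leaner choice $P' = P_0 \cup P_1$, which one checks directly is already a skeleton of $(M, G_0^\opp \cup G_2)$: conditions (i)--(iii) are inherited from $P_0, P_1$, and for (iv) the $\Sigma_1$-collars of $M_0 \setminus P_0$ and $M_1 \setminus P_1$ glue along the disks of $\Sigma_1\setminus G$ to form $\vert G\vert$ new open balls (so $\vert P'\vert = \vert P_0\vert + \vert P_1\vert - \vert G\vert$, not $+\vert G\vert$). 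With $P'$, the factorization is essentially immediate: $\Reg(P') = \Reg(P_0)\sqcup\Reg(P_1)$, so a coloring is simply a pair $(c_0, c_1)$; at each edge $a$ of $G$ (now an internal $2$-valent edge) the module $H_c(a)\cong\Hom_\cc(\un, c_0(D_a^0)^{\pm}\otimes c_1(D_a^1)^{\mp})$ vanishes unless $c_0(D_a^0)=c_1(D_a^1)$, which forces a common $c_G\in\mathrm{col}(G)$; the ball-count difference accounts for the $(\dim\cc)^{\vert G\vert}$ in the normalization; and the remaining $\dim(G,c_G)^{-1}$ emerges from the contractions $\ast_a$ at the $2$-valent edges $a$ and the inverse-pairing normalization in the definition of $\Upsilon$-composition.

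Your route is not wrong in principle (your $P$ is a legal skeleton, so Theorem~\ref{thm-state-3man+r} applies), but it creates the extra work you yourself flag as ``the main obstacle,'' and the step you sketch for discharging it is not quite right. The sum over $c_\Sigma$ is not a local per-edge bubble: each disk of $\Sigma_1\setminus G$ is bounded by several edges of $G$, so assigning it a color couples several edges simultaneously, and the $4$-valent link graphs $\Gamma_v$ at vertices of $G$ in your $P$ interlace the $\Sigma_1$-disks nontrivially. Lemma~\ref{lem-calc-diag}(a) applies to a two-edged lens between two vertices of a planar graph, not to this configuration; to eliminate $\Sigma_1$ honestly you would have to perform a sequence of primary moves ($T_1^{-1}$, $T_2$, etc.) reducing $P$ to $P'$ and invoke the invariance of Theorem~\ref{thm-state-3man+r} --- at which point you have simply rederived the paper's choice of skeleton. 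The cleanest fix is to take $P_0\cup P_1$ from the start, as the paper does, and skip the $2$-dimensional state-sum reduction on $\Sigma_1$ altogether.
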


\begin{proof} This  is a direct consequence of the definitions since
 the union of a  skeleton of  $(M_0, G_0^\opp \cup G)$  with a  skeleton of   $(M_1, G^\opp \cup G_2)$ is a skeleton of  $(M, G_0^\opp \cup G_2)$.
\end{proof}

\subsection{The state-sum TQFT}\label{sec-TQFT-}
For a skeleton $G$ of a closed oriented surface $\Sigma$,
set
$$\vert G; \Sigma\vert^\circ =\oplus_{c\in \mathrm{col}   (G)} \,  H((G,c); \Sigma).$$
  Given a 3-cobordism $(M, \Sigma_0 , \Sigma_1)$, we
define for any skeletons $G_0\subset \Sigma_0$ and $G_1\subset
\Sigma_1$ a homomorphism $$|M, \Sigma_0, G_0,  \Sigma_1, G_1|^\circ \co \vert G_0; \Sigma_0\vert^\circ\to \vert G_1; \Sigma_1\vert^\circ$$ by
\begin{equation}\label{eq-func-} |M, \Sigma_0, G_0,  \Sigma_1, G_1|^\circ=\sum_{\substack{c_0 \in \mathrm{col}(G_0)\\ c_1 \in \mathrm{col}(G_1)}} |M, \Sigma_0, (G_0, c_0),  \Sigma_1, (G_1, c_1)| , \end{equation}
where $|M, \Sigma_0, (G_0, c_0),  \Sigma_1, (G_1, c_1)|\co H((G_0,c_0); \Sigma_0) \to H((G_1,c_1); \Sigma_1)$. Lemma~\ref{lem-skeletons81} implies
that for any cobordisms $M_0, M_1, M$ as in this lemma and for any
skeletons $G_i \subset \Sigma_i$ with $i=0, 1,2 $,
\begin{equation}\label{eq-func} |M, \Sigma_0, G_0,  \Sigma_2, G_2|^\circ=|M,
\Sigma_1, G_1,  \Sigma_2, G_2|^\circ \circ  |M, \Sigma_0, G_0,
\Sigma_1, G_1|^\circ.\end{equation} These constructions assign a
finitely generated free module to every closed oriented surface with
distinguished skeleton and   a homomorphism of these modules to
every 3-cobordism whose bases are endowed with skeletons.
   This data   satisfies an
appropriate version of axioms of a TQFT except one:   the
homomorphism associated with the cylinder over a surface, generally
speaking, is not the identity. There is a standard procedure which
transforms such a \lq \lq pseudo-TQFT"   into a genuine
  TQFT and gets rid of the skeletons of surfaces  at the
same time. This procedure is described in detail in a similar
setting in \cite{Tu1}, Section VII.3. The idea is that if $G_0,G_1$
are two skeletons  of a closed oriented surface $\Sigma$, then the
cylinder cobordism $M=\Sigma\times [0,1]$ gives a homomorphism
$$p(G_0,G_1)=|M, \Sigma \times \{0\}, G_0 \times \{0\},  \Sigma \times \{1\}, G_1\times \{1\}|^\circ \colon \vert G_0; \Sigma\vert^\circ \to \vert G_1; \Sigma\vert^\circ\, .$$
Formula~\eqref{eq-func} implies that  $p(G_0,G_2)=p(G_1,G_2)\,
p(G_0,G_1)$ for any skeletons $G_0$, $G_1$, $G_2$ of~$\Sigma$.
Taking $G_0=G_1=G_2$ we obtain that $p(G_0,G_0)$ is a projector onto
a direct summand $\vert G_0; \Sigma\vert $  of $\vert G_0;
\Sigma\vert^\circ $. Moreover,
  $p(G_0,G_1)$ maps
$\vert G_0; \Sigma\vert $ isomorphically onto $\vert G_1;
\Sigma\vert $. The finitely generated projective $\kk$-modules
$\{\vert G; \Sigma\vert\}_G$,
 where $G$ runs over all skeletons of $\Sigma$, and the
homomorphisms $\{p(G_0,G_1)\}_{G_0, G_1}$ form a projective system.
The projective limit of this system is a $\kk$-module,  $\vert
\Sigma\vert $,  independent of the choice of a skeleton of $\Sigma$.
For each skeleton $G$ of~$\Sigma$, we have a  \lq\lq cone
isomorphism" of $\kk$-modules $\vert G; \Sigma\vert \cong \vert
\Sigma\vert$.  By convention, the empty surface $\emptyset$   has a unique (empty) skeleton and $\vert \emptyset \vert =\kk$.

Any 3-cobordism $(M,\Sigma_0, \Sigma_1)$   splits as a product of a
3-cobordism with a cylinder over $\Sigma_1$. Using this splitting
and Formula~\eqref{eq-func}, we obtain that the
homomorphism~\eqref{eq-func-} carries $\vert \Sigma_0\vert \cong
\vert G_0; \Sigma_0\vert \subset \vert G_0; \Sigma_0\vert^{\circ} $
into $\vert \Sigma_1\vert \cong \vert G_1; \Sigma_1\vert \subset
\vert G_1; \Sigma_1\vert^{\circ} $ for any skeletons $G_0, G_1$ of
$\Sigma_0, \Sigma_1$, respectively. This gives a homomorphism  $|M,
\Sigma_0, \Sigma_1 | \colon \vert \Sigma_0\vert\to \vert
\Sigma_1\vert$ independent of the choice of $G_0$, $G_1$.

An   orientation preserving homeomorphism of closed oriented
surfaces $f\colon \Sigma\to \Sigma'$  induces an isomorphism $\vert f\vert
\colon \vert \Sigma\vert  \to \vert \Sigma'\vert$ as follows.  Pick
a skeleton $G$ of $\Sigma$. Then
  $G'=f(G)$ is a   skeleton  of $\Sigma'$,  and
  $\vert f\vert $
   is  the composition of the  isomorphisms
$$\vert \Sigma\vert  \cong   \vert G; \Sigma\vert \cong \vert G'; \Sigma'\vert
   \cong \vert \Sigma'\vert \, .$$
Here the first and the third isomorphisms are the cone isomorphisms
and the middle isomorphism is induced by the homeomorphism  of pairs
$f\co (  \Sigma, G) \to (  \Sigma', G')$. It is easy to check that $\vert f\vert $
does not depend on the choice of $G$.

To accomplish the construction of  the 3-dimensional TQFT $\vert \cdot \vert$, we need only to
associate  with
every  morphism $\varphi \co \Sigma_0   \to  \Sigma_1$  in   $\mathrm{Cob}_3$
the induced homomorphism $\vert  \varphi \vert \co \vert \Sigma_0 \vert \to \vert \Sigma_1 \vert $.
Represent $\varphi$  by a pair $(M, h\co (-\Sigma_0) \sqcup\Sigma_1 \simeq  \partial M)$ as above.  For $i=0,1$ denote by $\Sigma'_i$ the surface $h(\Sigma_i)\subset \partial M$ with orientation induced by the one in $\Sigma_i$.  The 3-cobordism   $(M,  \Sigma'_0,   \Sigma'_1 )$   yields a homomorphism  $|M,
\Sigma'_0, \Sigma'_1 | \colon \vert \Sigma'_0\vert\to \vert
\Sigma'_1\vert$. The homeomorphism
 $h  \co \Sigma_i \to \Sigma'_i$ induces an isomorphism
 $ \vert \Sigma_i\vert\cong  \vert
\Sigma'_i\vert$  for $i=0,1$. Composing these three homomorphisms we obtain the homomorphism
$\vert  \varphi \vert \co \vert \Sigma_0\vert\to \vert
\Sigma_1\vert$.
This homomorphism does not depend on the choice of the representative pair $(M,h)$.
It follows from the definitions and Lemma~\ref{lem-skeletons81} that the
assignment
 $\Sigma\mapsto \vert \Sigma \vert, \,\,\,\, \varphi
\mapsto |\varphi | $ satisfies all the   axioms of a
TQFT.  To stress the dependance of $\cc$, we  shall   denote this TQFT by $\vert \cdot
\vert_\cc$. Considered
  up to isomorphism,  the TQFT  $\vert \cdot
\vert_\cc$ does
not depend on the choice of the representative set $I$ of simple
objects of $\cc$. For any closed oriented 3-manifold~$M$, the invariant $\vert M\vert_\cc \in \kk$ produced by this TQFT   coincides with
the invariant  of Sections~\ref{sec-ssot3m}
and~\ref{sec-skeletonsstatesums}.

\subsection{Computation of $|S^2|_\cc$}\label{Esp-S2}
To illustrate our definitions, we compute the $\kk$-module $\vert S^2\vert_\cc$.  A circle $G\subset \R^2\subset \R^2\cup\{\infty\}=S^2$  oriented counterclockwise and viewed as a graph with one  vertex $x$ and one edge $e$  is a skeleton of $S^2$.  Assigning $i\in I$ to  $e$, we turn $G$ into an $I$-colored graph   $G_i$. By definition,  the $\kk$-module  $\vert S^2\vert_\cc$ is isomorphic to the image of the endomorphism  $p(G,G)=\sum_{i,j \in I} p_i^j$ of $|G; S^2|^\circ=\oplus_{i\in I} H(G_{i})$, where
$$p_i^j= |\Sigma \times [0,1], \Sigma \times \{0\}, G_i  \times \{0\}, \Sigma  \times \{1\}, G_j  \times \{1\}|_\cc  \co H(G_i) \to H(G_j).$$
%. Consider the cone isomorphisms  $\tau_{i}  \co H(G_{i}) =H_x(G_{i})   \to \Hom_\cc (\un, i^*\otimes i)=\kk\rcoev_i$   and set $b_i=\tau_{i}^{-1}(\rcoev_i) \in  H(G_{i})$. Therefore $(b_i)_{i\in I}$ is a basis of $\vert G; S^2\vert^\circ$. Let $[a_{i,j}]_{i,j\in I}$ be the matrix of $p(G,G)$ of $\vert G; S^2\vert^\circ$  is given in this basis by a matrix  over $ \kk$.
To compute $p_{i}^j$,
consider the 2-polyhedron $P=(G\times [0,1])\cup (S^2 \times \{\frac{1}{2}\}) $ in $ S^2 \times [0,1]$.   We stratify $P$ by taking as  edges the arcs $x\times [0,\frac{1}{2}]$,  $x\times [\frac{1}{2},1]$, and    $e\times \{t\}$ for $t\in \{0, \frac{1}{2},1\}$. The polyhedron $P$ has 3 vertices $ x\times \{t\}$  with $t\in \{0, \frac{1}{2},1\}$ and 4 disk  regions. We orient the two regions  adjacent to the boundary so that $P$ is a skeleton of the pair $(S^2 \times [0,1],
(G^\opp \times \{0\}) \cup (G \times \{ 1\})  )$ and  endow  the two regions contained in $S^2 \times \{\frac{1}{2}\}$ with orientation induced by that of  $S^2  $.  Clearly, $\vert P\vert =4$. Set $$G_i^j=(G_i^\opp \times \{0\}) \cup (G_j \times \{ 1\})\subset \partial(S^2 \times [0,1]).$$ The maps $  \Reg(P) \to I$ extending the coloring of $G_i^j$ are numerated by the colors   $z, w\in I$ of the   region contained in $S^2 \times \{\frac{1}{2}\}$. The link of the  vertex $ (x,\frac{1}{2})$ of $P$
determines a $\cc$-colored  graph $ \Gamma^{z,w}$  in $S^2$:
\begin{equation*}
 \psfrag{i}[Bc][Bc]{\scalebox{.9}{$i$}}
 \psfrag{j}[Bc][Bc]{\scalebox{.9}{$j$}}
  \psfrag{z}[Bc][Bc]{\scalebox{.9}{$z$}}
 \psfrag{t}[Bc][Bc]{\scalebox{.9}{$w$}}
 P=
 \rsdraw{.45}{.9}{Gi-skelS2b}\,, \qquad \qquad
 \psfrag{i}[Bc][Bc]{\scalebox{.9}{$i$}}
 \psfrag{j}[Bc][Bc]{\scalebox{.9}{$j$}}
 \psfrag{z}[Bc][Bc]{\scalebox{.9}{$z$}}
 \psfrag{t}[Bc][Bc]{\scalebox{.9}{$w$}}
 \Gamma^{z,w}  =\rsdraw{.45}{.9}{gamma-S2d}\,.
\end{equation*}
Let $u,v$ be the bottom and the top vertices of $\Gamma^{z,w}  $, respectively. Then
\begin{equation*}
|S^2 \times [0,1], G_i^j |_\cc= \frac{\dim(i)\dim(j)}{\dim (\cc)^{4}} \sum_{z,w \in I}  \dim(z)\dim(w)  \,
 \mu_{i,j}^{z,w}
\end{equation*}
with
\begin{equation*}
 \psfrag{i}[Bc][Bc]{\scalebox{.9}{$i$}}
 \psfrag{j}[Bc][Bc]{\scalebox{.9}{$j$}}
 \psfrag{z}[Bc][Bc]{\scalebox{.9}{$z$}}
 \psfrag{t}[Bc][Bc]{\scalebox{.9}{$w$}}
  \mu_{i,j}^{z,w}= {\ast}_P (\inv_\cc (\Gamma^{z,w}))= \inv_\cc\left ( \rsdraw{.45}{.9}{gamma-S2c} \right )\in H_u(\Gamma^{z,w}  )^* \otimes H_v(\Gamma^{z,w}  )^*,
\end{equation*}
where the dotted line represents  the tensor contraction. Note that $H_u(\Gamma^z)=H(G_i)$ and $H_v(\Gamma^z)=H(G_j^\opp)$.
Set $a_i=\tau_i^{-1}(\rcoev_i) \in H(G_i)$ where $\tau_i \co H(G_i) \to \Hom_\cc(\un,i^* \otimes i)$ is the cone isomorphism. Then the vector $a_i$ forms a basis of $H(G_i)$. Since $H(G_j^\opp)=H(G_j)$, $\Omega=(\dim(j))^{-1} a_j \otimes a_j \in H(G_j) \otimes H(G_j^\opp)$ is the inverse of the canonical pairing $H(G_j^\opp) \otimes H(G_j) \to \kk$. Now
\begin{equation*}
 \psfrag{i}[Bc][Bc]{\scalebox{.9}{$i$}}
 \psfrag{j}[Bc][Bc]{\scalebox{.9}{$j$}}
 \psfrag{z}[Bc][Bc]{\scalebox{.9}{$z$}}
 \psfrag{t}[Bc][Bc]{\scalebox{.9}{$w$}}
  \mu_{i,j}^{z,w}(a_i \otimes a_j)= \inv_\cc\left ( \rsdraw{.45}{.9}{gamma-S2e} \right )=N^\un_{j^*\otimes z^* \otimes i \otimes w}
\end{equation*}
Therefore
\begin{align*}
p_i^j(a_i)
&=\frac{\dim (\cc)^{2}}{\dim(j)^2}\,|S^2 \times [0,1], G_i^j |_\cc(a_i \otimes a_j)\, a_j \\
&= \frac{\dim(i)}{\dim(j)\dim (\cc)^{2}} \sum_{z,w \in I}  \dim(z)\dim(w) N^\un_{j^*\otimes z^* \otimes i \otimes w} a_j\\
&=\frac{\dim(i)^2}{\dim (\cc)} \,a_j \quad \text{by the first formula of Lemma~\ref{bubbleidentity}.}
\end{align*}
We conclude that the image of $p(G,G)$ is generated by $v=\sum_{j \in I} a_j \in |G; S^2|^\circ$, and so that $|S^2|_\cc\simeq\kk$.

\section{Modular categories and categorical centers}

We recall the basics on  modular categories and the Drinfeld
center.

\subsection{Modular categories (\cite{Tu1})}\label{sec-modulcat}
 A \emph{braiding} in a monoidal category $\bb$ is a natural isomorphism
$ \tau=\{\tau_{X,Y} \co  X \otimes Y\to Y \otimes X\}_{X,Y \in
\Ob(\bb)} $ such that
\begin{equation*}
\tau_{X, Y\otimes Z}=(\id_Y \otimes \tau_{X, Z})(\tau_{X, Y} \otimes \id_Z) \quad \text{and} \quad
\tau_{X\otimes Y,Z}=(\tau_{X,Z} \otimes \id_Y)(\id_X \otimes \tau_{Y,Z}).
\end{equation*}
These conditions imply that $\tau_{X,\un}=\tau_{\un,X}=\id_X$ for
all $X\in\Ob(\bb)$ and so that \eqref{special} is satisfied.

 A monoidal category endowed with a braiding is said
to be \emph{braided}. The braiding    and its inverse are depicted
as follows
\begin{center}
\psfrag{X}[Bc][Bc]{\scalebox{.8}{$X$}}
\psfrag{Y}[Bc][Bc]{\scalebox{.8}{$Y$}}
$\tau_{X,Y}=\,$\rsdraw{.45}{.9}{braiding} \quad \text{and} \quad
$\tau^{-1}_{Y,X}=\,$\rsdraw{.45}{.9}{braidinginv}.
\end{center}

%The \emph{mirror of a braiding $\tau$} is the braiding $\overline{\tau}$ defined by
%$\overline{\tau}_{X,Y}=\tau^{-1}_{Y,X}$.  The \emph{mirror of a braided category $\bb$} is the braided category $\mirror{\bb}$ which coincides with $\bb$ as a monoidal category but is endowed with the mirror braiding.
%
%If $\bb$ is braided with braiding $\tau$, then $\bb^{\otimes\opp}$ is braided with braiding $\tau^\mop$ defined by $\tau^\mop_{X,Y}=\tau_{Y,X}$. Note that $\tau \mapsto \tau^\mop$ is a bijection between braidings on $\bb$ and  braidings on $\cc^{\otimes\opp}$.

%\subsection{Ribbon categories}
 For any object $X$ of a braided pivotal category
$\bb$, one defines a morphism   $$ \theta_X  =
\psfrag{X}[Bc][Bc]{\scalebox{.8}{$X$}}\rsdraw{.45}{.9}{theta1}\,=(\id_X
\otimes \rev_X)(\tau_{X,X} \otimes \id_{X^*})(\id_X \otimes
\lcoev_X) \co X\to X.
$$ This morphism, called the  \emph{twist}, is invertible and
$$\theta_X^{-1}=\psfrag{X}[Bc][Bc]{\scalebox{.8}{$X$}}\rsdraw{.45}{.9}{theta2}=(\lev_X \otimes \id_X)(\id_{X^*} \otimes \tau_{X,X}^{-1})(\rcoev_X \otimes \id_X)\co X \to X.$$
Note that
 $\theta_\un=\id_\un$,
$\theta_{X\otimes Y}=(\theta_X \otimes \theta_Y)\tau_{Y,X}\tau_{X,Y}$ for any $X,Y\in \Ob(\bb)$. The twist is natural:
 $\theta_Y f=f \theta_X$ for any morphism $f\co X\to Y$ in~$\bb$.

A \emph{ribbon category} is a braided pivotal category $\bb$ whose
twist  is self-dual, i.e., $(\theta_X)^*=\theta_{X^*}$ for all $X\in
\Ob(\bb)$. This   is equivalent to the equality of morphisms
$\psfrag{X}[Bc][Bc]{\scalebox{.8}{$X$}} \rsdraw{.45}{.9}{theta1}
=\rsdraw{.45}{.9}{theta2inv} $ for any  $X\in \Ob(\bb)$.
  In   a ribbon category   $\theta_X^{-1}=\psfrag{X}[Bc][Bc]{\scalebox{.8}{$X$}}
  \rsdraw{.45}{.9}{theta1inv}=\rsdraw{.45}{.9}{theta2}$.
  A ribbon category $\bb$  is spherical and  gives rise to  topological invariants of links in $S^3$. Namely,   every   $\bb$-colored framed oriented link   $L\subset S^3$ determines an endomorphism of the unit object  $F_\bb(L)\in \End (\un)$ which turns out to be a topological invariant of $L$. Here  $ L$ is  $\bb$-colored if every component of $L$ is endowed with an object of $\bb$ (called the color of this component).
The definition of $F_\bb(L)$ goes by an application of the Penrose calculus  to a diagram of $L$; a new feature is that    with the positive and negative crossings of the diagram one associates the braiding and its inverse, respectively. For more on this, see \cite{Tu1}.

A \emph{modular category} (over $\kk$) is a ribbon fusion category
$\bb$ (over $\kk=\End (\un)$) such that the matrix
$S=[\tr(\tau_{j,i}\tau_{i,j})]_{i,j \in \mathcal J}$ is invertible, where $\mathcal J$
is a representative set of simple objects of $\bb$ and $\tau$ is the
braiding of $\bb$. The matrix $S$ is called the \emph{$S$-matrix}
of~$\bb$.  Note that for any simple object $J$ of $\bb$, the twist
$\theta_J\co J \to J$ is multiplication by an invertible scalar
$v_J\in \kk$ called the {\it twist scalar} of $J$. This scalar
depends only on the isomorphism class of $J$ and $v_{J^*} =v_{J} $.
Set $\Delta_{\pm} =\sum_{i\in \mathcal J} v_i^{\pm 1} (\dim (i))^2\in \kk$.
It is known that $ \Delta_+\Delta_-=\dim (\bb) = \sum_{i\in \mathcal J} (\dim
(i))^2$, see \cite{Tu1}. We say that $\bb$ is {\it anomaly free} if
$\Delta_+=\Delta_-$.

\subsection{The center of a  monoidal category}\label{sect-centerusual}
Let $\cc$ be a monoidal category. A \emph{half braiding} of $\cc$ is
a pair $({{A}},\sigma)$, where ${{A}} \in \Ob(\cc)$ and
\begin{equation*}
\sigma=\{\sigma_X \co  {{A}} \otimes X\to X \otimes {{A}}\}_{X \in \Ob(\cc)}
\end{equation*}
is a natural isomorphism such that
 \begin{equation}\label{axiom-half-braiding}  \sigma_{X \otimes Y}=(\id_X \otimes
\sigma_Y)(\sigma_X \otimes \id_Y)\end{equation} for all
$X,Y \in \Ob(\cc)$. This implies that $\sigma_\un=\id_{{A}}$.
%Note that if $\cc$ is left or right rigid (that is, each object of $\cc$ has a left or right dual), then (iii) is a consequence of (i) and (ii).
We call ${{A}}$ the {\it   underlying object} of the half braiding
$({{A}},\sigma)$.

The \emph{center of $\cc$} is the braided category $\zz(\cc)$
defined as follows. The objects of~$\zz(\cc)$ are half braidings of
$\cc$. A morphism $({{A}},\sigma)\to ({{A}}',\sigma')$ in $\zz(\cc)$ is a
morphism $f \co {{A}} \to {{A}}'$ in $\cc$ such that $(\id_X \otimes
f)\sigma_X=\sigma'_X(f \otimes \id_X)$ for all $X\in\Ob(\cc)$. The
  unit object of $\zz(\cc)$ is $\un_{\zz(\cc)}=(\un,\{\id_X\}_{X \in
\Ob(\cc)})$ and the monoidal product is
\begin{equation*}
({{A}},\sigma) \otimes ({{B}}, \rho)=\bigl({{A}} \otimes {{B}},(\sigma \otimes \id_{{B}})(\id_{{A}} \otimes \rho) \bigr).
\end{equation*}
The braiding $\tau$ in $\zz(\cc)$ is defined by
$$\tau_{({{A}},\sigma),({{B}}, \rho)}=\sigma_{{{B}}}\co ({{A}},\sigma) \otimes
({{B}}, \rho) \to ({{B}}, \rho) \otimes ({{A}},\sigma).$$

There is a  {\it forgetful functor} $\zz(\cc)\to \cc$ assigning to
every half braiding $({{A}}, \sigma)$ the underlying object ${{A}}$ and
acting in the obvious way on the morphisms. This is a strict monoidal functor.

If $\cc$ is a monoidal \kt category, then so~$\zz(\cc)$ and
the forgetful functor is  \kt linear. Observe that $\End_{\zz(\cc)}(\un_{\zz(\cc)})=\End_\cc(\un) = \kk$.

If $\cc$ is pivotal, then so is $\zz(\cc)$ with
$({{A}},\sigma)^*=({{A}}^*,\sigma^\vee)$, where
$$
 \psfrag{M}[Bc][Bc]{\scalebox{.9}{${{A}}$}}
 \psfrag{X}[Bc][Bc]{\scalebox{.9}{$X$}}
 \psfrag{s}[Bc][Bc]{\scalebox{.9}{$\sigma_{X^*}$}}
\sigma^\vee_X= \rsdraw{.45}{1}{sigmadual} \co {{A}}^* \otimes X \to X \otimes {{A}}^*,
$$
and $\lev_{({{A}},\sigma)}= \lev_{{A}}$, $\lcoev_{({{A}},\sigma)}= \lcoev_{{A}}$,
$\rev_{({{A}},\sigma)}= \rev_{{A}}$, $\rcoev_{({{A}},\sigma)}= \rcoev_{{A}}$. The
(left and right) traces of morphisms and   dimensions of objects
in~$\zz(\cc)$ are the same as in~$\cc$. If $\cc$ is spherical, then
so is $\zz(\cc)$.

%\begin{rem}\label{rem-ribonZC}
%Let $\cc$ is a pivotal category. The the twist $\theta$ of $\zz(\cc)$ is given by:
%\begin{equation*}
%\theta_{(M,\sigma)}=(\id_M \otimes \rev_M)(\sigma_M \otimes \id_{M^*})(\id_M \otimes \lcoev_M) \co (M,\sigma) \to (M,\sigma).
%\end{equation*}
%Then $\theta$ is self dual (and so $\zz(\cc)$ is ribbon) if and only if
%\begin{center}
%\psfrag{M}[Bc][Bc]{\scalebox{.9}{$M$}} \psfrag{a}[Bc][Bc]{\scalebox{1}{$\sigma_M$}}
%\rsdraw{.45}{.9}{lsigmaM} $=$ \rsdraw{.45}{.9}{rsigmaM}\,.
%\end{center}
%for all half braiding $(M,\sigma)$.
%\end{rem}

\subsection{The center of a fusion category}
Let $\cc$ be a spherical fusion category over~$\kk$. Fix a
representative set~$I$ of simple objects of $\cc$.

\begin{lem}\label{lem-center-ribbon}
The center $\zz(\cc)$ of $\cc$ is ribbon.
\end{lem}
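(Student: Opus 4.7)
The plan is to invoke the general fact that a braided pivotal category is ribbon if and only if it is spherical, and to combine it with the observation that $\zz(\cc)$ inherits all the relevant structure from $\cc$.

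First, I would collect the structure on $\zz(\cc)$ already available from the previous subsection. By construction, $\zz(\cc)$ is braided with braiding $\tau_{({{A}},\sigma),({{B}},\rho)}=\sigma_{{{B}}}$. Since $\cc$ is pivotal, so is $\zz(\cc)$, with dual $({{A}},\sigma)^*=({{A}}^*,\sigma^\vee)$ and with evaluations/coevaluations inherited from $\cc$. The forgetful functor $U\co \zz(\cc)\to\cc$ is strict monoidal and preserves left/right dualities on the nose; in particular, for any endomorphism $f$ of an object of $\zz(\cc)$ we have $\tr_l(f)=\tr_l(U(f))$ and $\tr_r(f)=\tr_r(U(f))$ in $\End_\cc(\un)=\End_{\zz(\cc)}(\un_{\zz(\cc)})=\kk$. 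Sphericity of $\cc$ therefore implies sphericity of $\zz(\cc)$.

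Second, I would establish the following general principle: in any braided pivotal category $\bb$, the sphericity condition $\tr_l=\tr_r$ is equivalent to the ribbon identity $\theta_{X^*}=(\theta_X)^*$ for all $X\in\Ob(\bb)$. The proof is a direct graphical calculation. Starting from the diagram for $(\theta_X)^*$ obtained by inserting the definition of $\theta_X$ into the expression $(\lev_X \otimes \id_{X^*})(\id_{X^*}\otimes \theta_X \otimes \id_{X^*})(\id_{X^*}\otimes \lcoev_X)$, one uses the naturality of the braiding together with the pivotal duality axioms to push the crossing around and to identify the resulting diagram with that of $\theta_{X^*}$ up to the replacement of a left partial trace by a right one on an endomorphism built from the braiding and the duality morphisms; sphericity makes this replacement legitimate. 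Conversely, applied to a general endomorphism $f$, the same manipulation shows $\tr_l(f)=\tr_r(f)$ follows from the ribbon identity.

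Applying this general principle to $\bb=\zz(\cc)$ yields the lemma. The only nontrivial ingredient is the braided-pivotal-spherical-implies-ribbon step, whose diagrammatic verification is the main (and only) obstacle; this is a standard fact (cf.\ \cite{Tu1}) and uses no feature specific to the center construction beyond what has already been recorded above.
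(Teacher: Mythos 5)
The first step of your argument --- that $\zz(\cc)$ is braided, pivotal, and spherical, with sphericity inherited because the strict monoidal forgetful functor preserves duality and traces --- is correct and is exactly what the paper records at the end of Section 10.2.

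The gap is in the second step, where you assert as a general principle that a braided pivotal category is ribbon if and only if it is spherical, and propose to justify the forward implication by a graphical manipulation in which a left partial trace is replaced by a right one, ``sphericity [making] this replacement legitimate''. It does not. Sphericity is the scalar identity $\tr_l(g)=\tr_r(g)$ in $\End(\un)=\kk$, obtained by fully closing a diagram; graphically it is invariance of \emph{closed} diagrams under isotopy in $S^2$, and it says nothing about equality of the two \emph{open} endomorphisms $\theta_{X^*}$ and $(\theta_X)^*$, which is a strictly stronger statement. In particular, sphericity does not license replacing a left partial trace by a right partial trace on a non-scalar endomorphism, which is precisely what your calculation needs. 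So the implication ``spherical $\Rightarrow$ ribbon'' is not available for an arbitrary braided pivotal category, and the citation to \cite{Tu1} supports only the easy converse (ribbon $\Rightarrow$ spherical).

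The fusion hypothesis is exactly what closes this gap, and your proposal never invokes it. The paper's proof picks an $I$-partition $(p_\alpha\co A\to i_\alpha,\,q_\alpha\co i_\alpha\to A)_{\alpha\in\Lambda}$ of the underlying object $A$ of the half braiding, reduces the ribbon identity to the components $p_\alpha(\,\cdot\,)q_\beta$, which vanish unless $i_\alpha=i_\beta=i$, and then uses that any $f\in\End_\cc(i)$ with $i$ simple equals $(\dim(i))^{-1}\tr(f)\,\id_i$; this is the step that upgrades a trace identity (which sphericity does supply) to an identity of morphisms. It also uses the naturality of the half braiding $\sigma$ to commute $\sigma$ past $p_\alpha,q_\alpha$, which is not automatic since these are morphisms in $\cc$, not in $\zz(\cc)$. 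Your argument would need both of these fusion-specific ingredients, and without them the lemma does not follow.
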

\begin{proof}
  We need only to verify that
$\psfrag{X}[Bc][Bc]{\scalebox{.8}{$X$}}
\rsdraw{.45}{.9}{theta1}=\rsdraw{.45}{.9}{theta2inv}  $ for any half
braiding $X=({{A}},\sigma)$ of~$\cc$. Let $(p_\alpha\co {{A}} \to i_\alpha,
q_\alpha\co i_\alpha \to {{A}})_{\alpha \in \Lambda}$ be an
$I$-partition of ${{A}}$. For any $\alpha, \beta \in \Lambda$ such that
$i_\alpha=i_\beta=i\in I$  we obtain using the naturality of
$\sigma$ that
\begin{align*}
\psfrag{M}[Bc][Bc]{\scalebox{.9}{${{A}}$}}
\psfrag{i}[Bc][Bc]{\scalebox{.9}{$i$}}
\psfrag{c}[Bc][Bc]{\scalebox{1}{$\sigma_{{A}}$}}
\psfrag{r}[Bc][Bc]{\scalebox{1}{$\sigma_{i}$}}
\psfrag{p}[Bc][Bc]{\scalebox{1}{$p_\alpha$}}
\psfrag{q}[Bc][Bc]{\scalebox{1}{$q_\beta$}}
\rsdraw{.45}{.9}{ribproof1} & \psfrag{M}[Bc][Bc]{\scalebox{.9}{${{A}}$}}
\psfrag{i}[Bc][Bc]{\scalebox{.9}{$i$}}
\psfrag{c}[Bc][Bc]{\scalebox{1}{$\sigma_{{A}}$}}
\psfrag{r}[Bc][Bc]{\scalebox{1}{$\sigma_{{i}}$}}
\psfrag{p}[Bc][Bc]{\scalebox{1}{$p_\alpha$}}
\psfrag{q}[Bc][Bc]{\scalebox{1}{$q_\beta$}}
  = (\dim({i}))^{-1} \;\; \rsdraw{.45}{.9}{ribproof2}\; = (\dim({i}))^{-1} \rsdraw{.45}{.9}{ribproof25} \\
& \psfrag{M}[Bc][Bc]{\scalebox{.9}{${{A}}$}}
\psfrag{i}[Bc][Bc]{\scalebox{.9}{${i}$}}
\psfrag{c}[Bc][Bc]{\scalebox{1}{$\sigma_{{A}}$}}
\psfrag{r}[Bc][Bc]{\scalebox{1}{$\sigma_{{i}}$}}
\psfrag{a}[Bc][Bc]{\scalebox{1}{$\alpha$}}
\psfrag{p}[Bc][Bc]{\scalebox{1}{$p_\alpha$}}
\psfrag{q}[Bc][Bc]{\scalebox{1}{$q_\beta$}}= (\dim({i}))^{-1} \; \rsdraw{.45}{.9}{ribproof3} \; =\; \rsdraw{.45}{.9}{ribproof4}\,.
\end{align*}
We conclude using that any $f\in\End_\cc({{A}})$ expands as
$f=\sum_{\alpha, \beta \in \Lambda} q_\alpha (p_\alpha f q_\beta)
p_\beta$ and that $p_\alpha f q_\beta=0$ if $i_\alpha\neq i_\beta$.
\end{proof}

\begin{lem}[{\cite[Lemma 3.10]{Mu}}]\label{lem-proj-alg}
If $\dim(\cc)$ is invertible in $\kk$, then   for any    half
braidings $({{A}},\sigma)$ and $({{B}}, \rho)$ of~$\cc$, the \kt linear
endomorphism $\pi_{({{A}},\sigma)}^{({{B}}, \rho)}$ of $\Hom_\cc({{A}},{{B}})$
defined by
$$
\psfrag{M}[Bc][Bc]{\scalebox{.9}{${{A}}$}} \psfrag{N}[Bc][Bc]{\scalebox{.9}{${{B}}$}} \psfrag{f}[Bc][Bc]{\scalebox{.9}{$f$}}\psfrag{i}[Bc][Bc]{\scalebox{.9}{$i$}} \psfrag{a}[Bc][Bc]{\scalebox{1}{$\sigma_{i^*}$}}\psfrag{c}[Bc][Bc]{\scalebox{1}{$\rho_i$}}
\pi_{({{A}},\sigma)}^{({{B}}, \rho)}(f)=(\dim(\cc))^{-1}\sum_{i\in I}\dim(i) \rsdraw{.45}{.9}{projalg}
$$
is a projector onto $\Hom_{\zz(\cc)}(({{A}},\sigma),({{B}}, \rho))$.
\end{lem}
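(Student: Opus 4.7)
The plan is to establish three claims that together prove the lemma: (i) if $f \in \Hom_{\zz(\cc)}((A,\sigma),(B,\rho))$ then $\pi(f) = f$, where $\pi := \pi_{(A,\sigma)}^{(B,\rho)}$; (ii) $\pi(f) \in \Hom_{\zz(\cc)}((A,\sigma),(B,\rho))$ for every $f \in \Hom_\cc(A,B)$; (iii) $\pi^2 = \pi$. Once (i) and (ii) are proved, (iii) is immediate: for any $f$, claim (ii) places $\pi(f)$ in $\Hom_{\zz(\cc)}((A,\sigma),(B,\rho))$ and then (i) yields $\pi(\pi(f)) = \pi(f)$. Moreover, (i) and (ii) together identify the image of $\pi$ with $\Hom_{\zz(\cc)}((A,\sigma),(B,\rho))$.

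For (i), assume $(\id_X \otimes f)\sigma_X = \rho_X(f \otimes \id_X)$ for every $X \in \Ob(\cc)$. I would apply this equivariance, taken at $X = i^*$, inside the diagram defining $\pi(f)$ so as to slide $f$ across the half-braiding $\sigma_{i^*}$. The two half-braidings $\sigma_{i^*}$ and $\rho_i$ then become adjacent and, using the hexagon axiom~\eqref{axiom-half-braiding} together with the duality $(\lev_i,\rcoev_i)$, collapse to identity strands. What remains is $f$ with a closed $i$-colored loop beside it, whose evaluation is $\dim(i)$ in $\cc$. Summing over $I$ gives $(\dim\cc)^{-1} \sum_{i \in I} \dim(i)^2 \cdot f = f$ by the definition of $\dim(\cc)$ for spherical~$\cc$.

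For (ii), I would verify the equality $(\id_X \otimes \pi(f))\sigma_X = \rho_X(\pi(f) \otimes \id_X)$ directly for every $X\in \Ob(\cc)$. Starting from the left-hand side, use naturality of $\sigma$ and the hexagon axiom~\eqref{axiom-half-braiding} (applied to $\sigma_{X \otimes i^*}$ and $\rho_{i \otimes X}$) to drag $X$ through the $i$-loop in the definition of $\pi(f)$. To recombine the resulting diagram into the right-hand side, fix an $I$-partition $(p_\alpha \co X \otimes i \to j_\alpha,\, q_\alpha \co j_\alpha \to X \otimes i)_\alpha$ of $X \otimes i$ and expand the $(X\otimes i)$-strand as $\sum_\alpha q_\alpha p_\alpha$, so that the sum over $i \in I$ weighted by $\dim(i)$ gets rewritten, via Lemma~\ref{lem-calc-diag}(c), as a sum over $j \in I$ weighted by $\dim(j)$ in which $X$ now plays the role of the external strand on the other side. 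After collecting the dimension factors (using that simple objects have invertible dimensions, Lemma~\ref{lem-prefusion}(b)), one reads off $\rho_X(\pi(f)\otimes\id_X)$.

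The main obstacle is step (ii): the bookkeeping of the re-indexation over simple objects. Concretely, one must check that the sliding of $X$ through the $i$-loop produces exactly the correct coefficient $\dim(j)/\dim(\cc)$ on each summand and that the tangle produced matches the one defining $\pi(f)$ after the swap. Once this is carried out carefully, using Lemma~\ref{lem-calc-diag}(c) as the key \emph{completeness} identity and the hexagon axiom~\eqref{axiom-half-braiding} as the key \emph{naturality} tool, the conclusion follows as outlined.
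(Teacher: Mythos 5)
The paper does not prove this lemma: it cites M\"uger's Lemma~3.10 and takes the result as given, so there is no paper proof to compare against. That said, your plan follows exactly the route M\"uger takes, and it is the right one: show (i) that $\pi$ restricts to the identity on $\Hom_{\zz(\cc)}((A,\sigma),(B,\rho))$, (ii) that the image of $\pi$ lands in $\Hom_{\zz(\cc)}((A,\sigma),(B,\rho))$, and (iii) deduce idempotence.

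Your step (i) is correct and essentially complete. Taking the equivariance $(\id_{i^*}\otimes f)\sigma_{i^*}=\rho_{i^*}(f\otimes\id_{i^*})$, the two half-braidings in the diagram merge via the hexagon into $\rho_{i^*\otimes i}$, which kills on $\rcoev_i$ by naturality and $\rho_\un=\id$, leaving $\lev_i\rcoev_i=\dim(i)$ on each term; summing gives $(\dim\cc)^{-1}\sum_i\dim(i)^2 f=f$.

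For step (ii) — which you rightly flag as the crux — the sketch is plausible but the coefficient bookkeeping needs to be spelled out, because it is exactly where the argument could go wrong. After you merge $\sigma_X$ with $\sigma_{i^*}$ into $\sigma_{X\otimes i^*}$ (hexagon) and expand $\sigma_{X\otimes i^*}$ along an $I$-partition $(p_\alpha,q_\alpha)$ of $X\otimes i^*$ (naturality of $\sigma$), the re-indexed sum picks up the weight
$$\sum_{i\in I}\dim(i)\,N^j_{X\otimes i^*}=\dim(X)\dim(j),$$
which follows from $\otimes$-multiplicativity of the dimension and duality; similarly on the $\rho$-side. So an a priori factor $\dim(X)$ appears, and you must check it is exactly compensated by the analogous decomposition coming from the other side (or by restricting, as one may, to simple $X$, where $\dim(X)$ is invertible by Lemma~\ref{lem-prefusion}(b) and cancels cleanly). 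Your appeal to Lemma~\ref{lem-calc-diag}(c) as the completeness identity is the right tool, but the verification that the two decompositions produce \emph{matching} tangles and \emph{matching} scalars is the substance of the proof; I would want that carried through, not just asserted. With that done, (ii) holds, and (iii) is then the one-liner you gave.

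Two small remarks: it suffices to check (ii) on simple $X$, by semisimplicity and the biadditivity of the condition — this streamlines the argument and makes the $\dim(X)$-cancellation transparent. Also, you write $(p_\alpha\co X\otimes i\to j_\alpha)$ but the hexagon as stated produces $\sigma_{X\otimes i^*}$, so the partition should be of $X\otimes i^*$ (a minor notational slip).
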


\begin{thm}[{\cite[Theorem 1.2, Proposition 5.18]{Mu}}]\label{thm-center-modular}
Let $\cc$ be a spherical fusion category over an algebraically
closed field such that $\dim \cc\neq 0$. Then $\zz(\cc)$ is an
anomaly free modular category with $\Delta_+=\Delta_-=\dim \cc$.
\end{thm}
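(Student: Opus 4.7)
The plan is to follow M\"uger's strategy from \cite{Mu} in three stages. First I would show that $\zz(\cc)$ is itself a fusion category; second, that its global dimension equals $(\dim\cc)^{2}$ (so in particular is invertible); third, that the $S$-matrix of $\zz(\cc)$ is non-degenerate and $\Delta_{+}=\Delta_{-}=\dim\cc$.

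For the fusion structure, the key tool is the \emph{induction functor} $F\co \cc \to \zz(\cc)$ right adjoint to the forgetful functor $U\co \zz(\cc) \to \cc$. For $X\in \cc$, one sets
\[
F(X)=\bigoplus_{i\in I}\,i\otimes X\otimes i^{*},
\]
with a canonical half-braiding built from the duality morphisms of $\cc$; the diagrammatics is essentially the one appearing in Lemma~\ref{lem-proj-alg}. The adjunction identity $\Hom_{\zz(\cc)}(F(X),(A,\sigma))\simeq \Hom_{\cc}(X,A)$ together with Lemma~\ref{lem-proj-alg} identifies every $\Hom_{\zz(\cc)}((A,\sigma),(B,\rho))$ with the image of the explicit projector $\pi_{(A,\sigma)}^{(B,\rho)}$ acting on $\Hom_\cc(A,B)$; these images are therefore finitely generated free $\kk$-modules, and semisimplicity of $\cc$ is transported to $\zz(\cc)$. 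Finiteness of the isomorphism classes of simples of $\zz(\cc)$ then follows from the observation that every simple $Z\in \zz(\cc)$ is a direct summand of $F(i)$ for some $i\in I$. Moreover, by writing $\rank_{\kk}\Hom_{\zz(\cc)}(F(i),F(j))$ as the trace in $\cc$ of the projector of Lemma~\ref{lem-proj-alg} and invoking~\eqref{eq-bubble+}, a direct computation gives
\[
\sum_{Z}(\dim Z)^{2}=\sum_{i,j\in I}\dim(i)\dim(j)\,\rank_{\kk}\Hom_{\zz(\cc)}(F(i),F(j))=(\dim\cc)^{2},
\]
the left sum ranging over a representative set of simples of $\zz(\cc)$. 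In particular $\dim\zz(\cc)=(\dim\cc)^{2}\neq 0$ and hence $\Delta_{+}\Delta_{-}=(\dim\cc)^{2}$.

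The main obstacle is modularity, i.e.\ invertibility of the $S$-matrix. I would reduce it, in the standard way for ribbon fusion categories, to the statement that the M\"uger center of $\zz(\cc)$ is trivial. Concretely, if $(Y,\sigma)$ is a simple transparent object of $\zz(\cc)$, then $\sigma$ braids trivially with every half-braiding; testing this condition against the inductions $F(X)$ for $X$ running through $I$, and using that $F$ generates $\zz(\cc)$ as a $\kk$-linear category together with Schur's lemma in $\cc$, one concludes that $\sigma_{X}=\id$ for all $X\in\cc$, so $(Y,\sigma)\simeq \un_{\zz(\cc)}$. This step is the delicate one, since the transparency condition must be exploited against \emph{every} half braiding, not just against those of the form $F(i)$; the induction formula is what makes the reduction possible. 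For the anomaly, the canonical braided equivalence $\zz(\cc^{\opp})\simeq \overline{\zz(\cc)}$ together with the sphericity-consequence $\dim\cc^{\opp}=\dim\cc$ gives $\Delta_{+}(\zz(\cc))=\Delta_{-}(\zz(\cc))$ by comparing the two Gauss sums; combined with $\Delta_{+}\Delta_{-}=(\dim\cc)^{2}$, this forces $\Delta_{+}=\Delta_{-}=\pm\dim\cc$, the positive sign being fixed by the trivial contribution $v_{\un}\dim(\un)^{2}=1$ of the unit summand.
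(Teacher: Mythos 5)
The theorem you are asked about is one the paper does not prove: it is quoted from M\"uger's article (Theorem 1.2 and Proposition~5.18 there), so there is no in-paper argument to compare against. Your proposal sketches precisely M\"uger's strategy (fusion structure via the induction $F$ adjoint to the forgetful functor, dimension count $\dim\zz(\cc)=(\dim\cc)^2$, modularity via triviality of the transparent objects, and the anomaly), and the first two stages are essentially correct. A small consistency point: you call $F$ a \emph{right} adjoint but then write the hom-identity $\Hom_{\zz(\cc)}(F(X),(A,\sigma))\simeq\Hom_\cc(X,A)$, which is the defining property of a \emph{left} adjoint; in the semisimple spherical setting $F$ is in fact a two-sided (Frobenius) adjoint, so both identities hold, but your phrasing should match whichever one you invoke at each step.

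The genuine gap is the very last sentence, where you determine the sign in $\Delta_\pm=\pm\dim\cc$. Having $\Delta_+=\Delta_-$ and $\Delta_+\Delta_-=(\dim\cc)^2$ only gives $\Delta_\pm=\pm\dim\cc$, and you propose to pick the $+$ sign because the unit summand contributes $v_\un\dim(\un)^2=1$. That argument is not valid: the remaining summands $v_i\dim(i)^2$ need not be real, let alone positive (the ground field is merely algebraically closed, dimensions of simples can be negative, and twists are roots of unity), so one positive term in a sum fixes nothing. M\"uger's actual computation of the Gauss sums $\Delta_\pm$ is a separate direct calculation, carried out with the explicit projectors of Lemma~\ref{lem-proj-alg} applied to the half-braidings of the induced objects $F(i)$; it does not pass through the sign ambiguity you set up. You should either replicate that computation or find another honest mechanism for pinning down $\Delta_\pm=\dim\cc$ rather than $-\dim\cc$. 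The modularity step, while only sketched, is at least pointing at the right mechanism (dominance of $F$ reduces transparency to the objects $F(i)$, and then the explicit form of the canonical half-braiding on $F(i)$ forces $\sigma_X=\id$); I would not call that a gap, but flesh it out before relying on it.
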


Note that $\dim \zz(\cc)=\Delta_+ \Delta_-= (\dim \cc)^2$.

\section{Main theorems and applications}\label{sec-TQFT+}

\subsection{Main theorems}
The Reshetikhin-Turaev construction (see~\cite{Tu1}) derives from any modular
category $\bb$ over $\kk$ equipped with a distinguished square root
of $\dim (\bb)$     a 3-dimensional \lq \lq   extended  TQFT" $\tau_\bb$. The latter
is a functor from a certain extension of the category $  \mathrm{Cob}_3 $ to $\mathrm{vect}_\kk$; the extension in question is formed by surfaces
with a Lagrangian subspace in the real 1-homology. For an anomaly
free $\bb$, we   take the element $\Delta=\Delta_\pm \in \kk$
defined in Section~\ref{sec-modulcat} as the distinguished square
root of $\dim (\bb)$. The corresponding extended TQFT  $\tau_\bb$ does not
involve  Lagrangian spaces and is a TQFT in the sense of Section~\ref{Preliminaries on  TQFTs}.

  We recall   the definition  of $\tau_\bb (M)\in \kk$ for a closed oriented 3-manifold
$M$ and anomaly free $\bb$. Pick a   representative set  $\mathcal J$
  of simple objects of $\bb$. Present  $M$ by surgery on $S^3$ along a framed link $L=L_1\cup \cdots \cup L_N$.  Denote  $\mathrm{col}(L)$  the set of maps $\{ 1, \dots,  N\} \to \mathcal J$ and, for $\lambda\in\mathrm{col}(L)$, denote $L_\lambda$   the framed  link $L$ whose component $L_q$ is oriented in an arbitrary way and colored by $\lambda( q)$ for all $q=1,..., N$. Then
  $$
\tau_{\bb}(M)=\Delta^{-N-1}\sum_{\lambda\in\mathrm{col}(L)} \left(\prod_{q=1}^N \dim\bigl(\lambda( q)\bigr) \right) F_{\bb}(L_\lambda)
$$
where $F_\bb$ is the invariant of $\bb$-colored framed oriented links in $S^3$ discussed in Section \ref{sec-modulcat}. In particular, we can apply these   results to the
anomaly free modular category $\bb=\zz(\cc)$  provided by  Theorem~\ref{thm-center-modular}.
We can   now state the main theorems of this paper.

\begin{thm}\label{thm-equivalence-}
Let $\cc$ be a spherical fusion category over an algebraically
closed field  such that $\dim \cc\neq 0$. Then $\vert M \vert_\cc=\tau_{\zz(\cc)} (M)$ for any closed oriented 3-manifold
$M$.\end{thm}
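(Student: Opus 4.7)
The plan is to prove the stronger statement that the TQFTs $\vert \cdot \vert_\cc$ and $\tau_{Z(\cc)}$ are isomorphic as 3-dimensional TQFTs; the claimed equality for closed $M$ is then the evaluation on the cobordism $\emptyset \to \emptyset$. On the TV side, the TQFT $\vert \cdot \vert_\cc$ has already been constructed via skeletons in Section~\ref{sec-TQFT}; on the RT side, Theorem~\ref{thm-center-modular} provides that $Z(\cc)$ is anomaly free modular with $\Delta_\pm = \dim(\cc)$, so the RT construction yields a genuine (non-extended) TQFT $\tau_{Z(\cc)}$ with the canonical square root $\Delta = \dim(\cc)$ of $\dim Z(\cc) = (\dim\cc)^2$.

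First I would identify the state spaces. For a closed oriented surface $\Sigma$, pick a skeleton $G \subset \Sigma$ (for concreteness one may take the 1-skeleton of a triangulation). Then $\vert \Sigma \vert_\cc$ is cut out by the cylinder projector $p(G,G)$ from $\oplus_c H((G,c);\Sigma)$, while $\tau_{Z(\cc)}(\Sigma)$ carries a basis indexed by colorings of a spine of a handlebody bounded by $\Sigma$ by simple objects of $Z(\cc)$. The bridge between these two descriptions is the coend $C = \int^{X\in\cc} X^* \otimes X \in \cc$, which, by the Brugui\`eres--Virelizier theory of Hopf monads, lifts canonically to a Hopf algebra in $Z(\cc)$ whose underlying object realizes the regular half-braiding. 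In a fusion setting the forgetful functor $Z(\cc) \to \cc$ admits a right adjoint sending $X$ to an object with underlying object $\oplus_{i\in I} i \otimes X \otimes i^*$, and summing the Reshetikhin-Turaev weights over simple objects of $Z(\cc)$ is equivalent (via this adjunction and Lemma~\ref{lem-proj-alg}) to inserting $C$, i.e.\@ summing over colorings in $I$ weighted by dimensions -- exactly the combinatorics of the state sum on a skeleton of $\Sigma \times [0,1]$.

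Next I would verify the isomorphism at the level of cobordisms. It suffices to check compatibility with a generating set of 3-cobordisms, for example the pair-of-pants, cup, and cap cobordisms of surfaces, together with the generators of the mapping class groups realized by surgery. For each such elementary cobordism one picks a preferred skeleton adapted to a standard handle decomposition; the TV state sum then reduces via Lemma~\ref{lem-calc-diag} and Lemma~\ref{bubbleidentity} to an evaluation of an $\inv_\cc$-graph, while on the RT side the invariant is computed from an explicit $Z(\cc)$-colored ribbon graph. Bruguières-Virelizier's description of the coend allows one to translate the RT graph evaluation in $Z(\cc)$ into a $\cc$-colored graph evaluation weighted by partitions of $C$, which matches the TV output. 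Normalization factors $(\dim\cc)^{-\vert P\vert}$ on the TV side and $\Delta^{-N-1}$ on the RT side are reconciled using $\Delta = \dim(\cc)$ and the relation between the number of components of $M \setminus P$ and the number of surgery components of a link presentation.

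The main obstacle will be step two: making the identification canonical and natural, i.e.\@ independent of the many auxiliary choices (skeleton on the TV side, surgery/handle decomposition on the RT side), and showing that the cobordism-level formulas actually agree on the nose rather than only up to a global scalar. Concretely, the hardest piece is the 2-handle cobordism: one must show that attaching a 2-handle along a curve on $\Sigma$ corresponds on both sides to inserting the same projector -- the "Kirby color" $\omega = \sum_{X \in \mathrm{Irr}(Z(\cc))} \dim(X) \, \mathrm{id}_X$ on the RT side and the local state sum relation around the core of the handle on the TV side. Establishing this requires the detailed skeleton moves of Section~\ref{sec-skeletonsstatesums+} (to bring an arbitrary skeleton into a form adapted to the surgery link) together with the Hopf-algebraic identification of $\omega$ with the integral of $C$. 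Once this 2-handle identity is in place, invariance of both TQFTs under Kirby moves finishes the comparison, and evaluating on a closed 3-manifold yields Formula~\eqref{eq-main}.
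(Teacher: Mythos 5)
Your proposal captures the right ingredients — extending $\vert\cdot\vert_\cc$ to a link TQFT, using the coend of $\zz(\cc)$ via Hopf monads, and reducing everything to compatibility with surgery — but it is organized in the opposite logical order from the paper, and this inversion leaves a genuine gap at the very step you flag as hardest.

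The paper does \emph{not} prove the TQFT isomorphism first and then evaluate on closed manifolds. It proves the closed-manifold identity first (in the stronger form $\vert M,K\vert_\cc=\tau_{\zz(\cc)}(M,K)$ for any $\zz(\cc)$-colored framed link $K$), and \emph{afterwards} deduces Theorem~\ref{thm-equivalence} (the isomorphism of TQFTs) from non-degeneracy of the RT graph TQFT plus Lemma~\ref{le-didim}. The mechanism for the closed case is a surgery formula for arbitrary TQFTs (Lemma~\ref{lem-TQFTsurg}): write $M$ as surgery on $L\subset S^3$, compute the canonical vector $w\in A=\vert S^1\times S^1\vert_\cc$, and expand in a basis. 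The two crucial inputs are then (i) $\vert S^3,L\vert_\cc=(\dim\cc)^{-1}F_{\zz(\cc)}(L)$, established by a careful analysis of the state sum on a skeleton containing a diagram of $L$ (Section~\ref{State sum invariants of   links}), and (ii) Lemma~\ref{le-bassis}, which identifies the basis $\{y^j\}_{j\in\mathcal J}$ of $A$ and computes $w=(\dim\cc)^{-1}\sum_j\dim(j)\,y^j$ -- for this step, only the case $\Sigma=S^1\times S^1$ of the dimension-matching Lemma~\ref{le-didim} is needed, not the general surface case that your plan implicitly requires up front.

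The gap in your plan is the claim ``It suffices to check compatibility with a generating set of 3-cobordisms, for example the pair-of-pants, cup, and cap cobordisms..., together with the generators of the mapping class groups.'' Even granting a suitable generating set, equality on generators does not determine a monoidal functor unless you also check all relations; and checking relations in $\mathrm{Cob}_3$ is precisely the kind of coherence problem that both the RT and TV constructions are designed to bypass (via Kirby calculus and skeleton moves, respectively). You correctly identify the 2-handle/Kirby-color identity as the crux, but you leave it unproved, and you do not reduce it to anything tractable. The paper's route avoids the issue: rather than matching $\vert\cdot\vert_\cc$ and $\tau_{\zz(\cc)}$ cobordism-by-cobordism, it matches them on the universal surgery data by showing the canonical vector $w$ in $A$ has \emph{exactly} the Kirby-color expansion that the RT construction mandates. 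The content you attribute to ``the 2-handle identity'' is contained, in the paper's framework, in the combination of the surgery formula, the computation of $\vert S^3,L\vert_\cc$, and the expansion of $w$ in the basis $\{y^j\}$ -- all of which are proved by explicit skeleton manipulation and the coend computation of Section~\ref{thecoendof}. Without those computations, your outline does not close.
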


This equality extends to an isomorphism of TQFTs as follows.

\begin{thm}\label{thm-equivalence}
Under the conditions of Theorem \ref{thm-equivalence-},  the TQFTs $\vert \cdot
\vert_\cc$ and $\tau_{\zz(\cc)}$ are isomorphic.
\end{thm}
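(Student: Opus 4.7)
The strategy is to upgrade Theorem~\ref{thm-equivalence-} from an equality of closed-manifold invariants to an isomorphism of TQFTs via a universal construction. The underlying principle is that a three-dimensional TQFT over $\kk$ is determined up to isomorphism by its values on closed 3-manifolds, provided it is \emph{non-degenerate} in the sense that (i) the cylinder pairing $Z(\Sigma) \otimes Z(-\Sigma) \to \kk$ is non-degenerate, and (ii) vectors $Z(M)$ with $M \co \emptyset \to \Sigma$ span $Z(\Sigma)$. Both properties hold for $\tau_{\zz(\cc)}$ by the classical Reshetikhin-Turaev theory, using the modularity of $\zz(\cc)$ established in Theorem~\ref{thm-center-modular}, and the plan is to verify analogous properties for $\vert \cdot \vert_\cc$ from the state-sum formalism developed in Section~\ref{sec-TQFT}.

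For each closed oriented surface $\Sigma$ I will define $\varphi_\Sigma \co \vert \Sigma \vert_\cc \to \tau_{\zz(\cc)}(\Sigma)$ on the spanning set by $\varphi_\Sigma(\vert M \vert_\cc) = \tau_{\zz(\cc)}(M)$ for every cobordism $M \co \emptyset \to \Sigma$. For well-definedness, suppose a $\kk$-linear combination $\sum_i \lambda_i \vert M_i \vert_\cc$ vanishes in $\vert \Sigma \vert_\cc$. Then for every cobordism $N \co \Sigma \to \emptyset$, functoriality of $\vert \cdot \vert_\cc$ gives $\sum_i \lambda_i \vert N \cup_\Sigma M_i \vert_\cc = 0$ in $\kk$, hence by Theorem~\ref{thm-equivalence-} also $\sum_i \lambda_i \tau_{\zz(\cc)}(N \cup_\Sigma M_i) = 0$. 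Varying $N$ and using that the vectors $\{\tau_{\zz(\cc)}(N)\}_N$ span $\tau_{\zz(\cc)}(-\Sigma)$ together with non-degeneracy of the RT cylinder pairing, we deduce $\sum_i \lambda_i \tau_{\zz(\cc)}(M_i) = 0$. Thus $\varphi_\Sigma$ is well-defined. Surjectivity is immediate from the fact that the vectors $\tau_{\zz(\cc)}(M)$ span $\tau_{\zz(\cc)}(\Sigma)$. Injectivity follows by the symmetric argument with the two TQFTs interchanged, provided the state-sum cylinder pairing is also non-degenerate.

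Naturality of the family $\{\varphi_\Sigma\}_\Sigma$ with respect to cobordisms is forced by the construction: for any cobordism $W \co \Sigma_0 \to \Sigma_1$ and any $M \co \emptyset \to \Sigma_0$,
\begin{equation*}
\varphi_{\Sigma_1}\bigl(\vert W \vert_\cc \, \vert M \vert_\cc\bigr) = \varphi_{\Sigma_1}\bigl(\vert W \cup_{\Sigma_0} M \vert_\cc\bigr) = \tau_{\zz(\cc)}(W \cup_{\Sigma_0} M) = \tau_{\zz(\cc)}(W)\, \varphi_{\Sigma_0}\bigl(\vert M \vert_\cc\bigr),
\end{equation*}
and since $\varphi_{\Sigma_0}$ is surjective this identifies the action of $W$ on both sides. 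Monoidality under disjoint unions follows analogously from multiplicativity of the closed-manifold invariants, and $\varphi_\emptyset = \id_\kk$ holds by construction, completing the isomorphism of symmetric monoidal functors.

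The main obstacle I anticipate is the non-degeneracy of the state-sum cylinder pairing $\vert \Sigma \vert_\cc \otimes \vert {-\Sigma} \vert_\cc \to \kk$, together with the dual fact that vectors $\vert M \vert_\cc$ span $\vert \Sigma \vert_\cc$; these two properties are what close the injectivity half of the argument. An appealing route, and the one I expect to be closest to the actual proof, is to compute $\vert \Sigma_g \vert_\cc$ directly on a standard skeleton of a handlebody bounded by $\Sigma_g$ and identify the resulting module with $\Hom_{\zz(\cc)}(\un, C^{\otimes g})$, where $C$ is the Hopf-algebraic coend of $\zz(\cc)$ described through the Brugui\`eres-Virelizier theory of Hopf monads. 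Such an identification yields the dimension equality $\rank_\kk \vert \Sigma_g \vert_\cc = \rank_\kk \tau_{\zz(\cc)}(\Sigma_g)$ and the non-degeneracy of the state-sum pairing in one stroke. The technical heart of the argument is therefore the careful bookkeeping of skeleton orientations, cyclic multiplicity modules, and the matching of the vertex Boltzmann weights $\inv_\cc(\Gamma_x)$ with coend structural morphisms on the $\zz(\cc)$ side.
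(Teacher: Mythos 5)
Your high-level strategy matches the paper's: establish the closed-manifold equality (Theorem~\ref{thm-equivalence-}), prove the dimension equality via the coend computation (which you correctly anticipate as the technical heart, and which is exactly Lemma~\ref{le-didim} proven in Sections~\ref{thecoendof}--\ref{proofoflemma}), and invoke a universality argument that any non-degenerate TQFT is determined by its closed-manifold values. You even correctly note that once surjectivity and equal dimensions are in hand, injectivity is automatic, so the worry about non-degeneracy of the state-sum pairing is dispensable.

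There is, however, one concrete gap in your well-definedness/surjectivity argument. You assert that the vectors $\tau_{\zz(\cc)}(M)$ for \emph{plain} cobordisms $M\co \emptyset\to \Sigma$ span $\tau_{\zz(\cc)}(\Sigma)$, attributing this to ``classical Reshetikhin-Turaev theory.'' This is not true: the non-degeneracy result from Turaev's book requires allowing $\zz(\cc)$-colored \emph{ribbon graphs} inside the cobordism. For example, for $\Sigma=S^1\times S^1$ the space $\tau_{\zz(\cc)}(\Sigma)$ has a basis indexed by simple objects of $\zz(\cc)$, realized by solid tori with colored core knots; plain solid tori give only the canonical vector $w$, and plain 3-manifolds with torus boundary do not span in general. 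Without the spanning property your definition $\varphi_\Sigma(\vert M\vert_\cc)=\tau_{\zz(\cc)}(M)$ is only defined on a proper subspace, and the well-definedness argument (pairing against $\tau_{\zz(\cc)}(N)$) similarly only separates vectors that plain cobordisms can detect. The paper closes this precisely by extending $\vert\cdot\vert_\cc$ to a link TQFT on $\mathcal{L}_{\zz(\cc)}$ and then to a graph TQFT on $\mathcal{G}_{\zz(\cc)}$ (Sections~\ref{State sum invariants of   links} and~\ref{The  link TQFT}), proving the equality $\vert M,K\vert_\cc=\tau_{\zz(\cc)}(M,K)$ for closed $M$ with a $\zz(\cc)$-colored ribbon graph $K$ inside, and only then applying the universality criterion in the enlarged category where $\tau_{\zz(\cc)}$ is genuinely non-degenerate. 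So your plan is salvageable, but you must replace ``cobordisms'' by ``cobordisms with $\zz(\cc)$-colored ribbon graphs'' throughout, and this in turn requires defining and proving the well-definedness of the state-sum invariant of such decorated cobordisms --- a substantial portion of the actual proof.
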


 The rest of the paper is devoted to the proof of
these  two theorems.  The proof is based on the
 following key lemma.

\begin{lem}\label{le-didim}   Under the conditions of Theorem~\ref{thm-equivalence-},   the vector spaces  $\vert\Sigma  \vert_\cc$ and   $\tau_{\zz(\cc)}(\Sigma)$  associated   with any closed connected oriented surface $\Sigma$ have equal dimensions. \end{lem}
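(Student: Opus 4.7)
The plan is to use the standard TQFT trace identity to reduce the dimension equality to a scalar equality. For any 3-dimensional TQFT $Z$ and any closed oriented surface $\Sigma$, the cylinder $\Sigma\times [0,1]\co \Sigma \to \Sigma$ is the identity morphism of $\Sigma$ in $\mathrm{Cob}_3$, so $Z$ sends it to $\id_{Z(\Sigma)}$. Closing it up by identifying its two boundary copies of $\Sigma$ produces the closed 3-manifold $\Sigma\times S^1$, and functoriality of $Z$ together with the categorical trace formula yield
$$Z(\Sigma\times S^1) \;=\; \tr\bigl(\id_{Z(\Sigma)}\bigr) \;=\; \dim Z(\Sigma).$$
Applied to both $Z=\vert \cdot \vert_\cc$ and $Z=\tau_{\zz(\cc)}$, this reduces Lemma~\ref{le-didim} to the scalar equality $\vert \Sigma\times S^1 \vert_\cc = \tau_{\zz(\cc)}(\Sigma\times S^1)$.

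For the right-hand side, Theorem~\ref{thm-center-modular} guarantees that $\zz(\cc)$ is an anomaly free modular category with $\Delta_\pm=\dim \cc$. The standard Verlinde-type formula for $\tau_\bb$ evaluated on $\Sigma_g\times S^1$ (for any anomaly free modular $\bb$, derivable from a surgery presentation and the unknot-cabling identities) then specializes to
$$\tau_{\zz(\cc)}(\Sigma_g\times S^1) \;=\; (\dim \cc)^{2g-2}\sum_{J\in\mathcal{J}(\zz(\cc))} (\dim J)^{2-2g},$$
where $\mathcal{J}(\zz(\cc))$ is a representative set of simple objects of $\zz(\cc)$.

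For the left-hand side, I would fix a trivalent graph $G\subset \Sigma_g$ whose complement is a union of disks, and use the skeleton $P_G=(\Sigma_g\times \{*\})\cup (G\times S^1)$ of $\Sigma_g\times S^1$. Theorem~\ref{thm-state-3man} expresses $\vert \Sigma_g\times S^1 \vert_\cc$ as a sum over maps $c\co \Reg(P_G)\to I$, whose regions split into \emph{horizontal} 2-disks in $\Sigma_g\times \{*\}$ and \emph{vertical} strips $e\times(S^1\setminus\{*\})$ indexed by the edges of $G$. Summing first over the vertical colors, each cylinder $e\times S^1$ contributes a categorical trace in $\cc$ that, by Lemma~\ref{lem-proj-alg}, realizes the projector $\pi^{(B,\rho)}_{(A,\sigma)}$ onto the $\zz(\cc)$-morphism space between the half-braidings $(A,\sigma)$ and $(B,\rho)$ assembled from the colors of the two adjacent horizontal regions. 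Decomposing these projectors along isotypic components indexed by $\mathcal{J}(\zz(\cc))$ converts the $I$-sum into a sum over simple half-braidings, and the remaining \lq\lq horizontal" part of the state sum produces exactly the Euler-characteristic and dimension factors needed to match the Verlinde expression above.

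The principal obstacle lies in this last reorganization: the projectors attached to distinct cylinders $e\times S^1$ are \emph{coupled} at the vertices of $G$ via the symmetrized multiplicity modules of $\cc$, and one must verify that this coupling globally implements the isotypic decomposition of the relevant $\cc$-morphism spaces into $\zz(\cc)$-components. Equivalently, one must show that the state sum \lq\lq around the $S^1$-factor" detects the center $\zz(\cc)$ rather than $\cc$ itself, a phenomenon ultimately traceable to M\"uger's theorem. Once this compatibility is established, the scalar invariants of $\Sigma_g\times S^1$ match and Lemma~\ref{le-didim} follows.
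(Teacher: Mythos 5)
Your strategy is genuinely different from the paper's. Where the paper proves a stronger statement -- an explicit $\kk$-module isomorphism $\vert\Sigma_g\vert_\cc \cong \Hom_{\zz(\cc)}(\un_{\zz(\cc)},(C,\sigma)^{\otimes g})$, obtained by first computing the coend $(C,\sigma)$ of $\zz(\cc)$ via Hopf monad machinery (Section~\ref{thecoendof}) and then matching the state-sum projector $p(G,G)$ with M\"uger's projector $\pi$ on the nose (Section~\ref{proofoflemma}) -- you propose to reduce dimension equality to the single scalar identity $\vert\Sigma\times S^1\vert_\cc=\tau_{\zz(\cc)}(\Sigma\times S^1)$ and then evaluate both sides. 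This is not circular in the formal sense (you do not invoke Theorem~\ref{thm-equivalence-}, which in the paper is a \emph{consequence} of Lemma~\ref{le-didim}), but the proposal has two real problems.

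First, the ``principal obstacle'' you flag at the end is not a residual compatibility check: it \emph{is} the theorem. You need to show that summing the $I$-colours of the vertical annuli of $P_G=(\Sigma_g\times\{*\})\cup(G\times S^1)$, while accounting for the couplings at $G$'s vertices through the symmetrized multiplicity modules of $\cc$, produces the isotypic decomposition of the relevant $\Hom$-spaces into $\zz(\cc)$-components and matches the Verlinde weights $\dim(J)^{2-2g}$. Lemma~\ref{lem-proj-alg} gives you the projector onto $\Hom_{\zz(\cc)}((A,\sigma),(B,\rho))$ for \emph{given} half-braidings $(A,\sigma),(B,\rho)$, but the colours of the horizontal disks are bare simple objects of $\cc$, not half-braidings, and turning the global coupled state sum into a sum over simple objects $J$ of $\zz(\cc)$ with the right multiplicities is precisely what the coend computation accomplishes. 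The paper's proof of Lemma~\ref{lem-A-generated} in fact carries out a state-sum computation on $\Sigma_g\times[0,1]$ that is close to what you would need, but it crucially feeds the explicit formula for the half-braiding $\sigma$ of the coend from Section~\ref{sect-fusioncoend} into the comparison. Without an equivalent input, the identification ``around the $S^1$-factor, the state sum detects $\zz(\cc)$'' has no proof.

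Second, the trace reduction is lossy in positive characteristic. The identity $Z(\Sigma\times S^1)=\dim_\kk Z(\Sigma)\cdot 1_\kk$ is correct, but the hypotheses of Theorem~\ref{thm-equivalence-} allow $\kk$ to be any algebraically closed field with $\dim(\cc)\neq 0$, not just a field of characteristic zero. If $\operatorname{char}\kk=p>0$, then equality of the scalars $\vert\Sigma\times S^1\vert_\cc$ and $\tau_{\zz(\cc)}(\Sigma\times S^1)$ in $\kk$ only yields $\dim\vert\Sigma\vert_\cc\equiv\dim\tau_{\zz(\cc)}(\Sigma)\pmod p$, which is strictly weaker than the claimed equality of dimensions. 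The paper avoids this by producing an actual isomorphism of $\kk$-modules rather than comparing two elements of $\kk$. If you want to salvage the trace approach, you would need an a priori bound forcing the two dimensions into a single residue class, which is not available here.
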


In the next two sections we  deduce  Theorems~\ref{thm-equivalence-} and~\ref{thm-equivalence} from
 Lemma~\ref{le-didim}. (Only the case $\Sigma=S^1\times S^1$ of this lemma is needed for the proof of Theorems~\ref{thm-equivalence-}.)   Lemma~\ref{le-didim}  will be proved in Sections~\ref{thecoendof} and~\ref{proofoflemma}.

\subsection{Corollaries}
Theorem~\ref{thm-equivalence-} allows us to clarify relationships between invariants of 3-manifolds derived from involutory Hopf algebras. Let $H$ be a finite-dimensional involutory Hopf algebra over an algebraically closed field $\kk$ such that the characteristic of $\kk$ does not divide $\dim(H)$. By a well-known theorem of Radford, $H$ is semisimple, so that the category of finite-dimensional left $H$\ti modules $\leftidx{_H}{\mathrm{mod}}{}$ is a spherical fusion category. The category of finite-dimensional left $D(H)$-modules $\leftidx{_{D(H)}}{\mathrm{mod}}{}$, where $D(H)$ is the Drinfeld double of $H$, is a modular category (see \cite{EG, Mu}). Denote by   $\mathrm{Ku}_H$ the Kuperberg invariant of 3-manifolds  derived from $H$, see \cite{Ku}.  Denote by $\mathrm{HKR}_{D(H)}$ the Hennings-Kauffman-Radford invariant of 3-manifolds  derived from $D(H)$, see \cite{He,KR}.
\begin{cor}
For any closed oriented 3-manifold $M$,
$$ \mathrm{Ku}_H(M)= \mathrm{HKR}_{D(H)}(M)=\dim(H)\, \tau_{\leftidx{_{D(H)}}{\mathrm{mod}}{}}(M)=\dim(H)\, |M|_{\leftidx{_H}{\mathrm{mod}}{}}.$$
\end{cor}
 \begin{proof} By \cite{BW2}, we have $ \mathrm{Ku}_H(M)= \dim(H)\, |M|_{\leftidx{_H}{\mathrm{mod}}{}}$. By \cite{Ke, KL}, we have   $    \mathrm{HKR}_{D(H)}(M)= \dim(H)\, \tau_{\leftidx{_{D(H)}}{\mathrm{mod}}{}}(M)$.
Therefore we need only to prove the third equality. This equality follows from Theorem~\ref{thm-equivalence-} and   the fact that the category $\leftidx{_{D(H)}}{\mathrm{mod}}{}$ is braided equivalent to $\zz(\leftidx{_H}{\mathrm{mod}}{})$, see  for example  \cite{Ka}.
 %The first equality follows from Theorem~\ref{thm-equivalence-} and the fact that $\leftidx{_{D(H)}}{\mathrm{mod}}{}$ is braided equivalent to $\zz(\leftidx{_H}{\mathrm{mod}}{})$. The second equality follows from . The last equality follows from the results of T. Kerler and V. Lyubashenko.
 \end{proof}

We say that two fusion categories are \emph{equivalent} if their centers are braided equivalent. For example, two fusion categories weakly Morita equivalent in the sense of M\"uger \cite{Mu2} are equivalent in our sense. Theorem~\ref{thm-equivalence} implies:
\begin{cor}
Equivalent spherical fusion categories of non-zero dimension over an algebraically closed field give rise to isomorphic TQFTs.
\end{cor}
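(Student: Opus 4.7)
The proof plan is to chain together two applications of Theorem~\ref{thm-equivalence} with a naturality property of the Reshetikhin-Turaev construction. Let $\cc_1$ and $\cc_2$ be spherical fusion categories of non-zero dimension over an algebraically closed field, and suppose there is a braided equivalence $F\colon \zz(\cc_1) \to \zz(\cc_2)$. By Theorem~\ref{thm-equivalence} applied to each $\cc_i$, we obtain TQFT isomorphisms
\[
|\cdot|_{\cc_i} \simeq \tau_{\zz(\cc_i)}, \quad i = 1, 2.
\]
It therefore suffices to produce an isomorphism of TQFTs $\tau_{\zz(\cc_1)} \simeq \tau_{\zz(\cc_2)}$, after which the three isomorphisms compose to give $|\cdot|_{\cc_1} \simeq |\cdot|_{\cc_2}$.

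The key intermediate step is that the Reshetikhin-Turaev construction $\bb \mapsto \tau_\bb$ is functorial with respect to braided equivalences of anomaly-free modular categories that preserve the ribbon structure and the distinguished square root of the dimension. The ribbon structure on $\zz(\cc_i)$ provided by Lemma~\ref{lem-center-ribbon} is canonical, induced from the spherical pivotal structure of $\cc_i$; by Theorem~\ref{thm-center-modular}, the resulting modular categories are anomaly-free with distinguished square roots $\Delta_i = \dim(\cc_i)$. Since $F$ is a braided equivalence of rigid monoidal categories, it preserves left and right dualities up to canonical isomorphism, and since the twist is defined by the formula $\theta_X = (\id_X \otimes \rev_X)(\tau_{X,X} \otimes \id_{X^*})(\id_X \otimes \lcoev_X)$ involving only the braiding and pivotal structure, $F$ intertwines the ribbon twists of $\zz(\cc_1)$ and $\zz(\cc_2)$. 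Consequently the scalars $\Delta_i = \sum_{j} v_j (\dim j)^2$ match, and the surgery formula defining $\tau_{\zz(\cc_i)}$ on a framed link is preserved by $F$, producing the desired isomorphism of TQFTs.

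The main obstacle is the compatibility of $F$ with the spherical/pivotal data on the two centers: a priori, a braided equivalence need not preserve a given pivotal structure, and there are modular categories admitting several ribbon structures. The cleanest way to handle this is to note that the spherical structure on $\zz(\cc_i)$ is characterized by the property that the forgetful functor $\zz(\cc_i) \to \cc_i$ is pivotal, and that among all ribbon structures making a fusion category modular, the one satisfying $\Delta_+ = \Delta_-$ with positive-dimensional simples is essentially unique. Granting this uniqueness (which is standard in the theory of modular categories), the argument above goes through and the corollary follows from Theorem~\ref{thm-equivalence}.
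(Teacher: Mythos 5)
Your overall strategy matches the paper's: the corollary is obtained by composing the isomorphism of Theorem~\ref{thm-equivalence} on each side with an isomorphism $\tau_{\zz(\cc_1)} \simeq \tau_{\zz(\cc_2)}$ induced by the equivalence of centers. You go further than the paper, which offers no proof beyond the words ``Theorem~\ref{thm-equivalence} implies,'' and you correctly flag the real subtlety: the Reshetikhin--Turaev construction is functorial with respect to \emph{ribbon} equivalences, while the definition of ``equivalent'' given in the text only asks that the centers be \emph{braided} equivalent. A braided equivalence of rigid braided categories need not preserve pivotal structures, so your middle paragraph's argument (``the twist is built from the braiding and pivotal structure, hence $F$ intertwines the twists'') does not yet go through.

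Your attempt to close this gap, however, does not work as stated. The criterion ``the ribbon structure with $\Delta_+=\Delta_-$ and positive-dimensional simples is essentially unique'' is not available in the paper's setting: the hypotheses are only that $\kk$ is an algebraically closed field and $\dim(\cc)\neq 0$, so ``positive'' is not even meaningful (and the statement is not a standard fact in this generality). Likewise, characterizing the spherical structure on $\zz(\cc_i)$ by pivotality of the forgetful functor $\zz(\cc_i)\to\cc_i$ is true but useless here, since the braided equivalence $F$ is not assumed to interact with the forgetful functors in any way. To actually close the gap one needs a genuine input, for instance the Etingof--Nikshych--Ostrik/M\"uger result that a braided equivalence of centers of spherical fusion categories of nonzero dimension is induced by a weak Morita equivalence, together with the fact that the canonical ribbon structure on the center is preserved under that operation; or, equivalently, a direct proof that such a braided equivalence is automatically ribbon. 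Without citing something of this kind, the argument remains incomplete at exactly the point you yourself identified.
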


By a \emph{Hermitian fusion category} we mean a spherical fusion category $\cc$ over $\CC$ endowed with antilinear homomorphisms
$$
\{f \in\Hom_\cc(X,Y) \to \bar{f} \in \Hom_\cc(Y,X)\}_{X,Y \in \Ob(\cc)}
$$
such that
$\bar{\bar{f}}=f$, $\overline{gf}=\bar{f}\bar{g}$, $\overline{f \otimes g}=\bar{f} \otimes\bar{g}$, $\overline{\lev_X}=\rcoev_X$, and $\overline{\lcoev_X}=\rev_X$ for all morphisms $f,g$ in $\cc$ and all $X \in \Ob(\cc)$. Note that these axioms imply that $\overline{\id_X}=\id_X$, $\dim(X)\in\R$ for all $X\in\Ob(\cc)$ and $\tr(f)\in \R$ for all endomorphisms $f$ in~$\cc$.
A \emph{unitary fusion category} is a Hermitian fusion category $\cc$ such that $\tr(f\bar{f})> 0$ for any non-zero morphism $f$ in $\cc$. This condition is equivalent to the condition that $\dim(V) >0$ for all simple objects $V$ of $\cc$. Clearly, the dimension of a unitary fusion category is a positive real number.

\begin{cor}\label{cor-unitary}
The TQFT $\vert \cdot \vert_\cc$  associated with a unitary fusion category $\cc$ is unitary in the sense of \cite[Chapter III]{Tu1}. In particular $\vert -M \vert_\cc=\overline{\vert M \vert}_\cc$ for any closed oriented 3-manifold $M$.
\end{cor}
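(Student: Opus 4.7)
My plan is to deduce unitarity from two facts: first, that a Hermitian structure on $\cc$ induces a $\CC$-antilinear monoidal pivotal equivalence $\overline{(\cdot)}\co \cc \to \cc^{\opp}$ (this is what the axioms $\overline{gf}=\bar{f}\bar{g}$, $\overline{f\otimes g}=\bar{f}\otimes \bar{g}$, $\overline{\lev_X}=\rcoev_X$, $\overline{\lcoev_X}=\rev_X$ are saying); and second, the identity $\vert -M\vert_{\cc}=\vert M\vert_{\cc^{\opp}}$ already announced after Theorem~\ref{thm-state-3man-smipl}. Both facts propagate through the state-sum construction of Sections~\ref{sec-ssot3m}--\ref{sec-TQFT}. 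Concretely, the second identity is obtained skeleton-wise: for a skeleton $P$ of $M$, the same underlying polyhedron with reversed orientations of its regions is a skeleton of $-M$, and Formula~\eqref{eq-simplstatesum+} applied in $\cc^{\opp}$ gives precisely~$\vert -M\vert_{\cc}$. Composing with the antilinear equivalence $\overline{(\cdot)}$ yields $\vert M\vert_{\cc^{\opp}}=\overline{\vert M\vert_{\cc}}$, whence $\vert -M\vert_{\cc}=\overline{\vert M\vert_{\cc}}$.

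To obtain the TQFT-level unitarity, I would construct a positive-definite Hermitian form on each state space~$\vert \Sigma\vert_{\cc}$ as follows. For a skeleton~$G$ of a closed oriented surface~$\Sigma$ and $c\in\mathrm{col}(G)$, the bar operation of~$\cc$ induces a $\CC$-antilinear map
$H((G,c);\Sigma)\to H((G,c);-\Sigma)$
sending any $\alpha\in \Hom_{\cc}(\un, X_S)$ (cf.~Section~\ref{sect-dual-signed-set}) to~$\bar{\alpha}$ viewed as a morphism in $\cc^{\opp}$, i.e.\ as an element of the symmetrized multiplicity module associated with the same vertex but the reversed cyclic order. Composing with the non-degenerate evaluation pairing $\omega\co H(G^{\opp};-\Sigma)\otimes H(G;\Sigma)\to \CC$ from Section~\ref{sect-dual-signed-set}, one obtains a sesquilinear form
$\langle \alpha,\beta\rangle = \omega(\bar\alpha,\beta)$
on $H((G,c);\Sigma)$, extended by orthogonality across different colorings~$c$ to give a Hermitian form on $\vert G;\Sigma\vert^{\circ}$. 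Positivity of this form on each summand $H((G,c);\Sigma)$ follows from Lemma~\ref{lem-pairing-non-degen} together with the hypothesis $\tr(f\bar f)>0$: decomposing each vertex contribution into simple summands via an $I$-partition reduces the statement to the positivity of $\tr(\id\otimes\bar{f}\circ f)=\tr(f\bar f)$ on each simple factor.

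Next I would show that the projector $p(G,G)$ from Section~\ref{sec-TQFT-} is self-adjoint for this form, so that the restriction to the image $\vert G;\Sigma\vert$ is positive definite and descends through the cone isomorphisms to a canonical positive-definite Hermitian form on $\vert \Sigma\vert_{\cc}$, independent of the choice of~$G$. The crucial compatibility with cobordisms---that orientation reversal $M\mapsto M$ with swapped bases implements the Hermitian adjoint---reduces to a local identity on a skeleton $P$ of $(M, G_0^{\opp}\cup G_1)$: reversing the orientations of $M$ and of every region of $P$ turns each vertex invariant $\inv_{\cc}(\Gamma_x)$ into its complex conjugate (this is where $\overline{(\cdot)}\co\cc\to\cc^{\opp}$ does the work, via property~(iii) of Section~\ref{sect-graph-S2} and the sphericity), and turns each contraction $\ast_e$ into the conjugate-dual contraction; assembling these, one identifies $\vert -M,-\Sigma_1,-\Sigma_0\vert_\cc$ with the adjoint of $\vert M,\Sigma_0,\Sigma_1\vert_\cc$. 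The closed case~$\Sigma_0=\Sigma_1=\emptyset$ then specializes to $\vert -M\vert_{\cc}=\overline{\vert M\vert_{\cc}}$. The main obstacle, and where I would spend the most care, is this last local computation: verifying that the antilinear equivalence $\overline{(\cdot)}$ commutes with all the pieces of the state sum on the nose (symmetrized multiplicity modules, universal cones, evaluations, and the dimension normalization), so that orientation reversal indeed produces the complex-conjugate Boltzmann weights rather than merely a scalar multiple of them.
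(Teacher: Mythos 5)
Your plan takes a genuinely different route from the paper. The paper's proof is indirect and short: it introduces the unitary center $\zz^u(\cc)\subset\zz(\cc)$ of unitary half braidings, invokes M\"uger's theorem (cited as \cite[Theorem~6.4]{Mu}) that the inclusion $\zz^u(\cc)\subset\zz(\cc)$ is a braided equivalence, observes that the conjugation on $\cc$ turns $\zz^u(\cc)$ into a unitary modular category, recalls from \cite[Chapter III]{Tu1} that the RT-TQFT of a unitary modular category is unitary, and then transports this unitary structure across the chain of TQFT isomorphisms $\tau_{\zz^u(\cc)}\simeq\tau_{\zz(\cc)}\simeq\vert\cdot\vert_\cc$, the last one being Theorem~\ref{thm-equivalence}. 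So the paper's argument is a pure transport-of-structure, leaning on three prior results and involving no new state-sum computation. Your plan, by contrast, builds the unitary structure on $\vert\cdot\vert_\cc$ from the bottom up: define a sesquilinear form on each $H((G,c);\Sigma)$ by coupling the conjugation on $\cc$ with the evaluation pairing of Section~\ref{sect-dual-signed-set}, check positivity using $\tr(f\bar f)>0$, show the projector $p(G,G)$ is self-adjoint so the form descends to $\vert\Sigma\vert_\cc$, and verify the adjointness axiom by a local skeleton computation showing orientation reversal conjugates the Boltzmann weights.

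What each approach buys: yours would be self-contained (it requires neither M\"uger's braided-equivalence theorem nor Theorem~\ref{thm-equivalence}, nor the unitarity of RT-TQFTs) and would give an explicit state-sum description of the Hermitian pairing on $\vert\Sigma\vert_\cc$, which is illuminating. The paper's proof trades that explicitness for brevity by reusing heavy machinery already in place. Both routes are correct in principle, but yours is only a sketch: the two hard steps---the self-adjointness of $p(G,G)$, and the verification that reversing the orientation of $M$ and of the regions of a skeleton really conjugates every ingredient of the state sum coherently (vertex invariants $\inv_\cc(\Gamma_x)$, contractions $\ast_e$, Euler-characteristic exponents, and the $(\dim\cc)^{-\vert P\vert}$ normalization, noting $\dim\cc\in\R_{>0}$)---are precisely where you acknowledge the most care is needed, and they would require roughly as much work as the rest of Section~\ref{sec-TQFT}. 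This is a trade-off, not a gap, but it should be flagged that the plan is substantially less complete than the paper's proof.
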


\begin{proof}
A half braiding $(A,\sigma)$ of $\cc$ is said to be \emph{unitary} if $\overline{\sigma_X}=\sigma_X^{-1}$ for all $X\in \Ob(\cc)$. The \emph{unitary center of $\cc$}, denoted $\zz^u(\cc)$, is the full subcategory of $\zz(\cc)$ formed by the unitary half braidings. The unitary center of $\cc$ is a braided subcategory of $\zz(\cc)$. The inclusion $\zz^u(\cc) \subset \zz(\cc)$ is a braided equivalence, see~\cite[Theorem~6.4]{Mu}. Therefore $\zz^u(\cc)$ is a modular category. The conjugation in $\cc$ induces a conjugation in $\zz^u(\cc)$ so that $\zz^u(\cc)$ becomes a unitary modular category. Hence the TQFT $\tau_{\zz^u(\cc)}$ is unitary. Now there are isomorphisms of TQFTs $\tau_{\zz^u(\cc)}\simeq \tau_{\zz(\cc)} \simeq  \vert \cdot
\vert_\cc$. The first one is induced by the braided equivalence $\zz^u(\cc) \simeq \zz(\cc)$ and the second one is given by Theorem~\ref{thm-equivalence}. The unitary structure of $\tau_{\zz^u(\cc)}$ is transported to $\vert \cdot
\vert_\cc$ via the isomorphism $\tau_{\zz^u(\cc)} \simeq  \vert \cdot
\vert_\cc$.
\end{proof}

From Corollary~\ref{cor-unitary}, Theorem~\ref{thm-equivalence}, and Theorem~11.5 of \cite{Tu1}, we deduce:
\begin{cor}
If $\cc$ is a unitary fusion category, then $\left|\vert M \vert_\cc\right|\leq (\dim(\cc))^{g(M)-1}$ for any closed oriented 3-manifold $M$, where $g(M)$ is the Heegaard genus of $M$.
\end{cor}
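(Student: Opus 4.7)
The plan is to feed the three inputs listed just before the corollary into each other in the obvious way. By Theorem~\ref{thm-equivalence} the TQFTs $\vert\cdot\vert_\cc$ and $\tau_{\zz(\cc)}$ are isomorphic, so in particular $\vert M\vert_\cc=\tau_{\zz(\cc)}(M)$ as scalars. By Corollary~\ref{cor-unitary} the TQFT $\vert\cdot\vert_\cc$ is unitary; transporting the unitary structure along this same isomorphism (or equivalently going through the unitary center $\zz^u(\cc)\simeq\zz(\cc)$ used in the proof of Corollary~\ref{cor-unitary}) makes $\tau_{\zz(\cc)}$ a unitary modular TQFT. So the problem reduces to a genus bound for the Reshetikhin--Turaev invariant of a unitary modular category, applied to $\bb=\zz(\cc)$.

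For this I would invoke Theorem~11.5 of \cite{Tu1}, which for any unitary modular category $\bb$ asserts a Heegaard genus bound of the form $\vert\tau_\bb(M)\vert\le\mathcal{D}(\bb)^{g(M)-1}$, where $\mathcal{D}(\bb)$ is the positive square root of $\dim(\bb)$. Theorem~\ref{thm-center-modular} gives $\dim(\zz(\cc))=(\dim\cc)^2$, and since $\cc$ is unitary we have $\dim(\cc)>0$, so $\mathcal{D}(\zz(\cc))=\dim(\cc)$. Substituting, we obtain
\begin{equation*}
\bigl\vert\vert M\vert_\cc\bigr\vert=\bigl\vert\tau_{\zz(\cc)}(M)\bigr\vert\le\mathcal{D}(\zz(\cc))^{g(M)-1}=(\dim\cc)^{g(M)-1},
\end{equation*}
which is exactly the claim. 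There is no real obstacle in this approach; the only points to check are that $\dim(\cc)$ is indeed positive (built into the definition of unitary fusion category) and that the isomorphism of TQFTs of Theorem~\ref{thm-equivalence} is compatible with the Hermitian structures, but this is precisely what was established in the last line of the proof of Corollary~\ref{cor-unitary}.

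As a sanity check (and as an alternative self-contained route), one may reprove the bound directly by Heegaard splitting. Write $M=H_1\cup_\Sigma H_2$ with $\Sigma$ of genus $g=g(M)$, let $v_i\in\vert\Sigma\vert_\cc$ be the vector produced by $H_i$, and apply Cauchy--Schwarz for the Hermitian form supplied by Corollary~\ref{cor-unitary} to get $\vert\vert M\vert_\cc\vert\le\Vert v_1\Vert\,\Vert v_2\Vert$. Each squared norm equals $\vert\overline{H_i}\cup_\Sigma H_i\vert_\cc=\vert\#^g(S^1\times S^2)\vert_\cc$. Combining $\vert S^3\vert_\cc=(\dim\cc)^{-1}$ and $\vert S^1\times S^2\vert_\cc=1$ computed after Theorem~\ref{thm-state-3man} with the connected-sum formula $\vert M\#N\vert_\cc=\vert S^3\vert_\cc^{-1}\,\vert M\vert_\cc\,\vert N\vert_\cc$ gives $\vert\#^g(S^1\times S^2)\vert_\cc=(\dim\cc)^{g-1}$, and the bound follows.
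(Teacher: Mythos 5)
Your primary argument is exactly the paper's: the corollary is a one-line deduction from Corollary~\ref{cor-unitary}, Theorem~\ref{thm-equivalence}, and Theorem~11.5 of \cite{Tu1}, together with the identification $\dim\zz(\cc)=(\dim\cc)^2$, so $\mathcal{D}(\zz(\cc))=\dim(\cc)$. Your alternative Cauchy--Schwarz argument via Heegaard splittings (using $\vert S^3\vert_\cc=(\dim\cc)^{-1}$, $\vert S^1\times S^2\vert_\cc=1$, and the connected-sum formula) is also correct and is essentially the proof of Theorem~11.5 in \cite{Tu1} unwound in this special case, so it adds a useful self-contained check but is not a genuinely different route.
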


\section{State sum invariants of   links in 3-manifolds}\label{State sum invariants of   links}

Fix    a   spherical fusion category~$\cc$ over an algebraically
closed field~$\kk$ such that $\dim \cc\neq 0$.
  In this section,
we extend the state-sum invariant  of 3-cobordisms $\vert \cdot
\vert_\cc$  to 3-cobordisms with $\zz(\cc)$-colored   links
inside.

\subsection{Knotted graphs in $S^2$}\label{sect-knotted-graph}
 By a \emph{knotted graph} in $S^2$, we mean a graph immersed in $S^2$ such
that  the multiple points of the immersion are double transversal
crossings  of edges, and  at each crossing,
  one of the two intersecting edges is distinguished and said to
be \lq\lq lower" or \lq\lq under-passing", the second edge being
\lq\lq upper" or \lq\lq over-passing". Note that some of the
crossings may be   self-crossings of edges. A knotted graph may have
only a finite number of crossings, and   they all lie away from the
vertices of the graph.

   A knotted graph~$G$ in $S^2$ is $\cc$-colored if   all its edges   are oriented and endowed with an object
of $\cc$ or $\zz(\cc)$ (called the \emph{color} of the edge) so
that, at each crossing, the color of the over-passing edge is an
object of $\zz(\cc)$. In other words, the color of an edge $e$
of~$G$
  is an object of   $\zz(\cc)$ if $e$ passes at least once over other edges or over itself  and  is an object of $\zz(\cc)$ or $ \cc $ otherwise.
   By the \emph{quasi-color} of $e$, we mean the color of~$e$ if it  is in $\Ob (\cc)$ and  the underlying
    object of the color of~$e$ if it is in $\Ob (\zz(\cc))$. In all cases, the quasi-color  of $e$ is in $\Ob (\cc)$.
     For example, the $\cc$-colored graphs in
$S^2$ as in Section~\ref{sect-graph}  are   $\cc$-colored knotted
graphs   with no crossings.

Given two $\cc$-colored knotted graphs $G$ and $G'$ in $S^2$, an
\emph{isotopy} of~$G$ into~$G'$  is an isotopy of~$G$ into~$G'$ in
the class of $\cc$-colored knotted graphs in $S^2$ preserving the
vertices, the edges, the crossings, and   the orientation and the
color of the edges.

A vertex $v$ of a $\cc$-colored knotted graph~$G$ in~$S^2$
determines a cyclic $\cc$\ti set $ (E_v, c_v, \varepsilon_v)$ as in
Section~\ref{sect-graph}  with the only difference that the map
$c_v\co E_v \to \Ob(\cc)$ assigns to each half-edge $e\in E_v$ its
quasi-color. As in Section~\ref{sect-graph}, we set $H_v(G)=H(E_v )$
and $H(G)=\otimes_v \, H_v(G)$, where $v$ runs over all vertices
of~$G$.

The invariant $\inv_\cc$ of Section~\ref{sect-graph-S2} extends to
$\cc$-colored knotted graphs in $S^2$ as follows. Let $G$ be a
$\cc$-colored knotted graph in $S^2$. Pushing, if necessary, $G$
away from $
 \infty $, we obtain a $\cc$-colored knotted  graph in $\R^2$,  also denoted by $G$.  For
each vertex $v$   of~$G$, we choose a half-edge $e_v\in E_v$ and
isotope  $G$ near~$v$ so that   the half-edges incident to $v$   lie
above $v$ with respect to the second coordinate on $\R^2$ and $e_v$
is the leftmost of them. Pick any $\alpha_v \in H_v(G)$ and replace
$v$ by a box colored with $\tau^v_{e_v}(\alpha_v)$, where $\tau^v$
is the universal cone of $H_v(G)$ (see Figure~\ref{fig-alg-inv1}).
For each crossing $c$ of $G$, isotope~$G$ near~$c$ so that both
strands of~$G$ meeting at~$c$  are oriented downwards. If
$({{A}},\sigma)$ is the color of the over-passing strand and   $X$ the
quasi-color of the under-passing strand, then we replace~$c$ by a
box labeled by $\sigma_X$ provided the over-passing strand goes from
top-right to bottom-left and by $\sigma_X^{-1}$  provided the
over-passing strand goes from top-left to bottom-right. This
transforms $G$ into a planar diagram which determines, by the
Penrose calculus,  an element of $ \End_\cc(\un)=\kk$. This element
is denoted $\inv_\cc(G)(\otimes_v\alpha_v) $. By linear extension,
our procedure defines a vector $\inv_\cc(G)\in
H(G)^\star=\Hom_\kk(H(G),\kk)$.

Consider the local moves on $\cc$-colored knotted graphs in~$S^2$
shown in Figure~\ref{fig-knotted-graphs-moves}. It is understood
that   all strands are oriented (in an arbitrary way), and the
orientations of the strands are the same before and after the moves.
The colors of all edges are objects of $\cc$ and $\zz(\cc)$
preserved under the moves. As above, the colors of all over-passing
edges are objects of $\zz(\cc)$. For example, the second move of
Figure~\ref{fig-knotted-graphs-moves} is allowed only when the color
of the left strand is in $\zz(\cc)$.

\begin{figure}[h,t]
\begin{center}
 \rsdraw{.45}{.8}{graph-omega1}\; ,
\quad \quad
  \rsdraw{.45}{.8}{graph-omega2}\; ,
\quad \quad
  \rsdraw{.45}{.8}{graph-omega3}\\[1em]
  \rsdraw{.45}{.8}{graph-omega4}
 \end{center}
\caption{}
\label{fig-knotted-graphs-moves}
\end{figure}

\begin{lem}\label{lem-inv-knotted-graph}
The vector  $\inv_\cc (G) \in H(G)^\star$ is a well-defined isotopy
invariant of~$G$ preserved  under the moves in
Figure~\ref{fig-knotted-graphs-moves}.
\end{lem}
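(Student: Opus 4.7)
The plan is to adapt and extend the argument of Lemma~\ref{thm-R2} and the subsequent discussion in Section~\ref{sect-graph-S2}. The invariant $\inv_\cc(G)$ is defined after (i) pushing $G$ off $\infty$, (ii) choosing a preferred half-edge $e_v$ at each vertex, (iii) isotoping near each vertex to arrange that all incident half-edges lie above it and $e_v$ is leftmost, and (iv) isotoping near each crossing so that both strands are oriented downwards. I would check that none of these choices affect the resulting element of $\End_\cc(\un)=\kk$. Independence of (ii) is immediate from the definition of $H_v(G)$ as the projective limit using the cone bijections $\tau^v$, exactly as in the proof of Lemma~\ref{thm-R2}. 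Independence of (iii) follows from the pivotality of $\cc$. Independence of (iv) at a crossing whose over-strand is colored by $({{A}},\sigma)\in\Ob(\zz(\cc))$ and whose under-strand has quasi-color $X$ is the naturality of $\sigma$: any two ways to bring the two strands into \lq\lq both pointing down\rq\rq\ position differ by isotopies that, translated via Penrose, amount to conjugating $\sigma_X$ by identity morphisms composed with duality morphisms, which is handled by the naturality condition on the family $\{\sigma_Y\}_Y$ together with the formula for $\sigma^\vee$ recalled in Section~\ref{sect-centerusual}.

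Next I would address isotopy invariance in $S^2$. After fixing the local models at vertices and crossings, any ambient isotopy of $\cc$-colored knotted graphs decomposes into (a) planar isotopies supported away from vertices and crossings, (b) isotopies passing strands through $\infty$, and (c) local moves where a vertex or crossing is \lq\lq rotated\rq\rq\ through the preferred position. For (a) the Penrose calculus in the pivotal category $\cc$ gives invariance, cf. Lemma~\ref{thm-R2}. For (b) the sphericity of $\cc$ applies verbatim as in Section~\ref{sect-graph-S2}, since a strand passing over or under other strands at $\infty$ is still represented by the same composition of $\sigma$'s after naturality. For (c) one uses the pivotality argument at vertices and the naturality plus the half-braiding axiom \eqref{axiom-half-braiding} at crossings.

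Finally, the four moves in Figure~\ref{fig-knotted-graphs-moves} correspond, respectively, to: the fact that $\sigma_X$ is an isomorphism with inverse~$\sigma_X^{-1}$ (the Reidemeister-II type cancellation of a positive and a negative crossing); the naturality of $\sigma$ in the sliding variable; the tensor compatibility \eqref{axiom-half-braiding} (splitting or merging an over-crossing strand that passes over two parallel under-strands); and the defining condition on morphisms in $\zz(\cc)$ (together with the formula for $\sigma^\vee$) for sliding a $\zz(\cc)$-colored over-strand past a vertex whose incident edges carry the compatible half-braidings. In each case the move translates, after placing the configuration in the standard \lq\lq strands down\rq\rq\ position, into an identity between morphisms in $\cc$ that is literally one of the axioms recalled in Section~\ref{sect-centerusual}.

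The bookkeeping for the various orientations of strands is the main tedious point: each move has to be verified for all ways of orienting the participating edges, and the colors on the two sides must be matched using the dual half-braiding $\sigma^\vee$ when a $\zz(\cc)$-colored strand is oriented upwards. However, since $\sigma^\vee$ is defined precisely to make $({{A}}^*,\sigma^\vee)$ an object of $\zz(\cc)$, each re-orientation reduces to the already-treated case after applying the duality morphisms, so no new input beyond the axioms of $\zz(\cc)$, pivotality, and sphericity of $\cc$ is needed.
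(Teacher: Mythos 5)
Your overall strategy is close to the paper's: use the cone bijections to handle the choice of preferred half-edges, pivotality and sphericity for isotopy invariance, and naturality of $\sigma$, the half-braiding axiom \eqref{axiom-half-braiding}, and the formula for $\sigma^\vee$ to handle the moves. However, there are two genuine gaps.

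First, the orientation-reversal bookkeeping is not a ``final tedious point'' that can be deferred; it is a prerequisite for the move verifications. The paper isolates an explicit claim, \emph{before} treating the moves: if $G'$ is obtained from $G$ by reversing an edge and replacing its color with the dual, then $(\widehat{\psi})^\star(\inv_\cc(G'))=\inv_\cc(G)$. Proving this at a crossing is nontrivial and requires the identity $\sigma_X^{-1}=(\rev_X\otimes\id_{A\otimes X})(\id_X\otimes\sigma_{X^*}\otimes\id_X)(\id_{X\otimes A}\otimes\rcoev_X)$, which must be deduced from the naturality of $\sigma$ together with \eqref{axiom-half-braiding}. Your proposal gestures at ``applying the duality morphisms'' but does not state or derive this formula; without it the reduction to ``all strands oriented downward'' that you (and the paper) rely on for the remaining moves is not available.

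Second, your identification of the moves with individual axioms is wrong for the first move. You assert the first move is the Reidemeister-II cancellation of $\sigma_X$ against $\sigma_X^{-1}$, but the paper's first move does \emph{not} translate directly into an axiom of $\zz(\cc)$: it is a global move that must be expanded as a composition of the other three moves together with a passage of the over-strand across $\infty\in S^2$. Related to this, your handling of isotopy through $\infty$ conflates an isotopy (which, in the class of $\cc$-colored knotted graphs, preserves crossings and is covered by sphericity alone) with a strand ``passing over or under other strands at $\infty$'' (which changes the crossing data and is exactly what the first move encodes). The claim that ``in each case the move translates \ldots into an identity that is literally one of the axioms'' therefore fails for the first move, and a separate derivation is needed.
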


\begin{proof} Independence of $\inv_\cc (G)$ of the choice of the half-edges $e_v$
follows from the definition of $H_v(G)$. Invariance of $\inv_\cc
(G)$ under isotopies of~$G$ follows from the sphericity of $\cc$.

We claim that, as in Section~\ref{sect-graph-S2}, if a $\cc$-colored knotted graph $G' \subset S^2$ is obtained from a
$\cc$-colored knotted graph $G \subset S^2$ through   replacement of the
color of an edge $e$  by its dual and   simultaneous reversion of the
orientation of $e$, then the isomorphism $\psi_X\co X\to X^{**}$ of
Remark~\ref{rem-pivotal} (where $X\in \Ob (\cc)$ is the quasi-color of $e$) induces an isomorphism $\widehat{\psi} \co
H(G)\to H(G')$   such that    $(\widehat{\psi})^\star ( \inv_\cc(G') )=\inv_\cc(G)$. This claim is verified by  comparing the contributions   of the vertices and crossings of $G$, $G'$ to $
 \inv_\cc(G) $,   $\inv_\cc(G')$ respectively.
 Indeed, without loss of generality, we can assume that $G\subset \RR^2=S^2 \setminus \{\infty\}$ and $G$ is generic with respect to the   second coordinate on $\RR^2$.
 Consider a crossing $x$ of $G$ with over-passing edge $e_o$ and under-passing edge $e_u$. Applying if necessary an isotopy to $G$ in a neighborhood of $x$, we can assume that $e_o$ is directed from top-left to bottom-right. Let $(A,\sigma)\in\zz(\cc)$ be the color of $e_o$ and $X \in \Ob(\cc)$ be the quasi-color of $e_u$. There are four cases to consider depending on  the orientation of $e_u$  at $x$ (downwards or upwards)
and depending on whether we reverse the orientation  of $e_o$ or $e_u$.
 Assume that $e_u$ is directed downwards at $x$, so that the contribution of $x$ to $
 \inv_\cc(G) $ is
   $$
\psfrag{N}[Bl][Bl]{\scalebox{.9}{$(A,\sigma)$}}
\psfrag{X}[Bc][Bc]{\scalebox{.9}{$X$}}
\psfrag{M}[Bc][Bc]{\scalebox{.9}{$A$}}
\psfrag{u}[Bc][Bc]{\scalebox{.9}{$\sigma_{X}^{-1}$}}
\rsdraw{.45}{.9}{sigmainv3}\;.
$$
If $G'$ is obtained by reversing   $e_u$ and replacing its color with the dual object, then   the contribution of $x$ to $
 \inv_\cc(G') $ is
  $$
\psfrag{N}[Bl][Bl]{\scalebox{.9}{$(A,\sigma)$}}
\psfrag{m}[Bc][Bc]{\scalebox{1.1}{$\simeq$}}
\psfrag{n}[Bc][Bc]{\scalebox{1.1}{$=$}}
\psfrag{X}[Bc][Bc]{\scalebox{.9}{$X$}}
\psfrag{Y}[Bc][Bc]{\scalebox{.9}{$X^*$}}
\psfrag{M}[Bc][Bc]{\scalebox{.9}{$A$}}
\psfrag{u}[Bc][Bc]{\scalebox{.9}{$\sigma_{X}^{-1}$}}
\psfrag{v}[Bc][Bc]{\scalebox{.9}{$\sigma_{X^*}$}}
\psfrag{g}[Bc][Bc]{\scalebox{.9}{$\psi_{X}$}}
\psfrag{h}[Bc][Bc]{\scalebox{.9}{$\psi_{X}^{-1}$}}
\rsdraw{.45}{.9}{sigmainv4}\;.
$$
The last equality follows from the definition of $\psi_X$ and   the formula $$ \sigma_{X}^{-1}=
(\rev_X \otimes \id_{{{A}} \otimes X})(\id_X \otimes \sigma_{X^*}
\otimes \id_X)(\id_{X \otimes {{A}}} \otimes \rcoev_X).$$ The latter formula
can be easily deduced  from the naturality of   $\sigma$ and
Formula~\eqref{axiom-half-braiding}. Similarly, if $G'$ is obtained by reversing   $e_u$ and replacing its color with the dual object, then the contributions of $x$ to $\inv_\cc(G)$ and $\inv_\cc(G') $ are obtained from each other through conjugation by  $\psi_A$ (to see this, one should use the expression for the dual of a  half braiding given in Section~\ref{sect-centerusual}). The case where $e_u$ is directed upwards is treated  similarly. Combining  these local computations, we obtain  the required claim.

Consider   the second move in Figure~\ref{fig-knotted-graphs-moves}.
Applying if necessary the property of  $\inv_\cc$ obtained in the previous
paragraph, we can reduce ourselves  to the case where both
strands are oriented downward. In this case the
required identity follows from the definition of $\inv_\cc (G)$.
Similarly, to prove the invariance of $\inv_\cc (G)$ under the
fourth  move, it suffices to consider the case  where all edges
incident to the vertex are oriented to the left. The required
identity follows from the   definitions and the properties of
$\sigma$. The invariance  of $\inv_\cc (G)$ under the third move can
be proved similarly treating the crossing point of the two lower
branches as a vertex. Finally, the first move expands as a
composition of the other moves through pushing the over-passing
branch to the left across the rest of the diagram and across the
point $\infty\in S^2 $.
\end{proof}

Note that Lemma~\ref{lem-calc-diag} extends mutatis mutandis to
$\cc$-colored knotted graphs in~$S^2$.

\subsection{Link diagrams}\label{sect-linkdiag}
Let $M$ be a compact
oriented  3-manifold.  Given a s-skeleton~$P$ of~$M$, that is, a s-skeleton of the pair $(M, \emptyset)$ in the sense of Section~\ref{sec-skes}, we can   present   links in~$M$
by diagrams in~$P $, see \cite[Chapter~IX]{Tu1}. (Similar
presentations can be defined on arbitrary skeletons of~$M$, but
s-skeletons will be sufficient for our aims.)   A \emph{link diagram} in $P$ is a finite set of loops in $P$ such
that:
\begin{enumerate}
\renewcommand{\labelenumi}{{\rm (\roman{enumi})}}
\item the loops do not meet the vertices of $P$  and
  meet the edges of~$P$ transversely;
\item the loops
 have only double transversal crossings and self-crossings lying
in $\Int (P)=P\setminus P^{(1)}$;
\item at each crossing point of the loops one of the two
intersecting branches is distinguished and said to be \lq\lq
lower", the second branch being \lq\lq upper".
\end{enumerate}

The {\it underlying $4$-valent graph}   of a link diagram is formed by the loops of the diagram with over/under-crossing data forgotten.
By abuse of notation, we shall usually denote a link diagram and its  underlying graph by the same symbol.

 Each link diagram  $d$ in~$P$ determines a link $ \ell_d\subset M$ as follows. The
orientations of~$M$ and $P$ determine a distinguished normal direction on
$\Int(P)$ in~$M$. Pushing  slightly all upper branches  of~$d$ in
$M\setminus P$ in this   direction, we transform~$d$ into $\ell_d$.
The   underlying loops of~$d$ have a  well-defined  normal line
bundle ${b}_P (d)$
 in~$P$; this bundle is  defined in the points  of $d\cap P^{(1)}$ since, in a
neighborhood of  any such   point, $d$ traverses two regions of $P$
locally forming a 2-disk. The bundle ${b}_P (d)$ induces a line
subbundle ${b}_d $ of the (2-dimensional) normal bundle of $\ell_d$
in~$M$.

An \emph{enriched link diagram}  in $P$ is a link diagram  in $P$
whose loops are equipped with  half-integers called
\emph{pre-twists}. The pre-twist of a loop  $L$ is required to
belong  to $\Bbb Z$  if the normal line bundle of $L$ in $P$ is
trivial and  to $\frac{1}{2}+\Bbb Z$ otherwise.  The pre-twists determine
a framing of  $\ell_d\subset M$ as follows. Twist  ${b}_d$ around
each component of $\ell_d$ as many times as the pre-twist of the
corresponding loop. (The positive direction of the twist is
determined by the orientation of $M$. For instance, a pre-twist  of
$\frac{1}{2}$ gives rise to a positive half-twist of ${b}_d$.) This   produces
a trivial normal line bundle on $\ell_d$. Its  non-zero sections
yield a framing of $\ell_d$.

It is easy to see  that every framed link in~$M$ may be represented
by an enriched link diagram  in~$P$. Consider the moves  $\Omega_1,\dots, \Omega_8$  on enriched link diagrams shown in
  Figure~\ref{fig-link-moves} where   the orientation of~$M$ corresponds to
  the right-handed orientation in ${\RR}^3$. The moves $\Omega_1,\Omega_2, \Omega_3$ proceed in
$\Int(P)$, and $\Omega_4 ,\dots,  \Omega_8$ proceed in a
neighborhood of~$P^{(1)}$. In Figure~\ref{fig-link-moves}  the link
diagrams on~$P$
 are drawn in red bold  in order to distinguish them from  the  edges
of~$P$. The move $\Omega_1$ decreases the pre-twist by~$1$ and
$\Omega_4$ increases the pre-twist by~$\frac{1}{2}$; the other moves do not change the pre-twists.      The move $\Omega_6$ has four versions; in  the one in
  Figure~\ref{fig-link-moves}   we assume that the orientations of  the horizontal regions   are compatible, i.e., are induced by an orientation of the horizontal plane. If these orientations are  incompatible, then the picture must be modified:
  the overcrossing on the left (or on the right) should be replaced with an undercrossing. The third and fourth versions of $\Omega_6$ are obtained from the first two by changing both overcrossings to undercrossings.  The move $\Omega_7$ has two versions; both apply only when the  left horizontal region is oriented counterclockwise.    By \cite{Tu1}, two  enriched link diagrams  in~$P$
represent isotopic framed links if  and only if  these diagrams may be
related  by a finite sequence of moves $\Omega_1^{\pm 1},\dots, \Omega_8^{\pm 1}$
  and  ambient
isotopies in $P$.

Oriented  framed  links in $M$ can be similarly presented by
oriented link diagrams in~$P$. These are the (enriched) link
diagrams as above formed by oriented loops. Two such diagrams
present isotopic  oriented framed links   if and only if they
may be related by the moves   in Figure~\ref{fig-link-moves} where
all strands are oriented   (the orientations of the strands must be
the same before and after the moves).

\begin{figure}
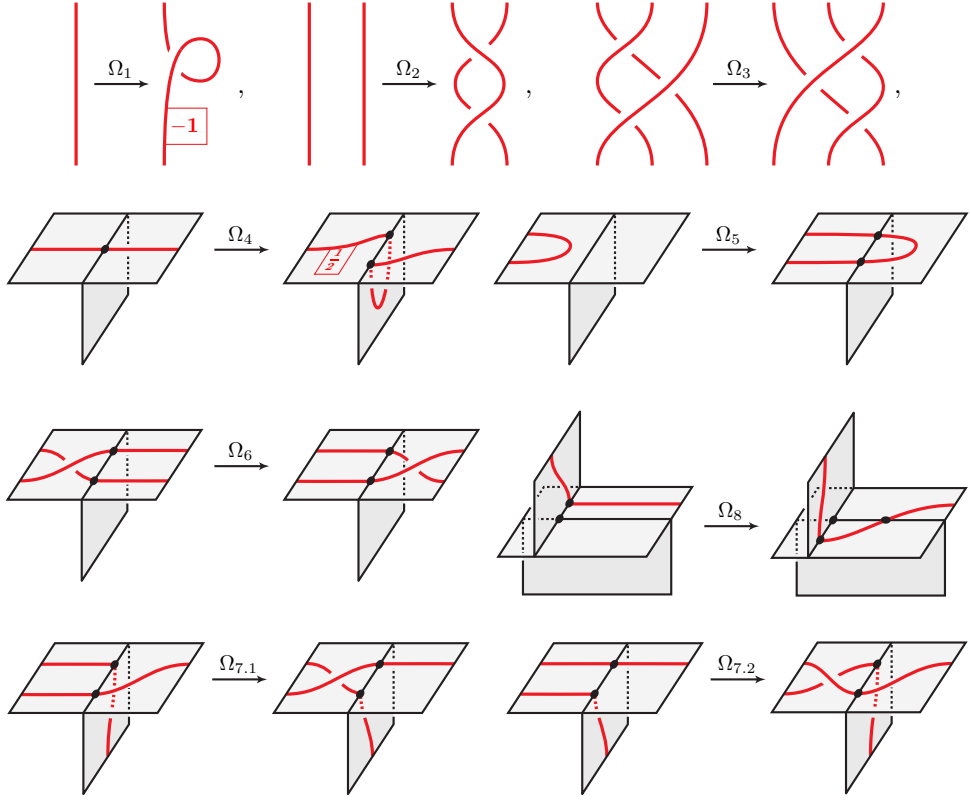

\begin{center}
 \psfrag{T}[Bc][Bc]{\scalebox{1}{$\Omega_1$}}
 \psfrag{A}[Bc][Bc]{\scalebox{1}{\textcolor{red}{$\mathbf{-1}$}}}
 \rsdraw{.45}{.8}{move-omega1}\; ,
\quad \quad
 \psfrag{T}[Bc][Bc]{\scalebox{1}{$\Omega_2$}}
  \rsdraw{.45}{.8}{move-omega2}\; ,
\quad \quad
 \psfrag{T}[Bc][Bc]{\scalebox{1}{$\Omega_3$}}
  \rsdraw{.45}{.8}{move-omega3}\;,\\[1.5em]
 \psfrag{T}[Bc][Bc]{\scalebox{1}{$\Omega_4$}}
 \psfrag{h}[Bc][Bc]{\scalebox{.6}{\textcolor{red}{$\displaystyle  \mathbf{\frac{1}{2}}$}}}
 \rsdraw{.45}{.8}{move-omega4a}
 \hfill \psfrag{T}[Bc][Bc]{\scalebox{1}{$\Omega_5$}}
 \rsdraw{.45}{.8}{move-omega5a}\\[1.5em]
 \psfrag{T}[Bc][Bc]{\scalebox{1}{$\Omega_6$}}
 \rsdraw{.45}{.8}{move-omega6a}
 \hfill\psfrag{T}[Bc][Bc]{\scalebox{1}{$\Omega_8$}}
 \rsdraw{.45}{.8}{move-omega8a}\\[1.5em]
 \psfrag{T}[Bc][Bc]{\scalebox{1}{$\Omega_{7.1}$}}
 \rsdraw{.45}{.8}{move-omega7a}
 \hfill\psfrag{T}[Bc][Bc]{\scalebox{1}{$\Omega_{7.2}$}}
 \rsdraw{.45}{.8}{move-omega7b}
 \end{center}
\caption{Moves $\Omega_1 - \Omega_8$ on link diagrams}
\label{fig-link-moves}
\end{figure}

\subsection{Invariants of links} \label{sect-invariants-links} %Note that if $\kk$ is an algebraically field, then every simple object of $\zz(\cc)$ is special (since $\zz(\cc)$ is then modular by Theorem~\ref{thm-center-modular}). %Without loss of generality,
%%we may assume that $\nu_{(M,\sigma)^*}=\nu_{(M,\sigma)}$, since $\theta_{(M,\sigma)^*}=\theta_{(M,\sigma)}$ by Lemma~\ref{lem-rib-simple-center}.
A component $K$ of a link $L$ in a 3-manifold $M$ is said to be {\it trivial} if there is an embedded 2-disk  $D\subset M$ such that $K=\partial D=D\cap L$.
Consider a pair $(M,L)$, where $M$ is a compact oriented   3-manifold
 and $L\subset \Int(M)$ is an oriented framed link  whose all components  are non-trivial and colored by {simple} objects of $\zz(\cc)$.
We   define an invariant $|M,L|_\cc \in \kk$ as follows.   Fix a
 (finite)
representative set~$I$ of simple objects of~$\cc$.
Pick a s-skeleton $P$ of $M$ and  an oriented  enriched link diagram $d$ in
$P$ representing~$L$. The set ${\widetilde d}=P^{(1)}\cup d$ is a graph embedded in $P$
with edges   contained either in $P^{(1)}$ or  in~$d$  and vertices of three types: the vertices of $P^{(1)}$,
 the  points of $P^{(1)}\cap d$, and the crossings of~$d$. The assumption that all components of $L$ are non-trivial ensures that every  component of ${\widetilde d}$ contains at least one vertex, so that ${\widetilde d}$ is   a graph in the sense of Section~\ref{sect-graph}. Cutting out $P$ along ${\widetilde d}$, we
obtain a compact oriented surface whose components are disks and disks with holes. These components are called the
{\it regions of $d$}. The set of regions of $d$ is denoted by
$\Reg(d)$.

Pick a map $c \co \Reg(d) \to I$.   For any oriented edge $e$ of the
graph ${\widetilde d}$, we define  a \kt module $H_c(e)$. If
$e\subset P^{(1)}$, then  $H_c(e)=H(P_e)$ as in
Section~\ref{sec-computat}.  If $e\subset d$, then there are two
regions $r_-, r_+\in \Reg(d)$ adjacent to $e$. We choose   notation
so that the orientation of $r_+$ (resp.\ $r_-$) induces on $e$ the
  orientation compatible with that of $e$ (resp.\ opposite to that of $e$):
\begin{center}
\medskip
 \psfrag{M}[Bl][Bl]{\scalebox{.9}{\textcolor{red}{${{A}}$}}}
 \psfrag{r}[Bc][Bc]{\scalebox{.9}{$r_+$}}
 \psfrag{v}[Bc][Bc]{\scalebox{.9}{$r_-$}}
 \psfrag{l}[Bc][Bc]{\scalebox{.9}{$e$}}
 \rsdraw{.45}{.9}{ell-regions}.
\medskip
\end{center}
Let ${{A}}  \in \Ob (\cc)$ be the    quasi-color of the component of
$L$ containing $e$.  We turn  the 3-element  set $P_e=\{r_- , e ,
r_+\}$   into a cyclic
 $\cc$-set by providing it with the
 cyclic order $r_- < e < r_+ < r_-$ and with the map  to $\Ob(\cc) \times \{+,-\}$ carrying
  $r_\pm $ to $(c(r_\pm), \pm)$ and $e$ to  $({{A}},\varepsilon )$, where $\varepsilon=+$ if the orientations of $e$ and $d$ are compatible and
  $\varepsilon=-$ otherwise.
Set  $$H_c(e)=H(P_e) \cong \Hom_\cc(\un,c(r_-)^*\otimes {{A}}^\varepsilon \otimes
c(r_+) ).$$ It is clear that in  both cases  $P_{e^\opp}=(P_e)^\opp$. This
induces a duality between
 the modules
 $H_c(e)$, $H_c(e^\opp)$ and a
 contraction  $ \ast_e \co H_c(e)^\star \otimes H_c(e^\opp)^\star \to\kk$.

We now associate    to every  vertex $x$ of ${\widetilde d}$ a
certain   vector $|x|_c$. We  distinguish three cases. If $x$ is a
vertex of $P^{(1)}$, then  as in Section~\ref{sec-computat},  the
link of $x$ determines a $\cc$-colored  graph $\Gamma_x \subset
S^2$. Set
  $|x|_c=\inv_\cc (\Gamma_x) \in
H_c(\Gamma_x)^*$.

If $x$ is a crossing of $d$, then a
neighborhood of $x$ in~$P$  looks as follows:
\begin{equation*}
 \psfrag{i}[Bc][Bc]{\scalebox{.9}{$i$}}
 \psfrag{j}[Bc][Bc]{\scalebox{.9}{$j$}}
 \psfrag{k}[Bc][Bc]{\scalebox{.9}{$k$}}
 \psfrag{M}[Br][Br]{\scalebox{.9}{\textcolor{red}{$J$}}}
 \psfrag{N}[Bl][Bl]{\scalebox{.9}{\textcolor{red}{$J'$}}}
 \psfrag{l}[Bc][Bc]{\scalebox{.9}{$l$}}
 \rsdraw{.45}{.9}{demi-vert2}
\end{equation*}
Here $J$ and $J'$ are the colors of the strands of $d$ meeting at
$x$, and $i,j,k,l \in I$ are the $c$-colors of the regions in
$\Reg(d)$ adjacent to $x$. Let $\Gamma_x$ be the following
$\cc$-colored knotted graph in   $\RR^2\subset S^2$:
\begin{equation*}
 \psfrag{i}[Bc][Bc]{\scalebox{.9}{$i$}}
 \psfrag{j}[Bc][Bc]{\scalebox{.9}{$j$}}
 \psfrag{k}[Bc][Bc]{\scalebox{.9}{$k$}}
 \psfrag{M}[Br][Br]{\scalebox{.9}{\textcolor{red}{$J$}}}
 \psfrag{N}[Bl][Bl]{\scalebox{.9}{\textcolor{red}{$J'$}}}
 \psfrag{l}[Bc][Bc]{\scalebox{.9}{$l$}}
  \Gamma_x= \,\rsdraw{.45}{.9}{demi-vert2-inv}
\end{equation*}
The orientation  (not shown in the picture) of the diagonals   is
induced by that of $d$. Section~\ref{sect-knotted-graph} yields a
tensor $|x|_c=\inv_\cc (\Gamma_x) \in H_c(\Gamma_x)^*$.

The  definition of $|x|_c$ in the remaining  case  $x \in P^{(1)}
\cap d$  uses  the assumption  that the colors of the components of
$L$ are simple objects. Since $\kk$ is an algebraically closed
field, we can choose for each component of $L$   a square root
$\nu_{J} \in \kk$ of the twist scalar $v_{J}\in \kk$ of the color
$J$ of this component, see Section~\ref{sec-modulcat}. (At the end,
the invariant $|M,L|_\cc \in \kk$ will be independent of this choice.)  A  neighborhood of $x \in P^{(1)} \cap d$
in $P$ looks as follows (as usual, the orientation of~$M$ is right-handed):
\begin{equation*}
 \psfrag{r}[Bc][Bc]{\scalebox{.9}{$r^-$}}
 \psfrag{c}[Bc][Bc]{\scalebox{.9}{$r^+$}}
 \psfrag{n}[Bc][Bc]{\scalebox{.9}{$r$}}
 \psfrag{i}[Bc][Bc]{\scalebox{.9}{$i$}}
 \psfrag{j}[Bc][Bc]{\scalebox{.9}{$j$}}
 \psfrag{k}[Bc][Bc]{\scalebox{.9}{$k$}}
 \psfrag{M}[Bc][Bc]{\scalebox{.9}{$M$}}
 \psfrag{N}[Bc][Bc]{\scalebox{.9}{\textcolor{red}{$J$}}}
 \psfrag{l}[Bc][Bc]{\scalebox{.9}{$l$}}
 \psfrag{x}[Bc][Bc]{\scalebox{.9}{$x$}}
 \psfrag{m}[Bc][Bc]{\scalebox{.9}{$m$}}
 \rsdraw{.45}{.9}{demi-vert1}\,.
\end{equation*}
The skeleton $P$  has  3 regions (possibly coinciding) adjacent to
$x$.  We denote them  $r,r^-, r^+$  so that the strand of $d$ at $x$
goes from $r^-$ to $r^+$. Let $J$ be the color of this strand. The
diagram $d$ has  5 regions  adjacent to $x$.  We denote their
$c$-colors  by $i,j,k,l,m$ as in the picture. Let $e_0$ be a tangent vector at $x$
directed inside $r$ and let $(e^{\pm}_1,e^{\pm}
_2)$ be a positive (i.e., positively oriented) basis of the tangent
space of $r^{\pm}$ at $x$.  Consider the basis $(e_0,e^{\pm}_1,e^{\pm}_2)$
   of the tangent space of~$M$ at $x$. Set $\varepsilon^{\pm}_x=1$ if this basis is  positive and $\varepsilon^{\pm}_x=-1$ if it is negative.
%%\begin{equation*}
%%\varepsilon^{\pm}_x=\left\{\begin{array}{cl} +1 & \text{if $(e_0,e^\pm_1,e^\pm_2)$ is a positive %%basis,}\\
%%-1 & \text{otherwise.}\end{array}\right.
%%\end{equation*}
Let $\Gamma_x$ be the following $\cc$-colored knotted graph in $\RR^2\subset S^2$:
\begin{equation*}
 \psfrag{i}[Bc][Bc]{\scalebox{.9}{$i$}}
 \psfrag{j}[Bc][Bc]{\scalebox{.9}{$j$}}
 \psfrag{k}[Bc][Bc]{\scalebox{.9}{$k$}}
 \psfrag{l}[Bc][Bc]{\scalebox{.9}{$l$}}
 \psfrag{m}[Bc][Bc]{\scalebox{.9}{$m$}}
 \psfrag{M}[Bc][Bc]{\scalebox{.9}{\textcolor{red}{$J$}}}
 \psfrag{N}[Bl][Bl]{\scalebox{.9}{\textcolor{red}{$J'$}}}
 \psfrag{1}[Bc][Bc]{\scalebox{1}{$1$}}
 \psfrag{2}[Bc][Bc]{\scalebox{1}{$2$}}
  \Gamma_x= \rsdraw{.45}{.9}{demi-vert3-inv}
\end{equation*}
where
\begin{equation*}
 \psfrag{i}[Bc][Bc]{\scalebox{.9}{$i$}}
 \psfrag{j}[Bc][Bc]{\scalebox{.9}{$j$}}
 \psfrag{k}[Bc][Bc]{\scalebox{.9}{$k$}}
 \psfrag{l}[Bc][Bc]{\scalebox{.9}{$l$}}
 \psfrag{m}[Bc][Bc]{\scalebox{.9}{$m$}}
 \psfrag{M}[Bc][Bc]{\scalebox{.9}{\textcolor{red}{$J$}}}
 \psfrag{N}[Bl][Bl]{\scalebox{.9}{\textcolor{red}{$J'$}}}
 \psfrag{1}[Bc][Bc]{\scalebox{1}{$1$}}
 \psfrag{2}[Bc][Bc]{\scalebox{1}{$2$}}
  \rsdraw{.45}{.9}{demi-vert-choix1}=\left\{\begin{array}{ll} \rsdraw{.45}{.9}{demi-vert-choix1b} & \text{if $\varepsilon^{{-}}_x=-1$,}\\ \rsdraw{.45}{.9}{demi-vert-choix1a} & \text{if $\varepsilon^{{-}}_x=1$,}\end{array}\right. \quad %\text{and} \quad
  \rsdraw{.45}{.9}{demi-vert-choix2}=\left\{\begin{array}{ll} \rsdraw{.45}{.9}{demi-vert-choix2b} & \text{if $\varepsilon^{{+}}_x=-1$,}\\ \rsdraw{.45}{.9}{demi-vert-choix2a} & \text{if $\varepsilon^{{+}}_x=1$.}\end{array}\right.
\end{equation*}
The latter pictures determine the orientations of the edges  of $\Gamma_x$ colored by $i,j,k,l$.  The  edge of $\Gamma_x$ colored by $m$   is directed
downward  if  the orientation of $r$ followed by  that of
$d$ at $x$ yields the positive orientation of $M$ and upward otherwise. Set
\begin{equation*}
\varepsilon_x=\frac{\varepsilon^{{-}}_x-\varepsilon^{{+}}_x}{2} \in \{-1,0,1\}
 \quad\quad \text{and} \quad \quad |x|_c=\nu_{J}^{\varepsilon_x}\, \inv_\cc (\Gamma_x) \in H_c(\Gamma_x)^\star.
\end{equation*}

For any vertex $x$ of $\widetilde d$, we have  $H_c(\Gamma_x)=
\otimes_e\, H_c(e)$, where
 $e$ runs over the edges of ${\widetilde d}$ incident to $x$ and oriented away from $x$.  The tensor product $\otimes_x \,|x|_c$ over all vertices~$x$ of~${\widetilde d}$
is a vector in
 $\otimes_{e } \, H_c(e )^\star $, where   $e$ runs over all oriented edges of ${\widetilde d}$. Set $\ast_P=\otimes_{e} \, \ast_e \colon \otimes_{e } \, H_c(e )^\star\to \kk $.

  Let   $L_1,..., L_N$ be the components of $L$. Let $J_q \in \Ob (\zz(\cc))$ be the color of $L_q$. Recall
    the distinguished square root $\nu_{J_q}\in \kk$
   of the twist scalar $v_{J_q}$ of $ {J_q}$. Let $n_q\in \frac{1}{2} \ZZ$ be the pre-twist of   the loop of $d$ representing  $L_q$, where $q=1,...,N$. Set
\begin{equation*}%\label{eq-simplstatesumlink}
|M,L|_\cc=(\dim (\cc))^{-\vert P\vert}   \prod_{q=1}^N \nu_{J_q}^{2n_q}     \sum_{c}   \left ( \prod_{r \in \Reg(d)} \! (\dim c(r))^{\chi(r)} \right )
  {\ast}_P ( \otimes_x \,|x|_c) \in \kk,
  \end{equation*}
  where   $\vert P\vert$ is the number of components of $M\setminus P$,
  $c$ runs over all maps $ \Reg(d) \to I$, and $\chi(r)$ is the
Euler characteristic of $r$.

\begin{lem}\label{lem-state-3manNEW}
$|M,L|_\cc$ is a topological invariant of the pair $(M,L)$
independent of  the choice  of~$I$ and of  the choice  of   square
roots    of the twist scalars.
\end{lem}

\begin{proof}
The independence  of~$I$ follows from the naturality of $\inv_{\cc}$
and of the contraction homomorphisms. We   prove the independence of
the choice of ${\nu_{J_q}}$. The term of $|M,L|_\cc$ determined by a map $
\Reg(d) \to I$ is a  product of an expression independent of~${\nu_{J_q}}$
and $ \nu_{J_q}^{2n_q+s_q}$ for $s_q=\sum_x \varepsilon_x$, where $x$ runs
over the intersections of the $q$-th component of $d$ with
$P^{(1)}$. It suffices to show that $2n_q+s_q\in 2\ZZ$. Observe that
$\varepsilon_x= 0 $ if the orientations of the regions $r_-$ and
$r_+$ at $x$ are compatible, and $\varepsilon_x= \pm 1 $ otherwise.
Therefore $s_q\in 2\ZZ$ if and only if the normal bundle of the $i$-th
component of $d$ in $P$ is trivial.  By the definition of a
pre-twist, the latter condition holds if and only if $ n_q \in
 \ZZ$. Therefore, $2n_q+s_q\in 2\ZZ$ in all cases.

Recall that any two s-skeletons of $M$ can be related by a sequence
of Matveev-Piergallini moves. Using   $\Omega_5^{\pm 1}, \Omega_6$,
and $\Omega_8$ we can   deform any     diagram of~$L$ on~$P$ away
from the place where an   MP-move is performed on $P$. Then the
same arguments as in the proof of
Theorems~\ref{thm-state-3man-smipl} and~\ref{thm-state-3man}  show
that $|M,L|_\cc$ is invariant under the MP-move in question.
Therefore we need only to verify that $|M,L|_\cc$ is invariant under
the moves $\Omega_1,\dots,\Omega_8$.

Without loss of generality, we can assume that in the pictures of $\Omega_1, \Omega_2, \Omega_3$ the orientation of the ambient region corresponds to the counterclockwise orientation of the plane of the picture.  For any orientation of the red-bold strand, colored by a {simple}
object $J$ of $\zz(\cc)$,
\begin{center}
 \psfrag{M}[Bl][Bl]{\scalebox{.9}{\textcolor{red}{$J$}}}
  \psfrag{N}[Br][Br]{\scalebox{.9}{\textcolor{red}{$J$}}}
 \psfrag{x}[Bc][Bc]{\scalebox{.9}{${\letterf}$}}
 \psfrag{i}[Bc][Bc]{\scalebox{.9}{$i$}}
 \psfrag{j}[Bc][Bc]{\scalebox{.9}{$j$}}
 \rsdraw{.45}{.9}{proof-omega1a}$\ds\;\sum_{{\letterf}\in I}\dim({\letterf})$
  \, \rsdraw{.45}{.9}{proof-omega1b} $\;=\;$ \rsdraw{.45}{.9}{proof-omega1c}$\;=\,\ds v_J$\, \rsdraw{.45}{.9}{proof-omega1d}\,.
\end{center}
The  arrow   indicates that we compute the contribution of the picture (the curl)  to the state sum.  The dotted arc indicates   the tensor contraction of the   vector spaces corresponding to the endpoints of the arc. The   equalities follow  respectively from Lemma \ref{lem-calc-diag}.c and  the definition of the twist scalar $v_J$ (if the  red-bold strand is oriented upward,  one should also use the equality $v_{J^*}=v_J$). Taking into account the normalization factor
$\prod_{q}  \nu_{q}^{2n_q}$  and the   additional pre-twist $-1$ introduced by $\Omega_1$, we conclude that       $|M,L|_\cc$ is invariant under $\Omega_1$.

For any   orientations of the red-bold strands (colored by
{simple} objects of $\zz(\cc)$),
\begin{center}
 \psfrag{M}[Br][Br]{\scalebox{.9}{\textcolor{red}{$J$}}}
  \psfrag{N}[Bl][Bl]{\scalebox{.9}{\textcolor{red}{$J'$}}}
 \psfrag{x}[Bc][Bc]{\scalebox{.9}{${\letterf}$}}
 \psfrag{i}[Bc][Bc]{\scalebox{.9}{$i$}}
 \psfrag{j}[Bc][Bc]{\scalebox{.9}{$j$}}
 \psfrag{k}[Bc][Bc]{\scalebox{.9}{$k$}}
 \psfrag{l}[Bl][Bl]{\scalebox{.9}{$l$}}
 \rsdraw{.45}{.9}{proof-omega2a}$\ds\sum_{{\letterf}\in I}\dim({\letterf})$ \; \rsdraw{.45}{.9}{proof-omega2b}$\ds\; =\sum_{{\letterf}\in I}\dim({\letterf})$ \, \rsdraw{.45}{.9}{proof-omega2c}$\;=\;$ \rsdraw{.45}{.9}{proof-omega2d}\\
 $\;=\;$ \rsdraw{.45}{.9}{proof-omega2e}$\;=\;\ds\delta_{k,l}$ \, \psfrag{j}[Bc][Bc]{\scalebox{.9}{$k$}}\rsdraw{.45}{.9}{proof-omega1d}  \psfrag{i}[Bc][Bc]{\scalebox{.9}{$k$}} \psfrag{j}[Bc][Bc]{\scalebox{.9}{$j$}}\,\rsdraw{.45}{.9}{proof-omega1d}\;\;\psfrag{i}[Bc][Bc]{\scalebox{.9}{$i$}} \rsdraw{.45}{.9}{proof-omega2f}\;.
\end{center}
The first two equalities follow  respectively from Claims (d) and (c) of Lemma \ref{lem-calc-diag}.
The third equality follows from
Lemma~\ref{lem-inv-knotted-graph}. We conclude that   $|M,L|_\cc$ is
invariant under $\Omega_2$.  The invariance under $\Omega_3$ is
verified similarly.

Below we verify the invariance of  $|M,L|_\cc$ under the moves $\Omega_4, \dots
\Omega_8$ for a certain  orientation  of the link diagram. The proof of the invariance for the opposite orientation of a component of the  diagram can be obtained by repeating exactly the same arguments but using everywhere the opposite orientation of the relevant edges of the $\cc$-colored knotted graphs. In particular, the tensors associated with all vertices are represented by the same graphs with opposite orientation of the appropriate red-bold edges.  For the tensors associated with the crossings, this follows directly from the definitions. For the tensors associated with the vertices of $P^{(1)}\cap d$, we use that
\begin{center}
 \psfrag{M}[Br][Br]{\scalebox{.9}{\textcolor{red}{$J$}}}
 \rsdraw{.45}{.9}{proof-link-orient1b} $\quad = \quad$ \rsdraw{.45}{.9}{proof-link-orient2b} $\quad= \; \nu_J^{-2}\quad $ \rsdraw{.45}{.9}{proof-link-orient3b}\;.
\end{center}
and
\begin{center}
 \psfrag{M}[Br][Br]{\scalebox{.9}{\textcolor{red}{$J$}}}
 \rsdraw{.45}{.9}{proof-link-orient1} $\quad = \quad$ \rsdraw{.45}{.9}{proof-link-orient2} $\quad= \; \nu_J^2\quad $ \rsdraw{.45}{.9}{proof-link-orient3}\;.
\end{center}
For example, we have
\begin{center}
 \psfrag{M}[Bc][Bc]{\scalebox{.9}{\textcolor{red}{$J$}}}
 \psfrag{i}[Bc][Bc]{\scalebox{.9}{$i$}}
 \psfrag{j}[Bc][Bc]{\scalebox{.9}{$j$}}
 \psfrag{k}[Bc][Bc]{\scalebox{.9}{$k$}}
 \psfrag{l}[Bc][Bc]{\scalebox{.9}{$l$}}
 \psfrag{m}[Bc][Bc]{\scalebox{.9}{$m$}}
 \rsdraw{.45}{.9}{proof-link-orient-ex1}$\quad \nu_J^{\frac{-1-1}{2}}\, $ \rsdraw{.45}{.9}{proof-link-orient-ex2}
\end{center}
and
\begin{center}
 \psfrag{M}[Bc][Bc]{\scalebox{.9}{\textcolor{red}{$J$}}}
  \psfrag{X}[Br][Br]{\scalebox{1.11}{$\nu_J^{\frac{1+1}{2}}$}}
   \psfrag{Y}[Br][Br]{\scalebox{1.11}{$=\, \nu_J^{-1}$}}
 \psfrag{i}[Bc][Bc]{\scalebox{.9}{$i$}}
 \psfrag{j}[Bc][Bc]{\scalebox{.9}{$j$}}
 \psfrag{k}[Bc][Bc]{\scalebox{.9}{$k$}}
 \psfrag{l}[Bc][Bc]{\scalebox{.9}{$l$}}
 \psfrag{m}[Bc][Bc]{\scalebox{.9}{$m$}}
 \rsdraw{.45}{.9}{proof-link-orient-ex1b}\;.
\end{center}
The last graph differs from the graph obtained in the previous picture only by  the orientation of the red-bold edge.

Let us now verify the invariance of $|M,L|_\cc$ under the moves $\Omega_4-\Omega_8$ for a certain orientation of the s-skeleton and of the link diagram. We begin with $\Omega_4$, $\Omega_5$, and $\Omega_8$. In these computations,   the (unique) red-bold strand is colored with a {simple} object $J$ of
$\zz(\cc)$. We have
\begin{center}
 \psfrag{M}[Bc][Bc]{\scalebox{.9}{\textcolor{red}{$J$}}}
 \psfrag{x}[Bc][Bc]{\scalebox{.9}{${\letterf}$}}
 \psfrag{i}[Bc][Bc]{\scalebox{.9}{$i$}}
 \psfrag{j}[Bc][Bc]{\scalebox{.9}{$j$}}
 \psfrag{k}[Bc][Bc]{\scalebox{.9}{$k$}}
 \psfrag{l}[Bc][Bc]{\scalebox{.9}{$l$}}
 \psfrag{m}[Bc][Bc]{\scalebox{.9}{$m$}}
 \rsdraw{.45}{.9}{proof-omega4a}\\
 $\ds\nu_{J}^{\frac{-1-1}{2}}\,\nu_{J}^{\frac{1-1}{2}}\;\sum_{{\letterf}\in I}\dim({\letterf})\; $  \rsdraw{.45}{.9}{proof-omega4b}\\
 $\ds=\,\nu_{J}^{-1}\;\sum_{{\letterf}\in I}\dim({\letterf})\; $  \rsdraw{.45}{.9}{proof-omega4c}\\
 $\ds=\,\nu_{J}^{-1}\;\sum_{{\letterf}\in I}\dim({\letterf})\; $  \rsdraw{.45}{.9}{proof-omega4d}\\
 $\ds=\,\nu_{J}^{-1}\;$  \rsdraw{.45}{.9}{proof-omega4e} $\ds=\,\nu_{J}^{-1}\;$  \rsdraw{.45}{.9}{proof-omega4f}\\
\end{center}
 All these equalities  except the penultimate one  follow  from
Lemma~\ref{lem-inv-knotted-graph}; the penultimate equality   follows from  Lemma \ref{lem-calc-diag}.
 By definition,
\begin{center}
 \psfrag{M}[Bc][Bc]{\scalebox{.9}{\textcolor{red}{$J$}}}
 \psfrag{x}[Bc][Bc]{\scalebox{.9}{${\letterf}$}}
 \psfrag{i}[Bc][Bc]{\scalebox{.9}{$i$}}
 \psfrag{j}[Bc][Bc]{\scalebox{.9}{$j$}}
 \psfrag{k}[Bc][Bc]{\scalebox{.9}{$k$}}
 \psfrag{l}[Bc][Bc]{\scalebox{.9}{$l$}}
 \psfrag{m}[Bc][Bc]{\scalebox{.9}{$m$}}
 \rsdraw{.45}{.9}{proof-omega4g}
 $\ds\;\nu_{J}^{\frac{-1+1}{2}}\; $  \rsdraw{.45}{.9}{proof-omega4h}\;.
\end{center}
Taking into account the normalization factor
$\prod_{q}  \nu_{q}^{2n_q}$  and the   additional pre-twist $\frac{1}{2}$ introduced by $\Omega_4$, we conclude that       $|M,L|_\cc$ is invariant under  $\Omega_4$.

Next, we have
\begin{center}
 \psfrag{M}[Bc][Bc]{\scalebox{.9}{\textcolor{red}{$J$}}}
 \psfrag{x}[Bc][Bc]{\scalebox{.9}{${\letterf}$}}
 \psfrag{i}[Bc][Bc]{\scalebox{.9}{$i$}}
 \psfrag{j}[Bc][Bc]{\scalebox{.9}{$j$}}
 \psfrag{k}[Bc][Bc]{\scalebox{.9}{$k$}}
 \psfrag{l}[Bc][Bc]{\scalebox{.9}{$l$}}
 \psfrag{m}[Bc][Bc]{\scalebox{.9}{$m$}}
 \rsdraw{.45}{.9}{proof-omega5a}\\[.4em]
 $\ds\nu_{J}^{\frac{1-1}{2}}\,\nu_{J}^{\frac{1-1}{2}}\;\sum_{{\letterf}\in I}\dim({\letterf})$  \rsdraw{.45}{.9}{proof-omega5b} $\;=\;$ \rsdraw{.45}{.9}{proof-omega5c} \\
 $\;=\;$ \rsdraw{.45}{.9}{proof-omega5d} $\;=\; \delta_{i,k}\;$ \rsdraw{.45}{.9}{proof-omega5e} \;.
\end{center}
The right-hand side is the contribution to the state sum of the following piece of the diagram:
\begin{center}
 \psfrag{M}[Bc][Bc]{\scalebox{.9}{\textcolor{red}{$J$}}}
 \psfrag{x}[Bc][Bc]{\scalebox{.9}{${\letterf}$}}
 \psfrag{i}[Bc][Bc]{\scalebox{.9}{$i$}}
 \psfrag{j}[Bc][Bc]{\scalebox{.9}{$j$}}
 \psfrag{k}[Bc][Bc]{\scalebox{.9}{$k$}}
 \psfrag{l}[Bc][Bc]{\scalebox{.9}{$l$}}
 \psfrag{m}[Bc][Bc]{\scalebox{.9}{$m$}}
 \rsdraw{.45}{.9}{proof-omega5f} \;.
\end{center}
Therefore the state sum is invariant under $\Omega_5$.

Next, we have
\begin{center}
 \psfrag{x}[Bc][Bc]{\scalebox{.9}{${\letterf}$}}
 \psfrag{i}[Bc][Bc]{\scalebox{.9}{$i$}}
 \psfrag{j}[Bc][Bc]{\scalebox{.9}{$j$}}
 \psfrag{k}[Bc][Bc]{\scalebox{.9}{$k$}}
 \psfrag{l}[Bc][Bc]{\scalebox{.9}{$l$}}
 \psfrag{m}[Bc][Bc]{\scalebox{.9}{$m$}}
 \psfrag{n}[Bc][Bc]{\scalebox{.9}{$n$}}
 \psfrag{p}[Bc][Bc]{\scalebox{.9}{$p$}}
 \psfrag{q}[Bc][Bc]{\scalebox{.9}{$q$}}
 \rsdraw{.45}{.9}{proof-omega8a}\\
$\ds\nu_{J}^{\frac{1-1}{2}}\,\nu_{J}^{\frac{1-1}{2}}\;\sum_{{\letterf}\in I}\dim({\letterf})$  \rsdraw{.45}{.9}{proof-omega8b}\\[.5em]
 $=\;$ \rsdraw{.45}{.9}{proof-omega8c}  $\; =\;$ \rsdraw{.45}{.9}{proof-omega8d} \\
 $\ds=\nu_{J}^{\frac{1-1}{2}}$ \rsdraw{.45}{.9}{proof-omega8e}
\rsdraw{.45}{.9}{proof-omega8f}
\end{center}
Therefore the state sum is invariant under   $\Omega_8$.

Consider now the moves $\Omega_6$ and $\Omega_7$. We assume that the two red-bold strands are colored  with   simple objects $J$,  $J'$ of $\zz(\cc)$.  We have
\begin{center}
 \psfrag{M}[Bl][Bl]{\scalebox{.9}{\textcolor{red}{$J$}}}
 \psfrag{N}[Bl][Bl]{\scalebox{.9}{\textcolor{red}{$J'$}}}
 \psfrag{x}[Bc][Bc]{\scalebox{.9}{${\letterf}$}}
 \psfrag{i}[Bc][Bc]{\scalebox{.9}{$i$}}
 \psfrag{j}[Bc][Bc]{\scalebox{.9}{$j$}}
 \psfrag{k}[Bc][Bc]{\scalebox{.9}{$k$}}
 \psfrag{l}[Bc][Bc]{\scalebox{.9}{$l$}}
 \psfrag{m}[Bc][Bc]{\scalebox{.9}{$m$}}
 \psfrag{n}[Bc][Bc]{\scalebox{.9}{$n$}}
 \psfrag{p}[Bc][Bc]{\scalebox{.9}{$p$}}
 \rsdraw{.45}{.9}{proof-omega6a}\\[-2em]
 $\ds\nu_{J}^{\frac{1-1}{2}}\,\nu_{J'}^{\frac{1-1}{2}}\;\sum_{{\letterf}\in I}\dim({\letterf})$ \;  \rsdraw{.45}{.9}{proof-omega6b}\\
 $=\;$ \rsdraw{.45}{.9}{proof-omega6c}  $\; =\;$ \rsdraw{.45}{.9}{proof-omega6d} \\
  $\ds=\nu_{J}^{\frac{1-1}{2}}\,\nu_{J'}^{\frac{1-1}{2}}\;\sum_{{\letterf}\in I}\dim({\letterf})$ \;  \rsdraw{.45}{.9}{proof-omega6e}\\
 \rsdraw{.45}{.9}{proof-omega6f}  \;.
\end{center}
Therefore the state sum is invariant under $\Omega_6$.

We verify the invariance for $\Omega_{7.1}$ (the case of $\Omega_{7.2}$ is similar). We have
\begin{center}
 \psfrag{M}[Bl][Bl]{\scalebox{.9}{\textcolor{red}{$J$}}}
 \psfrag{N}[Bl][Bl]{\scalebox{.9}{\textcolor{red}{$J'$}}}
 \psfrag{x}[Bc][Bc]{\scalebox{.9}{${\letterf}$}}
 \psfrag{i}[Bc][Bc]{\scalebox{.9}{$i$}}
 \psfrag{j}[Bc][Bc]{\scalebox{.9}{$j$}}
 \psfrag{k}[Bc][Bc]{\scalebox{.9}{$k$}}
 \psfrag{l}[Bc][Bc]{\scalebox{.9}{$l$}}
 \psfrag{m}[Bc][Bc]{\scalebox{.9}{$m$}}
 \psfrag{n}[Bc][Bc]{\scalebox{.9}{$n$}}
 \psfrag{p}[Bc][Bc]{\scalebox{.9}{$p$}}
 \psfrag{q}[Bc][Bc]{\scalebox{.9}{$q$}}
 \rsdraw{.45}{.9}{proof-omega7a}\\[-2em]
 $\ds\nu_{J}^{\frac{-1+1}{2}}\,\nu_{J'}^{\frac{1-1}{2}}\;\sum_{{\letterf}\in I}\dim({\letterf})$ \;  \rsdraw{.45}{.9}{proof-omega7b}\\
 $=\;\ds\sum_{{\letterf}\in I}\dim({\letterf})$ \rsdraw{.45}{.9}{proof-omega7c}\\
 $=\;$ \rsdraw{.45}{.9}{proof-omega7d}  $\;\,=\,\;$ \rsdraw{.45}{.9}{proof-omega7e}  $\;\,=$ \rsdraw{.45}{.9}{proof-omega7f} \\[.5em]
 $=\ds\nu_{J}^{\frac{-1+1}{2}}\,\nu_{J'}^{\frac{1-1}{2}}$ \rsdraw{.45}{.9}{proof-omega7g}  \rsdraw{.45}{.9}{proof-omega7h}
\end{center}
Therefore the state sum is invariant under $\Omega_7$.

The invariance of $|M,L|_\cc$ under the moves $\Omega_4-\Omega_8$ with other orientations of the s-skeleton can be verified similarly.
As above, this verification is easy for $\Omega_5$ and $\Omega_6$ but longer for the other moves. However, having verified this invariance for $\Omega_5$ and $\Omega_6$, we can prove that $|M,L|_\cc$ does not depend on the orientation of the s-skeleton $P$. Then the invariance for the other moves follows from the special cases considered above. To prove our claim, it is enough to prove the invariance of $|M,L|_\cc$  under reversion of the orientation in an arbitrary region, $X$, of $P$. Applying bubble moves $T_4^{''}$ (see Figure~\ref{fig-movesTT}) at the boundary of $X$,  then using $\Omega_5$ to push the newly attached disks   near the crossings  and  using  $\Omega_6$ to pull the crossings into these disks, we can  reduce  the claim to the case where $X$ contains no crossings of the diagram  (note that all these transformations keep $|M,L|_\cc$). Then $X$ meets the link diagram in  several disjoint embedded arcs. Similarly, applying bubble moves $T_4^{''}$ and then    lune moves $\mathcal L^{\pm 1}$ (see Figure~\ref{fig-movesee}) to push  the newly attached disks between   the arcs of the diagram in $X$,  we can reduce the claim to the case where  the   diagram meets the disk $X$ along a single embedded arc. Finally, applying   the moves  $(T^{2,1})^{-1}$  (see Figure~\ref{fig-32moves}) to $P$ outside the diagram, we can reduce the claim to the case where $X$ has only two vertices in $P$ as in the picture below. There are two cases to consider, depending on whether or not this arc arrives and leaves in the same direction:
\begin{center}
\rsdraw{.45}{.9}{proof-orient0}\qquad \text{and} \qquad
\rsdraw{.45}{.9}{proof-orient0b}
\end{center}
Let us check that the two orientations of the disk do contribute similarly to the state sum. In the first case, we have
\begin{center}
 \psfrag{M}[Bl][Bl]{\scalebox{.9}{\textcolor{red}{$J$}}}
 \psfrag{x}[Bc][Bc]{\scalebox{.9}{$x$}}
 \psfrag{i}[Bc][Bc]{\scalebox{.9}{$i$}}
 \psfrag{j}[Bc][Bc]{\scalebox{.9}{$j$}}
 \psfrag{y}[Bc][Bc]{\scalebox{.9}{$y$}}
 \rsdraw{.45}{.9}{proof-orient1} \\[-2em]
$\ds\nu_{J}^{\frac{\varepsilon_x^-+1}{2}}\,\nu_{J}^{\frac{-1-\varepsilon_y^+}{2}}\;\sum_{i,j\in I}\dim(i)\dim(j)$ \;  \rsdraw{.45}{.9}{proof-orient1c}\\[-2em]
$=\ds\nu_{J}^{\frac{\varepsilon_x^--\varepsilon_y^+}{2}}\;$
\rsdraw{.45}{.9}{proof-orient1d}\phantom{XXXXXXXXXXXXXXXXXX}\\[-2.1em]
$=\ds\nu_{J}^{\frac{\varepsilon_x^--1}{2}}\,\nu_{J}^{\frac{+1-\varepsilon_y^+}{2}}\;\sum_{i,j\in I}\dim(i)\dim(j)$ \;  \rsdraw{.45}{.9}{proof-orient1e}\\[-2em]
\rsdraw{.45}{.9}{proof-orient1b}
% $=\;\ds\sum_{{\letterf}\in I}\dim({\letterf})$ \rsdraw{.45}{.9}{proof-omega7c}\\
% $=\;$ \rsdraw{.45}{.9}{proof-omega7d}  $\;\,=\,\;$ \rsdraw{.45}{.9}{proof-omega7e}  $\;\,=$ \rsdraw{.45}{.9}{proof-omega7f} \\[.5em]
% $=\ds\nu_{J}^{\frac{-1+1}{2}}\,\nu_{J'}^{\frac{1-1}{2}}$ \rsdraw{.45}{.9}{proof-omega7g}  \rsdraw{.45}{.9}{proof-omega7h}
\end{center}
In the second case, we have
\begin{center}
 \psfrag{M}[Bl][Bl]{\scalebox{.9}{\textcolor{red}{$J$}}}
 \psfrag{x}[Bc][Bc]{\scalebox{.9}{$x$}}
 \psfrag{i}[Bc][Bc]{\scalebox{.9}{$i$}}
 \psfrag{j}[Bc][Bc]{\scalebox{.9}{$j$}}
 \psfrag{y}[Bc][Bc]{\scalebox{.9}{$y$}}
 \rsdraw{.45}{.9}{proof-orient2} \\[-2em]
$\ds\nu_{J}^{\frac{\varepsilon_x^-+1}{2}}\,\nu_{J}^{\frac{1-\varepsilon_y^+}{2}}\;\sum_{i,j\in I}\dim(i)\dim(j)$ \;  \rsdraw{.45}{.9}{proof-orient2c}\\[.5em]
$=\ds\nu_{J}^{1+\frac{\varepsilon_x^--\varepsilon_y^+}{2}}\;\sum_{i,j\in I}\dim(i)\dim(j)$ \;
\rsdraw{.45}{.9}{proof-orient2d}\\[.5em]
  $=\ds\nu_{J}^{1+\frac{\varepsilon_x^--\varepsilon_y^+}{2}}\;\sum_{i,j\in I}\dim(i)\dim(j)\;$ \rsdraw{.35}{.9}{proof-orient2e}\\
$=\ds\nu_{J}^{1+\frac{\varepsilon_x^--\varepsilon_y^+}{2}}\;\sum_{j\in I}\dim(j)$ \;
\rsdraw{.45}{.9}{proof-orient2f} \\[.5em]
 $=\ds\nu_{J}^{1+\frac{\varepsilon_x^--\varepsilon_y^+}{2}}$ \;\rsdraw{.35}{.9}{proof-orient2g}\;
$=\ds\nu_{J}^{-1+\frac{\varepsilon_x^--\varepsilon_y^+}{2}}$ \;\rsdraw{.35}{.9}{proof-orient2gb}\\[.5em] %[-2.1em]
$=\ds\nu_{J}^{-1+\frac{\varepsilon_x^--\varepsilon_y^+}{2}}\;\sum_{i,j\in I}\dim(i)\dim(j)$ \;  \rsdraw{.45}{.9}{proof-orient2j}\\[.5em]
$=\ds\nu_{J}^{-1+\frac{\varepsilon_x^--\varepsilon_y^+}{2}}\;\sum_{i,j\in I}\dim(i)\dim(j)$ \;  \rsdraw{.45}{.9}{proof-orient2i}\\[.5em]
$=\ds\nu_{J}^{\frac{\varepsilon_x^--1}{2}}\,\nu_{J}^{\frac{-1-\varepsilon_y^+}{2}}\;\sum_{i,j\in I}\dim(i)\dim(j)$ \;  \rsdraw{.45}{.9}{proof-orient2h}\\[-2em]
\rsdraw{.45}{.9}{proof-orient2b}
% $=\;\ds\sum_{{\letterf}\in I}\dim({\letterf})$ \rsdraw{.45}{.9}{proof-omega7c}\\
% $=\;$ \rsdraw{.45}{.9}{proof-omega7d}  $\;\,=\,\;$ \rsdraw{.45}{.9}{proof-omega7e}  $\;\,=$ \rsdraw{.45}{.9}{proof-omega7f} \\[.5em]
% $=\ds\nu_{J}^{\frac{-1+1}{2}}\,\nu_{J'}^{\frac{1-1}{2}}$ \rsdraw{.45}{.9}{proof-omega7g}  \rsdraw{.45}{.9}{proof-omega7h}
\end{center}
This concludes the proof of Lemma~\ref{lem-state-3manNEW}.
\end{proof}

\subsection{The case of arbitrary links}  We extend the invariant $|M,L|_\cc$ of    links without trivial components in a compact oriented   3-manifold $M$  (see Lemma~\ref{lem-state-3manNEW}) to arbitrary links in $M$.
Consider   an oriented framed link  $L\subset \Int(M)$ whose   components   are colored by {simple} objects of $\zz(\cc)$. Let $L_1, \ldots , L_k$ be the trivial components of $L$, and let $\widetilde{L}$ be the sublink of $L$ formed by all the other components.  For  $i=1, \ldots, k$,  let $J_i$ be the
 object of $\zz(\cc)$  coloring $L_i$ and let $f_i\in \ZZ$ be the framing number of~$L_i$ (defined as in the standard case of knots in the 3-sphere).
Set
$$|M,L|_\cc = |M,\widetilde{L}|_\cc \,\prod_{i=1}^k v_{J_i}^{f_i} \dim(J_i) \in \kk. $$

\begin{thm}\label{thm-state-3manNEW}
$|M,L|_\cc$ is a topological invariant of the pair $(M,L)$.
\end{thm}

This theorem directly follows from Lemma~\ref{lem-state-3manNEW}.

The invariant $|M,L|_\cc$ can be computed via the same formula as in Section~\ref{sect-invariants-links} applied to an arbitrary s-skeleton $P$ of $M$ and   an  oriented  enriched link diagram $d$ in
$P$ representing~$L$ such that every component of $d$  has a self-crossing or meets the 1-skeleton of $P$.

\subsection{Remarks}\label{rem-noneedclosed}
(1)  Since all objects of $\zz(\cc)$ are
direct sums of simple objects, the invariant  $|M,L|_\cc$
  extends by  linearity to arbitrary colors of the
components of~$L$.

(2) Theorem~\ref{thm-equivalence-} directly extends to 3\ti manifolds with links, see Formula~\eqref{genethm104} in Section~\ref{proof104}.

\subsection{A special case}\label{rem-aspecial case}   It is obvious that for $L=\emptyset$,  we have  $|M,L|_\cc=|M|_\cc  $, where $|M|_\cc \in \kk$ is the invariant  defined for closed $M$  in Section~\ref{sec-ssot3m}  and generalized to   compact $M$
 in Section~\ref{sec-Io3m+}.  We obtain   a similar formula  for any
$\zz(\cc)$-colored framed oriented link  $L$  lying  in a 3-ball  in $  M$. Denote by $L'$ the  $\zz(\cc)$-colored framed oriented link in $S^3$ obtained
  by cutting out the ball containing $L$ from $M$ and  embedding   this ball - with $L$ inside - in $S^3$.
  The   embedding in question should be orientation-preserving.

\begin{lem}\label{lem-computforlinksinthesphere-}
$|M,L|_\cc = |M|_\cc \, F_{\zz(\cc)}(L' )$.
\end{lem}
\begin{proof}    Pick an   s-skeleton $P$ of $M$ and a  diagram $d$ of $L$ contained in   a region $r$ of~$P$. Inserting if necessary small curls in $d$, we can assume that $d$ has at least one self-crossing and all pre-twists of $d$ are equal to zero. Denote the   underlying ($4$-valent) graph  of $d$ by  ${\underline d}$.   Applying if necessary $\Omega_2$ to $d$ in $r$, we can ensure that  ${\underline d}$ is connected and all its edges have distinct endpoints.  Then  all regions of $d$   are  disks except  the  \lq\lq exterior region"  $r_0$ of $d$ in $r$, which is an annulus. Let $R_1 $ be the set of the disk regions of $d$ contained  in~$r$ and $R_2$   be the set of all other disk regions of $d$. Thus, $\Reg (d)= R_1\amalg R_2 \amalg \{r_0\}$.

Since   $P^{(1)} \cap  {  d}=\emptyset$,   the scalar ${\ast}_P ( \otimes_x \,|x|_c)\in \kk$   in the definition of $|M,L|_\cc$      expands as a product of two scalars $t^{c}_1, t^{c}_2\in \kk$. The scalar $t^{c}_1$ (resp.\@  $t^{c}_2$) is
obtained   from  the tensor associated with the vertices of   $ P^{(1)} $ (resp.\@  ${\underline d}$) through   the tensor contraction associated with the edges of     $ P^{(1)} $ (resp.\@  ${\underline d}$). Each $t^{c}_k $ is determined by  $i=c(r_0)\in I$ and the map  $c_k=c\vert_{ R_k }\co R_k \to I$. We therefore denote $t^{c}_k $ by  $t^{i,c_k}$. We have
\begin{gather*}
|M,L|_\cc
      = (\dim (\cc))^{-\vert P\vert}    \sum_{i\in I}
  \sum_{\substack {c\co \Reg (d)\to I \\ c(r_0)=i}}   \,\,  \left (\prod_{r \in R_1\cup  R_2} \!  \dim c(r) \right ) \,
t^{c}_1 t^{c}_2\\
=  (\dim (\cc))^{-\vert P\vert}   \sum_{i\in I} \sum_{c_1\co R_1\to I}   \left (\prod_{r \in R_1} \!  \dim c_1(r)   \right ) t^{i,c_1}
  \sum_{c_2\co R_2\to I} \left (\prod_{r \in R_2} \!  \dim c_2(r)   \right ) t^{i,c_2}.
\end{gather*}
Below we prove that for all $i\in I$,
\begin{equation}\label{equ-locall} \sum_{c\co R_1\to I} \left (\prod_{r \in R_1} \!  \dim c(r)   \right ) t^{i,c}= \dim (i) \, F_{\zz(\cc)}(L' ).\end{equation}
Substituting this  in the expression  for $|M,L|_\cc$ above, we obtain that
\begin{gather*}
|M,L|_\cc=  (\dim (\cc))^{-\vert P\vert}   \sum_{i\in I} \sum_{c\co R_2\to I}   \left (\prod_{r \in R_2} \!  \dim c(r)   \right ) t^{i,c_2}
\dim (i) \, F_{\zz(\cc)}(L' )\\
= |M|_\cc \, F_{\zz(\cc)}(L' ).
\end{gather*}

To prove  \eqref{equ-locall}, we need to study the   graph $\underline d$   in more detail.  Let $N\geq 1$ be the number of vertices of $\underline d$  (i.e.,  the number of crossings of $d$). Since the   graph $\underline d$   is 4-valent, it  has $2N$ edges.
A   computation of the Euler characteristic of $r$ shows that   $\underline  d$ splits $r$ into  $N+1$ disk regions and the exterior region $r_0$.  The diagram ${  d}$ also determines a      graph  ${d}^*\subset r$ as follows. Fix a point in each region  of $d$ in $r$  called the {\it center}  of the  region. These $N+2$ points are the vertices of ${d}^*$.  Every edge $e$ of ${\underline d}$ determines a {\it dual edge} $e^*$ of ${d}^*$ which connects the centers of the two regions adjacent to $e$, meets the interior of $e$ transversely in one point, and is disjoint from ${\underline d}$  otherwise. Note that the two regions adjacent to $e$ are always distinct so that the edges of $d^*$ are not loops.
%(If the two regions adjacent to $e$ coincide, then $e^*$ is a loop.)
We   choose the edges of ${d}^*$ so that they meet only in common vertices. The vertex of ${d}^*$ represented by the center of $r_0$ is denoted $O$.

By a {\it  subgraph} of a graph $G$  we mean a graph formed by some vertices and   edges of $G$.   A subgraph $F$ of $G$ is  {\it full} if all vertices   of $G$ are vertices of $F$. A {\it maximal tree} in $G$ is a full subgraph of $G$ which is a tree.   Each  subgraph $F$ of ${\underline d}$ determines a   full subgraph  $F^*$ of  ${d}^*$ whose edges are dual to the edges of  $\underline d$ not belonging to~$F$. Clearly, $F\cap F^* =\emptyset$.   If $F$ is a maximal tree in ${\underline d}$, then $F^*$ is   a maximal  tree in  ${d}^*$.  Indeed, since  every component of $r \setminus F^*$ contains a vertex of $\underline d$ and any two vertices of $\underline d$ can be related by a path in $  F\subset r \setminus F^*$, the  set $r \setminus F^*$ is connected. Hence $F^*$ is a forest  with $N+2$ vertices and $2N-(N-1)=N+1$ edges. Such a forest  is necessarily a tree.

Let $e_1,\ldots, e_{2N}$ be   the edges of $\underline d$ enumerated so that the following conditions are met. For $k=1,\ldots, 2N$, set  $F_k=\cup_{l=1}^{k} e_l \subset \underline d$ and observe that   $F^*_k$ is the full subgraph of $d^*$ with edges $e_{k+1}^*,\ldots, e_{2N}^*$. We require  that (a)  the graph $F_{N-1} $ is   a maximal tree in $\underline d$ and (b) for all
$k=N, \ldots,  2N$,  the graph  $F_{k-1}^*$ has a 1-valent vertex distinct from $ O$ and  incident to   $e_k^*$.  It is easy to choose       $e_1,\ldots, e_{N-1}$    to ensure (a). We explain now how  to
choose  $e_k$  with $k\geq N$   to ensure (b).  For $k=N$,  pick a 1-valent vertex  $v_N $   of the maximal  tree  $F_{N-1} ^*\subset d^*$ distinct from $O$. (Such a vertex exists because a tree having at least one edge necessarily has $\geq 2$  vertices of valency 1.)  Let  $e_N$ be the edge of $\underline d$ such that $e^*_N$ is the   edge of $F_{N-1} ^*$ adjacent to $v_N$.   The graph $F_N^*$ is obtained from $F_{N-1} ^*$ by removing    $e^*_N$ (keeping all the vertices).  Clearly,   $F_N^*$ is a disjoint union of the isolated vertex $v_N$ and a tree with $N+1$ vertices.  We choose $e_{N+1}$ so that $e^*_{N+1}$ is the edge of the latter  tree   adjacent to a 1-valent vertex  $v_{N+1}\neq O$.  Continuing by induction we obtain  that the graph    $F_k^*$ constructed  at the $k$-th step consists of   isolated vertices $v_N, \ldots, v_k$ and a tree with $2N+1-k$ vertices.  We choose $e_{k+1}$ so that $e^*_{k+1}$ is the edge of the latter  tree   adjacent to a 1-valent vertex  $v_{k+1} \neq O$. This process   stops at $k=2N$ because the graph $F_{2N}^*$ has only isolated vertices.

 % The latter  is possible because at the $k$-th step with $k\geq N$, the previously constructed graph $F_{k-1}$ contains the  maximal %tree $F_{N-1}$ so that $F_{k-1}^*$ is a forest. One should also use the fact that a forest having at least one edge necessarily has $%%\geq 2$  vertices of valency 1.  An  induction on $k$ shows that for all $k =N-1, \ldots, 2N$, the forest $F_k^*$ consists of a tree with $2N%+1-k$ vertices and $k-N+1$ isolated vertices.

We can now  prove \eqref{equ-locall}. Recall that the term $t^{i,c}$ is obtained by placing a small colored tetrahedron-type graph in every  crossing of   $d$, taking the tensor product of the  associated $\inv_{\zz(\cc)}$-invariants,  and tensor contracting  this product along the edges of $\underline d$. The colors  of the edges of these tetrahedron-type graphs are determined by $i, c$, and the given coloring of the link components. We shall   perform  the  tensor contraction at one edge  at a time  following the order  of  the edges
$  e_1,\ldots, e_{2N}$
fixed above.  Condition (a) shows that  for $k=1, \ldots, N-1$, the $k$-th tensor contraction
involves two different  pieces of the diagram so that we can apply  Claim  (d) of Lemma~\ref{lem-calc-diag}.
At each of these $N-1$ steps, the tensor contraction of the  tensor product of two $\inv_{\zz(\cc)}$-invariants yields the $\inv_{\zz(\cc)}$-invariant
of a   \lq\lq fused" diagram.     For  $k \geq N $, the endpoint  $v_k$ of $e^*_k$ is the center of a disk region, $V_k \subset r$,  of $d$
adjacent to the edge $e_k$. Note that all other sides of $V_k$ (i.e., all other edges of $d$ adjacent to $V_k$) must have appeared at the previous steps
among $e_1, \ldots, e_{k-1}$. This follows from the fact that  neither of the edges $e_{k+1}^*, \ldots, e_{2N}^*$ is adjacent to~$v_k$. Therefore   the fusions corresponding to all sides of $V_k$ except  $e_k$ have been   done before the $k$-th step.
The  tetrahedron-type graph  associated with a vertex of $V_k$ has a side   in $V_k$; under the
fusions in question,  these sides are united into a single arc labeled with $c(V_k)$.  This shows that   the $k$-th tensor contraction
involves two vertices of the same  connected  piece of the diagram. We apply Claim  (c) of Lemma~\ref{lem-calc-diag} where the roles of $i, u, v$ are played respectively by $c(V_k)$ and the endpoints of $e_k$ (here we use that all edges of $\underline d$ have distinct endpoints). After fusion along $e_k$ our diagram will contain an embedded $c(V_k)$-colored circle.  Lemma~\ref{lem-calc-diag}.c  says that to preserve the state sum  under the fusion  we must delete this circle,  the summation over
$c(V_k)\in I$, and the factor $\dim  c(V_k)$. The rest of the diagram  is the \lq\lq fused diagram" obtained at the $k$-th step.
Continuing by induction, we obtain that  the left-hand side of \eqref{equ-locall} is equal to the   $\inv_{\zz(\cc)}$-invariant of the fused diagram obtained at the  last step $k=2N$. This   diagram consists of a diagram of $L'$ surrounded by a big circle colored with $i$.  The   $\inv_{\zz(\cc)}$-invariant of this fused diagram
is equal to $\dim (i) \, F_{\zz(\cc)}(L' )$. This proves our claim. \end{proof}

Applying Lemma~\ref{lem-computforlinksinthesphere-} to $M=S^3$ and using the equality $\vert S^3  \vert_\cc=(\dim(\cc))^{-1}$ (see Section~\ref{sec-computat}), we obtain that for any  $\zz(\cc)$-colored framed oriented link $L\subset S^3$,
\begin{equation}\label{computforlinksinthesphere}\vert S^3,L \vert_\cc=(\dim(\cc))^{-1} F_{\zz(\cc)}(L). \end{equation}

%%%The assumption that the ground ring $\kk$ is an
%%%algebraically closed field can also be eliminated. For an arbitrary
%%%commutative ring $\kk$, our construction produces a state sum
%%%$$ |M,L|_\cc\in  \kk[\nu_1,\ldots , \nu_N]/ (\nu_{J_q}^2=v_{J_q})_{q=1}^N.$$ The latter  ring is an
%%%extension of $k$, and the argument at the beginning of the proof of
%%%Theorem~\ref{thm-state-3manNEW} shows that in fact $|M,L|_\cc\in  \kk$.

\section{Deduction of Theorems~\ref*{thm-equivalence-} and~\ref*{thm-equivalence} from Lemma~\ref*{le-didim}}

\subsection{Conventions} In this section, the symbol  $D^2$ denotes  the unit disk in $\CC$ with counterclockwise orientation
and $S^1=\partial D^2$ is the unit circle  with   counterclockwise orientation. Unless explicitly stated to the contrary, the torus $S^1\times S^1$ and the solid tori $S^1\times D^2$ and $D^2\times S^1$   are provided with the product orientations.

\subsection{A surgery formula}\label{asurgeryformula} Let $Z\co  \mathrm{Cob}_3 \to \mathrm{vect}_\kk$ be a 3-dimensional TQFT  over a field~$\kk$. We establish a surgery formula for the values of $Z$ on closed   3-manifolds.

Given a  framed oriented link $L=\cup_{q=1}^N L_q$ in $S^3$, denote by $E_L$ its exterior, i.e., the complement in $S^3$ of an open regular neighborhood of $L$. We endow $E_L$ with the orientation induced by the right-handed orientation of $S^3$. There is  a homeomorphism $f=f_L$ from   the disjoint union $N (S^1 \times S^1)=\amalg_{q=1}^N  (S^1 \times S^1)_q$ of $N$ ordered $2$-tori  to $\partial E_L$  carrying the $q$-th copy   of   the torus  to the boundary of a closed regular neighborhood of $L_q$ so that    $f(S^1\times {\text {pt}})$ is   a positively oriented meridian of $L_q$ and  $ f({\text {pt}} \times S^1)$ is   a positively oriented longitude of $L_q$ determined by the framing for   $q =1 , ..., N$.
Observe that  $f$ is an orientation preserving homeomorphism   $ N (S^1 \times S^1) \simeq  -\partial E_L$. We use $f$ to identify $-\partial E_L$ and $N (S^1 \times S^1) $.
Then  $Z
(-\partial E_L)= A^{\otimes N}$, where $A=Z(S^1 \times S^1)$. Consider the homomorphism
$$Z\bigl(E_L, -\partial E_L ,\emptyset\bigr)\co  A^{\otimes N}=
Z
(-\partial E_L) \to Z(\emptyset)=\kk.$$
For any  $y_1, \dots, y_N\in A$, set
$$
Z(L;y_1, \dots,y_N)=Z\bigl(E_L, -\partial E_L ,\emptyset\bigr)( {{y}_1} \otimes \cdots  \otimes {{y}_N}) \in \kk.
$$

Consider the solid torus ${V}=-( S^1\times D^2)$ with orientation opposite to the product orientation. Then
 $\partial {V} =S^1\times S^1$ in the category of oriented manifolds.   Let $w\in A=Z(S^1 \times S^1)$ be the image of $1\in \kk$ under   the homomorphism $Z({V}, \emptyset,  \partial {V})\co \kk \to  A$. We call $w$ the {\it canonical vector} associated with $Z$. Pick an arbitrary   basis   $Y$ of the vector space $A$ and expand $w=\sum_{y\in Y} w_y y$ where  $w_y\in \kk$.

\begin{lem}\label{lem-TQFTsurg}
Let $M$ be a  closed oriented $3$-manifold   obtained by surgery on $S^3$ along a framed  link $L=L_1\cup \cdots \cup L_N \subset S^3$. For any orientation of $L$,
\begin{equation*}%\label{eq-surgysurgy-}
Z(M)=\sum_{{y}_1, \dots , {y}_N \in Y} \left(\prod_{q=1}^N w_{{y}_q} \right) Z(L;{y}_1, \dots,{y}_N).
\end{equation*}
\end{lem}
\begin{proof}   Let ${V}_N $ be a disjoint union of   $N$   copies of ${V}$.
The associated homomorphism $Z({V}_N, \emptyset,  \partial {V}_N)\co \kk \to  A^{\otimes N}$ carries $1\in \kk$ to
  $$w^{\otimes N}=\sum_{{y}_1, \dots , {y}_N \in Y} \left(\prod_{q=1}^N w_{y_q} \right)   {{y}_1} \otimes \cdots  \otimes {{y}_N}.$$
The 3-cobordism $M=(M, \emptyset, \emptyset)$ can be obtained  by attaching the cobordism $(E_L, -\partial E_L, \emptyset)$ on top of the cobordism $({V}_N, \emptyset, \partial  {V}_N )$
along the homeomorphism $f\co  \partial  {V}_N = N(S^1\times S^1)\to -\partial E_L$ specified above.
Therefore $$Z(M, \emptyset, \emptyset)= Z(E_L, -\partial E_L, \emptyset) \circ Z({V}_N, \emptyset, \partial  {V}_N ) \co \kk \to \kk $$
and
\begin{align*}
Z(M)&=Z(M, \emptyset, \emptyset) (1)= Z(E_L, -\partial E_L, \emptyset) (\sum_{{y}_1, \dots , {y}_N \in Y} \left(\prod_{q=1}^N w_{y_q} \right)   {{y}_1} \otimes \cdots  \otimes {{y}_N})\\
&= \sum_{{y}_1, \dots , {y}_N \in Y} \left(\prod_{q=1}^N w_{y_q} \right) \, Z\bigl(E_L, -\partial E_L ,\emptyset\bigr)( {{y}_1} \otimes \cdots  \otimes {{y}_N})\\
&= \sum_{{y}_1, \dots , {y}_N \in Y} \left(\prod_{q=1}^N w_{y_q} \right) Z(L;{y}_1, \dots,{y}_N).
\end{align*}
\end{proof}

\subsection{Link TQFTs }\label{sect-enrichedTQFT}  For any category $\bb$, we define a category   $  \mathcal{L}_\bb$ of 3-cobordisms with $\bb$-colored framed oriented links inside. The objects  of $\mathcal{L}_\bb$  are closed oriented surfaces. A morphism   $\Sigma_0 \to \Sigma_1$ in $ \mathcal{L}_\bb$ is represented by  a triple $(M,{h}, K)$,  where $M$ is
 a compact oriented 3-manifold, ${h}$ is      an orientation-preserving homeomorphism $  (-\Sigma_0) \sqcup\Sigma_1 \simeq  \partial M$, and $K$ is a $\bb$-colored framed oriented link in $M\setminus \partial M$. (A link $K$ is $\bb$-colored if every component of $K$ is endowed with an object of $\bb$ called its color.)
Two such triples $(M, {h}, K )$ and
$(M', {h}' , K')$ represent the same morphism   if there is an orientation-preserving homeomorphism  $F\co  M \to M'$ such that ${h}'=F{h}$ and $K'=F(K)$ in the class of $\bb$-colored framed oriented links. The composition of morphisms in  $  \mathcal{L}_\bb$  is defined via the gluing of cobordisms and the tensor product in  $  \mathcal{L}_\bb$ is defined via disjoint union. This turns $  \mathcal{L}_\bb$ into a symmetric monoidal category. The links in question may be empty so that the category $\mathrm{Cob}_3$ of Section~\ref{Preliminaries on  TQFTs}  is a subcategory of
  $ \mathcal{L}_\bb$.

By a  {\it link TQFT} we mean a symmetric monoidal functor $Z\co  \mathcal{L}_\bb \to \mathrm{vect}_\kk$.    We   establish a version of Lemma~\ref{lem-TQFTsurg} for such a $Z$.
 Consider disjoint
  framed oriented links $K$ and $L=\cup_{q=1}^N L_q$  in $S^3$ and assume that $K$ is $\bb$-colored. Then $K$ lies in the exterior $E_L$ of $L$.
  As in Section~\ref{asurgeryformula},     $Z
(-\partial E_L)= A^{\otimes N}$, where $A=Z(S^1 \times S^1)$.
For any  $y_1, \dots, y_N\in A$, set
$$
Z(K,L;y_1, \dots,y_N)=Z\bigl((E_L,K), -\partial E_L ,\emptyset\bigr)( {{y}_1} \otimes \cdots  \otimes {{y}_N}) \in \kk,
$$
where the pair $ (E_L,K)$ is viewed as a morphism $-\partial E_L \to
\emptyset$ in  $\mathcal{L}_\bb$. Let $M$ be the closed oriented 3-manifold obtained from $S^3$ by surgery along $L$.
Then    $K\subset E_L\subset M$ is   a $\bb$-colored framed oriented link in $M$.
%and
%\begin{equation}\label{eq-surgysurgy} Z(M, K)=\sum_{{y}_1, \dots , {y}_N \in Y} \left(\prod_{q=1}^N w_{{y}_q} \right) Z(K,L;{y}_1, \dots,{y}_N),\end{equation}
%where   $Y$ is any basis of $A$ and $w=\sum_{y\in Y} w_y y\in A$ is the canonical vector.
\begin{lem}\label{lem-surgysurgy}
Let $Y$ be any basis of $A$ and $w=\sum_{y\in Y} w_y y\in A$ be the canonical vector. Then
$$
Z(M, K)=\sum_{{y}_1, \dots , {y}_N \in Y} \left(\prod_{q=1}^N w_{{y}_q} \right) Z(K,L;{y}_1, \dots,{y}_N).
$$
\end{lem}
For $K=\emptyset$, we recover Lemma~\ref{lem-TQFTsurg}. The  proof of Lemma~\ref{lem-surgysurgy} repeats that of Lemma~\ref{lem-TQFTsurg} with the obvious changes.

\subsection{The  link TQFT  $\vert \cdot \vert_{\cc}$}\label{The  link TQFT} Fix  a spherical fusion category~$\cc$ over an algebraically
closed field~$\kk$ such that $\dim \cc\neq 0$.
The results of Section~\ref{State sum invariants of   links} allow us to extend the TQFT $\vert \cdot \vert_{\cc}\co \mathrm{Cob}_3 \to \mathrm{vect}_\kk$ of Section~\ref{sec-TQFT} to  a link TQFT $ \mathcal{L}_{\zz (\cc)} \to \mathrm{vect}_\kk$, where $\zz (\cc)$ is the center of $\cc$.  On surfaces and 3-cobordisms with empty links these TQFTs are equal.
In general, the construction  follows the same lines as in Sections~\ref{sec-Io3m++}    and~\ref{sec-TQFT-} but involves 3-cobordisms with $\zz (\cc)$-colored framed oriented links inside.   The resulting link TQFT  is also  denoted $\vert \cdot \vert_{\cc}$.

   Let $0\in D^2$ be  the center of   $D^2$.   For any $j \in \Ob (\zz(\cc))$, denote by $U^j$ the solid torus $D^2 \times S^1$ endowed with
 the $j$-colored framed oriented knot $\{0\}\times S^1$  whose  orientation is induced by that of $S^1$   and whose framing is  constant, i.e.,   determined by a non-zero tangent vector of $D^2$ at $0$. Clearly, $\partial U^j= S^1 \times S^1$ so that the link TQFT
$\vert \cdot
\vert_\cc$ produces a vector
$$
y^j=\vert U^j, \emptyset, \partial U^j \vert_\cc \in A  =\vert S^1 \times S^1 \vert_\cc.
$$

 By  M\"{u}ger's theorem (Theorem \ref{thm-center-modular}),  the category $\zz(\cc)$   is modular and anomaly free. We fix a
 (finite)
representative set~${\mathcal J}$  of simple objects of~$\zz(\cc)$.

\begin{lem}\label{le-bassis}   The  set $Y=(y^j)_{j \in {\mathcal J}}$ is a basis of the vector space $A$. The canonical vector $w\in A$ expands as $w= (\dim(\cc))^{-1}  \sum_{j\in {\mathcal J}} \dim(j) y^j$.
\end{lem}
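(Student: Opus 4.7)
The plan is as follows. First, I apply Lemma~\ref{le-didim} with $\Sigma=S^1\times S^1$: since $\zz(\cc)$ is modular by Theorem~\ref{thm-center-modular}, the state space $\tau_{\zz(\cc)}(S^1\times S^1)$ is free of rank $|\mathcal{J}|$, whence $\dim_\kk A=|\mathcal{J}|$. It therefore suffices to show that the $|\mathcal{J}|$ vectors $\{y^j\}_{j\in\mathcal{J}}$ are linearly independent.

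For this, for each simple $j$ I introduce an auxiliary vector $w^j=|V^j,\emptyset,\partial V|_\cc\in A$, where $V^j$ denotes the solid torus $V$ equipped with an additional 0-framed $j$-coloured knot along its core; since a $\un$-coloured component of a coloured link may be erased without affecting the state-sum invariant, $w^\un = w$. The standard orientation-preserving identification of $\partial V$ with $-\partial U^k$ that realises the Heegaard splitting $V\cup U=S^3$ glues $V^j$ to $U^k$ into $(S^3, H_{j,k})$, where $H_{j,k}$ is the 0-framed oriented Hopf link with components coloured $j$ and $k$. TQFT functoriality produces a bilinear form $B\co A\otimes A\to\kk$ satisfying $B(w^j,y^k)=|V^j\cup U^k|_\cc$, and Formula~\eqref{computforlinksinthesphere} gives
\begin{equation*}
B(w^j,y^k)=|S^3,H_{j,k}|_\cc=(\dim\cc)^{-1}F_{\zz(\cc)}(H_{j,k})=(\dim\cc)^{-1}S_{jk},
\end{equation*}
where $S=[S_{jk}]$ is the $S$-matrix of the modular category $\zz(\cc)$. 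Modularity renders $S$ invertible, so $[B(w^j,y^k)]$ is invertible; this forces both $\{y^k\}$ and $\{w^j\}$ to be linearly independent and, combined with the dimension count, shows that $Y$ is a basis of $A$.

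To identify $w$ in this basis, write $w=\sum_l c_l\, y^l$. Setting $j=\un$ in the formula above gives $B(w,y^k)=(\dim\cc)^{-1}\dim(k)$, so the coefficients $c_l$ are determined once we know the Gram matrix $P_{lk}=B(y^l,y^k)$. The same identification applied to $U^l$ and $U^k$ yields $S^2\times S^1$ with two parallel cores coloured $l,k$, so $P_{lk}=|S^2\times S^1,K_l\sqcup K_k|_\cc$. Fusing the parallel cores via the decomposition $l\otimes k=\bigoplus_i N^i_{l\otimes k}\, i$ in $\zz(\cc)$ and invoking the single-knot identity $|S^2\times S^1,K_i|_\cc=\delta_{i,\un}$ for simple~$i$ yields $P_{lk}=N^\un_{l\otimes k}=\delta_{l,k^*}$. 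Substituting back gives $c_{k^*}=(\dim\cc)^{-1}\dim(k)$, and by sphericity $\dim(k^*)=\dim(k)$, which yields the expansion $w=(\dim\cc)^{-1}\sum_l \dim(l)\, y^l$.

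The principal technical hurdle is to establish $|S^2\times S^1,K_i|_\cc=\delta_{i,\un}$ without circularly invoking Theorems~\ref{thm-equivalence-} or~\ref{thm-equivalence}. The natural route uses the standard TQFT trace identity: cutting $S^2\times S^1$ along a 2-sphere transverse to~$K_i$ exhibits the invariant as the trace of the identity endomorphism on the state space $|(S^2;+i)|_\cc$ of $S^2$ with a single marked point coloured~$i$. A direct state-sum computation on such a marked sphere (analogous to the calculation of $|S^2|_\cc$ in Section~\ref{Esp-S2}) should identify this state space with $\Hom_{\zz(\cc)}(\un, i)$, whose dimension equals $\delta_{i,\un}$ for simple~$i$.
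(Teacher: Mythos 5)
Your proof of the basis claim (linear independence from the non-degeneracy of the $S$-matrix, plus the dimension count via Lemma~\ref{le-didim}) is essentially the paper's argument, just packaged through a bilinear form $B$ on $A$ rather than through the family of functionals $y\mapsto |K,L;y|_\cc$. That part is sound.

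For the expansion of $w$ you take a genuinely different route. The paper applies the surgery formula~\eqref{eq-surgysurgy} to the \emph{twisted} Hopf link $H^+_{i,j}$, after surgery on one $+1$-framed unknotted component recovering $S^3$ with a $0$-framed $i$-colored unknot; this yields the linear system $\dim(i)=\sum_j w_j v_i v_j S_{i,j}$ ($v$ the twist scalars), which is then solved by citing the identity $\dim(i)=(\dim\cc)^{-1}\sum_j\dim(j)v_iv_jS_{i,j}$ from \cite[Ch.~II, (3.8.d)]{Tu1}. You instead compute the Gram matrix $P_{lk}=B(y^l,y^k)$ of the basis $Y$ and use $B(w,y^k)=(\dim\cc)^{-1}\dim(k)$. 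Your geometric identification $B(y^l,y^k)=|S^2\times S^1,K_l\sqcup K_k|_\cc$ is correct: with the gluing map that sends $V\cup U$ to $S^3$, gluing two $U$-type tori identifies meridian to meridian and produces $S^2\times S^1$ with parallel cores. Your approach buys a more conceptual and self-contained identification of $w$ (avoiding the twist scalars and the external $S$-matrix identity), but at the cost of needing the invariant of a link in $S^2\times S^1$.

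That cost is exactly where the gap lies, and you flag it yourself: the identity $|S^2\times S^1,K_i|_\cc=\delta_{i,\un}$ for simple $i\in\zz(\cc)$ is asserted, not proved. The trace-of-identity argument you sketch requires a state space $|(S^2;\,i)|_\cc$ for a $2$-sphere with a marked point colored by an object of $\zz(\cc)$; but the link TQFT $\mathcal L_{\zz(\cc)}\to\vect_\kk$ constructed in the paper has only closed oriented surfaces (without marked points) as objects, so this state space is not available. Introducing marked surfaces, or alternatively carrying out a direct state-sum computation on an s-skeleton of $S^2\times S^1$ carrying a diagram of $K_i$ and identifying the answer with $\dim_\kk\Hom_{\zz(\cc)}(\un,i)$, is a substantive piece of work comparable to the Section~\ref{Esp-S2} computation of $|S^2|_\cc$, and it is not done here. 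Until that is supplied, the expansion of $w$ is not established; the paper's route through the twisted Hopf link and \cite{Tu1}'s $S$-matrix identity sidesteps this issue entirely.
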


\begin{proof}  Consider      the   framed oriented Hopf link $ K\cup L\subset S^3$
whose components $K$, $L$  have   framing $0$  and linking number $1$.  Endow $K$ with a color   $i\in \mathcal J$.  The definitions of Section~\ref{sect-enrichedTQFT}  applied to the link TQFT  $\vert \cdot \vert_{\cc}$ yield a $\kk$-linear map $A\to \kk, y\mapsto \vert K ,L;{y} \vert_\cc\in \kk$. We   compute the value of this  map on $y^j\in A$ for   $j\in \mathcal J$. Since the gluing of $ D^2\times S^1 $ to the exterior $E_L$ of $L$ along the homeomorphism $f_L\co  S^1\times S^1 \to \partial E_L$ yields   $ S^3$,   the  functoriality of   $\vert \cdot \vert_\cc$  implies that
 $$\vert K ,L;{y^j} \vert_\cc = \vert ((E_L,K), -\partial E_L ,\emptyset) \vert_\cc  ( {{y}^j}  )= \vert S^3, H_{i,j} \vert_\cc ,$$
 where $H_{i,j}=K\cup L$ is the framed oriented  Hopf link whose components $K, L$ are colored  with $i,j$ respectively.  Formula \eqref{computforlinksinthesphere} gives then
 $$\vert K ,L;{y^j} \vert_\cc = (\dim(\cc))^{-1} F_{\zz(\cc)}(H_{i,j})  =(\dim(\cc))^{-1}   S_{i,j},$$
 where $[S_{i,j} ]_{ i, j\in {\mathcal J}}$ is   the $S$-matrix of   $\zz(\cc)$. By the definition of a modular category, this matrix is non-degenerate.
Hence the vectors $(y^j)_{j}$ are linearly independent. By Lemma~\ref{le-didim}, $\dim A=\dim \tau_{\zz(\cc)} (S^1\times S^1)$. By \cite{Tu1}, $\dim \tau_{\zz(\cc)} (S^1\times S^1)=\mathrm{card}\, {\mathcal J}$. Therefore $\dim A=\mathrm{card}\, {\mathcal J}$ and
the  set $Y=(y^j)_{j \in {\mathcal J}}$ is a basis of   $A$.

Consider   the $\zz(\cc)$-colored framed oriented  link   $ H^+_{i,j}=K^+\cup L^+ \subset S^3 $   obtained from $H_{i,j}$
by adding a positive twist to the framing of each component.  Since   $L^+$    is an unknot with framing $1$, the surgery on $S^3$ along $L^+$
gives   $S^3$. The  knot  $K^+$   gives after this surgery an  unknot
in $S^3$ with   framing $0$ and color $i$. Denote this unknot by $K_0^i$.  By \eqref{computforlinksinthesphere},
$$
\vert S^3, K_0^i \vert_\cc=(\dim(\cc))^{-1} F_{\zz(\cc)}(K_0^i)=(\dim(\cc))^{-1}\dim (i).
$$
On the other hand, Lemma~\ref{lem-surgysurgy} gives
$$
\vert S^3, K_0^i \vert_\cc=\sum_{_{j\in {\mathcal J}}}  w_{{j} } \vert K^+ ,L^+;{y^j} \vert_\cc ,
$$
where $w=\sum_{j\in {\mathcal J}} w_j y^j$ with $w_j\in \kk$ for all $j$.
As above,
$$
\vert K^+ ,L^+;{y^j} \vert_\cc =\vert S^3, H^+_{i,j} \vert_\cc=(\dim(\cc))^{-1} F_{\zz(\cc)}(H^+_{i,j})  =(\dim(\cc))^{-1} v_i v_j  S_{i,j},
$$
where $v_k\in \kk$ is the twist scalar of $k \in {\mathcal J}$.
Combining these equalities, we obtain that for all   $i\in {\mathcal J}$,
$$
\dim (i)=\sum_{_{j\in {\mathcal J}}}  w_{{j} }   v_i v_j S_{i,j}.
$$
Since the $S$-matrix  and the twist scalars are   invertible, this system of equations has a unique solution. By \cite{Tu1}, Chapter II,  Formula (3.8.d),
$$
\dim (i)= (\dim( \cc ))^{-1} \sum_{_{j\in {\mathcal J}}}    \dim(j)     v_i v_j S_{i,j}.
$$
Hence $w_j= (\dim(\cc))^{-1}   \dim(j) $ for all $j$.
\end{proof}

\subsection{Proof of Theorem~\ref{thm-equivalence-}}\label{proof104}
We shall prove that  for any $\zz(\cc)$-colored framed oriented link
 $K $ in a closed oriented 3-manifold $M$,
\begin{equation}\label{genethm104}
|M,K|_\cc=\tau_{\zz(\cc)}(M,K).
\end{equation}
For $K=\emptyset$, this gives  Theorem~\ref{thm-equivalence-}.

Present $M$ by surgery on $S^3$ along a framed oriented  link $L=L_1\cup \cdots \cup L_N \subset S^3$. Pushing $K$ in the exterior $E_L$ of $L$ in $S^3$, we can assume that $K\subset E_L$.
For any $j_1,..., j_N\in {\mathcal J}$, denote by $L_{(j_1, \dots,j_N)}$   the link $L$ whose  components $L_1, \ldots , L_N$ are colored with  $j_1, \ldots , j_N$ respectively. We shall apply  the notation of Section~\ref{sect-enrichedTQFT} to the link TQFT $Z=\vert \cdot
\vert_\cc$. Observe that
$$
\vert K, L;{y}^{j_1}, \dots,{y}^{j_N}\vert_\cc=\vert S^3, K\cup L_{(j_1, \dots,j_N)}\vert_\cc .
$$
This follows from the definitions, the functoriality of   $\vert \cdot
\vert_\cc$, and the fact that the gluing of   $\amalg_{q=1}^N U^{j_q}$ to  $(E_L,K)$ along the homeomorphism $f_L\co N (S^1 \times S^1)\to \partial E_L$ introduced in Section~\ref{asurgeryformula}  yields  the pair $(S^3,K\cup L_{(j_1, \dots,j_N)})$.
By~\eqref{computforlinksinthesphere},
  $$\vert S^3,K\cup  L_{(j_1, \dots,j_N)}\vert_\cc=(\dim(\cc))^{-1} F_{\zz(\cc)}(K\cup  L_{(j_1, \dots,j_N)}).$$
Applying Lemma~\ref{lem-surgysurgy} to   $Z= \vert \cdot \vert_{\cc}$ and the basis of $\vert S^1 \times S^1 \vert_\cc$ given by Lemma~\ref{le-bassis}, we obtain
\begin{align*}
\vert M , K\vert_\cc &=  \sum_{{j}_1, \dots , {j}_N \in {\mathcal J}} \left(\prod_{q=1}^N \frac{\dim({j}_q)}{\dim(\cc)}  \right)  \vert K, L;{y}^{j_1}, \dots,{y}^{j_N}\vert_\cc\\
 &=  \sum_{{j}_1, \dots , {j}_N \in {\mathcal J}} \left(\prod_{q=1}^N \frac{\dim({j}_q)}{\dim(\cc)}  \right) (\dim(\cc))^{-1} F_{\zz(\cc)}(K\cup L_{({j}_1, \dots,{j}_N)})\\
&=(\dim(\cc))^{-N-1}\sum_{{j}_1, \dots , {j}_N \in {\mathcal J}} \left(\prod_{q=1}^N \dim({j}_q)  \right) F_{\zz(\cc)}(K\cup L_{({j}_1, \dots,{j}_N)})\\
&=\tau_{\zz(\cc)}(M, K),
\end{align*}
where the last equality is the definition of $\tau_{\zz(\cc)}(M, K)$ in \cite{Tu1}.

\subsection{Proof of Theorem~\ref{thm-equivalence}}
For a  category $\bb$, we define a category   $   \mathcal{G}_\bb$ of 3-cobordisms with $\bb$-colored ribbon graphs inside, see \cite{Tu1} for a definition of colored ribbon graphs. (Here we consider only ribbon graphs disjoint from the bases of cobordisms.) The category   $   \mathcal{G}_\bb$  is defined as  $ \mathcal{L}_\bb $   replacing   \lq\lq framed oriented links" with \lq\lq ribbon graphs".  The category   $   \mathcal{G}_\bb$ contains $\mathrm{Cob}_3$ as a subcategory and  is a symmetric monoidal category in the obvious way.
By a  {\it graph TQFT} we mean a symmetric monoidal functor $   \mathcal{G}_\bb \to \mathrm{vect}_\kk$. In the rest of the argument $\bb=\zz(\cc)$.

The TQFT $\tau_{\zz(\cc)}\co  \mathrm{Cob}_3 \to \mathrm{vect}_\kk$ extends to a graph TQFT $\mathcal{G}_{\zz (\cc)} \to \mathrm{vect}_\kk$ still denoted $\tau_{\zz(\cc)}$, see \cite{Tu1}, Chapter IV.    This   TQFT   is   non-degenerate in the following sense:  the vector space $\tau_{\zz(\cc)} (\Sigma)$ associated with any closed oriented surface $\Sigma$ is generated by the vectors $\tau_{\zz(\cc)} (M, \emptyset, \Sigma) (1) $, where $M$ runs over all compact   oriented 3-manifolds
with $\zz(\cc)$-colored ribbon graphs inside and with $\partial M=\Sigma$.

The TQFT $\vert \cdot
\vert_\cc\co  \mathrm{Cob}_3 \to \mathrm{vect}_\kk$  also can be extended to a graph TQFT. To do this, one proceeds similarly to Section~\ref{State sum invariants of   links} by representing  the ribbon graphs in a 3-manifold $M$ by diagrams on a s-skeleton $P$ of $M$. Then one defines a state sum on such a diagram $d$ as in  Section~\ref{State sum invariants of   links}.  A typical coupon of $d$
\begin{equation*}
 \psfrag{i}[Bc][Bc]{\scalebox{.9}{$i$}}
 \psfrag{j}[Bc][Bc]{\scalebox{.9}{$j$}}
  \psfrag{k}[Bc][Bc]{\scalebox{.9}{$k$}}
 \psfrag{l}[Bc][Bc]{\scalebox{.9}{$l$}}
  \psfrag{m}[Bc][Bc]{\scalebox{.9}{$m$}}
 \psfrag{f}[Bc][Bc]{\scalebox{.9}{$f$}}
   \psfrag{A}[Bc][Bc]{\scalebox{.9}{\textcolor{red}{$A$}}}
 \psfrag{B}[Bc][Bc]{\scalebox{.9}{\textcolor{red}{$B$}}}
   \psfrag{C}[Bc][Bc]{\scalebox{.9}{\textcolor{red}{$C$}}}
 \psfrag{D}[Bc][Bc]{\scalebox{.9}{\textcolor{red}{$D$}}}
   \psfrag{E}[Bc][Bc]{\scalebox{.9}{\textcolor{red}{$E$}}}
 \rsdraw{.45}{.9}{contrib-ribbon-graph}\;,
\end{equation*}
 contributes to the state sum the factor
\begin{equation*}
 \psfrag{i}[Bc][Bc]{\scalebox{.9}{$i$}}
 \psfrag{j}[Bc][Bc]{\scalebox{.9}{$j$}}
  \psfrag{k}[Bc][Bc]{\scalebox{.9}{$k$}}
 \psfrag{l}[Bc][Bc]{\scalebox{.9}{$l$}}
  \psfrag{m}[Bc][Bc]{\scalebox{.9}{$m$}}
 \psfrag{f}[Bc][Bc]{\scalebox{.9}{$f$}}
   \psfrag{A}[Bc][Bc]{\scalebox{.9}{\textcolor{red}{$A$}}}
 \psfrag{B}[Bc][Bc]{\scalebox{.9}{\textcolor{red}{$B$}}}
   \psfrag{C}[Bc][Bc]{\scalebox{.9}{\textcolor{red}{$C$}}}
 \psfrag{D}[Bc][Bc]{\scalebox{.9}{\textcolor{red}{$D$}}}
   \psfrag{E}[Bc][Bc]{\scalebox{.9}{\textcolor{red}{$E$}}}
 \inv_\cc \left (\rsdraw{.45}{.9}{contrib-ribbon-graph2} \right )\,,
\end{equation*}
where the invariant $\inv_\cc$ of $\cc$-colored knotted graphs in $S^2$  is extended straightforwardly (by the Penrose calculus) to $\cc$-colored ribbon graphs in $S^2$.  (In this example of a coupon,   $A,B,C,D,E \in \Ob (\zz(\cc))$,
 $f\in \Hom_{\zz(\cc)}(A \otimes B \otimes C^*,D^* \otimes E)$, and $i,j,k,l,m \in I$ are the colors of the regions of $d$ in $P$ adjacent to the coupon.)
The resulting  state sum is  invariant  under the moves $\Omega_1-\Omega_8$ (away from the coupons) and under the moves pushing a coupon  over or under a strand or  across an edge of~$P$. Therefore the state sum in question yields an isotopy invariant of $\zz(\cc)$-colored ribbon   graphs  in $M$. The extension of $\vert \cdot
\vert_\cc$  to $\mathcal{G}_{\zz (\cc)}$  proceeds   as in Sections~\ref{sec-Io3m++},~\ref{sec-TQFT-}, and~\ref{The  link TQFT} replacing   \lq\lq framed oriented links" with \lq\lq ribbon graphs".
The resulting graph TQFT
$\mathcal{G}_{\zz (\cc)} \to \mathrm{vect}_\kk$ is still denoted $\vert \cdot
\vert_\cc$.

 We claim that   there is a    natural monoidal isomorphism of  the functors
$\tau_{\zz(\cc)} $ and
$\vert \cdot
\vert_\cc$ from $\mathcal{G}_{\zz (\cc)}  $ to $ \mathrm{vect}_\kk$. Restricting both functors to $\mathrm{Cob}_3$, we obtain the theorem. Our claim follows from a general criterion establishing isomorphism of two TQFTs, cf.\ \cite{Tu1}, Chapter III, Section 3. Namely, if at least one of the TQFTs is non-degenerate,    the values of these TQFTs on cobordisms with empty bases are equal, and the vector spaces associated by these TQFTs with any closed oriented surface   have equal dimensions, then these TQFTs are isomorphic. Here by a TQFT we mean a generalized TQFT incorporating graph TQFTs.  The first condition holds because
the graph TQFT $\tau_{\zz(\cc)}$ is non-degenerate. That $
\vert  M,K
\vert_\cc=\tau_{\zz(\cc)}(M,K)$ for any $\zz(\cc)$-colored ribbon   graph $K$ in a closed oriented 3-manifold $M$  is proven along the same lines  as Formula~\eqref{genethm104}. The equality of dimensions  is provided by  Lemma~\ref{le-didim}. This completes the proof of our claim and of the theorem.

\section{The coend of the center}\label{thecoendof}
The aim of this section is to compute the coend of the center   of a fusion category. This computation will be instrumental in the proof of Lemma~\ref{le-didim} given  in the next section. It is based on the theory of Hopf monads, which was introduced precisely to this end in \cite{BV2,BV3}.
We briefly recall this theory and state the relevant results of \cite{BV2,BV3}.

\subsection{Coends}\label{sect-coend}
Let $\cc$ and $\dd$ be categories.    A \emph{dinatural transformation}
from  a functor $F\co \cc^\opp
\times \cc \to \dd$  to an object $D$ of $\dd$  is a family $$d=\{d_X \co
F(X,X) \to D\}_{X \in \Ob(\cc)}$$ of morphisms in~$\dd$ such that
$d_X F(f, \id_X)=d_Y F(\id_Y,f)$ for every morphism $f\co X \to Y$
in~$\cc$.   The {\it composition}  of such a $d$ with a morphism $\varphi\co D\to D'$ in $\dd$
is the dinatural transformation $ \varphi\circ d= \{\varphi \circ d_X \co
F(X,X) \to D'\}_{X \in \Ob(\cc)}$ from $F$ to $D'$. A \emph{coend} of~$F$ is a pair $(C,\rho)$ consisting in an object $C$ of $\dd$ and a dinatural transformation $\rho$ from $F$ to $C$ satisfying the following
universality condition:   every dinatural transformation  $d$
from $F$ to  an object   of $\dd$ is the composition of $\rho$ with a morphism   in $\dd$  and the latter
morphism is uniquely determined by $d$. If $F$ has a coend $(C,\rho)$, then  it is unique (up to unique isomorphism).  One writes
$C= \int^{X \in \cc}F(X,X)$.   For more on coends, see
\cite{ML1}.

For a left rigid category $\cc$ (that is, a monoidal category such that every  object $X$ of $\cc$  has a left dual $\leftidx{^\vee}{X}{}$), the formula
$ (X,Y)\mapsto \leftidx{^\vee}{X}{}\otimes Y$ defines a
functor $
\cc^\opp \times \cc \to \cc$. The coend  of this functor (if it exists)    is called the \emph{coend of~$\cc$}.

Consider in more detail the case where  $\cc$ is a fusion category over a commutative ring $\kk$.
Let $I$ be a (finite) representative set of simple objects of $\cc$.
If $\dd$ is a \kt category which admits finite direct sums, then any  \kt linear functor  $F\co \cc^\opp
\times \cc \to \dd$ has a  coend $(C, \rho)$. Here  $C=\oplus_{i \in I} F(i,i)$ and  $\rho=\{\rho_X \co
F(X,X) \to C\}_{X \in \Ob(\cc)}$
 is computed by $\rho_X=\sum_{\alpha } F(q^\alpha_X,p^\alpha_X)$, where $(p_X^\alpha,q_X^\alpha)_{\alpha  }$ is any $I$-partition of $X$. An arbitrary dinatural transformation $d$ from $F$ to  an object $D$ of $\dd$ is the composition of $\rho$ with    $ \sum_{i \in I} d_i \co C \to D$. In particular, $\cc$ has a coend  $ \bigoplus_{i \in I} i^* \otimes
i$.

%It is useful to observe  that the number of isomorphism classes of simple objects of  $\cc$ can be computed from the coend of  $\cc$: \begin{equation*}
%\# I = \sum_{i \in I} \rank_\kk \, \Hom_\cc\bigl(i, i) =  \sum_{i \in I} \rank_\kk \, \Hom_\cc\bigl(\un, i^* \otimes i)=\rank_\kk \, \Hom_\cc\bigl(\un, \bigoplus_{i \in I} i^* \otimes
%i).
%\end{equation*}

\subsection{Centralizable functors}\label{sect-centralizer-}
Let  $\cc$ be a left rigid category. A functor $T\co \cc\to \cc $ is \emph{centralizable} if for  every object $X$ of $\cc$, the functor $  \cc^\opp
\times \cc \to \cc$  carrying  any pair $(Y_1,Y_2)$ to $ \leftidx{^\vee}{T}{}(Y_1) \otimes X \otimes Y_2$  has a  coend
\begin{equation*}
Z_T(X)=\int^{Y \in \cc} \leftidx{^\vee}{T}{}(Y) \otimes X \otimes Y.
\end{equation*}
The correspondence $X\mapsto Z_T(X)$  extends to a  functor $Z_T\co \cc\to \cc$,  called the {\it centralizer} of $T$, so that
the associated
universal dinatural transformation
\begin{equation}\label{rhorho}
\rho_{X,Y}\co  \leftidx{^\vee}{T}{}(Y) \otimes X \otimes Y \to Z_T(X)
\end{equation}
is
natural in $X$ and dinatural in~$Y$.

For example, if $\cc$ is a fusion category over $\kk$, then any \kt linear functor $T\co \cc \to \cc$ is centralizable, and its centralizer $Z_T\co \cc \to \cc$ is given by \begin{equation}\label{eq-formulaforzofx} Z_T(X)=\bigoplus_{i \in I}T(i)^* \otimes X \otimes i \end{equation}
for all $X\in \Ob (\cc)$, where $I$ is a representative set of simple objects of $\cc$. The associated universal dinatural transformation   is $$ \rho_{X,Y}=\sum_{\beta}T(q^\beta_Y)^* \otimes \id_X \otimes p^\beta_Y \co T(Y)^* \otimes X \otimes Y \to Z_T(X ),$$ where $(p_Y^\beta, q_Y^\beta)_{\beta  }$ is any $I$-partition of $Y$.

\subsection{Hopf monads}
The Hopf monads generalize Hopf
algebras to an abstract categorical setting. We recall  the basic  definitions of the theory of  Hopf monads referring to \cite{BV2} for a detailed treatment.

Any category $\cc$ gives rise to a   category ${\text {End}}\,(\cc)$ whose objects are  functors   $\cc\to \cc$ and whose morphisms are natural transformations of such functors. The category ${\text {End}}\,(\cc)$  is a (strict) monoidal category with tensor product being composition of functors and unit object being
the identity functor $1_\cc\co \cc\to \cc$.
A \emph{monad} on $\cc$  is an algebra in the category ${\text {End}}\,(\cc)$, that is, a triple $(T,\mu,\eta)$ consisting of a functor
$T\co \cc \to \cc$ and two natural transformations $$\mu=\{\mu_X\co T^2(X) \to T(X)\}_{X \in  \Ob(\cc)}\quad {\text {and}} \quad  \eta=\{\eta_X\co X \to T(X)\}_{X \in \Ob(\cc)}$$  called the \emph{product} and the \emph{unit} of $T$, such that for all $X\in\Ob(\cc)$, $$\mu_X T(\mu_X)=\mu_X\mu_{T(X)} \quad {\text {and}} \quad  \mu_X\eta_{T(X)}=\id_{T(X)}=\mu_X T(\eta_X).$$
For example, the identity functor $1_\cc\co \cc \to \cc $ is a monad on $\cc$ (with identity as product and unit), called the {\it trivial monad}.

Given a monad $T$ on $\cc$, a  $T$\ti module in $\cc$ is a pair $(M,r)$ where $M\in\Ob(\cc)$ and $r\co T(M) \to M$ is a morphism  in $\cc$ such that $r T(r)= r \mu_M$ and $r \eta_M= \id_M$. A morphism from a $T$\ti module  $(M,r)$ to a $T$\ti module $(N,s)$ is a morphism $f \co M \to N$ in $\cc$ such that $f r=s T(f)$.   This defines the {\it category $T\ti \cc$ of $T$-modules in $\cc$}, with composition induced by that in $\cc$. We denote by $U_T$
 the forgetful functor  $T\ti \cc \to \cc$, defined by $U_T(M,r)=M$ and $U_T(f)=f$. Note that $1_\cc\ti \cc=\cc$.
% The reader may check as an exercise that if  $T$ is the trivial monad, then $T\ti \cc=\cc$.

To define Hopf monads, we recall  the notion of a comonoidal functor.
 A functor $F\co C \to D$ between monoidal categories is \emph{comonoidal} if it is endowed with
 a morphism $F_0\co F(\un) \to \un$ and a natural transformation
 $$F_2=\{F_2(X,Y) \co  F(X \otimes Y)\to F(X) \otimes F(Y)\}_{X,Y \in \Ob(\cc)}$$
   which are coassociative and counitary, i.e.,  for all $X,Y,Z\in\Ob(\cc)$, $$(\id_{F(X)} \otimes F_2(Y,Z)) F_2(X,Y \otimes Z)= (F_2(X,Y) \otimes \id_{F(Z)}) F_2(X \otimes Y, Z)$$ and
$$(\id_{F(X)} \otimes F_0) F_2(X,\un)=\id_{F(X)}=(F_0 \otimes \id_{F(X)}) F_2(\un,X).$$ A natural transformation $\varphi=\{\varphi_X\co F(X) \to G(X)\}_{X \in \Ob(\cc)}$ between comonoidal functors   is \emph{comonoidal} if $G_0  \varphi_\un= F_0$ and,  for all $X, Y \in \Ob(\cc)$, $$G_2(X,Y) \, \varphi_{X \otimes Y}= (\varphi_X \otimes \varphi_Y) F_2(X,Y).$$

Let $\cc$ be a monoidal category. A \emph{bimonad} on $\cc$ is a monad $(T,\mu,\eta)$ on $\cc$ such that the underlying functor $T\co \cc \to \cc$ and the natural transformations $\mu$ and $\eta$ are comonoidal.  For a   bimonad $T$ on $\cc$, the category  of $T$\ti modules $T\ti \cc$  has a monoidal structure with unit object $(\un,T_0)$ and monoidal product $$(M,r) \otimes (N,s)=\bigl(M \otimes N, (r \otimes s) \, T_2(M,N)\bigr). $$ Note that the forgetful functor $U_T\co
T\ti \cc \to \cc$ is strict monoidal.

By a rigid category we mean  a monoidal category $\cc$ such that every object $X$ of~$\cc$ has a left dual $\leftidx{^\vee}{}{X}$ and a right dual $X^\vee$.  Every morphism $f\co X\to Y$ in a rigid category gives rise to two dual morphisms   $\leftidx{^\vee}{}{f}\co \leftidx{^\vee}{}{Y}\to \leftidx{^\vee}{}{X}$ and    $ {f}^\vee\co {Y}^\vee\to {X}^\vee$.  A bimonad $T$ on  a rigid category $\cc$ is a \emph{Hopf monad} if its category of modules $T\ti \cc$ is rigid. This condition can be reformulated in terms  of    morphisms  $$s^l=\{s^l_X\co T(\leftidx{^\vee}{T}{}(X)) \to \leftidx{^\vee}{X}{}\}_{X \in \Ob(\cc)}\quad {\text {and}} \quad s^r=\{s^r_X\co T(T(X)^\vee) \to X^\vee\}_{X \in \Ob(\cc)}$$   encoding the left and right duals of any $T$\ti module $(M,r)$ via the formulas $$\leftidx{^\vee}{(}{} M,r)=(\leftidx{^\vee}{M}{}, s^l_M T(\leftidx{^\vee}{r}{})) \quad {\text {and}} \quad (M,r)^\vee=(M^\vee, s^r_M T(r^\vee)).$$
The morphisms $s^l$ and $s^r$ are called the \emph{left and right antipodes}, respectively.
For example, the trivial  monad   on $\cc$ is a Hopf monad (with identity morphisms for
comonoidal structure and antipodes), called the {\it trivial Hopf monad}.

%A \emph{quasitriangular Hopf monad} is a Hopf monad $T$  equipped with an \Rt matrix, that is, a natural transformation $R=\{R_{X,Y}\co
%X \otimes Y \to T(Y) \otimes T(X)\}_{X,Y \in \Ob(\cc)}$ satisfying appropriate axioms which ensure
% that  the natural transformation $$ \{\tau_{(M,r),(N,s)}=(s \otimes r) R_{M,N}\co (M,r) \otimes (N,s) \to (N,s) \otimes (M,r)\}_{(M,r), (N,s) \in \Ob(T\ti \cc)}$$
%form a braiding in the category of $T$-modules in $\cc$.
%
%A \emph{ribbon hopf monad} is a quasitriangular Hopf monad $T$  equipped with a self-dual twist, that is, a natural transformation $\theta=\{\theta_X\co X \to T(X)\}_{X\in \Ob(\cc)}$ satisfying some axioms which make the braided category $T\ti \cc$
%ribbon, with twist $\theta_{(M,r)}=r \theta_M\co (M,r) \to (M,r)$.

\subsection{Distributive laws}\label{sect-dist-law}
Let $(P,m,u)$ and $(T,\mu,\eta)$ be monads on a category $\cc$. Following Beck \cite{Be}, a \emph{distributive law of $T$ over $P$} is a natural transformation $\Omega=\{\Omega_X \co TP(X) \to PT(X)\}_{X \in \Ob(\cc)}$ satisfying appropriate axioms which ensure that the functor $PT\co \cc \to \cc$ is a monad on $\cc$ with product $p$ and unit $e$ given by
\begin{equation*}
p_X=m_{T(X)}P^2(\mu_X)P(\Omega_{T(X)}) \quad \text{and} \quad e_X=u_{T(X)}\eta_X \quad \text{for any} \quad X\in \Ob(\cc).
\end{equation*}
The monad $(PT, p,e)$ is denoted by $P \circ_\Omega T$. A distributive law $\Omega$  of $T$ over $P$ also defines a lift  of  $P$ to a monad $(\Tilde{P}, \Tilde{m},
\Tilde{u})$ on the category $T\ti\cc$ by
\begin{equation*}
\Tilde{P}(M,r)=\bigl(P(M), P(r)\Omega_M\bigr), \quad \Tilde{m}_{(M,r)}=m_M, \quad  \Tilde{u}_{(M,r)}=u_M,
\end{equation*}
and the categories $\Tilde{P}\ti(T\ti\cc)$ and $(P\circ_\Omega T)\ti\cc$ are isomorphic.

If $\cc$ is rigid, $P$ and $T$ are Hopf monads, and $\Omega$ is comonoidal, then $P \circ_\Omega T$ is a Hopf monad on $\cc$, $\Tilde{P}$ is a Hopf monad on $T\ti\cc$, and $\Tilde{P}\ti(T\ti\cc) \simeq (P\circ_\Omega T)\ti\cc $ as monoidal categories (see \cite[Corollary~4.11]{BV3}).

\subsection{Centralizer and double of a Hopf monad}\label{sect-centralizer-double} Let  $\cc$ be a rigid category. We state here some results
of \cite{BV3} concerning the centralizer and the double of a Hopf monad on~$\cc$. As an application, we shall
compute (under certain additional assumptions on $\cc$)  the coend of the center $\zz(\cc)$ of $\cc$.

Let $T\co \cc\to \cc $  be a centralizable Hopf monad on  $\cc$. Then its centralizer  $Z_T\co \cc \to \cc $  has a natural   structure of a Hopf monad on $\cc$, see \cite[Theorems~5.6]{BV3}. Consider the universal dinatural transformation $\rho$  associated with $Z_T$, see  \eqref{rhorho}.   Since $T$ is a Hopf monad on $\cc$, for any $X\in\Ob(\cc)$, the dinatural transformation $$\{T(\rho_{X,Y})\co  T\bigl(\leftidx{^\vee}{}{T}(Y) \otimes X \otimes Y\bigr) \to TZ_T(X)\}_{Y \in \Ob(\cc)}$$ is universal. Therefore there exists a unique morphism $\Omega^T_X\co TZ_T(X) \to Z_TT(X)$ such that, for any $Y \in \Ob(\cc)$,
%and The formula
%\begin{equation}\label{eq-candislaw}\Omega^T=\{\Omega^T_X\co TZ_T(X) \to Z_TT(X)\}_{X \in \Ob(\cc)}. \end{equation}
%  This transformation is an invertible  distributive law in the sense of Beck \cite{Be} and is defined \cite{BV3} by
\begin{equation}\label{def-omega}
\Omega^T_X T(\rho_{X,Y})= \rho_{T(X),T(Y)}\bigl(\leftidx{^\vee}{\mu}{_Y}s^l_{T(Y)}T(\leftidx{^\vee}{\mu}{_Y}) \otimes T_2(X,Y)\bigr)\,T_2(\leftidx{^\vee}{}{T}(Y),X
\otimes Y),
\end{equation}
where $\mu$ and $s^l$ are the  product and the left antipode of $T$.
%uniquely defines a natural transformation
%\begin{equation*}%\label{eq-candislaw}
%\Omega^T=\{\Omega^T_X\co TZ_T(X) \to Z_TT(X)\}_{X \in \Ob(\cc)}.
%\end{equation*}
By \cite[Theorem~6.1]{BV3}, $\Omega^T=\{\Omega^T_X\co TZ_T(X) \to Z_TT(X)\}_{X \in \Ob(\cc)}$ is an invertible comonoidal distributive law, called the \emph{canonical distributive law of $T$ over $Z_T$}. By Section~\ref{sect-dist-law}, this has two consequences.

Firstly, $D_T=Z_T \circ_{\Omega^T} T$ is a Hopf monad on $\cc$ and the rigid categories $D_T \ti \cc $ and $ \zz(T\ti\cc)$ are isomorphic, see \cite[Theorem~6.5]{BV3}. An explicit isomorphism   $\Phi_T\co D_T \ti \cc \to \zz(T\ti\cc)$ carries any morphism in  $D_T \ti \cc$ to itself viewed as a  morphism  in $\zz(T\ti\cc)$ and carries  any   $(M,r)\in \Ob (D_T \ti \cc)$  to   $ \bigl((M,ru_{T(M)}),\sigma\bigr)\in \Ob (\zz(T\ti\cc))$ with $$\sigma_{(N,s)}=(s \otimes r \rho_{T(M),N})(\lcoev_{T(N)} \otimes \eta_M \otimes \id_N) \co M\otimes N \to N\otimes M,$$ where $\eta$ and $u$ are the units of $T$ and   $Z_T$ respectively. The Hopf monad $D_T$ is called the \emph{double of~$T$}. Though we shall not use it, it is important to note that $D_T$ is quasitriangular in an appropriate sense and $\Phi_T\co D_T \ti \cc \to \zz(T\ti\cc)$ is an isomorphism of braided categories.

Secondly, $\Omega^T$ determines a lift of $Z_T$ to a Hopf monad $\Tilde{Z}_T$ on $T\ti \cc$, which turns out to be the centralizer of the identity functor of $T\ti \cc$, see \cite[Theorem~6.9]{BV3}. By the definition of a centralizer,  $\Tilde{Z}_T (\un,T_0)=\bigl(Z_T(\un),Z_T(T_0)\Omega^T_\un\bigr) $ is the coend of~$T\ti \cc$.

Now let $\cc$ be a rigid category such that the trivial Hopf monad $1_\cc $ is centralizable. Clearly,  the centralizer $Z=Z_{1_\cc}$ of $1_\cc$ coincides with the double of~$1_\cc$. Applying the results above to $T=1_\cc$, we obtain that $Z$   is a quasitriangular Hopf monad and $\Phi=\Phi_{1_\cc} \co Z\ti \cc \to \zz(\cc)$ is an isomorphism of braided categories. Assume furthermore that   $Z$ is centralizable. Applying the results above to $T=Z$, we obtain   a morphism   $\Omega=\Omega^Z_{\un} \co ZZ_Z(\un)\to Z_Z  Z(\un)$. Then $\bigl(Z_Z(\un),Z_Z(Z_0)\Omega\bigr)$ is the coend of $Z\ti \cc$. So,
 $$(C,\sigma)=\Phi \bigl(Z_Z(\un),Z_Z(Z_0)\Omega \bigr)\in \Ob (\zz(\cc))$$ is the coend of $\zz(\cc)$. We have $C=Z_Z(\un)$ and, for any $X\in \Ob (\cc)$,
\begin{equation}\label{eq-formulaforsigma}
  \sigma_X=(\id_X \otimes Z_Z(Z_0)\Omega \varrho_{C,X})(\lcoev_{X} \otimes \id_{C \otimes X})\co C \otimes X \to X \otimes C,
\end{equation}
where $\varrho_{X,Y}\co \leftidx{^\vee}{Y}{} \otimes X \otimes Y \to Z(X)$ is the universal dinatural transformation for $Z(X)=\int^{Y \in \cc} \leftidx{^\vee}{Y}{} \otimes X \otimes Y$.

\subsection{The case of fusion categories}\label{sect-fusioncoend} We apply the computations of the previous subsections to  a fusion category $\cc$ over $\kk$. Fix a representative set  $I$ of simple objects of~$\cc$. For $X \in \Ob(\cc)$, denote by $(p_X^\alpha\co X\to i_\alpha ,q_X^\alpha\co i_\alpha \to X)_{\alpha \in \Lambda_X}$ an $I$-partition of~$X$. For $i \in I$, let $\Lambda_X^i$ be the subset of $\Lambda_X$ consisting of all  $\alpha\in \Lambda_X$ such that  $i_\alpha= i$.

By the results above,  the trivial Hopf monad $1_\cc$, being \kt linear,  is centralizable and its centralizer $Z\co \cc \to \cc$ is the  Hopf monad given by Formula~\eqref{eq-formulaforzofx} for $T=1_\cc$, that is, $Z(X)=\bigoplus_{i \in I} i^* \otimes X \otimes i$.  The  structural morphisms of $Z$ can be computed  as follows, see \cite{BV3}.    Let $\psi$ be the pivotal structure of $\cc$ (see Remark~\ref{rem-pivotal}). Then   for any $X,Y\in \Ob(\cc)$, \begin{center}
          $\displaystyle Z_2(X,Y)=\sum_{i \in I}$\,
 \psfrag{i}[Bc][Bc]{\scalebox{.85}{$i$}}
 \psfrag{X}[Bc][Bc]{\scalebox{.85}{$X$}}
 \psfrag{Y}[Bc][Bc]{\scalebox{.85}{$Y$}}
\rsdraw{.45}{.9}{Z-coprod}  $\co Z(X\otimes Y) \to Z(X)\otimes Z(Y)$, \\[.5em] $\displaystyle Z_0=\sum_{i \in I}$\,
 \psfrag{i}[Bc][Bc]{\scalebox{.85}{$i$}}
\rsdraw{.45}{.9}{Z-counit} $\co Z(\un) \to \un$, \\[.5em]
$\displaystyle \mu_X=\hspace*{-.2cm}\sum_{\substack{i,j,k \in I \\ \alpha \in \Lambda_{i \otimes j}^k }}$\;
 \psfrag{i}[Bc][Bc]{\scalebox{.85}{$i$}}
 \psfrag{j}[Bc][Bc]{\scalebox{.85}{$j$}}
 \psfrag{k}[Bc][Bc]{\scalebox{.85}{$k$}}
 \psfrag{X}[Bc][Bc]{\scalebox{.85}{$X$}}
 \psfrag{p}[c][c]{\scalebox{.9}{$(q^\alpha_{i \otimes j})^*$}}
 \psfrag{q}[c][c]{\scalebox{.9}{$p^\alpha_{i \otimes j}$}}
\rsdraw{.5}{.9}{Z-prod}   $\co Z^2(X ) \to Z(X) $, \\[.5em]
 $\displaystyle \eta_X=\id_X \co X \to X=\un^* \otimes X \otimes \un \hookrightarrow Z(X)$, \\[.5em]
% \psfrag{X}[Bc][Bc]{\scalebox{.85}{$X$}}
%  \psfrag{u}[Bc][Bc]{\scalebox{.85}{$\un$}}
%\rsdraw{.45}{.9}{Z-unit} $\co X \to Z(X) $, \\[.5em]
$\displaystyle s^l_X=s^r_X=\sum_{\substack{i,j \in I \\ \alpha \in \Lambda_{j^*}^i }}$
 \psfrag{i}[Bc][Bc]{\scalebox{.85}{$i$}}
 \psfrag{u}[Bc][Bc]{\scalebox{.85}{$i^{**}$}}
 \psfrag{j}[Bc][Bc]{\scalebox{.85}{$j$}}
 \psfrag{X}[Bc][Bc]{\scalebox{.85}{$X$}}
 \psfrag{p}[c][c]{\scalebox{.9}{$q^\alpha_{j^*}\psi_i^{-1}$}}
 \psfrag{q}[c][c]{\scalebox{.9}{$p^\alpha_{j^*}$}}
 \,\rsdraw{.35}{.9}{Z-antip} $\co Z(Z(X)^*) \to X^*$.
% , \\[.5em]
%$\displaystyle R_{X,Y}=\hspace*{-.2cm}\sum_{\substack{i \in I \\ \alpha \in \Lambda_Y^i }}$\hspace*{-.2cm}
% \psfrag{X}[Bc][Bc]{\scalebox{.85}{$X$}}
%  \psfrag{u}[Bc][Bc]{\scalebox{.85}{$\un$}}
%   \psfrag{i}[Bc][Bc]{\scalebox{.85}{$i$}}
% \psfrag{Y}[Bc][Bc]{\scalebox{.85}{$Y$}}
% \psfrag{p}[c][c]{\scalebox{.9}{$p_Y^\alpha$}}
% \psfrag{q}[c][c]{\scalebox{.9}{$q_Y^\alpha$}}
%\rsdraw{.5}{.9}{Z-rmat} $\co X\otimes Y\to Z(Y) \otimes Z(X)$.
\end{center}

Being \kt linear, the Hopf monad $Z$ is centralizable and its centralizer $Z_Z \co \cc \to \cc$ is given by Formula~\eqref{eq-formulaforzofx} for $T=Z$, that is, $$Z_Z(X)=\bigoplus_{j \in I} Z(j)^* \otimes X \otimes j\simeq \bigoplus_{i,j \in I} i^* \otimes j^* \otimes i \otimes X \otimes j,$$ with universal dinatural transformation
$$\rho_{X,Y}=\sum_{i,j\in I, \, \alpha \in \Lambda_Y^j} \id_{i^*} \otimes (q_Y^\alpha)^* \otimes \psi_i^{-1} \otimes \id_{X} \otimes p_Y^\alpha\co (Z(Y))^\ast\otimes X\otimes Y\to Z_Z(X).$$
By Section~\ref{sect-centralizer-double}, $\zz(\cc)$ admits a coend $(C,\sigma)$, where
\begin{equation}\label{eq-formulaforc}
C=Z_Z(\un)=\bigoplus_{i,j \in I} i^* \otimes j^* \otimes i \otimes j,
\end{equation}
and $\sigma=\{\sigma_X\co C\otimes X\to X\otimes C\}_{X\in \Ob (\cc)}$ is given by~\eqref{eq-formulaforsigma}.
%In this formula,  $$ Z_Z(Z_0)\Omega  Z(\rho_{\un,Y})= \rho_{\un,Z(Y)}\bigl(\mu_Y^*s^l_{Z(Y)}Z(\mu^*_Y) \otimes \id_Z(X)\bigr)Z_2(Z(Y)^*, Y).$$
Using \eqref{def-omega} and the above description of the structural morphisms of $Z$,
we obtain that, for any $X\in \Ob (\cc)$,
$$ \psfrag{i}[Bc][Bc]{\scalebox{.9}{$i$}}
 \psfrag{j}[Bc][Bc]{\scalebox{.9}{$j$}}
 \psfrag{k}[Bc][Bc]{\scalebox{.9}{$k$}}
 \psfrag{l}[Bc][Bc]{\scalebox{.9}{$l$}}
 \psfrag{n}[Bc][Bc]{\scalebox{.9}{$z$}}
 \psfrag{X}[Bc][Bc]{\scalebox{.9}{$X$}}
 \psfrag{a}[cc][cc]{\scalebox{.9}{$p^\beta_{z^* \otimes j \otimes z}$}}
 \psfrag{s}[cc][cc]{\scalebox{.9}{$q^\beta_{z^* \otimes j \otimes z}$}}
 \psfrag{c}[cc][cc]{\scalebox{.9}{$q_{z \otimes k \otimes z^*}^\alpha$}}
 \psfrag{r}[cc][cc]{\scalebox{.9}{$p_{z \otimes k \otimes z^*}^\alpha$}}
  \psfrag{v}[cc][cc]{\scalebox{.9}{$p_X^\gamma$}}
   \psfrag{u}[cc][cc]{\scalebox{.9}{$q_X^\gamma$}}
\sigma_X= \!\!\!\sum_{\substack{i,j,k,l,z \in I \\ \alpha \in \Lambda_{z\otimes k \otimes z^*}^i\\
\beta \in \Lambda_{z^* \otimes j \otimes z}^l \\ \gamma \in \Lambda_X^z }} \; \rsdraw{.50}{.9}{sigmacoend3}
\, .$$
Using the fact that for  any $X,Y \in \Ob(\cc)$, the family of pairs
\begin{center}
 \psfrag{k}[Bc][Bc]{\scalebox{.9}{$k$}}
 \psfrag{l}[Bc][Bc]{\scalebox{.9}{$l$}}
 \psfrag{Y}[Bc][Bc]{\scalebox{.9}{$Y$}}
 \psfrag{X}[Bc][Bc]{\scalebox{.9}{$X$}}
 \psfrag{a}[Bc][Bc]{\scalebox{.9}{$q_{X^* \otimes l \otimes Y^*}^{\alpha}$}}
 \psfrag{s}[Bc][Bc]{\scalebox{.9}{$p_{X^* \otimes l \otimes Y^*}^{\alpha}$}}
$ \displaystyle \left ( \begin{array}{c} \rsdraw{.45}{.9}{ghost3c}, \;\text{$ \dim(l)$}\, \rsdraw{.45}{.9}{ghost4c}\end{array} \right )_{l \in I, \alpha \in \Lambda^\un_{X^* \otimes l \otimes Y^*}}$
\end{center}
is an $I$-partition of $X \otimes Y$, the half braiding $\sigma$ can be rewritten as
$$ \psfrag{i}[Bc][Bc]{\scalebox{.9}{$i$}}
 \psfrag{j}[Bc][Bc]{\scalebox{.9}{$j$}}
 \psfrag{k}[Bc][Bc]{\scalebox{.9}{$k$}}
 \psfrag{l}[Bc][Bc]{\scalebox{.9}{$l$}}
 \psfrag{n}[Bc][Bc]{\scalebox{.9}{$z$}}
 \psfrag{X}[Bc][Bc]{\scalebox{.9}{$X$}}
 \psfrag{a}[cc][cc]{\scalebox{.9}{$q^\beta_{z^* \otimes j^* \otimes z\otimes l}$}}
 \psfrag{s}[cc][cc]{\scalebox{.9}{$p^\beta_{z^* \otimes j^* \otimes z \otimes l}$}}
 \psfrag{c}[cc][cc]{\scalebox{.9}{$p_{z \otimes k^* \otimes z^*\otimes i}^\alpha$}}
 \psfrag{r}[cc][cc]{\scalebox{.9}{$q_{z \otimes k^* \otimes z^* \otimes i}^\alpha$}}
  \psfrag{v}[cc][cc]{\scalebox{.9}{$p_X^\gamma$}}
   \psfrag{u}[cc][cc]{\scalebox{.9}{$q_X^\gamma$}}
\sigma_X= \!\!\!\!\!\!\sum_{\substack{i,j,k,l,z \in I \\ \alpha \in \Lambda_{z\otimes k^* \otimes z^* \otimes i}^\un\\
\beta \in \Lambda_{z^* \otimes j^* \otimes z \otimes l}^\un \\ \gamma \in \Lambda_X^z }}\!\!\! \!\!\!\dim(i) \dim(l)\; \rsdraw{.50}{.9}{sigmacoend5}
\, .$$
The latter   formula simplifies  for $X\in I$. In this case  $(\id_X,\id_X)$ is an $I$-partition of~$X$, so the top-left and bottom-right boxes may be deleted from the picture and the summation over $z$ is unnecessary:  only $z=X$ may contribute a non-zero term.

\section{Proof of Lemma~\ref*{le-didim}}\label{proofoflemma}
Since $\kk$ is an algebraically closed field,     $\zz(\cc)$ is a fusion category. By Section~\ref{sect-coend}, the coend of $\zz(\cc)$ is $ \bigoplus_{i\in \mathcal{I}} i^* \otimes i$, where $\mathcal{I}$ is a  representative  set of simple objects of $\zz(\cc)$. By \cite[Chapter IV]{Tu1}, for a closed oriented surface $\Sigma$ of genus $g\geq 0$,
\begin{align*}
\tau_{\zz(\cc)}(\Sigma)&=\bigoplus_{i_1, \ldots,i_g \in \mathcal{I}}\Hom_{\zz(\cc)}(\un_{\zz(\cc)}, i_1 ^* \otimes i_1 \otimes \cdots \otimes i_g^* \otimes i_g)\\
&=\Hom_{\zz(\cc)}\bigl(\un_{\zz(\cc)},\bigoplus_{i_1, \dots,i_g \in \mathcal{I}} i_1 ^* \otimes i_1 \otimes \cdots \otimes i_g^* \otimes i_g\bigr)\\
%&=\Hom_{\zz(\cc)}\bigl(\un_{\zz(\cc)},(\bigoplus_{i_1 \in \mathcal{I}} i_1 ^* \otimes i_1 ) \otimes \cdots \otimes  (\bigoplus_{i_g \in \mathcal{I}} i_g ^* \otimes i_g )\bigr)\\
&=\Hom_{\zz(\cc)}(\un_{\zz(\cc)},(\bigoplus_{i\in \mathcal{I}} i^* \otimes i)^{\otimes g}).
\end{align*}
Therefore Lemma~\ref*{le-didim} is a direct consequence of  the following lemma.
\begin{lem}\label{lem-A-generated} Let $\cc$ be a spherical fusion category over a commutative ring~$\kk$ such that $\dim(\cc)$ is invertible in~$\kk$.
Then for any closed connected oriented surface $\Sigma$ of genus $g\geq 0$,  the $\kk$-module $|\Sigma|_\cc$ is isomorphic to $\Hom_{\zz(\cc)}(\un_{\zz(\cc)},(C,\sigma)^{\otimes g})$, where $(C,\sigma)$ is the coend of $\zz(\cc)$.
\end{lem}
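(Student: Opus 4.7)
The plan is to compute $|\Sigma|_\cc$ directly from a one-vertex skeleton of $\Sigma$ adapted to the standard polygon presentation of a genus-$g$ surface, and to identify the resulting TQFT projector with M\"uger's projector from Lemma~\ref{lem-proj-alg}.

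First, I would pick a skeleton $G\subset\Sigma$ consisting of a single vertex~$x$ and $2g$ oriented loops $a_1,b_1,\dots,a_g,b_g$ such that $\Sigma\setminus G$ is a single open disk whose boundary, traced with the induced orientation, spells out $a_1 b_1 a_1^{-1} b_1^{-1}\cdots a_g b_g a_g^{-1} b_g^{-1}$. This fixes a cyclic $\cc$\ti set structure at~$x$ whose $4g$ entries are the signed objects $(i_q,+),(j_q,+),(i_q,-),(j_q,-)$ for $q=1,\dots,g$, where $(i_q,j_q)=(c(a_q),c(b_q))$. The cone bijection then gives a canonical $\kk$\ti linear isomorphism
\[
|G;\Sigma|^\circ=\bigoplus_{i_1,j_1,\dots,i_g,j_g\in I}H\bigl((G,c);\Sigma\bigr)\;\cong\;\Hom_\cc(\un,C^{\otimes g}),
\]
where $C=\bigoplus_{i,j\in I}i^*\otimes j^*\otimes i\otimes j$ is the underlying object of the coend of $\zz(\cc)$ computed in Section~\ref{sect-fusioncoend}. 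By Lemma~\ref{lem-proj-alg} applied to the half braiding $(C,\sigma)^{\otimes g}$, the subspace $\Hom_{\zz(\cc)}(\un_{\zz(\cc)},(C,\sigma)^{\otimes g})$ of $\Hom_\cc(\un,C^{\otimes g})$ is precisely the image of M\"uger's projector
\[
\pi(f)=(\dim\cc)^{-1}\sum_{z\in I}\dim(z)\,\bigl(\text{$z$-loop threaded through $\sigma^{\otimes g}$ and composed with $f$}\bigr).
\]

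Next, I would compute the TQFT projector $p(G,G)\colon |G;\Sigma|^\circ\to |G;\Sigma|^\circ$ using the skeleton $P=(G\times[0,1])\cup(\Sigma\times\{\tfrac12\})$ of the cylinder $\Sigma\times[0,1]$, stratified exactly as in Section~\ref{Esp-S2}. The complement $(\Sigma\times[0,1])\setminus P$ consists of two 3-balls (the upper and lower halves of $(\Sigma\setminus G)\times[0,1]$), so $|P|=2$. The middle surface $\Sigma\times\{\tfrac12\}$ contributes a single disk region (since $\Sigma\setminus G$ is a disk), and the remaining $4g$ regions are rectangular strips $e\times[0,\tfrac12]$ and $e\times[\tfrac12,1]$ whose colors are forced by the boundary colorings $c_0,c_1$ of $G\times\{0\}$ and $G\times\{1\}$. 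The only free parameter is thus the color $z\in I$ of the middle disk, and all Euler characteristics equal~$1$. Combining the prefactor $(\dim\cc)^{-2}$ of Theorem~\ref{thm-state-3man+r} with the pseudo-TQFT normalization $(\dim\cc)^{|G_1|}/\dim(G_1)=\dim\cc/\prod_e\dim(c_1(e))$ of Section~\ref{sec-Io3m++} and with the boundary-strip dimension weights (absorbed by the cone bijections), the state sum collapses to
\[
p(G,G)=(\dim\cc)^{-1}\sum_{z\in I}\dim(z)\,\bigl(\text{Boltzmann weight at the vertex }(x,\tfrac12)\bigr).
\]
The link graph $\Gamma_{(x,\frac12)}\subset S^2$ at the unique interior vertex of~$P$ consists of two \lq\lq equatorial\rq\rq\ copies of the cyclic pattern at~$x$ (one from $G\times\{0\}$, one from $G\times\{1\}$) separated by the $z$-colored boundary circle of the middle disk, with the two vertical arcs $x\times[0,\tfrac12]$ and $x\times[\tfrac12,1]$ connecting them. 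Applying Lemma~\ref{lem-calc-diag}(a)(b) to these vertical arcs reduces $\Gamma_{(x,\frac12)}$ to exactly the Penrose diagram defining the $z$-summand of~$\pi$. Hence $p(G,G)=\pi$, and so
\[
|\Sigma|_\cc=\mathrm{Im}\,p(G,G)=\mathrm{Im}\,\pi=\Hom_{\zz(\cc)}(\un_{\zz(\cc)},(C,\sigma)^{\otimes g}).
\]

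The main obstacle is the geometric identification of the diagram $\Gamma_{(x,\frac12)}$ with the one defining $\pi$. It requires verifying that the $z$-loop, as it winds through $\Gamma_{(x,\frac12)}$, pairs the $(i_q,\pm)$ and $(j_q,\pm)$ strands of the lower copy of~$G$ with the corresponding strands of the upper copy \emph{factor-by-factor} in~$q$, so that the single-$z$ braiding around $C^{\otimes g}$ appearing in the state sum really coincides with the iterated half braiding $\sigma^{\otimes g}$ of $(C,\sigma)^{\otimes g}$, rather than a cyclically permuted variant. The polygon relation $\prod_q a_q b_q a_q^{-1}b_q^{-1}$ is essential here: it ensures that, viewed on~$S^2$, the cyclic order at~$(x,\tfrac12)$ aligns the two equatorial copies in the order prescribed by the monoidal product in $\zz(\cc)$, so that the explicit formula for $\sigma_X$ derived in Section~\ref{sect-fusioncoend} assembles correctly into $\sigma_X^{\otimes g}$ on $C^{\otimes g}$. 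Once this geometric matching is in place, the remaining scalar check (that the prefactor $(\dim\cc)^{-1}\dim(z)$ above coincides with the prefactor in~$\pi$) is immediate, and the identification $|\Sigma|_\cc=\Hom_{\zz(\cc)}(\un_{\zz(\cc)},(C,\sigma)^{\otimes g})$ follows.
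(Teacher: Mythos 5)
Your overall strategy is the one the paper uses: pick a one-vertex, $2g$-loop skeleton $G\subset\Sigma$, identify $\vert G;\Sigma\vert^\circ$ with $\Hom_\cc(\un,C^{\otimes g})$ via the explicit formula $C=\bigoplus_{i,j\in I}i^*\otimes j^*\otimes i\otimes j$, invoke Lemma~\ref{lem-proj-alg} to realize $\Hom_{\zz(\cc)}(\un_{\zz(\cc)},(C,\sigma)^{\otimes g})$ as the image of the projector $\pi$ on $\Hom_\cc(\un,C^{\otimes g})$, and then compute the TQFT projector $p(G,G)$ using the cylinder skeleton $P=(G\times[0,1])\cup(\Sigma\times\{\tfrac12\})$. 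So the decomposition is right, and the role of the coend formula from Section~\ref{sect-fusioncoend} is also correctly flagged as the crucial input.

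However, there is a genuine gap in your prefactor bookkeeping: the two projectors $p(G,G)$ and $\pi$ are \emph{not} equal under the cone bijections, and the ``boundary-strip dimension weights'' are \emph{not} absorbed by those bijections. Keeping track of Euler characteristics, the $4g$ strip regions of $P$ contribute $\dim(\mathbf{i})\dim(\mathbf{j})\dim(\mathbf{k})\dim(\mathbf{l})$ to the state sum (where $\mathbf{i},\mathbf{j}$ color $G\times\{0\}$ and $\mathbf{k},\mathbf{l}$ color $G\times\{1\}$), and after the $\frac{(\dim\cc)^{\vert G_1\vert}}{\dim(G_1)}$ normalization of Section~\ref{sec-Io3m++} the block $p_{\mathbf{i},\mathbf{j}}^{\mathbf{k},\mathbf{l}}$ carries a prefactor $\frac{\dim(\mathbf{i})\dim(\mathbf{j})}{\dim(\cc)}$ — not the $\frac{1}{\dim(\cc)}$ you wrote. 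On the algebraic side, expanding $\sigma_X$ on simple $X=z$ via the formulas of Section~\ref{sect-fusioncoend} produces a prefactor $\frac{\dim(\mathbf{i})\dim(\mathbf{l})}{\dim(\cc)}$ in the corresponding block $\pi_{\mathbf{i},\mathbf{j}}^{\mathbf{k},\mathbf{l}}$ of M\"uger's projector. These two do not coincide; they differ by the diagonal factor $\dim(\mathbf{l})/\dim(\mathbf{j})$, and one must interpose the extra automorphism $\kappa=\sum_{\mathbf{i},\mathbf{j}}\dim(\mathbf{j})\,\id_{V_{\mathbf{i},\mathbf{j}}}$ so that $\kappa\tau\,p(G,G)\,(\kappa\tau)^{-1}=\pi$. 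The conclusion that the images are isomorphic is still correct, but your claim that $p(G,G)=\pi$ (and that the ``remaining scalar check is immediate'') is false as stated. Relatedly, your suggestion to apply Lemma~\ref{lem-calc-diag}(a)(b) along the vertical edges $x\times[0,\tfrac12]$ and $x\times[\tfrac12,1]$ is not how the reduction actually works: those edges contribute via tensor contractions (not simplifications internal to a single $\inv_\cc$ evaluation), and the identification of $\mu_{\mathbf{i},\mathbf{j}}^{\mathbf{k},\mathbf{l}}$ with $\Pi_{\mathbf{i},\mathbf{j}}^{\mathbf{k},\mathbf{l}}$ requires unwinding the explicit $I$-partition formula for $\sigma$ and the cone isomorphisms, which is where all the diagonal dimension factors surface. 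Your concern about the ``cyclic permutation'' issue is legitimate, but it is exactly the explicit matching of these tensors — the bulk of the argument — that resolves it, not a short geometric observation.
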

\begin{proof}
For $g=0$, the claim of the lemma follows from the computation of Section~\ref{Esp-S2}. Suppose that $g \geq 1$.
By the definition of the monoidal product of $\zz(\cc)$,  we have $(C,\sigma)^{\otimes g}=(C^{\otimes g},\sigma^g)$, where
$\sigma^g=\{\sigma_X^g \co C^{\otimes g} \otimes X\to X\otimes C^{\otimes g}\}_{X \in \Ob(\cc)}$ is the half braiding defined by
$$
\sigma^g_X=(\sigma_X \otimes \id_{C^{\otimes (g-1)}}) \cdots (\id_{C^{\otimes (g-2)}} \otimes \sigma_X \otimes \id_C)(\id_{C^{\otimes (g-1)}} \otimes \sigma_X).
$$
By Lemma~\ref{lem-proj-alg}, $\Hom_{\zz(\cc)}(\un_{\zz(\cc)},(C,\sigma)^{\otimes g})$ is the image of the involutive endomorphism
$\pi$ of $\Hom_\cc(\un_\cc,C^{\otimes g})$ carrying any  $f\in \Hom_\cc(\un_\cc,C^{\otimes g})$ to
$$
\psfrag{N}[Br][Br]{\scalebox{.8}{$C^{\otimes g}$}} \psfrag{C}[Bl][Bl]{\scalebox{.8}{$C^{\otimes g}$}} \psfrag{f}[Bc][Bc]{\scalebox{.9}{$f$}}\psfrag{i}[Bc][Bc]{\scalebox{.9}{$z$}} \psfrag{c}[Bc][Bc]{\scalebox{1}{$\sigma^g_z$}}
\pi(f)=(\dim(\cc))^{-1}\sum_{z\in I}\dim(z) \; \rsdraw{.45}{.9}{projalg2g},
$$
where $I$ is  a
representative set  of simple objects of $\cc$. Without loss of generality, we can assume that $\un_\cc \in I$. For $\mathbf{i}=(i_1,\ldots,i_g)\in I^g$ and $\mathbf{j}=(j_1,\ldots,j_g)\in I^g$, set $$\dim(\mathbf{i})=\prod_{k=1}^g\dim(i_k)$$ and
$$
V_{\mathbf{i},\mathbf{j}}=\Hom_\cc(\un_\cc,i_1^* \otimes j_1^* \otimes i_1 \otimes j_1 \otimes \cdots \otimes i_g^* \otimes j_g^* \otimes i_g \otimes j_g) .
$$
For an explicit description of the coend $(C, \sigma)$,  see Section~\ref{sect-fusioncoend}. In particular, Formula~\eqref{eq-formulaforc} implies that $$\Hom_\cc(\un_\cc,C^{\otimes g})=\bigoplus_{\mathbf{i},\mathbf{j}\in I^g}V_{\mathbf{i},\mathbf{j}}.$$
Hence $\pi =\sum_{\mathbf{i},\mathbf{j},\mathbf{k},\mathbf{l}\in I^g} \pi_{\mathbf{i},\mathbf{j}}^{\mathbf{k},\mathbf{l}}$, where $\pi_{\mathbf{i},\mathbf{j}}^{\mathbf{k},\mathbf{l}}$ is  a $\kk$-homomorphism
$V_{\mathbf{i},\mathbf{j}}\to V_{\mathbf{k},\mathbf{l}}$.
 Using the expression for  $\sigma$ given at the end of Section~\ref{sect-fusioncoend}, we obtain that for any $f \in V_{\mathbf{i},\mathbf{j}}$,
$$\pi_{\mathbf{i},\mathbf{j}}^{\mathbf{k},\mathbf{l}}(f)=(\dim(\cc))^{-1}\dim(\mathbf{i})\dim(\mathbf{l})\sum_{z\in I}\dim(z) \; \Pi_{\mathbf{i},\mathbf{j}}^{\mathbf{k},\mathbf{l}}(f),$$   where
\begin{equation}\label{coendg}
\psfrag{i}[Bc][Bc]{\scalebox{.9}{$i_1$}}
 \psfrag{j}[Bc][Bc]{\scalebox{.9}{$j_1$}}
 \psfrag{k}[Bc][Bc]{\scalebox{.9}{$k_1$}}
 \psfrag{l}[Bc][Bc]{\scalebox{.9}{$l_1$}}
 \psfrag{u}[Bc][Bc]{\scalebox{.9}{$i_g$}}
 \psfrag{v}[Bc][Bc]{\scalebox{.9}{$j_g$}}
 \psfrag{t}[Bc][Bc]{\scalebox{.9}{$k_g$}}
 \psfrag{d}[Bc][Bc]{\scalebox{.9}{$l_g$}}
 \psfrag{n}[Bc][Bc]{\scalebox{.9}{$z$}}
 \psfrag{a}[cc][cc]{\scalebox{.6}{$\beta_1$}}
 \psfrag{s}[cc][cc]{\scalebox{.6}{$\beta_1$}}
 \psfrag{c}[cc][cc]{\scalebox{.6}{$\alpha_1$}}
 \psfrag{r}[cc][cc]{\scalebox{.6}{$\alpha_1$}}
  \psfrag{o}[cc][cc]{\scalebox{.6}{$\beta_g$}}
 \psfrag{m}[cc][cc]{\scalebox{.6}{$\beta_g$}}
 \psfrag{x}[cc][cc]{\scalebox{.6}{$\alpha_g$}}
 \psfrag{e}[cc][cc]{\scalebox{.6}{$\alpha_g$}}
  \psfrag{g}[cc][cc]{\scalebox{.9}{$f$}}
\Pi_{\mathbf{i},\mathbf{j}}^{\mathbf{k},\mathbf{l}}(f)=\!\!\!\!\!\!\!\!\!\!\sum_{\substack{\alpha_1 \in \lambda_{z\otimes k_1^* \otimes z^* \otimes i_1}\\
\beta_1 \in \lambda_{z^* \otimes j_1^* \otimes z \otimes l_1} \\ \vdots \\ \alpha_g \in \lambda_{z\otimes k_g^* \otimes z^* \otimes i_g}\\
\beta_g \in \lambda_{z^* \otimes j_g^* \otimes z \otimes l_g} }}\;\rsdraw{.50}{.9}{sigmacoend9}\,.
\end{equation}
Here, for any $X\in\Ob(\cc)$,  we pick an $I$-partition $(p_X^\alpha,q_X^\alpha)_{\alpha \in \Lambda_X}$   of $X$ and denote by $\lambda_X$   the set  of all $\alpha\in \Lambda_X$ such that the target of $p^\alpha_X$ is $\un_\cc$. For $\alpha \in \lambda_X$, the morphisms  $p_X^\alpha\co X\to \un_\cc$ and $q_X^\alpha\co \un_\cc \to X$ are  depicted respectively as
$$
 \psfrag{a}[cc][cc]{\scalebox{.9}{$\alpha$}}  \psfrag{Y}[Bc][Bc]{\scalebox{.9}{$X$}}  \rsdraw{.50}{.8}{Ishort1} \quad \quad {\text {and}} \quad \quad  \rsdraw{.50}{.8}{Ishort2} \, .
$$

Let $\Sigma$ be a closed connected oriented surface of genus $g\geq 1$. For $\mathbf{i}=(i_1,\ldots,i_g)\in I^g$ and $\mathbf{j}=(j_1,\ldots,j_g)\in I^g$, consider the following $I$-colored graph $ G_{\mathbf{i},\mathbf{j}}$ on   $\Sigma$:
\begin{equation*}
 \psfrag{i}[Bc][Bc]{\scalebox{.9}{$i_1$}}
 \psfrag{j}[Bc][Bc]{\scalebox{.9}{$j_1$}}
  \psfrag{c}[Bc][Bc]{\scalebox{.9}{$i_2$}}
 \psfrag{g}[Bc][Bc]{\scalebox{.9}{$j_2$}}
  \psfrag{e}[Bc][Bc]{\scalebox{.9}{$i_g$}}
 \psfrag{q}[Bc][Bc]{\scalebox{.9}{$j_g$}}
 G_{\mathbf{i},\mathbf{j}}=
 \rsdraw{.45}{.9}{Gij-gg}\,.
\end{equation*}
The underlying oriented graph of $G_{\mathbf{i},\mathbf{j}}$, denoted $G$, is a skeleton of $\Sigma$. It has one vertex $x$ of valence $4g$, and its complement in $\Sigma $ is  a disk.
The surface $\Sigma$ can be obtained
from a $4g$-sided polygon
by gluing  pairs of sides, and the graph $G_{\mathbf{i},\mathbf{j}}$ corresponds to the union of radii joining the center of the polygon to the centers of the sides:
\begin{equation*}
 \psfrag{i}[Bc][Bc]{\scalebox{.9}{$i_1$}}
 \psfrag{j}[Bc][Bc]{\scalebox{.9}{$j_1$}}
  \psfrag{k}[Bc][Bc]{\scalebox{.9}{$i_1$}}
 \psfrag{l}[Bc][Bc]{\scalebox{.9}{$j_1$}}
  \psfrag{m}[Bc][Bc]{\scalebox{.9}{$i_2$}}
 \psfrag{n}[Bc][Bc]{\scalebox{.9}{$j_2$}}
  \psfrag{o}[Bc][Bc]{\scalebox{.9}{$i_2$}}
 \psfrag{p}[Bc][Bc]{\scalebox{.9}{$j_2$}}
  \psfrag{q}[Bc][Bc]{\scalebox{.9}{$i_g$}}
 \psfrag{r}[Bc][Bc]{\scalebox{.9}{$j_g$}}
  \psfrag{s}[Bc][Bc]{\scalebox{.9}{$i_g$}}
 \psfrag{t}[Bc][Bc]{\scalebox{.9}{$j_g$}}
  \psfrag{a}[Bc][Bc]{\scalebox{.9}{$a_1$}}
 \psfrag{b}[Bc][Bc]{\scalebox{.9}{$b_1$}}
  \psfrag{c}[Bc][Bc]{\scalebox{.9}{$a_2$}}
 \psfrag{d}[Bc][Bc]{\scalebox{.9}{$b_2$}}
  \psfrag{e}[Bc][Bc]{\scalebox{.9}{$a_g$}}
 \psfrag{f}[Bc][Bc]{\scalebox{.9}{$b_g$}}
 %G_{\mathbf{i},\mathbf{j}}=
 \rsdraw{.45}{.9}{Gij-ggp}\,.
\end{equation*}
By definition,  the $\kk$-module  $|\Sigma|_\cc$ is isomorphic to the image of the homomorphism  $p(G,G)\co |G; \Sigma|^\circ \to |G; \Sigma|^\circ$, where $$|G; \Sigma|^\circ=\oplus_{\mathbf{i},\mathbf{j}\in I^g} H(G_{\mathbf{i},\mathbf{j}}) .$$  The cone isomorphisms  $\tau_{\mathbf{i},\mathbf{j}}  \co H(G_{\mathbf{i},\mathbf{j}}) =H_x(G_{\mathbf{i},\mathbf{j}})   \to V_{\mathbf{i},\mathbf{j}}$   induce  an isomorphism
$$\tau=\sum_{\mathbf{i},\mathbf{j}\in I^g} \tau_{\mathbf{i},\mathbf{j}} \co |G; \Sigma|^\circ \to \bigoplus_{\mathbf{i},\mathbf{j}\in I^g}V_{\mathbf{i},\mathbf{j}}=  \Hom_\cc(\un,C^{\otimes g}).$$
Consider the automorphism $\kappa=\sum_{\mathbf{i},\mathbf{j}\in I^g} \dim(\mathbf{j})\,  \id_{V_{\mathbf{i},\mathbf{j}}}$ of $\oplus_{\mathbf{i},\mathbf{j}\in I^g} V_{\mathbf{i},\mathbf{j}}$.
We claim that
\begin{equation}\label{proj-conjugate}
 \kappa\tau\, p(G,G)(\kappa\tau)^{-1}=\pi.
\end{equation}
Hence the images of  the projectors $p(G,G)$ and  $\pi$ are isomorphic. This will imply the claim of the lemma.

By definition,  $p(G,G)=\sum_{\mathbf{i},\mathbf{j}, \mathbf{k},\mathbf{l}\in I^g} p_{\mathbf{i},\mathbf{j}}^{\mathbf{k},\mathbf{l}}$, where
$$p_{\mathbf{i},\mathbf{j}}^{\mathbf{k},\mathbf{l}}= |\Sigma \times [0,1], \Sigma \times \{0\}, G_{\mathbf{i},\mathbf{j}}  \times \{0\}, \Sigma  \times \{1\}, G_{\mathbf{k},\mathbf{l}}  \times \{1\}|_\cc  \co H(G_{\mathbf{i},\mathbf{j}}) \to H(G_{\mathbf{k},\mathbf{l}}).$$   To compute  $p_{\mathbf{i},\mathbf{j}}^{\mathbf{k},\mathbf{l}}$, consider the $\cc$-colored graph
$$G_{\mathbf{i},\mathbf{j}}^{ \mathbf{k},\mathbf{l}} =(G_{\mathbf{i},\mathbf{j}}^\opp \times \{0\})  \cup (G_{\mathbf{k},\mathbf{l}} \times \{1\}) \subset \partial (\Sigma\times [0,1]).$$
By definition,
$$ p_{\mathbf{i},\mathbf{j}}^{\mathbf{k},\mathbf{l}}   = \dim(\cc)\left ( \dim(\mathbf{k})\dim(\mathbf{l})\right )^{-1}\Upsilon\bigl(|\Sigma \times [0,1], G_{\mathbf{i},\mathbf{j}}^{ \mathbf{k},\mathbf{l}}|_\cc\bigr).
$$
To compute the right-hand side, consider the 2-polyhedron $$P=(G\times [0,1])\cup (\Sigma \times \{\frac{1}{2}\}) \subset \Sigma \times [0,1].$$  We stratify $P$ by taking as its edges the arcs $x\times [0,\frac{1}{2}]$,  $x\times [\frac{1}{2},1]$, and  the edges of $G\times \{t\}$ for $t\in\{0, \frac{1}{2},1\}$. The polyhedron $P$ has 3 vertices $ x\times \{t\}$  with $t\in\{0, \frac{1}{2},1\}$. We orient the regions of $P$ as shown in the next picture.
\begin{equation*}
 \psfrag{i}[Bc][Bc]{\scalebox{.9}{$k_1$}}
 \psfrag{j}[Bc][Bc]{\scalebox{.9}{$l_1$}}
  \psfrag{k}[Bc][Bc]{\scalebox{.9}{$k_1$}}
 \psfrag{l}[Bc][Bc]{\scalebox{.9}{$l_1$}}
  \psfrag{m}[Bc][Bc]{\scalebox{.9}{$k_2$}}
 \psfrag{n}[Bc][Bc]{\scalebox{.9}{$l_2$}}
  \psfrag{o}[Bc][Bc]{\scalebox{.9}{$k_2$}}
 \psfrag{p}[Bc][Bc]{\scalebox{.9}{$l_2$}}
  \psfrag{q}[Bc][Bc]{\scalebox{.9}{$k_g$}}
 \psfrag{r}[Bc][Bc]{\scalebox{.9}{$l_g$}}
  \psfrag{s}[Bc][Bc]{\scalebox{.9}{$j_g$}}
 \psfrag{t}[Bc][Bc]{\scalebox{.9}{$i_g$}}
  \psfrag{a}[Bc][Bc]{\scalebox{.9}{$i_1$}}
 \psfrag{b}[Bc][Bc]{\scalebox{.9}{$j_1$}}
  \psfrag{c}[Bc][Bc]{\scalebox{.9}{$i_2$}}
 \psfrag{d}[Bc][Bc]{\scalebox{.9}{$j_2$}}
  \psfrag{x}[Bc][Bc]{\scalebox{.9}{$x$}}
 \psfrag{v}[Bc][Bc]{\scalebox{.9}{$v$}}
  \psfrag{u}[Bc][Bc]{\scalebox{.9}{$u$}}
   \psfrag{z}[Bc][Bc]{\scalebox{.9}{$z$}}
 P=\;\rsdraw{.45}{.9}{Gij-skel}\,.
\end{equation*}
It is clear that $P$ is a skeleton of the pair $(\Sigma \times [0,1],
G_{\mathbf{i},\mathbf{j}}^{ \mathbf{k},\mathbf{l}}  )$.  The maps $  \Reg(P) \to I$ extending the coloring of the boundary are numerated by the color   $z\in I$ of the unique region of $P$ lying in $\Sigma \times \{\frac{1}{2}\}$. The link of the  vertex $ (x,\frac{1}{2})$ of $P$
determines a $\cc$-colored  graph $\Gamma^z$ in $S^2$:
\begin{equation*}
 \psfrag{i}[Br][Br]{\scalebox{.9}{$i_1$}}
 \psfrag{j}[Br][Br]{\scalebox{.9}{$j_1$}}
 \psfrag{k}[Br][Br]{\scalebox{.9}{$k_1$}}
 \psfrag{l}[Br][Br]{\scalebox{.9}{$l_1$}}
 \psfrag{u}[Bl][Bl]{\scalebox{.9}{$i_g$}}
 \psfrag{g}[Bl][Bl]{\scalebox{.9}{$j_g$}}
 \psfrag{c}[Bl][Bl]{\scalebox{.9}{$k_g$}}
 \psfrag{d}[Bl][Bl]{\scalebox{.9}{$l_g$}}
 \psfrag{z}[Bc][Bc]{\scalebox{.9}{$z$}}
 \Gamma^z  =\rsdraw{.45}{.9}{gamma-g2}\,.
\end{equation*}
Let $u,v$ be the bottom and the top vertices of $\Gamma^z  $, respectively. Then
\begin{equation*}
|\Sigma \times [0,1], G_{\mathbf{i},\mathbf{j}}^{ \mathbf{k},\mathbf{l}} |_\cc= (\dim (\cc))^{-2}  \prod_{\mathbf{y}\in\{ \mathbf{i},\mathbf{j},\mathbf{k},\mathbf{l}\}}  \dim(\mathbf{y}) \,  \sum_{z \in I}  \dim(z)  \,
 \mu_{\mathbf{i},\mathbf{j}}^{\mathbf{k},\mathbf{l}} (z)
\end{equation*}
where
$$\mu_{\mathbf{i},\mathbf{j}}^{\mathbf{k},\mathbf{l}} (z)= {\ast}_P (\inv_\cc (\Gamma^z  )) \in H_u(\Gamma^z  )^* \otimes H_v(\Gamma^z  )^*.$$ In graphical notation,
\begin{equation*}
 \psfrag{i}[Br][Br]{\scalebox{.9}{$i_1$}}
 \psfrag{j}[Br][Br]{\scalebox{.9}{$j_1$}}
 \psfrag{k}[Br][Br]{\scalebox{.9}{$k_1$}}
 \psfrag{l}[Br][Br]{\scalebox{.9}{$l_1$}}
 \psfrag{u}[Bl][Bl]{\scalebox{.9}{$i_g$}}
 \psfrag{g}[Bl][Bl]{\scalebox{.9}{$j_g$}}
 \psfrag{c}[Bl][Bl]{\scalebox{.9}{$k_g$}}
 \psfrag{d}[Bl][Bl]{\scalebox{.9}{$l_g$}}
 \psfrag{z}[Bc][Bc]{\scalebox{.9}{$z$}}
  \psfrag{e}[Bc][Bc]{\scalebox{.9}{$u$}}
   \psfrag{v}[Bc][Bc]{\scalebox{.9}{$v$}}
  \mu_{\mathbf{i},\mathbf{j}}^{\mathbf{k},\mathbf{l}} (z)= \inv_\cc\left ( \rsdraw{.45}{.9}{gamma-g3} \right ) ,
\end{equation*}
where the dotted lines represent  the tensor contractions. Note that $H_u(\Gamma^z)=H(G_{\mathbf{i},\mathbf{j}})$ and $H_v(\Gamma^z)=H(G_{\mathbf{k},\mathbf{l}}^\opp)$.

Consider the sequence of signed objects of $\cc$
$$S=\bigl((k_1,-),(l_1,-),(k_1,+),(l_1,+), \ldots, (k_g,-),(l_g,-),(k_g,+),(l_g,+) \bigr).$$
Recall the object $X=X_S\in \Ob (\cc)$ defined by  \eqref{Evev-} and the isomorphism $\psi_{S^*}\co X_{S^*} \to X^*$ defined
 in the proof of Lemma~\ref{lem-pairing-non-degen}.  Consider the cone isomorphisms $$\tau_{\mathbf{k},\mathbf{l}}: H(G_{\mathbf{k},\mathbf{l}}) \to V_{\mathbf{k},\mathbf{l}}=\Hom_\cc (\un_\cc, X)
$$ and $$ \tau_{\bar{\mathbf{l}},\bar{\mathbf{k}}}\co  H(G_{\mathbf{k},\mathbf{l}}^\opp) \to V_{\bar{\mathbf{l}},\bar{\mathbf{k}}}=\Hom_\cc (\un_\cc, X_{S^*}),$$
 where $\bar{\mathbf{k}}=(k_g, \ldots,k_1)$ and  $\bar{\mathbf{l}}=(l_g, \ldots,l_1)$.  For $\alpha \in \lambda =\lambda_{X}$, set
$$
a_\alpha=\tau_{\mathbf{k},\mathbf{l}}^{-1}(q_{X}^\alpha)\in H(G_{\mathbf{k},\mathbf{l}}) \quad \text{and} \quad b_\alpha=\tau_{\bar{\mathbf{l}},\bar{\mathbf{k}}}^{-1} (\psi_{S^*}^{-1} \circ (p_{X}^\alpha)^*) \in H(G_{\mathbf{k},\mathbf{l}}^\opp).
$$
Then
$$\Omega=\sum_{\alpha \in \lambda} a_\alpha \otimes b_\alpha\in H(G_{\mathbf{k},\mathbf{l}}) \otimes H(G_{\mathbf{k},\mathbf{l}}^\opp)$$ is the inverse of the contraction pairing $   H(G_{\mathbf{k},\mathbf{l}}^\opp) \otimes H(G_{\mathbf{k},\mathbf{l}}) \to \kk$, cf.\  the proof of Lemma~\ref{lem-prefusion}(a). Therefore, for any $h \in H(G_{\mathbf{i},\mathbf{j}})$,
$$
p_{\mathbf{i},\mathbf{j}}^{\mathbf{k},\mathbf{l}}(h) = \frac{\dim(\mathbf{i})\dim(\mathbf{j})}{\dim(\cc)}\sum_{ {z \in I, \alpha \in \lambda}}  \dim(z)  \,
\mu_{\mathbf{i},\mathbf{j}}^{\mathbf{k},\mathbf{l}}(h\otimes b_\alpha) a_\alpha.
$$
For any $f\in V_{\mathbf{i},\mathbf{j}}$,
$$
\kappa\tau_{\mathbf{k},\mathbf{l}}\, p_{\mathbf{i},\mathbf{j}}^{\mathbf{k},\mathbf{l}} (\kappa\tau_{\mathbf{i},\mathbf{j}})^{-1}(f) = \frac{\dim(\mathbf{i})\dim(\mathbf{l})}{\dim(\cc)}\sum_{ {z \in I, \alpha \in \lambda}}  \dim(z)  \,
\mu_{\mathbf{i},\mathbf{j}}^{\mathbf{k},\mathbf{l}}\bigl(\tau_{\mathbf{i},\mathbf{j}}^{-1}(f) \otimes b_\alpha\bigr) \tau_{\mathbf{k},\mathbf{l}}(a_\alpha).
$$
We have
\begin{align*}
\sum_{\alpha \in \lambda} &\mu_{\mathbf{i},\mathbf{j}}^{\mathbf{k},\mathbf{l}}\bigl(\tau_{\mathbf{i},\mathbf{j}}^{-1}(f) \otimes b_\alpha\bigr) \tau_{\mathbf{k},\mathbf{l}}(a_\alpha)=\sum_{\alpha \in \lambda} \mu_{\mathbf{i},\mathbf{j}}^{\mathbf{k},\mathbf{l}}\bigl(\tau_{\mathbf{i},\mathbf{j}}^{-1}(f) \otimes \tau_{\bar{\mathbf{l}},\bar{\mathbf{k}}}^{-1}(\psi_{S^*}^{-1}\circ (p_{X}^\alpha)^*)\bigr) q_{X}^\alpha\\
&\psfrag{i}[Bc][Bc]{\scalebox{.9}{$i_1$}}
 \psfrag{j}[Bc][Bc]{\scalebox{.9}{$j_1$}}
 \psfrag{k}[Bc][Bc]{\scalebox{.9}{$k_1$}}
 \psfrag{l}[Bc][Bc]{\scalebox{.9}{$l_1$}}
 \psfrag{u}[Bc][Bc]{\scalebox{.9}{$i_g$}}
 \psfrag{v}[Bc][Bc]{\scalebox{.9}{$j_g$}}
 \psfrag{t}[Bc][Bc]{\scalebox{.9}{$k_g$}}
 \psfrag{d}[Bc][Bc]{\scalebox{.9}{$l_g$}}
 \psfrag{n}[Bc][Bc]{\scalebox{.9}{$z$}}
 \psfrag{a}[cc][cc]{\scalebox{.6}{$\beta_1$}}
 \psfrag{s}[cc][cc]{\scalebox{.6}{$\beta_1$}}
 \psfrag{c}[cc][cc]{\scalebox{.6}{$\alpha_1$}}
 \psfrag{r}[cc][cc]{\scalebox{.6}{$\alpha_1$}}
  \psfrag{o}[cc][cc]{\scalebox{.6}{$\beta_g$}}
 \psfrag{m}[cc][cc]{\scalebox{.6}{$\beta_g$}}
 \psfrag{x}[cc][cc]{\scalebox{.6}{$\alpha_g$}}
 \psfrag{e}[cc][cc]{\scalebox{.6}{$\alpha_g$}}
  \psfrag{g}[cc][cc]{\scalebox{.9}{$f$}}
 \psfrag{p}[cc][cc]{\scalebox{.9}{$p_{X}^\alpha$}}
  \psfrag{q}[cc][cc]{\scalebox{.9}{$q_{X}^\alpha$}}
=\!\!\!\!\!\!\!\!\!\!\sum_{\substack{ \alpha_1 \in \lambda_{z\otimes k_1^* \otimes z^* \otimes i_1} \\
\beta_1 \in \lambda_{z^* \otimes j_1^* \otimes z \otimes l_1} \\ \vdots \\ \alpha_g \in \lambda_{z\otimes k_g^* \otimes z^* \otimes i_g}\\
\beta_g \in \lambda_{z^* \otimes j_g^* \otimes z \otimes l_g} \\ \alpha \in \lambda}}\;\rsdraw{.50}{.9}{sigmacoend8}\, .
\end{align*}
Using \eqref{coendg} and the fact that $\sum_{\alpha \in \lambda } p_{X}^\alpha q_{X}^\alpha h=h$ for all   $h\in \Hom_\cc ( \un , X)$, we obtain that the latter expression is equal to $\Pi_{\mathbf{i},\mathbf{j}}^{\mathbf{k},\mathbf{l}}(f)$. Therefore
$$
\kappa\tau_{\mathbf{k},\mathbf{l}}\, p_{\mathbf{i},\mathbf{j}}^{\mathbf{k},\mathbf{l}} (\kappa\tau_{\mathbf{i},\mathbf{j}})^{-1}(f) = \frac{\dim(\mathbf{i})\dim(\mathbf{l})}{\dim(\cc)}\sum_{z \in I}  \dim(z)  \,
\Pi_{\mathbf{i},\mathbf{j}}^{\mathbf{k},\mathbf{l}}(f)=\pi_{\mathbf{i},\mathbf{j}}^{\mathbf{k},\mathbf{l}}(f).
$$
This proves \eqref{proj-conjugate} and  concludes the proof of the lemma.
\end{proof}

\section*{Appendix. The $6j$-symbols}   The tensors known
under the awkward name  $6j$-symbols (see Example~\ref{exa-6j}) play
a special role in this theory because only these tensors are needed
for the computation of the state
  sum  of Section~\ref{sec-computat}
   on   special skeletons of 3-manifolds. We outline
   the basic properties of the $6j$-symbols associated with a  spherical pre-fusion category~$\cc$.
  Pick a   6-tuple $T=( i,\varepsilon_i,
j,\varepsilon_j, k,\varepsilon_k, l,\varepsilon_l, m,\varepsilon_m,
n,\varepsilon_n)$ of signed objects of~$\cc$.
%
%Let $t=(i,j,k,l,m,n)$ be a 6-tuple of objects $\cc$ and denote $\Gamma_t$ the following $\cc$-colored graph in $S^2$:
%\begin{equation*}
%\Gamma_t=
%\psfrag{i}[Bc][Bc]{\scalebox{.7}{$i$}} \psfrag{j}[Bc][Bc]{\scalebox{.7}{$j$}} \psfrag{k}[Bc][Bc]{\scalebox{.7}{$k$}}
%\psfrag{l}[Bc][Bc]{\scalebox{.7}{$l$}} \psfrag{m}[Bc][Bc]{\scalebox{.7}{$m$}} \psfrag{n}[Bc][Bc]{\scalebox{.7}{$n$}}
%\psfrag{e}[Bc][Bc]{\scalebox{.8}{$e_v$}} \psfrag{v}[Bc][Bc]{\scalebox{.8}{$v$}}
%%\rsdraw{.45}{.9}{6jsymbol4}\,.
%\rsdraw{.45}{.9}{6jsymbol5}\,.
%% \rsdraw{.45}{.9}{6jsymbol6}\,
%\end{equation*}
%Given a 6-tuple $\varepsilon=(\varepsilon_i, \varepsilon_j, \varepsilon_k, \varepsilon_l,\varepsilon_m, \varepsilon_n)$ of signs, we
Let $\Gamma_T$  be the $\cc$-colored graph in $S^2$ obtained from
the graph $\Gamma\subset S^2$ in Example~\ref{exa-6j} by reversing
the orientation on all edges colored by $x\in\{i,j,k,l,m,n\}$ with
$\varepsilon_x=-$. Then
\begin{equation*}
\begin{vmatrix}
i\,\varepsilon_i& j\,\varepsilon_j& k\,\varepsilon_k\\
l\,\varepsilon_l& m\,\varepsilon_m & n\,\varepsilon_n
\end{vmatrix}=
\inv_\cc(\Gamma_T)\in H(\Gamma_T)^*,
\end{equation*}
is the \emph{6j-symbol} determined by   $T$. Here $H(\Gamma_T)$ is
the (non-ordered) tensor product of the modules $H(m,\varepsilon_m,
i,-\varepsilon_i,n, -\varepsilon_n)$, $H(j ,\varepsilon_j, i
,\varepsilon_i,k ,-\varepsilon_k)$, $H(n,\varepsilon_n,
j,-\varepsilon_j,l,-\varepsilon_l)$, and $H(l,\varepsilon_l,
k,\varepsilon_k,m,-\varepsilon_m)$. In particular,
$
\begin{vmatrix}
i+& j+& k+\\
l+& m+ & n+
\end{vmatrix} =\inv_\cc(\Gamma )$.

The  isotopy invariance of $\inv_\cc$ implies that the 6j-symbols
have the symmetries of an oriented tetrahedron. For example,
\begin{align*}
\begin{vmatrix}
i+& j+& k+\\
l+& m+ & n+
\end{vmatrix}=\begin{vmatrix}
j+& k-& i-\\
m+& n+ & l+
\end{vmatrix}=\begin{vmatrix}
k+& l+& m+\\
n-& i+ & j-
\end{vmatrix}.
\end{align*}

Given    $i,j,k \in \Ob(\cc)$ and signs $\varepsilon, \mu, \nu$, we
have a canonical  (non-degenerate)   pairing
$H({i\varepsilon,j\mu,k\nu}) \otimes H(k(-\nu),j(-\mu),i
(-\varepsilon)) \to \kk$, see
Sections~\ref{sect-non-degen-contraction} and~\ref{sect-graph}. This
form is denoted $\omega_{i\varepsilon,j\mu,k\nu}$ and the
corresponding tensor contraction is denoted
$\ast_{i\varepsilon,j\mu,k\nu}$.

\begin{thm}[The Biedenharn-Elliott identity]\label{thm-bied} Let $I$
be a representative set of simple objects in~$\cc$.  For any
$a,b,c,i,j,k,l,m,n \in \Ob (\cc)$,
\begin{gather*}
\sum_{{\letterf} \in I} \dim(z) \; \ast_{m \mp, k \pm, z \pm} \,\ast_{c \mp, j \mp, z \pm} \, \ast_{b \pm, i \mp, z \pm} \Bigl ( \begin{vmatrix}
i \pm & z \mp & b\pm\\
m \pm & n\pm & k \pm
\end{vmatrix}
 \otimes \begin{vmatrix}
z \pm & j\mp & c \pm\\
l \pm & m \pm & k\pm
\end{vmatrix} \otimes \\
\otimes \begin{vmatrix}
i\pm & j \mp & a \mp\\
c \mp & b\pm & z \mp
\end{vmatrix} \Bigr) = \ast_{a\mp, n\mp, l\pm}\, \Bigl ( \begin{vmatrix}
i \pm &  j\mp &  a\mp\\
l\pm & n \pm  & k\pm
\end{vmatrix} \otimes \begin{vmatrix}
n \pm & l \mp & a \mp\\
c \mp & b \pm & m \mp
\end{vmatrix} \Bigr) .
\end{gather*}
\end{thm}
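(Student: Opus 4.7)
The plan is to exhibit both sides of the identity as two evaluations of the invariant $\inv_\cc$ of a single $\cc$-colored graph $G$ on $S^2$, using the fusion relation of Lemma~\ref{lem-calc-diag}(c) in two different but equivalent ways. Thus the Biedenharn--Elliott identity is, at heart, a diagrammatic associativity statement.

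First I would construct the graph $G \subset S^2$: it is a trivalent graph with $5$ vertices and $9$ edges, combinatorially the $1$-skeleton of a triangular bipyramid (two tetrahedra glued along a common triangular face). I would color and orient its edges by $a,b,c,i,j,k,l,m,n$ with the $\pm$ signs prescribed by the statement, so that each of the two tetrahedral ``halves'' of~$G$ is isotopic to the graph~$\Gamma$ of Example~\ref{exa-6j}. The invariant $\inv_\cc(G)$ then lies in $H(G)^\star$, which canonically splits as a tensor product of five symmetrized multiplicity modules (one per vertex). The contraction $\ast_{a\mp, n\mp, l\pm}$ contracts the three half-edges coming from the common face (labelled $a$, $n$, $l$), which are precisely the tensor factors where the two tetrahedral pieces meet; this directly identifies
\[
\inv_\cc(G) = \ast_{a\mp,\,n\mp,\,l\pm}\!\Bigl(\,\begin{vmatrix} i\pm & j\mp & a\mp \\ l\pm & n\pm & k\pm\end{vmatrix} \otimes \begin{vmatrix} n\pm & l\mp & a\mp \\ c\mp & b\pm & m\mp\end{vmatrix}\Bigr),
\]
which is the RHS of the identity.

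Next I would compute $\inv_\cc(G)$ by a different route. Isotope $G$ so that a single edge (the $a$-edge shared by the two tetrahedra) separates a triangular ``window'' of~$G$ from the rest of the diagram. Apply Lemma~\ref{lem-calc-diag}(c) inside this window to replace an empty pair of parallel strands by a sum $\sum_{z\in I}\dim(z)$ of a new strand colored by $z\in I$, flanked by two new trivalent vertices. This re-triangulates $G$ into a graph with three tetrahedral subgraphs, each isotopic to~$\Gamma$; the three $6j$-symbols appearing in the LHS arise as $\inv_\cc$ of these three subgraphs, and the three new tensor contractions $\ast_{m\mp, k\pm, z\pm}$, $\ast_{c\mp, j\mp, z\pm}$, $\ast_{b\pm, i\mp, z\pm}$ come from the three internal edges along which they are glued. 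Comparing the two evaluations gives the identity.

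The main obstacle is the bookkeeping of signs, cyclic orders, and tensor factors: each vertex of $G$ carries a symmetrized multiplicity module depending on a cyclic $\cc$-set of signed objects, and the two decompositions produce their $6j$-symbols with \emph{a priori} different orderings of the half-edges. One must check that under isotopy in $S^2$ the resulting cyclic $\cc$-sets (and signs) match those in the statement. This is handled by the symmetry of the $6j$-symbol under the tetrahedral group (a consequence of the isotopy invariance of $\inv_\cc$ noted after Example~\ref{exa-6j}) together with property (iii) of Section~\ref{sect-graph-S2} (orientation reversal of an edge is compensated by replacing the color by its dual via $\psi$). Once these identifications are made, the identity follows, and the full range of sign patterns $\pm$ is obtained uniformly by these symmetries, so it suffices to verify a single sign assignment.
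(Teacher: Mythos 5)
Your overall strategy is exactly the paper's: realize both sides of the identity as $\inv_\cc$ of $\cc$-colored graphs in $S^2$, then show the two presentations are related by the fusion identities of Lemma~\ref{lem-calc-diag} (apply parts (c) and (d) to remove the sum over $z$ and the internal contractions, pass to a single fused graph, isotope in $S^2$, and re-split via (d)), with the tetrahedral symmetry of the $6j$-symbols and the orientation-reversal property (iii) of Section~\ref{sect-graph-S2} handling all sign patterns at once. This is precisely the chain of equalities the paper draws in its Figure~\ref{fig-Bied}.

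One concrete slip worth flagging: the intermediate graph $G$ you describe cannot be ``a trivalent graph with $5$ vertices and $9$ edges, combinatorially the $1$-skeleton of a triangular bipyramid.'' A trivalent graph must have an even number of vertices (the sum of degrees is even), and the $1$-skeleton of the bipyramid has degree sequence $(3,3,4,4,4)$, so it is not trivalent. You are conflating the triangulated $3$-ball (two tetrahedra glued along a face, which indeed is the bipyramid and is the right picture for the $2\!\!-\!\!3$ Pachner move) with the planar graph whose $\inv_\cc$ is being computed. Each $6j$-symbol is $\inv_\cc$ of a planar $K_4$; contracting $\ast_{a\mp,n\mp,l\pm}$ removes one vertex from each copy and glues the three freed half-edges pairwise, producing a trivalent planar graph with $6$ vertices and $9$ edges (the $1$-skeleton of a triangular \emph{prism}, not a bipyramid). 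The same count applies to the three-tetrahedron side: three $K_4$'s with three pairwise contractions again fuse to a $6$-vertex, $9$-edge trivalent graph, now containing a $z$-colored triangle that Lemma~\ref{lem-calc-diag}(c) (preceded by two applications of (d) to reduce the $z$-cycle to a bigon) eliminates together with the $\sum_z \dim(z)$. After this correction, your outline is a faithful account of the paper's proof.
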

\begin{proof} Note that the signs of all the objects in this formula may be chosen
independently from each other.  For the upper choice of all the
signs, the graphical proof is given in Figure~\ref{fig-Bied} (for
the opposite choice of some of the signs,   reverse the orientation
of the corresponding edges).   Figure~\ref{fig-Bied} presents
colored $\cc$-graphs in~$S^2$; the equality means the equality of
their $\inv_\cc$-invariants.  The first equality is obtained by
applying  Lemma~\ref{lem-calc-diag}(d) twice (along the dotted
lines) and then   Lemma~\ref{lem-calc-diag}(c). The second equality
follows from the invariance of $\inv_\cc$ under isotopies of the
graph in~$S^2$. The last equality follows from
Lemma~\ref{lem-calc-diag}(d).
\end{proof}
\begin{figure}[h, t]
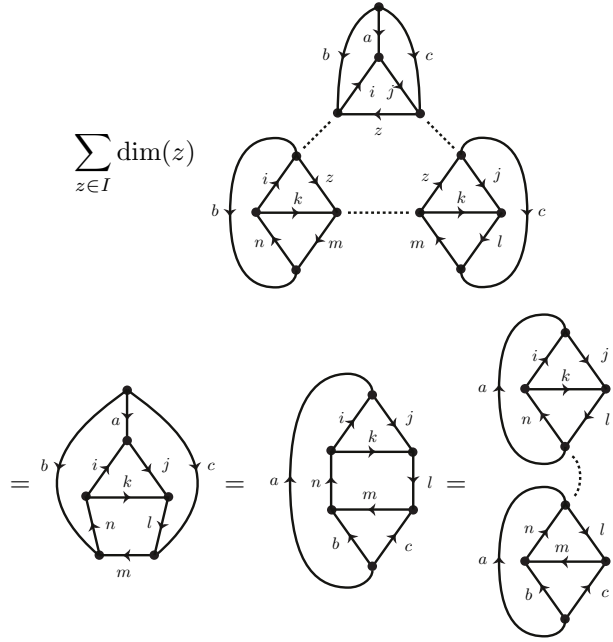

\begin{center}
\psfrag{i}[Bc][Bc]{\scalebox{.7}{$i$}} \psfrag{x}[Bc][Bc]{\scalebox{.7}{${\letterf}$}}
\psfrag{a}[Bc][Bc]{\scalebox{.7}{$a$}} \psfrag{b}[Bc][Bc]{\scalebox{.7}{$b$}}
\psfrag{k}[Bc][Bc]{\scalebox{.7}{$k$}} \psfrag{c}[Bc][Bc]{\scalebox{.7}{$c$}}
\psfrag{l}[Bc][Bc]{\scalebox{.7}{$l$}} \psfrag{m}[Bc][Bc]{\scalebox{.7}{$m$}}
\psfrag{j}[Bc][Bc]{\scalebox{.7}{$j$}} \psfrag{n}[Bc][Bc]{\scalebox{.7}{$n$}}
$\displaystyle \sum_{{\letterf} \in I} \dim({\letterf})$ \,\rsdraw{.45}{.9}{bied1} \\ \phantom{XXXXXXXXXXXX} = \rsdraw{.45}{.9}{bied2}
= \, \rsdraw{.45}{.9}{bied3}\; = \rsdraw{.45}{.9}{bied4}
\end{center}
\caption{Proof of the Biedenharn-Elliot identity}
\label{fig-Bied}
\end{figure}

\begin{thm}[The orthonormality relation]\label{thm-ortho}
For any  objects  $i,k,l,m,m',n$  of~$\cc$ such that $m,m'$ are
simple and for any~$I$ as above,
\begin{gather*}
\dim(m)\sum_{{\letterf} \in I} \dim({\letterf}) \ast_{z \pm,i\pm,k\mp} \ast_{z\pm,n\mp,l\pm}  \,\Bigl ( \begin{vmatrix}
i\pm & z\pm & k \pm \\
l \pm & m \pm & n\pm
\end{vmatrix} \otimes \begin{vmatrix}
z\mp & i\mp & k\mp \\
m'\pm & l \pm & n\pm
\end{vmatrix} \Bigr)\\
= \delta_{m,m'} \; \omega_{m\mp,n\pm,i\pm} \otimes \omega_{l\mp,m\pm,k\mp}.
\end{gather*}
\end{thm}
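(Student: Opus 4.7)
The plan is to mimic the graphical proof of the Biedenharn--Elliott identity given in Theorem~\ref{thm-bied}. First I would interpret the left-hand side, for the upper choice of all signs, as the $\inv_\cc$-invariant of a single colored graph $\Gamma$ in $S^2$ obtained by glueing the two tetrahedral graphs representing the $6j$-symbols along the two triangular faces with edge labels $(z,i,k)$ and $(z,n,l)$ sharing the edge colored $z$. The two tensor contractions $\ast_{z\pm,i\pm,k\mp}$ and $\ast_{z\pm,n\mp,l\pm}$ identify a pair of vertices of the first tetrahedral graph with the corresponding pair of the second, so that the five edges colored $z, i, k, l, n$ become shared while the remaining two edges colored $m$ and $m'$ run in parallel between a common pair of vertices of $\Gamma$.

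Next, the summation $\sum_{\letterf \in I} \dim(\letterf)\,\cdots$ over the color of the $z$-edge is to be interpreted, via Lemma~\ref{lem-calc-diag}(c) read in reverse, as the removal of the $z$-edge together with its two endpoints from $\Gamma$. This produces a reduced graph $\Gamma_1$ in which the $m$- and $m'$-edges form a bigon joining two vertices, with the remaining edges colored $i, k, n, l$ attached to them. Applying Lemma~\ref{lem-calc-diag}(a) to this bigon---legitimate because $m$ and $m'$ are assumed to be simple objects of $I$---yields the factor $\delta_{m,m'}$ and forces $m = m'$. The resulting graph has a single $m$-colored edge joining two trivalent vertices, along which Lemma~\ref{lem-calc-diag}(b) applies: the prefactor $(\dim m)^{-1}$ coming from that lemma cancels the $\dim(m)$ factor standing on the left-hand side, and the graph splits as the disjoint union of two smaller $\cc$-colored graphs whose $\inv_\cc$-invariants are precisely the evaluation forms $\omega_{m\mp,n\pm,i\pm}$ and $\omega_{l\mp,m\pm,k\mp}$ appearing on the right-hand side (compare Example~\ref{exa-simplegraph}).

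The main obstacle is the combinatorial bookkeeping. One must verify that the orientations of all edges, the cyclic orders at each vertex of the glued graph, and the local configurations on which the three parts of Lemma~\ref{lem-calc-diag} are invoked all match the conventions used in the definitions of the $6j$-symbols and of the pairings $\omega$. A secondary subtlety is to confirm that the sum over $\letterf$ really does match the form of Lemma~\ref{lem-calc-diag}(c) after the glueing, since the $z$-edge is not a priori a loop bounding a disk disjoint from the other edges; here one should use the planarity of $\Gamma$ on $S^2$ together with an isotopy that exhibits a $z$-colored simple loop around the relevant strands, as is done in Figure~\ref{fig-Bied}. Once the identity is established for the uniform upper choice of signs, the general case follows by reversing the orientations of the corresponding edges throughout the graphical manipulations, exactly as at the end of the proof of Theorem~\ref{thm-bied}.
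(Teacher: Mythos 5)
Your overall strategy matches the paper's proof exactly: assemble the two tetrahedral graphs into a single graph, absorb the sum $\sum_{\letterf\in I}\dim(\letterf)$ together with the remaining contraction into an application of Lemma~\ref{lem-calc-diag}(c), and then split the resulting graph to recover the two evaluation forms on the right-hand side. The first two steps are correct as you state them.

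The gap is in the description of the reduced graph and in the final step. After the $z$-edge is removed via Lemma~\ref{lem-calc-diag}(c), the surviving graph has \emph{four} vertices (the ones carrying the modules $H(m\,\varepsilon,\,i(-\varepsilon),\,n(-\varepsilon))$, $H(l\,\varepsilon,\,k\,\varepsilon,\,m(-\varepsilon))$, $H(n\,\varepsilon,\,i\,\varepsilon,\,m'(-\varepsilon))$, and $H(m'\,\varepsilon,\,k(-\varepsilon),\,l(-\varepsilon))$), with an $\{i,n\}$-bigon between the first and third, a $\{k,l\}$-bigon between the second and fourth, and $m,m'$ as two \emph{disjoint} edges joining the first to the second and the third to the fourth, respectively. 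In particular $m$ and $m'$ do not share endpoints, so they do not form a bigon, and Lemma~\ref{lem-calc-diag}(a) cannot be applied to them. What you actually have is exactly the configuration of Lemma~\ref{lem-calc-diag}(b): two sub-graphs (the two opposite bigons, playing the role of the boxes) joined by a pair of parallel strands of colors $m$ and $m'$. A single application of Lemma~\ref{lem-calc-diag}(b) to that configuration produces \emph{both} the factor $\delta_{m,m'}$ \emph{and} the factor $(\dim m)^{-1}$ at once, while splitting the graph into the two $\theta$-graphs whose invariants are $\omega_{m\mp,n\pm,i\pm}$ and $\omega_{l\mp,m\pm,k\mp}$. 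The sequential application of (a) followed by (b) that you propose would not go through: after your (imagined) step (a) there is only a single $m$-colored strand left, and neither (a) nor (b) takes that as input.

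Once the final step is replaced by a single use of Lemma~\ref{lem-calc-diag}(b) as above, your argument (and your closing remark about handling the general sign choices by reversing edge orientations) coincides with the paper's Figure~\ref{fig-ortho}.
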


The proof is given in~Figure~\ref{fig-ortho}.

\begin{figure}[h, t]
\begin{center}
\psfrag{i}[Bc][Bc]{\scalebox{.7}{$i$}} \psfrag{x}[Bc][Bc]{\scalebox{.7}{${\letterf}$}}
\psfrag{w}[Bc][Bc]{\scalebox{.7}{$m'$}}
\psfrag{k}[Bc][Bc]{\scalebox{.7}{$k$}}
\psfrag{l}[Bc][Bc]{\scalebox{.7}{$l$}} \psfrag{m}[Bc][Bc]{\scalebox{.7}{$m$}}
 \psfrag{n}[Bc][Bc]{\scalebox{.7}{$n$}}
$\displaystyle \sum_{{\letterf} \in I} \dim({\letterf})$ \,\rsdraw{.45}{.9}{ortho1}  \\ = \, \rsdraw{.45}{.9}{ortho2}\,
$= \, \delta_{m,m'}\, \dim(m)^{-1}$ \rsdraw{.45}{.9}{ortho3}
\end{center}
\caption{Proof of the orthonormality relation}
\label{fig-ortho}
\end{figure}

%\clearpage


\begin{thebibliography}{KSWM}
{\normalsize

\bibitem[Ba1]{Ba1}
Balsam, B.,
  \emph{Turaev-Viro invariants as an
extended TQFT II},  arXiv:1010.1222.

\bibitem[Ba2]{Ba2}
\bysame,
  \emph{Turaev-Viro invariants as an
extended TQFT III},   arXiv:1012.0560.

\bibitem[BK]{KB}
Balsam, B., Kirillov, A. Jr.,
  \emph{Turaev-Viro invariants as an
extended TQFT},  arXiv:1004.1533.


\bibitem[BW1]{BW} Barrett, J., Westbury, B., \emph{Invariants of piecewise-linear
3-manifolds},  Trans. Amer. Math. Soc.  348  (1996),   3997--4022.

\bibitem[BW2]{BW2} \bysame, \emph{The equality of {$3$}-manifold invariants},  Math. Proc. Cambridge Philos. Soc.  118  (1995),   503--510.

\bibitem[Be]{Be} Beck, J.,  \emph{Distributive laws},  1969  Sem. on Triples and Categorical Homology Theory (ETH, Z\"{u}rich, 1966/67)  pp. 119--140 Springer, Berlin.

\bibitem[BV1]{BV2}
Brugui{\`e}res, A.,  Virelizier, A.,  \emph{Hopf monads}, Advances in Math.  215  (2007), 679--733.

\bibitem[BV2]{BV3}
\bysame,  \emph{Quantum double of {H}opf monads and categorical centers},
 Trans. Amer. Math. Soc. 364 (2012),   1225--1279.



\bibitem[Ca]{Ca} Carlip, S., \emph{Quantum gravity in  2+1  dimensions}, Cambridge Monographs on Mathematical Physics.
  Cambridge University Press, Cambridge, 1998.

\bibitem[EG]{EG}
Etingof, P., Gelaki, S. \emph{Some properties of finite-dimensional semisimple {H}opf algebras}, Math. Res. Lett. 5 (1998), 191--197.

\bibitem[ENO]{ENO} Etingof, P.,  Nikshych, D.,  Ostrik, V.,  \emph{On fusion categories}, Ann.
of Math. (2)  162  (2005),   581--642.

\bibitem[GK]{GK}  Gelfand, S., Kazhdan, D., \emph{Invariants of three-dimensional manifolds},
Geom. Funct. Anal.  6  (1996),    268--300.

\bibitem[He]{He}
Hennings, M., \emph{Invariants of links and 3-manifolds obtained from {H}opf
  algebras}, J. of London Math. Soc. 54 (1996),  594--624.

\bibitem[JS]{JS}   Joyal, A.,  Street, R. \emph{Tortile Yang-Baxter operators in tensor categories},  J. Pure Appl. Algebra 71 (1991),   43--51.

    \bibitem[Ka]{Ka}  Kassel, C.,  Quantum groups. Graduate Texts in Math., 155. Springer-Verlag, New York, 1995.

\bibitem[KR]{KR}
Kauffman, L., Radford, D., \emph{Invariants of 3-manifolds derived from
  finite dimensional {H}opf algebras}, J. of Knot Theory and its Ramifications 4 (1995), 131--162.

\bibitem[KSW]{KSW}  Kawahigashi, Y.,  Sato, N.,  Wakui,
M.,  \emph{(2+1)-dimensional topological quantum field theory from
subfactors and Dehn surgery formula for 3-manifold invariants},
  Adv. Math. 195 (2005),   165--204.

 % \bibitem[Lyu95]{Lyu2}
% Lyubashenko, V.,  Invariants of $3$-manifolds and projective
 % representations of mapping class groups via quantum groups at roots of
 % unity, Comm. Math. Phys. \textbf{172} (1995),  467--516.

\bibitem[Ke]{Ke}  Kerler, T., \emph{Genealogy of non-perturbative quantum-invariants of 3-manifolds: the surgical family}. Geometry and physics (Aarhus, 1995), 503--547, Lecture Notes in Pure and Appl. Math., 184, Dekker, New York, 1997.

 \bibitem[KL]{KL}    Kerler, T.,  Lyubashenko, V.,  Non-semisimple topological quantum field theories for 3-manifolds with corners. Lecture Notes in Math., 1765. Springer-Verlag, Berlin, 2001.



\bibitem[Ku]{Ku}
Kuperberg, G., \emph{Involutory {H}opf algebras and $3$-manifold invariants},
  Internat. J. Math. 2 (1991), 41--66.

\bibitem[{Mac}]{ML1}
 {MacLane}, S., \emph{Categories for the working mathematician},
second ed.,
  Springer-Verlag, New York, 1998.

\bibitem[Maj]{Maj}
 Majid, S., \emph{Representations, duals and quantum doubles of monoidal categories}, Rend. Circ. Mat. Palermo
Suppl. 26 (1991), 197--206.

\bibitem[Mal]{Malt} Maltsiniotis, G.
\emph{Traces dans les catégories monoïdales, dualité et catégories
monoïdales fibrées [Traces in monoidal categories, duality and
fibered monoidal categories]}, Cahiers Topologie Géom. Différentielle
Catég. 36 (1995), 195--288.

\bibitem[Mat]{Mat1}   Matveev, S. V. \emph{Algorithmic topology and classification of 3-manifolds},
 Second edition. Algorithms and Computation in Mathematics, 9. Springer, Berlin, 2007.

%% \bibitem[MS]{MS-} Moore, G.,  Seiberg, N.,  Classical and quantum conformal
%5field theory.  Comm. Math. Phys.  123  (1989),    177--254.

\bibitem[M\"u1]{Mu2} M\"uger, M.,  \emph{From subfactors to categories and topology. I. Frobenius algebras in and Morita equivalence of tensor categories},   J. Pure Appl.
Algebra 180 (2003),   81--157.

\bibitem[M\"u2]{Mu} \bysame,  \emph{From subfactors to categories and topology. II. The
quantum double of tensor categories and subfactors},   J. Pure Appl.
Algebra 180 (2003),   159--219.











\bibitem[RT]{RT} Reshetikhin, N., Turaev, V., \emph{Invariants of 3-manifolds via link
polynomials and quantum groups},  Invent. Math.  103  (1991),
547--597.

%%%\bibitem[RKT]{RKT} Rosso, M., Kassel, C., Turaev, V.,
%%%   Quantum groups and knot invariants,
 %%%Panoramas and Syntheses, Paris, 1997.






 \bibitem[Tu]{Tu1} Turaev, V.,  \emph{Quantum invariants of knots and 3-manifolds}, de
Gruyter Studies in Mathematics, 18. Walter de Gruyter, Berlin, 1994.





\bibitem[TV]{TV}Turaev, V., Viro, O., \emph{State sum invariants of 3-manifolds
and quantum 6j-symbols},  Topology  31  (1992),    865--902.

\bibitem[Wa]{Wa} Walker, K.,  \emph{On Witten's 3-manifold invariants},
preprint (1991).

   \bibitem[Wi]{Wi-} Witten, E., \emph{Quantum field theory and the Jones polynomial},  Comm.
 Math. Phys.  121  (1989),    351--399.}



 %%  \bibitem[Zu1]{Zu1} Zunino, M.,  Double construction for crossed Hopf coalgebras.
  %%  J. Algebra  278  (2004),   43--75.

 %%   \bibitem[Zu2]{Zu2} Zunino, M.,  Yetter-Drinfeld modules
%%for crossed structures. J. Pure Appl. Algebra 193 (2004), 313--343.}


\end{thebibliography}
\end{document}